\newtheorem{theorem}{Theorem}[section]
\newtheorem{corollary}[theorem]{Corollary}
\newtheorem{lemma}[theorem]{Lemma}
\newtheorem{proposition}[theorem]{Proposition}
\newtheorem{theoremstar}{Main Theorem}
\newtheorem*{propositionstar}{Proposition} %%%% for unnumbered statements
\theoremstyle{definition}
\newtheorem{definition}[theorem]{Definition}
\newtheorem{remark}[theorem]{Remark}
\newtheorem{example}[theorem]{Example}
\newtheorem{question}[theorem]{Question}
\newcommand{\emphbf}[1]{\emph{\textbf{#1}}}
\newcommand{\Hom}{\operatorname{Hom}}
\newcommand{\Ker}{\operatorname{Ker}}
\newcommand{\End}{\operatorname{End}}
\newcommand{\Ext}{\operatorname{Ext}}
\newcommand{\can}{\operatorname{can}}
\newcommand{\modu}{\operatorname{mod}}
\newcommand{\proj}{\operatorname{proj}}
\newcommand{\Mod}{\operatorname{Mod}}
\newcommand{\EM}{\operatorname{EM}}
\newcommand{\rad}{\operatorname{rad}}
\newcommand{\image}{\operatorname{Im}}
\newcommand{\Cok}{\operatorname{Cok}}
\newcommand{\comp}{\operatorname{comp}}
\newcommand{\adj}{\operatorname{adj}}
\newcommand{\id}{\operatorname{id}}
\newcommand{\GL}{\operatorname{GL}}
\newcommand{\Kleisli}{\operatorname{Kl}}
\newcommand{\Ind}{\operatorname{Ind}}
\newcommand{\CoInd}{\operatorname{CoInd}}
\newcommand{\add}{\operatorname{add}}
\newcommand{\projdim}{\operatorname{projdim}}
\title[In the bocs seat: Quasi-hereditary algebras and representation type]{In the bocs seat:\\Quasi-hereditary algebras and representation type}
\author[Julian K\"ulshammer]{Julian K\"ulshammer\thanks{The author wants to thank his collaborators Agnieszka Bodzenta, Steffen Koenig, Vanessa Miemietz, Sergiy Ovsienko, and Ulrich Thiel for the fruitful joint work and the possibility to include parts of our joint work in this article. Additional thanks go to Agnieszka Bodzenta, Ren\'e Marczinzik, Frederik Marks and Theo Raedschelders for helpful comments on earlier versions of the paper.}}
\begin{document}

\begin{abstract}
This paper surveys bocses, quasi-hereditary algebras and their relationship which was established in a recent result by Koenig, Ovsienko, and the author. Particular emphasis is placed on applications of this result to the representation type of the category filtered by standard modules for a quasi-hereditary algebra. In this direction, joint work with Thiel is presented showing that the subcategory of modules filtered by Weyl modules for tame Schur algebras is of finite representation type. The paper also includes a new proof for the classification of quasi-hereditary algebras with two simple modules, a result originally obtained by Membrillo-Hern\'andez in \cite{MembrilloHernandez-Quasihereditaryalgebras-1994}.
\end{abstract}

\begin{classification}
Primary 16G60;\\ Secondary 16G10, 16G70, 17B10, 18C20.
\end{classification}

\begin{keywords}
BGG category $\mathcal{O}$, bocs, Eilenberg--Moore category, exact Borel subalgebra, Kleisli category, $q$-Schur algebras, quasi-hereditary algebras, reduction algorithm, representation type, Schur algebras, tame, wild
\end{keywords}

\maketitle

\section{Introduction}

Bocses were introduced by Ro\v{\i}ter in 1979. From some point of view, they are generalisations of algebras which have turned out to be most useful in the study of problems related to the representation type of algebras. In particular, they were used in attempts to prove the Brauer--Thrall conjectures and they are at the core of the proof of Drozd's tame and wild dichotomy theorem. It was realised by Bautista and Kleiner \cite{BautistaKleiner-Almostsplitsequences-1990} that under certain assumptions the exact category of modules over a bocs has almost split sequences and can be realised as a subcategory of modules over an algebra. This point of view was soon strengthened by Burt and Butler in  \cite{BurtButler-Almostsplitsequences-1991}, which is one of the key ingredients of our work.

On the other hand, quasi-hereditary algebras were introduced by Scott in 1987 \cite{Scott-Simulatingalgebraicgeometry-1987}, and in the more general framework of highest weight categories allowing infinitely many simple objects by Cline, Parshall, and Scott \cite{ClineParshallScott-Finitedimensionalalgebras-1988}. The most prominent examples of this class of algebras arise in the representation theory of groups and Lie algebras, but examples also include all algebras of global dimension $\leq 2$, in particular hereditary algebras and Auslander algebras. Two of the key examples which have motivated much of the theory of quasi-hereditary algebras are blocks of BGG category $\mathcal{O}$ and Schur algebras of the symmetric groups.
A defining property of quasi-hereditary algebras is the existence of certain factor modules $\Delta(\lambda)$ of the indecomposable projective modules $P(\lambda)$, called standard modules. In the case of BGG category $\mathcal{O}$ these are given by the Verma modules while for Schur algebras these are called Weyl modules. They are often much easier to understand than the corresponding simple factor modules. In several instances, it turned out to be useful to consider the subcategory of all modules having a filtration by standard modules $\mathcal{F}(\Delta)$. It was proven by Ringel in \cite{Ringel-Thecategoryofmodules-1991} that this subcategory also has Auslander--Reiten sequences.

Motivated by the example of the universal enveloping algebra of a Borel subalgebra of a Lie algebra $U(\mathfrak{b})\subseteq U(\mathfrak{g})$, Koenig introduced in \cite{Koenig-ExactBorelsubalgebras-1995} the notion of an exact Borel subalgebra of a quasi-hereditary algebra sharing similar properties. In particular, an analogue of the PBW theorem holds and standard modules for the quasi-hereditary algebra are induced from simple modules for the exact Borel subalgebra. In the course of proving existence of exact Borel subalgebras for all quasi-hereditary algebras, Koenig, Ovsienko, and the author could prove the following intrinsic description of quasi-hereditary algebras via bocses.

\begin{theoremstar}[{\cite[Section 11]{KoenigKulshammerOvsienko-Quasihereditaryalgebras-2014}}]
An algebra $A$ is quasi-hereditary if and only if it is Morita equivalent to the endomorphism ring $R$ of a projective generator of $\modu\mathfrak{B}$ for a directed bocs $\mathfrak{B}$. In this case, the subcategory of modules  filtered by standard modules $\mathcal{F}(\Delta)$ is equivalent to the category of modules over the bocs $\mathfrak{B}$.\\
In particular, there exists an algebra $R$, Morita equivalent to $A$, such that $R$ has an exact Borel subalgebra $B$ with $\mathcal{F}(\Delta)$ equivalent to the category of induced modules from $B$ to $R$.
\end{theoremstar}

In this paper, we provide context, motivation, and give applications and further examples of this result. This in particular includes as a corollary a classification of two-point quasi-hereditary algebras by pairs of natural numbers originally obtained by Membrillo-Hern\'andez in \cite{MembrilloHernandez-Quasihereditaryalgebras-1994}, see Theorem \ref{classification2simples}.

For another application recall that a quasi-hereditary algebra is called \emphbf{(left) strongly quasi-hereditary} if all (left) standard modules have projective dimension $\leq 1$, in other words $\mathcal{F}(\Delta)$ is a hereditary exact category. Regarding representation type, the following is an immediate corollary of the proof of Drozd's tame and wild dichotomy theorem and Main Theorem 1:

\begin{propositionstar}
Let $A$ be a strongly quasi-hereditary algebra. Then, the category of modules filtered by standard modules is either representation-finite, tame, or wild.
\end{propositionstar}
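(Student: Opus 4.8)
The plan is to leverage Main Theorem 1 to transport the trichotomy problem from the subcategory $\mathcal{F}(\Delta)$ into the world of bocses, where Drozd's dichotomy machinery lives natively. By that theorem, for a strongly quasi-hereditary algebra $A$ there is a directed bocs $\mathfrak{B}$ with $\mathcal{F}(\Delta)$ equivalent to $\modu\mathfrak{B}$, and this equivalence is an equivalence of categories, so it preserves representation type: $\mathcal{F}(\Delta)$ is representation-finite, tame, or wild exactly when $\modu\mathfrak{B}$ is. Thus the whole statement reduces to establishing a tame--wild--finite trichotomy for $\modu\mathfrak{B}$, and the key additional input I would exploit is the hypothesis that $A$ is \emph{strongly} quasi-hereditary, i.e.\ that $\mathcal{F}(\Delta)$ is a hereditary exact category with all standard modules of projective dimension $\leq 1$.

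The first step is to pin down what the strong quasi-heredity hypothesis says about the bocs $\mathfrak{B}$. I would argue that the condition $\projdim\Delta(\lambda)\leq 1$ for all $\lambda$ forces $\mathfrak{B}$ to have an especially simple shape: concretely, that $\mathfrak{B}$ can be taken to be a \emphbf{layered} (or ``free'') bocs whose left algebra is a direct product of copies of the base field and whose differential and bimodule structure encode only the first-order extension data between standard modules. The point is that homological dimension $\leq 1$ in the exact category translates into the bocs being of the type to which Drozd's reduction algorithm applies without the pathologies that appear for higher projective dimension. I would make this precise by tracing through the construction in \cite{KoenigKulshammerOvsienko-Quasihereditaryalgebras-2014} and isolating the place where the projective dimension of the $\Delta(\lambda)$ controls the structure of the $\mathfrak{B}$-bimodule defining the bocs.

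With $\mathfrak{B}$ identified as a bocs of the appropriate (reduced, layered) class, the second step is to invoke the tame--wild dichotomy directly in its native setting. Drozd's theorem, together with the refinements of Bautista--Kleiner \cite{BautistaKleiner-Almostsplitsequences-1990} and Burt--Butler \cite{BurtButler-Almostsplitsequences-1991} quoted in the introduction, states that the module category of such a bocs is either representation-finite, tame, or wild, and these three cases are mutually exclusive. I would apply the reduction algorithm: either it terminates after finitely many steps, yielding finite representation type, or it produces in the limit a minimal wild bocs, giving wild type, or it stabilises on a one-parameter family, giving tame type. Since the dichotomy theorem is precisely the statement that no fourth possibility occurs, the trichotomy for $\modu\mathfrak{B}$ follows, and transporting back along the equivalence of Main Theorem 1 gives the claim for $\mathcal{F}(\Delta)$.

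The main obstacle I anticipate is the second step's hidden premise, namely verifying that the bocs $\mathfrak{B}$ arising from a strongly quasi-hereditary algebra genuinely lies in the class for which Drozd's dichotomy is established, rather than merely resembling it. The dichotomy theorem is usually stated for bocses satisfying specific finiteness and ``freeness'' conditions on the underlying bimodule and counit, and one must check that directedness together with $\projdim\Delta\leq 1$ supplies exactly these conditions; the projective-dimension bound is what I expect to be doing the real work here, since without it the associated bocs need not be reduced and the reduction algorithm can fail to terminate in a controlled way. Establishing this compatibility carefully — rather than the formal transport of representation type, which is routine once the equivalence is in hand — is where the substance of the proof lies.
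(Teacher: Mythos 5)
Your proposal follows essentially the same route as the paper: its proof consists of exactly the two steps you describe --- $\mathcal{F}(\Delta)$ for a strongly quasi-hereditary algebra is equivalent to $\modu\mathfrak{B}$ for a free normal directed bocs, and the proof of Drozd's tame--wild dichotomy applies natively to such bocses (equivalently, differential biquivers), giving the trichotomy. The ``hidden premise'' you rightly single out in your last paragraph is precisely what the paper supplies as a standalone proposition: $A$ is strongly quasi-hereditary if and only if it is Morita equivalent to the right algebra of a free normal bocs, where \emph{free} means the underlying algebra $B$ is hereditary, and free normal bocses are exactly those arising from differential biquivers, which is the setting of Drozd's reduction machinery.

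One correction so a slip does not propagate: what strong quasi-heredity buys is that the underlying algebra $B$ of the bocs is \emph{hereditary} (a path algebra of a directed quiver), not that any algebra attached to $\mathfrak{B}$ is a direct product of copies of $k$; only the span of the trivial paths (the degree-zero part over which $B$ is freely generated) is such a product. Moreover, in the paper's Burt--Butler terminology the ``left algebra'' of $\mathfrak{B}$ denotes $L_{\mathfrak{B}}=\End_{\mathfrak{B}^{op}}(B)$, i.e.\ the Ringel dual, which is certainly not semisimple in general; and if $B$ itself were a product of copies of the field, all standard modules would be simple, a condition far stronger than $\projdim\Delta(i)\leq 1$.
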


It is expected, but not known, whether an analogue of this result holds for more general quasi-hereditary algebras. Nevertheless, there are partial results classifying representation type of the category filtered by standard modules. Most notably, this has been achieved for blocks of BGG category $\mathcal{O}$ by Br\"ustle, Koenig and Mazorchuk \cite{BrustleKoenigMazorchuk-Thecoinvariantalgebra-2001}. Representation-finite categories $\mathcal{F}(\Delta)$ for blocks of the Schur algebras $S(2,d)$ with parameter $n=2$ have been classified by Erdmann, Madsen and Miemietz \cite{ErdmannMadsenMiemietz-OnAuslanderReitentranslates-2010}. In this direction, in joint work with Ulrich Thiel we obtained the following statement on filtered representation type of tame Schur algebras:

\begin{theoremstar}[K--Thiel 2014]
Let $S_q(n,d)$ be a tame ($q$-)Schur algebra, then there are only finitely many modules up to isomorphisms which can be filtered by Weyl modules.
\end{theoremstar}

The paper is organised as follows. In Section 2, we recall necessary results on finite dimensional algebras and representation type. In particular, we introduce the Gabriel quiver of a finite dimensional algebra and recall Drozd's tame--wild dichotomy theorem. Section 3 is devoted to an introduction to quasi-hereditary algebras. In particular, two of the most prominent examples, blocks of BGG category $\mathcal{O}$ and Schur algebras are introduced. Furthermore, the main theorem of \cite{KoenigKulshammerOvsienko-Quasihereditaryalgebras-2014} about existence of exact Borel subalgebras for quasi-hereditary algebras up to Morita equivalence is recalled. In Section 4, bocses are considered from different points of view. This includes the description as a Kleisli category of a monad, Burt--Butler's theory on right and left algebras,  $A_\infty$-algebras, as well as differential biquivers. The reduction algorithm for differential biquivers is recalled, and applied to several examples of quasi-hereditary algebras. As a corollary of the proof of Drozd's tame--wild dichotomy theorem, this section contains the statement that for strongly quasi-hereditary algebras, the subcategory of modules filtered by standard modules is either tame or wild. In the examples subsection, a new proof of Membrillo-Hern\'andez' classification of quasi-hereditary algebras with two simple modules is obtained. The last section, Section 5, contains the results of joint work with Ulrich Thiel on the representation type of the subcategory of filtered modules for tame Schur algebras. Firstly, the corresponding bocses are described, and secondly the reduction algorithm of Bautista and Salmer\'on together with computer calculations is applied to get the main theorem showing that these subcategories have finite representation type.

Throughout the whole article we assume that $k$ is an algebraically closed field although often weaker assumptions suffice. An algebra will always mean a finite dimensional unital associative algebra unless indicated otherwise. By an $A$-module we usually mean a finite dimensional unital left module and denote the category of all such modules by $\modu A$. The category of all modules (not necessarily finite dimensional) is denoted $\Mod A$. The opposite algebra of $A$ is denoted $A^{op}$, and the category of finite dimensional right $A$-modules is denoted by $\modu A^{op}$. The $k$-duality functor is denoted $D=\Hom_k(-,k)\colon \modu A\to \modu A^{op}$.

\section{Finite dimensional algebras and representation type}

In this section we shortly recall the definition of the Gabriel quiver of an algebra and the notion of representation type of finite dimensional algebras and subcategories of their module category. For a general introduction to the representation theory of finite dimensional algebras, the reader is referred to \cite{AssemSimsonSkowronski-Elements1-2006} or \cite{AuslanderReitenSmalo-Representationtheory-1995}. For an introduction to representation type, the reader can consult \cite[Chapter XIX]{SimsonSkowronski-Elements3-2007} or \cite[Chapters 22 and 27]{BautistaSalmeronZuazua-Differentialtensoralgebras-2009}.

\subsection{The Gabriel quiver of an algebra}

Recall that a quiver $Q=(Q_0,Q_1,s,t)$ is a finite oriented graph with vertex set $Q_0$, set of arrows $Q_1$, and functions $s,t\colon Q_1\to Q_0$ determining the starting (resp. terminal) point of an arrow. Given such a quiver $Q$, its path algebra is the algebra with basis given by the paths in the quiver (including a path of length zero $e_i$ for each vertex $i\in Q_0$) and multiplication given by concatenation of paths. It is an important result of Gabriel, that the representation theory of each algebra can be understood via the representation theory of the path algebra of an associated quiver.

\begin{definition}
Let $A$ be a finite dimensional algebra. Then, the \emphbf{Gabriel quiver} of $A$ is the quiver $Q_A$ with vertices given by the isomorphism classes of simple $A$-modules, and the number of arrows $[S_i]\to [S_j]$ given by $\dim D\Ext^1(S_i,S_j)$. 
\end{definition}

\begin{remark}
The usage of the duality $D$ here is non-standard (and not really necessary since the definition only depends on the dimension of this space). It will turn out to be the right thing later in Keller's $A_\infty$-description of the quiver and relations of an algebra (see Theorem \ref{Kellerreconstruction}).
\end{remark}

Path algebras of quivers share the property that they are hereditary, i.e. that their simple modules have projective dimension $\leq 1$. In order to understand the representation theory of all algebras, it is therefore necessary to divide out certain ideals.

\begin{definition}
Let $Q$ be a finite quiver. Let $kQ^+$ be the ideal of $kQ$ spanned by the arrows. An ideal $I\subseteq KQ$ is called \emphbf{admissible} if there is an integer $m$ such that $(kQ^+)^m\subseteq I\subseteq (kQ^+)^2$.
\end{definition}

The following is Gabriel's description of the Morita equivalence classes of finite dimensional algebras by quotients of path algebras by admissible ideals.

\begin{theorem}[Gabriel's theorem]
Let $A$ be a finite dimensional algebra. Let $Q_A$ be the Gabriel quiver of $A$. Then, there is an admissible ideal $I$ such that $A$ is Morita equivalent to $kQ_A/I$, i.e.  $\modu A\cong \modu kQ_A/I$.
\end{theorem}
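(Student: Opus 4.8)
The plan is to produce an explicit surjective $k$-algebra homomorphism $\varphi\colon kQ_A\to A$ and to identify its kernel as an admissible ideal. First I would reduce to the case that $A$ is basic. Since $k$ is algebraically closed, $A$ is Morita equivalent to a basic algebra, and both the Gabriel quiver $Q_A$ and the assertion are invariant under Morita equivalence (the simple modules and the spaces $\Ext^1(S_i,S_j)$ depend only on the Morita class); by transitivity of Morita equivalence it therefore suffices to treat the basic case, and there it is enough to produce an honest algebra isomorphism $A\cong kQ_A/I$. So assume $A/\rad A\cong k^n$ and fix a complete set $e_1,\dots,e_n$ of primitive orthogonal idempotents with $\sum_i e_i=1$, where $e_i$ corresponds to the simple module $S_i$.

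The construction rests on the identification, for each pair $(i,j)$,
\[
\dim \Ext^1(S_i,S_j)=\dim e_j\bigl(\rad A/\rad^2 A\bigr)e_i,
\]
obtained by computing $\Ext^1(S_i,S_j)$ from the minimal projective resolution of $S_i$: the right-hand side is exactly the multiplicity of $S_j$ as a composition factor of $\rad(Ae_i)/\rad^2(Ae_i)=(\rad A/\rad^2 A)e_i$. Thus the number of arrows $i\to j$ prescribed in the definition of $Q_A$ agrees with this dimension. I would then choose, for each arrow $\alpha$ from $i$ to $j$, an element $\varphi(\alpha)\in e_j(\rad A)e_i$ lifting a fixed $k$-basis of $e_j(\rad A/\rad^2 A)e_i$, and define $\varphi$ by $e_i\mapsto e_i$ together with these assignments. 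Because $kQ_A$ is freely generated as a $k$-algebra by the $e_i$ and the arrows (subject only to the idempotent relations), this extends uniquely to a homomorphism $\varphi$.

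Surjectivity follows from Nakayama's lemma: by construction the $\varphi(\alpha)$ span $\rad A$ modulo $\rad^2 A$, and since $\rad A$ is nilpotent they generate $\rad A$ as an ideal, so together with the idempotents their products span all of $A$. Writing $I=\ker\varphi$, the inclusion $(kQ^+)^m\subseteq I$ for large $m$ is immediate, since $\varphi(kQ^+)\subseteq\rad A$ forces $\varphi\bigl((kQ^+)^m\bigr)\subseteq(\rad A)^m=0$ once $m$ exceeds the nilpotency index of $\rad A$.

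The step I expect to be the main obstacle is the reverse inclusion $I\subseteq (kQ^+)^2$, which is precisely where the careful choice of lifts is used. As $\varphi$ is the identity on the span of the idempotents, one first checks $I\subseteq kQ^+$; moreover $\varphi$ sends the arrows to representatives of a basis of $\rad A/\rad^2 A$, so the induced map $kQ^+/(kQ^+)^2\to\rad A/\rad^2 A$ is an isomorphism. Hence no nonzero element of $kQ^+$ of ``linear'' type (that is, outside $(kQ^+)^2$) can lie in the kernel, giving $I\subseteq (kQ^+)^2$. Combining the two inclusions shows that $I$ is admissible, and $\varphi$ descends to the desired isomorphism $A\cong kQ_A/I$, whence $\modu A\cong\modu kQ_A/I$ as claimed.
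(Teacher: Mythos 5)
The paper states Gabriel's theorem purely as background and gives no proof of it (it is a classical result, with the standard references \cite{AssemSimsonSkowronski-Elements1-2006, AuslanderReitenSmalo-Representationtheory-1995} cited for general theory), so there is no argument in the paper to compare yours against. Your proposal is the standard textbook proof and it is correct: the reduction to the basic case, the identification $\dim\Ext^1(S_i,S_j)=\dim e_j(\rad A/\rad^2 A)e_i$, the construction of $\varphi$ via the universal property of the path algebra, surjectivity by lifting a basis of $\rad A/\rad^2 A$ and using nilpotency of the radical, and both inclusions $(kQ^+)^m\subseteq I\subseteq (kQ^+)^2$ are all carried out correctly; the only place where I would add a line is the surjectivity step, where ``they generate $\rad A$ as an ideal'' is best made precise by the downward induction $\rad^m A\subseteq \image\varphi+\rad^{m+1}A$ for all $m$, which your appeal to nilpotency implicitly encodes.
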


\begin{remark}
Given an algebra $A$, it is in general quite hard to explicitly determine this admissible ideal $I$ (even if the in general hard task of giving a classification of the simple $A$-modules as well as determining the dimension of the first extension groups between them is already established).\\
The number of generators from $i$ to $j$ in a minimal set of generators for $I$ is given by $\dim D\Ext^2(S_i,S_j)$ (see \cite[Corollary 1.1]{Bongartz-Algebras-1983}). Keller's $A_\infty$-description of the quiver and relations of an algebra (see Theorem \ref{Kellerreconstruction}) gives a precise description on how to determine the quiver and relations for an algebra given a very detailed knowledge of $D\Ext^1(S_i,S_j)$ and $D\Ext^2(S_i,S_j)$, namely the restriction of the $A_\infty$-structure on the $\Ext$-algebra of the direct sum of all simples. 
\end{remark}

\subsection{Representation type}

In this subsection, we recall basic facts about the representation type of an algebra, which is a rough measure on how difficult it is to classify all the finite dimensional indecomposable modules (and hence by Krull--Remak--Schmidt all its finite dimensional modules). 

\begin{definition}
Let $A$ be a finite dimensional algebra, $k[T]$ be the polynomial ring in one variable, and $k\langle X,Y\rangle$ be the free unital algebra on two variables.
\begin{enumerate}[(i)]
\item $A$ is called \emphbf{representation-finite} if there are only finitely many indecomposable modules up to isomorphism.
\item $A$ is called \emphbf{tame} if it is not representation-finite and for any dimension $d\geq 1$ there are finitely many $A$-$k[T]$-bimodules $N^{(1)},\dots, N^{(m_d)}$, which are free of finite rank as $k[T]$-modules, such that all but finitely many $A$-modules of dimension $d$ are isomorphic to modules of the form $N^{(i)}\otimes_{k[T]} k_{\lambda}$ for some simple $k[T]$-module $k_\lambda$.
\item $A$ is called \emphbf{wild} if there is an $A$-$k\langle X,Y\rangle $-bimodule $N$, free of finite rank as a $k\langle X,Y\rangle $-module, such that $N\otimes_{k\langle X,Y\rangle }-\colon \modu k\langle X,Y\rangle\to \modu A$ preserves indecomposability and reflects isomorphisms, i.e. for each indecomposable $k[X,Y]$-module $M$, $N\otimes M$ is indecomposable, and if for two $k\langle X,Y\rangle$-modules $M,M'$, $N\otimes_{k\langle X,Y\rangle }M\cong N\otimes_{k\langle X,Y\rangle } M'$, then $M\cong M'$.  
\end{enumerate}
\end{definition}

Keeping in mind the classification of simple $k[T]$-modules via Jordan normal form, one often describes tameness as the possibility of classifying the indecomposable modules of each dimension by finitely many $1$-parameter families.\\
On the other hand, wildness is equivalent to having for each finite dimensional algebra $B$ an $A$-$B$-bimodule $N$, finitely generated projective as a $B$-module, such that $N\otimes_B -\colon \modu B\to \modu A$ preserves indecomposability and reflects isomorphisms. Thus, the representation theory of a wild algebra incorporates in some sense the representation theory of every other finite dimensional algebra. It is therefore considered a hopeless endeavour.\\
In the late 1970s Drozd proved the following trichotomy for finite dimensional algebras, see \cite{Drozd-Tameandwild-1977, Drozd-Tameandwild-1979, Drozd-Tameandwild-1980}. (Sometimes representation-finite algebras are considered as a subclass of tame algebras, so that the statement will be a dichotomy. For simplicity of stating some of the results later on, for us it is more convenient to follow the convention of excluding it.) For more accessible accounts of a proof see e.g. \cite{CrawleyBoevey-Ontamealgebras-1988, BautistaSalmeronZuazua-Differentialtensoralgebras-2009}

\begin{theorem}
Every algebra is either representation-finite, tame, or wild.
\end{theorem}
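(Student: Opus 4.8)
Since the definition already makes representation-finiteness and tameness mutually exclusive, the genuine content of the statement is twofold: first, that an algebra which is not wild must be representation-finite or tame, and second, that no algebra is simultaneously tame and wild. The second point is comparatively soft---a wild algebra admits, for suitable dimension vectors, families of pairwise non-isomorphic indecomposables depending on two or more parameters (coming from the moduli of $k\langle X,Y\rangle$-modules), which directly contradicts the one-parameter bound imposed by tameness---so I would dispose of it first and concentrate on the main implication. The plan for that implication is to use the theory of bocses and their reduction algorithms, which is precisely why this survey foregrounds bocses.

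The first step is to encode the classification of $A$-modules as a representation problem for a bocs. One associates to $A = kQ_A/I$ a triangular bocs $\mathfrak{A}$ (equivalently a differential tensor algebra, in the language of the cited work of Bautista, Salmer\'on and Zuazua) whose category of finite-dimensional representations is equivalent to $\modu A$. Representation type is an invariant of this equivalence, so it suffices to establish the trichotomy for $\mathfrak{A}$. Working dimension by dimension, the $d$-dimensional representations form an affine variety on which a product of general linear groups acts with isomorphism classes as orbits, which is the setting in which the matrix-problem reductions operate.

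The second step is the reduction algorithm. I would apply the finite repertoire of elementary moves---deletion of idempotents, regularization (absorbing the arrows that occur as leading terms of the differential), edge reduction, loop reduction, and unravelling---each of which replaces $\mathfrak{A}$ by a smaller bocs $\mathfrak{A}'$ together with a reduction functor $\modu \mathfrak{A}' \to \modu \mathfrak{A}$ that is full and faithful and captures, up to finitely many exceptions in each dimension, all indecomposable $\mathfrak{A}$-modules. The essential property, to be verified for each move, is that it preserves representation type: $\mathfrak{A}$ is tame (respectively wild) exactly when $\mathfrak{A}'$ is. Running this algorithm, one either terminates with a bocs having only finitely many indecomposables in the range of interest, or one reaches a minimal bocs on which the dichotomy becomes visible: it either produces a genuine one-parameter family of indecomposables, or it contains a copy of a fixed minimal wild bocs through which all of $\modu k\langle X,Y\rangle$ embeds.

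The main obstacle is exactly this terminal dichotomy, together with the bookkeeping that makes the reduction work. Concretely, one must show that the failure of wildness forces a homogeneity phenomenon in the spirit of Crawley-Boevey---that all but finitely many indecomposables of each dimension are fixed by the Auslander--Reiten translate and hence organise into the finitely many one-parameter families demanded by the definition of tameness---and one must verify that tameness and wildness are transported faithfully in both directions along every reduction functor and then back to $A$. Controlling the algorithm itself (termination on each fixed dimension, control of the exceptional representations, and preservation of genericity under the moves) is where the real difficulty lies: each individual move is elementary, but assembling them into a terminating procedure with the required representation-type invariance is the delicate heart of the argument.
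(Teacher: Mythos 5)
The paper does not actually prove this statement: it is Drozd's tame--wild dichotomy theorem, quoted with citations to Drozd's papers and to the accounts of Crawley-Boevey and Bautista--Salmer\'on--Zuazua, and the only thing the paper itself supplies is the remark that the proof ``rests on the theory of bocs reductions'' together with the partial machinery of Section 4 (passing to the hereditary situation via $\mathcal{P}^2(A)$, regularisation, minimal edge reduction, and the partial loop reduction it declines to spell out) --- which is precisely the strategy your proposal reconstructs, so you are following the same route the paper endorses, with the genuinely hard core (termination, invariance of type under the moves, the minimal-bocs dichotomy) left to the cited literature just as the paper leaves it. The one technical slip worth flagging is your claim that the associated bocs has representation category \emph{equivalent} to $\modu A$: as the paper's subsection on passing to the hereditary situation makes explicit, what one gets is the cokernel functor $\mathcal{P}^2(A)\to\modu A$, which is dense, full, reflects isomorphisms and preserves and reflects indecomposability but is not an equivalence of categories, and it is this weaker notion of representation equivalence through which representation type is transported.
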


The proof of this theorem rests on the theory of bocs reductions. We will explain parts of it in later sections. Several classes of algebras have been classified according to their representation type. Here, let us only mention the two classes that have served as prototypical examples in the represesentation theory of finite dimensional algebras, namely group algebras and hereditary algebras:

\begin{theorem}[(i) Bondarenko, Drozd {\cite{BondarenkoDrozd-Therepresentationtype-1977}}, (iia) Gabriel {\cite{Gabriel-Unzerlegbaredarstellungen-1972}} (iib) Donovan--Freislich {\cite{DonovanFreislich-Therepresentationtheory-1973}} and Nazarova {\cite{Nazarova-Representationsofquivers-1973}}] Let $k$ be a field of characteristic $p\geq 0$.
\begin{enumerate}[(i)]
\item Let $G$ be a finite group. 
\begin{enumerate}[(a)]
\item The group algebra $kG$ is of finite representation type if and only if $p\nmid |G|$ or $p\big| |G|$ and the $p$-Sylow subgroups of $G$ are cyclic.
\item The group algebra $kG$ is of tame representation type if and only if $p=2 \big| |G|$ and the $p$-Sylow subgroups of $G$ are dihedral, semidihedral, or generalised quaternion.
\end{enumerate}
\item Let $Q$ be a connected finite quiver.
\begin{enumerate}[(a)]
\item The path algebra $kQ$ is of finite representation type if and only if the underlying graph of $Q$ is of  Dynkin type $A$, $D$, or $E$.
\item The path algebra $kQ$ is of tame representation type if and only if the underlying graph of $Q$ is of Euclidean type $\tilde{A}$, $\tilde{D}$, or $\tilde{E}$.
\end{enumerate}
\end{enumerate}
\end{theorem}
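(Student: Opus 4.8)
The statement collects several classical classifications rather than a single result, so the plan is to treat the quiver case (ii) and the group-algebra case (i) by entirely different machinery, in both cases organising the argument around Drozd's trichotomy stated above: it suffices to separate out the finite and the tame classes, the remaining connected algebras being wild by elimination. First I would dispose of the path algebras, where the combinatorics of quadratic forms does the work, and then turn to the group algebras, where relative projectivity and block theory take over.

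For the path algebras in (ii) I would introduce the Tits (Euler) form $q_Q(d)=\sum_{i\in Q_0} d_i^2 - \sum_{\alpha\in Q_1} d_{s(\alpha)} d_{t(\alpha)}$ on dimension vectors $d\in\mathbb{Z}^{Q_0}$. The classical dichotomy for symmetric integral forms says that, for a connected graph, $q_Q$ is positive definite exactly for the Dynkin diagrams $A,D,E$ and positive semidefinite with one-dimensional radical exactly for the Euclidean diagrams $\tilde A,\tilde D,\tilde E$. The representation-theoretic input is the theorem of Bernstein--Gelfand--Ponomarev: applying reflection functors at sinks and sources, together with the Coxeter functor, yields in the Dynkin case a bijection between the indecomposable representations and the positive roots, i.e.\ the $d$ with $q_Q(d)=1$; since there are only finitely many such roots, this gives (iia). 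In the Euclidean case the dimension vectors of indecomposables are again the positive roots, but now the radical generator $\delta$ is an imaginary root with $q_Q(\delta)=0$, and I would use the preprojective--regular--preinjective decomposition of the representation category: the preprojective and preinjective parts contribute only finitely many classes per dimension, while the regular part splits into tubes parametrised by $\mathbb{P}^1_k$, producing exactly the one-parameter families that witness tameness (the prototype being the Kronecker quiver $\tilde A_1$). For a connected graph that is neither Dynkin nor Euclidean the form is indefinite, and I would deduce wildness by embedding a known wild quiver, such as the three-arrow Kronecker quiver, as a full subquiver.

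For the group algebras in (i) the central tool is relative projectivity. Since $[G:P]$ is invertible for a Sylow $p$-subgroup $P$, Higman's criterion shows every $kG$-module is a summand of a module induced from $P$, so the representation type of $kG$ is governed by that of $kP$ via Green correspondence. For (ia) I would split into cases: if $p\nmid|G|$ then $kG$ is semisimple by Maschke; if $P$ is cyclic of order $p^n$ then $kP\cong k[x]/(x^{p^n})$ is a uniserial Nakayama algebra with only the finitely many indecomposables $k[x]/(x^i)$, and Higman reduction propagates finiteness to $kG$; conversely a non-cyclic $P$ contains a copy of $C_p\times C_p$, whose group algebra already admits infinitely many pairwise non-isomorphic indecomposables, forcing infinite type. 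For (ib) the content is the Bondarenko--Drozd classification: one reduces to the local situation and observes that the blocks of a group with dihedral, semidihedral or generalised quaternion Sylow $2$-subgroup are of the corresponding tame type in Erdmann's sense, their indecomposables organising into string and band modules with the bands giving finitely many one-parameter families; every remaining $2$-group, and every odd $p$ with non-cyclic Sylow subgroup, instead forces wildness.

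The main obstacle throughout is the \emph{negative} half of each equivalence, i.e.\ proving that the algebras lying outside the listed families genuinely fail to be of the asserted type. For quivers this means converting indefiniteness of the Tits form into an honest wild embedding, which is precisely where the reduction techniques underlying Drozd's theorem are needed. For group algebras the analogous step, especially (ib), is by far the most delicate: identifying the tame blocks with special biserial (and mildly more general) algebras and simultaneously ruling out tameness for every other defect group requires the intricate case analysis of Bondarenko and Drozd, and at this point I would invoke their results directly rather than attempt to reconstruct the argument.
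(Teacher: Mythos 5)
The paper gives no proof of this theorem at all: it is quoted as a classical classification, with the work attributed to Bondarenko--Drozd, Gabriel, Donovan--Freislich and Nazarova, and the surrounding text only ever uses the statement. So there is no internal argument to compare against; what you have written is a reconstruction of the standard proofs from the cited literature, and the overall framework (Tits form plus reflection functors for (ii), relative projectivity and Higman's criterion for (i), deferral to Bondarenko--Drozd and Erdmann for the tame blocks in (ib)) is indeed the accepted route.

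Two of your steps, however, would fail as written. First, for the wild case of (ii) you propose to handle a connected graph that is neither Dynkin nor Euclidean ``by embedding a known wild quiver, such as the three-arrow Kronecker quiver, as a full subquiver.'' This cannot work in general: the star $T_{2,3,7}$ (the one-vertex extension of $\tilde{E}_8$) is a tree, so it contains no multiple arrows whatsoever, yet it is wild. The correct repair is either to enumerate the minimal non-Dynkin, non-Euclidean graphs --- each is a one-vertex extension of a Euclidean graph --- and prove each of these hypercritical cases wild directly (full-subquiver embeddings do propagate wildness upward, so this suffices), or to run the quadratic-form argument producing a sincere dimension vector $d$ with $q_Q(d)<0$. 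Second, in (ia) you claim that a non-cyclic $p$-group $P$ ``contains a copy of $C_p\times C_p$.'' This is false precisely for the generalised quaternion $2$-groups: every abelian subgroup of $Q_{2^n}$ is cyclic, since $Q_{2^n}$ has a unique involution. (And these groups genuinely matter here: they are of infinite, in fact tame, representation type by (ib), so your subgroup argument misses exactly them.) The standard fix is to pass to the Frattini quotient: $P/\Phi(P)$ is elementary abelian of rank $\geq 2$ whenever $P$ is non-cyclic, modules over $k[C_p\times C_p]$ inflate to $kP$ preserving indecomposability and non-isomorphism, and finiteness descends from $kG$ to $kP$ because every $kP$-module is a direct summand of the restriction of its induction to $G$. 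With these two substitutions your outline matches the proofs in the cited sources.
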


In this article, we are not only interested in the representation type of the whole module category, but also in the representation type of subcategories $\mathcal{C}\subseteq \modu A$. For this reason we define the following:

\begin{definition}
Let $A$ be a finite dimensional algebra. Let $\mathcal{C}\subseteq \modu A$ be a full subcategory closed under direct summands and direct sums.
\begin{enumerate}[(i)]
\item $\mathcal{C}$ is of \emphbf{finite representation type} if there are only finitely many indecomposable modules in $\mathcal{C}$ up to isomorphism.
\item $\mathcal{C}$ is \emphbf{tame} if it is not of finite representation type and for any dimension $d\geq 1$ there are finitely many $A$-$k[T]$-bimodules $N^{(1)},\dots,N^{(m_d)}$, which are free of finite rank as $k[T]$-modules, such that all but finitely many modules in $\mathcal{C}$ of dimension $d$ are isomorphic to modules of the form $N^{(i)}\otimes_{k[T]} k_\lambda$ for some simple $k[T]$-module $k_\lambda$.
\item $\mathcal{C}$ is \emphbf{wild} if there is an $A$-$k\langle X,Y\rangle$-module $N$, free of finite rank as a $k\langle X,Y\rangle$-module, such that $N\otimes_{k\langle X,Y\rangle}-\colon \modu k\langle X,Y\rangle\to \mathcal{C}$ preserves indecomposability and reflects isomorphisms.
\end{enumerate}
\end{definition}

Unlike for the whole module category $\modu A$, up to the authors knowledge, there is no general criterion for when such a subcategory admits a tame--wild dichotomy theorem. Also it seems that only very few examples are known. Therefore, the following question arises:

\begin{question}
What are good conditions on a subcategory $\mathcal{C}$ to admit a tame-wild dichotomy theorem?
\end{question}

We will later establish such a result for a particular type of subcategory $\mathcal{C}$, see Corollary \ref{stronglyfilteredtamewild}.

\section{Quasi-hereditary algebras}

Quasi-hereditary algebras were introduced by Scott \cite{Scott-Simulatingalgebraicgeometry-1987}, or in greater generality of highest weight categories by Cline, Parshall, and Scott \cite{ClineParshallScott-Finitedimensionalalgebras-1988}. In this section, we start by recalling two prototypical examples of quasi-hereditary algebras, namely blocks of BGG category $\mathcal{O}$ and Schur algebras and introduce the general theory only afterwards. For a general introduction to quasi-hereditary algebras, the reader is advised to look into \cite{DlabRingel-Themoduletheoreticalapproach-1992, KlucznikKoenig-Characteristictiltingmodules-1999} or \cite[Appendix]{Donkin-TheqSchuralgebra-1998}.

\subsection{Blocks of BGG category $\mathcal{O}$}

In this subsection we introduce one of the prototypical examples for quasi-hereditary algebras, blocks of Bernstein--Gelfand--Gelfand category $\mathcal{O}$. For a general introduction to BGG category $\mathcal{O}$ the reader can consult \cite{Humphreys-Representations-2008}, or (for the $\mathfrak{sl}_2$-case) \cite{Mazorchuk-Lectures-2010}.

Let $\mathfrak{g}$ be a finite dimensional semisimple complex Lie algebra. Fix a triangular decomposition $\mathfrak{g}=\mathfrak{n}^-\oplus \mathfrak{h}\oplus \mathfrak{n}^+$. Denote by $\mathfrak{b}=\mathfrak{n}^-\oplus \mathfrak{h}$ the corresponding Borel subalgebra. The reader not familiar with these concepts can just stick to the example $\mathfrak{g}=\mathfrak{sl}_n$, the $n\times n$-matrices of trace zero and fix $\mathfrak{h}, \mathfrak{n}^+, \mathfrak{n}^-$ to be the set of diagonal, lower triangular, upper triangular matrices, respectively. Let $U(\mathfrak{g})$ be the \emphbf{universal enveloping algebra} of $\mathfrak{g}$ which is a particular infinite dimensional associative algebra satisfying $\Mod \mathfrak{g}\cong \Mod U(\mathfrak{g})$.\\
It is a well-known fact that the category $\modu U(\mathfrak{g})$ is \emphbf{semisimple}, i.e. every finite dimensional module is a direct sum of simple modules. It is thus not of much interest to study $\modu U(\mathfrak{g})$ as an abelian category. On the other hand, the category $\Mod U(\mathfrak{g})$ of all (or even the finitely generated) $\mathfrak{g}$-modules is far too big to understand. There is not even a nice classification of all simple modules, see e.g. \cite[Chapter 6]{Mazorchuk-Lectures-2010}. Several different attempts have been made to define (abelian) categories in between $\modu U(\mathfrak{g})$ and $\Mod U(\mathfrak{g})$, which are at the same time easy enough to understand, but capture enough information of the representation theory of $\mathfrak{g}$. Bernstein--Gelfand--Gelfand category $\mathcal{O}$ turned out to be one of the most powerful of these approaches.

\begin{definition}
\emphbf{Bernstein--Gelfand--Gelfand category $\mathcal{O}$} is the full subcategory of $\Mod \mathfrak{g}$ whose objects $M$ satisfy the following three properties:
\begin{enumerate}[{($\mathcal{O}$}1{)}]
\item $M$ is finitely generated,
\item $M$ is a weight module, i.e. $M=\bigoplus_{\lambda\in \mathfrak{h}^*}M_\lambda$ as a $U(\mathfrak{h})$-module,
\item $M$ is locally $U(\mathfrak{n}^+)$-finite, i.e. for every $m\in M$ the cyclic module $U(\mathfrak{n}^+)m$ is finite-dimensional.
\end{enumerate}
\end{definition}

Typical examples of modules are the \emphbf{Verma modules} which are defined as $\Delta(\lambda):=U(\mathfrak{g})\otimes_{U(\mathfrak{b})} \mathbb{C}_\lambda$, where $\mathbb{C}_\lambda$ is the one-dimensional module on which $U(\mathfrak{n}^+)$ acts as $0$ and $\mathfrak{h}$ acts by the character $\lambda\in \mathfrak{h}^*$. It follows from the Poincar\'e--Birkhoff--Witt theorem that $U(\mathfrak{g})\cong U(\mathfrak{n}^-)\otimes_{\mathbb{C}} U(\mathfrak{h})\otimes_{\mathbb{C}} U(\mathfrak{n}^+)$. In particular, $U(\mathfrak{g})$ is free over $U(\mathfrak{b})$ and $\Delta(\lambda)\cong U(\mathfrak{n}^-)\otimes \mathbb{C}_\lambda$ as a $U(\mathfrak{n}^-)$-module. 

Although, like the Verma modules, modules in BGG category $\mathcal{O}$ tend to be infinite dimensional, one can understand this category as an abelian category by understanding the module categories of certain finite dimensional algebras by the following theorem already proved in \cite[Theorem 3]{BernsteinGelfandGelfand-Acertaincategory-1976}:

\begin{theorem}
Let $\mathfrak{g}$ be a finite dimensional complex semisimple Lie algebra. Then there is a block decomposition $\mathcal{O}=\bigoplus_{\chi} \mathcal{O}_\chi$ where $\chi$ ranges over the central characters of the form $\chi_\lambda$. For every $\chi$ there is a finite dimensional algebra $A_\chi$ such that $\mathcal{O}_\chi\cong \modu A_\chi$.
\end{theorem}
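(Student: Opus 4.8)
The plan is to establish the two assertions in turn: first the direct-sum decomposition of $\mathcal{O}$ coming from the action of the centre $Z := Z(U(\mathfrak{g}))$, and then the equivalence of each summand $\mathcal{O}_\chi$ with the module category of a finite dimensional algebra, realised through a projective generator. The starting point is the Harish--Chandra isomorphism, which identifies $Z$ with the algebra of invariants in $S(\mathfrak{h})$ under the dot action $w\cdot\lambda = w(\lambda+\rho)-\rho$ of the Weyl group $W$. Each weight $\lambda$ yields a central character $\chi_\lambda$, the scalar by which $Z$ acts on the Verma module $\Delta(\lambda)$, and Harish--Chandra's theorem gives the linkage principle $\chi_\lambda=\chi_\mu$ if and only if $\mu\in W\cdot\lambda$.

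To produce the block decomposition I would first show that every $M\in\mathcal{O}$ has finite length. By the PBW theorem $M$ is finitely generated over $U(\mathfrak{n}^-)$, its weight spaces are finite dimensional, and the set of weights lies in a finite union of downward cones; a standard highest-weight argument then bounds the number of composition factors. Finite length allows one to decompose $M$ into its generalised $Z$-eigenspaces, i.e. the submodules $M_\chi$ annihilated by a power of the maximal ideal of $Z$ corresponding to $\chi$; only finitely many $M_\chi$ are nonzero and $M=\bigoplus_\chi M_\chi$. Declaring $\mathcal{O}_\chi$ to be the full subcategory of modules concentrated in a single central character $\chi$ gives $\mathcal{O}=\bigoplus_\chi\mathcal{O}_\chi$. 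Every simple subquotient of a module in $\mathcal{O}_\chi$ is some $L(\mu)$ (the unique simple quotient of $\Delta(\mu)$) with $\chi_\mu=\chi$, and by the linkage principle such $\mu$ lie in a single dot-orbit; intersecting with the relevant weights shows each $\mathcal{O}_\chi$ has only finitely many isomorphism classes of simple objects.

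For the equivalence I would invoke the theory of projective generators, which requires first showing that $\mathcal{O}$ has enough projectives. The key steps are that $\Delta(\lambda)$ is projective in $\mathcal{O}$ when $\lambda$ is maximal in its linkage class, and that tensoring with a finite dimensional $\mathfrak{g}$-module $E$ preserves $\mathcal{O}$ and sends projectives to projectives, since this functor is exact and biadjoint to tensoring with the dual $E^*$. As every $L(\lambda)$ occurs in the head of a suitable $\Delta(\mu)\otimes E$, one obtains a projective cover $P(\lambda)\twoheadrightarrow L(\lambda)$ of finite length for each $\lambda$. Fixing a block, set $P_\chi=\bigoplus P(\mu)$, the sum ranging over the finitely many simples $L(\mu)$ in $\mathcal{O}_\chi$, and put $A_\chi=\End_{\mathcal{O}}(P_\chi)^{op}$; since each $P(\mu)$ has finite length and finite dimensional weight spaces, the Hom-spaces are finite dimensional and $A_\chi$ is a finite dimensional algebra.

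It remains to check that $F=\Hom_{\mathcal{O}}(P_\chi,-)\colon\mathcal{O}_\chi\to\modu A_\chi$ is an equivalence. As $P_\chi$ is projective the functor is exact, as $P_\chi$ is a generator it is faithful, and because $F$ sends $P_\chi$ to the regular module $A_\chi$ and carries projective presentations to projective presentations, the standard reconstruction argument for a length category with a projective generator shows $F$ is full and essentially surjective, hence an equivalence. I expect the construction of enough projectives, together with the finiteness statements (finite length of objects and finite dimensionality of Hom-spaces), to be the main obstacle; once these are secured, the passage to $\modu A_\chi$ is the formal Morita-type step.
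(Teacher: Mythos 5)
Your proof is correct: the paper does not prove this statement itself but cites it to Bernstein--Gelfand--Gelfand [Theorem 3], and your argument --- central-character decomposition via the Harish--Chandra isomorphism together with finite length, construction of enough projectives by tensoring dominant Verma modules with finite dimensional modules, and the Morita-type equivalence $\Hom_{\mathcal{O}}(P_\chi,-)\colon \mathcal{O}_\chi\to\modu A_\chi$ --- is exactly the standard proof of that cited result (as in Humphreys' book, which the paper also references for category $\mathcal{O}$). The steps you flag as the main obstacles (finite length, enough projectives, finite dimensionality of Hom-spaces) are indeed where the real work lies, and your sketches of them are the standard correct arguments.
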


Let us give the smallest possible example of such algebras, namely the cases $\mathfrak{g}=\mathfrak{sl}_2$. For different approaches on how to compute these examples (and also the next cases in difficulty) see \cite{Marko-Algebraassociated-2006, Stroppel-CategoryO-2003}.

\begin{example}\label{examplesl2}
For $\mathfrak{g}=\mathfrak{sl}_2$ there are two Morita equivalence classes of blocks for BGG-category $\mathcal{O}$. A block can either be Morita equivalent to $\mathbb{C}$ or to the path algebra of $\begin{tikzcd}1\arrow[yshift=0.5ex]{r}{a}&2\arrow[yshift=-0.5ex]{l}{b}\end{tikzcd}$  with relation $ba$.
\end{example}

The representation type of blocks of BGG category $\mathcal{O}$ has been determined independently by Futorny, Nakano, and Pollack \cite{FutornyNakanoPollack-Representationtype-2001}, and Br\"ustle, Koenig, and Mazorchuk \cite{BrustleKoenigMazorchuk-Thecoinvariantalgebra-2001} using different methods. To state this result we first need to recall notation. By the classification of finite dimensional semisimple complex Lie algebras, each $\mathfrak{g}$ comes equipped with a root system $\Phi$ of one of the Dynkin types $A$--$G$ and a Weyl group $W$. One can choose a set of simple roots which determines a set of positive roots $\Phi^+$ and a set of integral weights $X$ and dominant weights $X^+$. It follows from a result of Soergel, see \cite[Theorem 11]{Soergel-KategorieO-1990} (cf. \cite[Theorem 13.13]{Humphreys-Representations-2008}) that one can restrict attention from arbitrary weights $\lambda\in \mathfrak{h}^*$ to integral weights $\lambda\in X$ since every non-integral block is equivalent to an integral one. Every integral block contains a unique anti-dominant weight $\lambda$ (for the definition see e.g. \cite[p. 54]{Humphreys-Representations-2008}). Call $W_0$ the stabiliser subgroup of $W$ fixing $\lambda$. Also $W_0$ corresponds to a root system $\Phi_0$ of one of the Dynkin types $A$--$G$. 

\begin{theorem}
Let $\mathcal{O}_\chi$ be a block of BGG category $\mathcal{O}$. Let $A_\chi$ be the finite dimensional algebra such that $\modu A_\chi\cong \mathcal{O}_\chi$. 
\begin{enumerate}[(i)]
\item The algebra $A_\chi$ is representation-finite if and only if one of the following three cases occurs: $\Phi_0=\Phi$ and arbitrary $\mathfrak{g}$, $\mathfrak{g}=\mathfrak{sl}_2$ and $\Phi_0=\emptyset$, or $\mathfrak{g}=\mathfrak{sl}_3$ and $\Phi_0=A_1$.
\item The algebra $A_\chi$ is tame if and only if one of the following two cases occurs: $\mathfrak{g}=\mathfrak{sl}_4$ and $\Phi_0=A_2$, or $\Phi=B_2$ and $\Phi_0=A_1$.
\end{enumerate}
\end{theorem}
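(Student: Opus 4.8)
The plan is to classify representation type by reducing, via Main Theorem~1, to the representation type of the associated directed bocs $\mathfrak B$ whose module category realises $\mathcal F(\Delta)$, and then to apply the reduction algorithm for bocses (differential biquivers) of Section~4 together with the tame--wild dichotomy. Concretely, for each pair $(\mathfrak g,\Phi_0)$ I would first determine the quiver and relations of the basic algebra $A_\chi$ with $\modu A_\chi\cong\mathcal O_\chi$, using Soergel's combinatorial description and the known Kazhdan--Lusztig data so that the standard (Verma) modules $\Delta(\lambda)$ and their multiplicities are explicit. From this I would extract the directed bocs $\mathfrak B$ governing $\mathcal F(\Delta)$, record its differential biquiver, and then run the reduction algorithm to bring it into a normal form from which the trichotomy can be read off.

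For the representation-finite claim (i), the three cases should be handled separately. When $\Phi_0=\Phi$ the block is semisimple (the regular block collapses to the antidominant weight alone), so $\mathcal O_\chi\cong\modu\mathbb C^{\oplus r}$ and the statement is trivial; the cases $\mathfrak g=\mathfrak{sl}_2,\ \Phi_0=\emptyset$ and $\mathfrak g=\mathfrak{sl}_3,\ \Phi_0=A_1$ reduce to small explicit quivers (compare Example~\ref{examplesl2}) whose $\mathcal F(\Delta)$-bocses have only finitely many indecomposables, which I would verify by direct reduction. The converse direction---that these are the \emphbf{only} finite cases---is where the real work lies: I would need to show that every other integral block is not representation-finite, i.e.\ that its associated bocs admits a one-parameter family. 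Here I would argue by identifying, inside each remaining Weyl-group combinatorial type, a subquiver or a suitable idempotent subbocs that after reduction contains a tame or wild configuration, so that representation-finiteness already fails on that piece.

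For the tame claim (ii), I would carry out the bocs reduction in the two boundary cases $\mathfrak g=\mathfrak{sl}_4,\ \Phi_0=A_2$ and $\Phi=B_2,\ \Phi_0=A_1$ explicitly, transforming the differential biquiver until it matches a known tame normal form; the existence of Auslander--Reiten sequences for $\mathcal F(\Delta)$ (Ringel) guarantees the reduction is well-behaved. Ruling out the remaining possibilities---showing all larger blocks are wild---I would again do by exhibiting a wild subbocs after a finite number of reduction steps. The main obstacle I anticipate is not the positive (finite/tame) direction, where a finite computation suffices, but the exhaustive negative direction: one must control an a priori unbounded family of Weyl-group types $(\mathfrak g,\Phi_0)$ and prove wildness uniformly. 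I expect this to require a reduction to finitely many minimal wild configurations, so that any block strictly larger than those in (i) and (ii) contains one of them as a subbocs; assembling this finite list of obstructions and verifying that every excluded case dominates one of them is the delicate step.
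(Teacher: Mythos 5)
First, a point of comparison: the paper does not prove this theorem at all. It is quoted as a result of Futorny--Nakano--Pollack \cite{FutornyNakanoPollack-Representationtype-2001} and Br\"ustle--Koenig--Mazorchuk \cite{BrustleKoenigMazorchuk-Thecoinvariantalgebra-2001}, whose arguments rest on explicit quiver-and-relations presentations of the blocks (in the latter case via Soergel's description through the coinvariant algebra), not on the bocs machinery surveyed here. So your proposal has to stand on its own, and it contains a genuine gap.

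The gap is in your very first step. Main Theorem 1 identifies $\modu\mathfrak{B}$ with the subcategory $\mathcal{F}(\Delta)$, \emph{not} with $\modu A_\chi$; running the reduction algorithm on the directed bocs $\mathfrak{B}$ therefore determines the representation type of $\mathcal{F}(\Delta)$, which is a genuinely different invariant from the representation type of $A_\chi$ that the theorem is about. The paper's very next theorem exhibits the discrepancy concretely: for $\Phi=A_4$, $\Phi_0=A_3$ the algebra $A_\chi$ is wild while $\mathcal{F}(\Delta)$ is representation-finite, and for the two tame blocks of part (ii) ($\mathfrak{g}=\mathfrak{sl}_4$, $\Phi_0=A_2$ and $\Phi=B_2$, $\Phi_0=A_1$) the category $\mathcal{F}(\Delta)$ is again representation-finite, not tame. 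Consequently your strategy fails even in principle, in both directions: in case (ii) you would be hunting for a ``tame normal form'' of a bocs that is in fact of finite type, and to rule out $\Phi=A_4$, $\Phi_0=A_3$ you would need a wild sub-bocs inside a representation-finite category, which cannot exist. Only wildness passes upward from a subcategory to $\modu A_\chi$; finiteness or tameness of $\mathcal{F}(\Delta)$ never implies the corresponding statement for $A_\chi$, so even your ``easy'' verifications for $\mathfrak{sl}_2$ and $\mathfrak{sl}_3$ prove the wrong statement. If one insists on bocs techniques for $\modu A_\chi$ itself, the relevant device is the one from the subsection on passing to the hereditary situation, namely the differential biquiver attached to the category $\mathcal{P}^1(A_\chi)$ of projective presentations, not the $\mathcal{F}(\Delta)$-bocs. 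Note finally that the trichotomy you invoke is established in the paper (Corollary \ref{stronglyfilteredtamewild}) only for strongly quasi-hereditary algebras, and blocks of category $\mathcal{O}$ are in general not strongly quasi-hereditary, so even for $\mathcal{F}(\Delta)$ the dichotomy is not available in the generality your argument needs.
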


The paper of Br\"ustle, K\"onig, and Mazorchuk also contains a statement for the representation type of a  subcategory, namely the subcategory of all modules admitting a Verma flag. Here, a module $M$ is said to \emphbf{have a Verma flag} if there is a filtration $0=N_0\subset N_1\subset \dots\subset N_t=M$ for some $t$ with $N_{i+1}/N_i\cong \Delta(\lambda^{(i)})$ for some $\lambda^{(i)}$. Denote by $\mathcal{F}(\Delta)$ the corresponding subcategory of $\modu A_\chi$ for a block $\mathcal{O}_\chi$.

\begin{theorem}
Let $\mathcal{O}_\chi$ be a block of BGG category $\mathcal{O}$. Let $A_\chi$ be the finite dimensional algebra such that $\modu A_\chi\cong \mathcal{O}_\chi$. Let $\mathcal{F}(\Delta)$ be the subcategory of $\modu A_\chi$ corresponding to the objects of $\mathcal{O}_\chi$ which can be filtered by Verma modules.
\begin{enumerate}[(i)]
\item Apart from the cases where $A_\chi$ is re\-pre\-sen\-ta\-tion-fi\-nite, $\mathcal{F}(\Delta)$ is re\-pre\-sen\-ta\-tion-fi\-nite if  and only if $A_\chi$ is tame, or $\Phi=A_4$ and $\Phi_0=A_3$.
\item $\mathcal{F}(\Delta)$ is tame if and only if one of the following three cases occurs: $\Phi=A_1\times A_1$ and $\Phi_0=\emptyset$, $\Phi=A_5$ and $\Phi_0=A_4$, or $\Phi=B_3$ and $\Phi_0=B_2$.
\end{enumerate}
\end{theorem}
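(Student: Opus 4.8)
The plan is to reduce the entire classification to a finite problem about directed bocses by means of Main Theorem 1, and then to run the reduction algorithm. Since each block $\mathcal{O}_\chi\cong\modu A_\chi$ is quasi-hereditary with the Verma modules as standard modules, Main Theorem 1 furnishes a directed bocs $\mathfrak{B}=\mathfrak{B}_\chi$ together with an equivalence $\mathcal{F}(\Delta)\simeq\modu\mathfrak{B}_\chi$. Representation type is preserved by this equivalence, so it suffices to read off the representation type of $\modu\mathfrak{B}_\chi$. Note that blocks of $\mathcal{O}$ are \emph{not} strongly quasi-hereditary in general, since Verma modules may have projective dimension $>1$; the Proposition on strongly quasi-hereditary algebras therefore does not apply directly. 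However $\mathfrak{B}_\chi$ is a directed bocs, and the reduction algorithm of Section 4 still applies and terminates, yielding one of the three outcomes for $\modu\mathfrak{B}_\chi$.

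First I would make $\mathfrak{B}_\chi$ explicit as a differential biquiver. Its vertices are the weights of the block, that is the orbit $W\cdot\lambda$, equivalently the coset space $W/W_0$ with $W_0=\mathrm{Stab}_W(\lambda)$ for the antidominant weight $\lambda$; thus $\mathcal{F}(\Delta)$ depends on the pair $(\Phi,\Phi_0)$ only through the Bruhat order on this coset space. The solid arrows $i\to j$ are counted by $\dim D\Ext^1_{\mathcal{O}}(\Delta(\lambda_i),\Delta(\lambda_j))$, while the dotted arrows and the differential are governed by $\Ext^2$ and by the $A_\infty$-structure on the self-extension algebra of $\bigoplus_i\Delta(\lambda_i)$, exactly as in Keller's reconstruction (Theorem \ref{Kellerreconstruction}). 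All of this input is supplied by the well-understood homological structure of category $\mathcal{O}$: the Hom- and Ext-spaces between Verma modules are controlled by the Bruhat order and by Kazhdan--Lusztig combinatorics, which is what makes the biquiver computable uniformly across blocks.

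Next I would apply the reduction algorithm for differential biquivers to each $\mathfrak{B}_\chi$, reducing it to a normal form whose representation type can be compared against the standard lists of representation-finite, tame, and minimal wild bocses. The decisive structural observation is a monotonicity principle: an inclusion of coset posets coming from a parabolic sub-situation induces a sub-biquiver of $\mathfrak{B}_\chi$ whose module category embeds into $\mathcal{F}(\Delta)$, so that wildness (and failure of representation-finiteness) is inherited upward. The problem then becomes threefold: isolate the minimal wild and minimal tame configurations, verify these once by explicit reduction, and show that every block not appearing in the lists of (i) and (ii) contains such a configuration.

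The main obstacle is precisely this uniform step, because the pairs $(\Phi,\Phi_0)$ form an infinite family and cannot simply be enumerated. The hard part will be to pin down a small set of universal wild sub-biquivers — I expect these to come from rank-two situations and from short Bruhat intervals whose reduced bocs already supports a wild subspace problem — and to prove that one of them embeds whenever $(\Phi,\Phi_0)$ lies strictly beyond the boundary cases $A_4/A_3$, $A_5/A_4$, $B_3/B_2$ and $A_1\times A_1/\emptyset$. The remaining, genuinely finite, part is to carry out the reduction by hand for each boundary block and for the small representation-finite blocks ($\Phi_0=\Phi$, $\mathfrak{sl}_2$ with $\Phi_0=\emptyset$, $\mathfrak{sl}_3$ with $\Phi_0=A_1$, and the tame-$A_\chi$ cases $\mathfrak{sl}_4/A_2$ and $B_2/A_1$), confirming in each that the reduced bocs is representation-finite or tame as asserted. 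A secondary subtlety to keep in mind throughout is that representation type here refers to a \emph{subcategory} of $\modu A_\chi$, so one must check that both the bocs equivalence and the sub-biquiver embeddings respect the definitions of tame and wild for subcategories given in Section 2.
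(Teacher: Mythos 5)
The paper does not prove this theorem at all: it is quoted as a result of Br\"ustle, Koenig and Mazorchuk \cite{BrustleKoenigMazorchuk-Thecoinvariantalgebra-2001}, whose actual proof proceeds via Soergel's functor and the coinvariant algebra, not via bocses. So there is no proof in the paper to compare against; what must be judged is whether your plan would constitute a proof, and it would not, for the following reason.

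The foundational step of your argument is circular with respect to an open problem that the paper itself flags. You correctly note that blocks of $\mathcal{O}$ are in general not strongly quasi-hereditary, so Corollary \ref{stronglyfilteredtamewild} does not apply; but you then assert that ``the reduction algorithm of Section 4 still applies and terminates, yielding one of the three outcomes for $\modu\mathfrak{B}_\chi$.'' This is precisely what is not available. The full reduction algorithm (including partial loop reduction, which drives the tame--wild dichotomy) exists only for \emph{free} bocses, i.e.\ differential biquivers with hereditary underlying algebra $B$; when the standard modules have projective dimension $>1$ the algebra $B$ carries relations, and the paper states explicitly that the Bautista--Salmer\'on version with relations is only available in the representation-finite case and that ``there is also no version of the full version of the reduction algorithm for bocses where $B$ is non-hereditary.'' Indeed the paper says outright that the trichotomy for $\mathcal{F}(\Delta)$ of a general quasi-hereditary algebra ``is expected, but not known.'' Your plan therefore presupposes exactly the statement whose absence makes this theorem nontrivial. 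Beyond this, two further steps are gaps rather than arguments: the claim that the differential biquiver of $\mathfrak{B}_\chi$ is ``computable uniformly across blocks'' from Kazhdan--Lusztig combinatorics conceals the genuinely hard problem of determining the $A_\infty$-structure on $\Ext^*_{\mathcal{O}}(\Delta,\Delta)$ (known only in special cases, cf.\ \cite{KlamtStroppel-OntheExtalgebras-2012}); and the decisive uniform step --- exhibiting wild sub-biquivers in every block outside the listed boundary cases --- is stated as an expectation (``I expect these to come from rank-two situations\ldots'') rather than proved, and your proposed monotonicity principle for parabolic sub-situations is itself unjustified at the level of bocs module categories.
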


Let us close this section by the warning that the reader should keep in mind that although BGG category $\mathcal{O}$ is a very interesting category, it is quite different from $\Mod\mathfrak{g}$. The following two well-known observations give a hint in this direction:

\begin{remark}
\begin{enumerate}[(i)]
\item The following example is taken from \cite[Exercise 3.1]{Humphreys-Representations-2008}. BGG category $\mathcal{O}$ is not extension closed in $\Mod\mathfrak{g}$. Consider the module $U(\mathfrak{g})\otimes_{U(\mathfrak{b})} \mathbb{C}^2$ where $\mathbb{C}^2$ is given a $U(\mathfrak{h})$ action by letting $h\in \mathfrak{h}$ act by the Jordan block $\begin{pmatrix}\lambda &1\\0&\lambda\end{pmatrix}$. It is a self-extension of the Verma module $\Delta(\lambda)$, which is not contained in BGG category $\mathcal{O}$.
\item No projective $U(\mathfrak{g})$-module is contained in BGG category $\mathcal{O}$. This easily follows from the PBW theorem since the restriction of a projective $U(\mathfrak{g})$-module to $U(\mathfrak{h})$ is projective. In contrast, the restriction of a module in BGG category $\mathcal{O}$ to $U(\mathfrak{h})$ is a direct sum of one-dimensional modules.\footnote{The author wants to thank Jeremy Rickard for communicating this nice argument to him via http://math.stackexchange.com/q/1418241}
\end{enumerate}
One approach to come a bit closer to $\Mod\mathfrak{g}$ is to consider fat category $\mathcal{O}^{[n]}$, where $(\mathcal{O}2)$ is replaced by a decomposition into modules where $U(\mathfrak{h})$ acts via a Jordan block of size smaller than or equal to $n$.
\end{remark}

\subsection{Schur algebras and $q$-Schur algebras}

Another prominent example in the theory of quasi-hereditary algebras is the example of the Schur algebra, which is closely related to the symmetric group. We also include its cousin, the $q$-Schur algebra, responding to a question of Stephen Donkin at the AMS-EMS-SPM joint meeting 2015 in Porto. For a general introduction to Schur algebras, see e.g. \cite{Green-Polynomialrepresentations-2007}. For the $q$-Schur algebra, see e.g. \cite{Donkin-TheqSchuralgebra-1998}.

Let $V=k^n$ be an $n$-dimensional vector space. Then, $V^{\otimes d}$ becomes a $k\GL(n)$-$k\mathfrak{S}_d$-bimodule. The left action of the general linear group $\GL(n)$ is the diagonal action given by $g\cdot (v_1\otimes\cdots \otimes v_d)=gv_1\otimes \cdots\otimes gv_d$ and the right action of the symmetric group $\mathfrak{S}_d$ is given by permuting the tensor factors as $(v_1\otimes\cdots \otimes v_d)\cdot \sigma=v_{\sigma^{-1}(1)}\otimes \cdots\otimes v_{\sigma^{-1}(d)}$. Schur--Weyl duality asserts that the two maps $k\GL(n)\to \End_{k\mathfrak{S}_d}(V^{\otimes d})$ and $k\mathfrak{S}_d\to \End_{k\GL(n)}(V^{\otimes d})$ induced by the corresponding representations  are surjective. This motivates the definition of the following algebra, which connects the representation theories of the symmetric group and the general linear group.

\begin{definition}
Let $\mathfrak{S}_d$ be the symmetric group on $d$ letters. Let $V=k^n$ be an $n$-dimensional vector space. Then, the \emphbf{Schur algebra} $S(n,d)$ is defined as $S(n,d):=\End_{k\mathfrak{S}_d}(V^{\otimes d})$, 
\end{definition}

There is a strong relationship between the representation theory of the Schur algebra and the representation theory of the symmetric group if the characteristic is not too small. This relationship was already observed by Schur in 1901.

\begin{theorem}
Let $k$ be a field of characteristic $p\geq 0$. Let $S(n,d)$ be the Schur algebra. Then, there is an idempotent $e\in S(n,d)$ with $eS(n,d)e\cong k\mathfrak{S}_d$. The corresponding functor $F_e\colon \modu S(n,d)\to \modu k\mathfrak{S}_d, M\mapsto eM$, sometimes called the \emphbf{Schur functor}, is an equivalence for $p=0$ or $p>d$. In this case, these categories are semi-simple.
\end{theorem}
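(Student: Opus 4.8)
The plan is to produce the idempotent $e$ from the weight-space decomposition of $V^{\otimes d}$ and to read off the corner algebra directly. Writing $v_1,\dots,v_n$ for the standard basis of $V$, the pure tensors $v_{i_1}\otimes\cdots\otimes v_{i_d}$ are eigenvectors for the diagonal torus of $\GL(n)$, and sorting them by the composition $\lambda=(\lambda_1,\dots,\lambda_n)$ of $d$ recording the multiplicity of each $v_i$ gives a decomposition $V^{\otimes d}=\bigoplus_\lambda (V^{\otimes d})_\lambda$. Since the right $\mathfrak{S}_d$-action merely permutes tensor factors, it preserves every weight space, so the projections $e_\lambda$ onto $(V^{\otimes d})_\lambda$ are $k\mathfrak{S}_d$-linear, hence lie in $S(n,d)=\End_{k\mathfrak{S}_d}(V^{\otimes d})$, and form a complete set of orthogonal idempotents. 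Assuming $n\ge d$, as is implicit in the statement and standard for the Schur functor, I set $e:=e_\omega$ for the weight $\omega=(1^d,0^{n-d})$.

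To identify $eS(n,d)e$ I would use the canonical isomorphism $e\End_{k\mathfrak{S}_d}(V^{\otimes d})e\cong\End_{k\mathfrak{S}_d}(eV^{\otimes d})$ valid for any idempotent endomorphism. The space $eV^{\otimes d}=(V^{\otimes d})_\omega$ has a basis indexed by those tensors in which each of $v_1,\dots,v_d$ occurs exactly once; these indices are pairwise distinct, so $\mathfrak{S}_d$ permutes the basis freely and transitively, and $(V^{\otimes d})_\omega$ is therefore free of rank one as a right $k\mathfrak{S}_d$-module. Consequently $eS(n,d)e\cong\End_{k\mathfrak{S}_d}(k\mathfrak{S}_d)\cong k\mathfrak{S}_d$, which settles the first claim.

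For $p=0$ or $p>d$ we have $p\nmid d!=|\mathfrak{S}_d|$, so $k\mathfrak{S}_d$ is semisimple by Maschke's theorem; then $V^{\otimes d}$ is a semisimple $k\mathfrak{S}_d$-module and its endomorphism ring $S(n,d)$ is a product of matrix algebras over $k$, hence also semisimple. To see that $F_e=e(-)$ is an equivalence I would invoke the standard criterion that $e(-)\colon\modu S(n,d)\to\modu eS(n,d)e$ is an equivalence precisely when $e$ is full, i.e. $S(n,d)\,e\,S(n,d)=S(n,d)$; over a semisimple algebra this amounts to $e$ annihilating no simple module. Schur--Weyl duality supplies the double-centralizer decomposition $V^{\otimes d}\cong\bigoplus_\lambda L(\lambda)\otimes_k S^\lambda$, with $\lambda$ ranging over partitions of $d$ with at most $n$ rows, the $L(\lambda)$ the simple $S(n,d)$-modules and the $S^\lambda$ the simple $k\mathfrak{S}_d$-modules. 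Applying $e$ yields $\bigoplus_\lambda(eL(\lambda))\otimes_k S^\lambda\cong(V^{\otimes d})_\omega\cong k\mathfrak{S}_d$, in which each $S^\lambda$ occurs with multiplicity $\dim_k S^\lambda>0$; comparing multiplicities forces $eL(\lambda)\ne 0$ for every $\lambda$, and since $n\ge d$ every partition of $d$ actually occurs, so no simple is killed and the equivalence follows.

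The main obstacle is precisely this fullness step $S(n,d)\,e\,S(n,d)=S(n,d)$: the weight-space bookkeeping of the first two paragraphs is routine, but controlling which simple modules survive the Schur functor genuinely relies on the double-centralizer property of Schur--Weyl duality together with the hypothesis $n\ge d$ (which guarantees that all partitions of $d$ are realized in $V^{\otimes d}$). Outside the semisimple range this is the delicate heart of Green's theory of the Schur functor; here semisimplicity collapses it into the clean multiplicity count above.
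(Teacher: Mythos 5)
Your proposal is correct, but there is nothing in the paper to compare it against: this theorem is recalled in the survey as a classical fact (attributed to Schur, with Green's \emph{Polynomial representations of $\GL_n$} and Donkin's book as the standing references for that section) and no proof is given there. Your argument is essentially the standard one from Green's treatment: the weight idempotents $e_\lambda$ lie in $\End_{k\mathfrak{S}_d}(V^{\otimes d})$ because the place-permutation action preserves weight spaces; the corner algebra is computed from $e\End_{k\mathfrak{S}_d}(V^{\otimes d})e\cong\End_{k\mathfrak{S}_d}\bigl((V^{\otimes d})_\omega\bigr)$ together with the observation that $(V^{\otimes d})_\omega$ is the free right $k\mathfrak{S}_d$-module of rank one; semisimplicity follows from Maschke plus the fact that the endomorphism algebra of a semisimple module is semisimple; and the equivalence reduces, via the Morita criterion $S(n,d)\,e\,S(n,d)=S(n,d)$, to checking that $e$ kills no simple, which your multiplicity count against the regular representation settles cleanly (note that for this last step you do not even need to know that the simples of $S(n,d)$ are indexed by \emph{all} partitions with at most $n$ rows; it suffices that every simple is a multiplicity space of some $S^\lambda$ occurring in $V^{\otimes d}$, and the comparison with $k\mathfrak{S}_d$ then forces $eL(\lambda)\neq 0$ for each of these). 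One genuinely valuable point you make explicit is the hypothesis $n\geq d$: the paper's statement omits it, but it is needed both for the weight $\omega=(1^d,0^{n-d})$ to exist and for the equivalence to be possible at all, since for $n<d$ the two semisimple algebras have different numbers of simple modules and cannot be Morita equivalent.
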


In general, $F_e$ cannot be an equivalence, since $\mathfrak{S}_d$ is self-injective, hence of infinite global dimension if it is not semisimple, while $S(n,d)$ is always of finite global dimension. Recently, Hemmer and Nakano proved that in almost all cases there is an equivalence between two subcategories of these two algebras given by the Schur functor. The subcategory of the category of modules over the Schur algebra is similar to the subcategory of modules with a Verma flag introduced in the previous subsection. To define the analogues of the Verma modules for the Schur algebra we first introduce an analogue of the universal enveloping algebra of the Borel subalgebra, called the Borel Schur algebra. For more information on Borel Schur algebras, see e.g. \cite{Green-Oncertainsubalgebras-1990, Woodcock-BorelSchuralgebras-1994}

\begin{definition}
The algebra $S^+(n,d)$, defined as the subalgebra of $S(n,d)$ given by the images of the upper-triangular matrices under the map $k\GL(n,d)\to \End_{k\mathfrak{S}_d}(V^{\otimes d})$ is called the \emphbf{Borel Schur algebra}.
\end{definition}

The following is an analogue of the Borel--Weil theorem proved by Green, see \cite[(1.3)]{Green-Oncertainsubalgebras-1990}.

\begin{theorem}
Let $S(n,d)$ be a Schur algebra, $S^+(n,d)$ its Borel Schur subalgebra. Then, the following statements hold:
\begin{enumerate}[(i)]
\item The simple modules for $S^+(n,d)$ are indexed by the set of unordered partitions of $d$ into at most $n$ parts. All of them are $1$-dimensional, call them $k_\lambda$ for $\lambda$ an unordered partition.
\item The induced module $S(n,d)\otimes_{S^+(n,d)} k_\lambda$ is non-zero if and only if $\lambda$ is an ordered partition.
\end{enumerate}
\end{theorem}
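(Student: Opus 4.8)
The plan is to place both algebras inside Green's combinatorial model of the Schur algebra and to read the two assertions off the resulting triangular structure. Recall that $S(n,d)$ has a $k$-basis $\{\xi_{i,j}\}$ indexed by the $\mathfrak{S}_d$-orbits on pairs of multi-indices $i,j\colon\{1,\dots,d\}\to\{1,\dots,n\}$ under the diagonal action, where $\xi_{i,j}$ is dual to the coordinate monomial $\prod_{r} c_{i_r j_r}$ spanning the coefficient space $A(n,d)$. To a multi-index $i$ one attaches its content $\mathrm{wt}(i)$, the composition of $d$ into $n$ parts recording how often each value is attained; write $\Lambda(n,d)$ for the set of such contents. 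The elements $\xi_\mu:=\xi_{i,i}$ with $\mathrm{wt}(i)=\mu$ form a complete set of orthogonal idempotents summing to $1$, with $\xi_\mu\,\xi_{i,j}\,\xi_\nu=\delta_{\mu,\mathrm{wt}(i)}\,\delta_{\nu,\mathrm{wt}(j)}\,\xi_{i,j}$. First I would identify $S^+(n,d)$ as the span of those $\xi_{i,j}$ admitting a representative with $i_r\le j_r$ for all $r$; these are exactly the basis elements whose dual monomial survives in the quotient $A(n,d)\to A(n,d)/(c_{st}:s>t)$ cutting out the Borel, which matches the definition of $S^+(n,d)$ as the image of the upper-triangular matrices.

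Part (i) then follows from a triangularity observation. If $\xi_{i,j}\in S^+(n,d)$ with $i_r\le j_r$ for all $r$, then comparing partial sums of contents gives $\mathrm{wt}(i)\succeq\mathrm{wt}(j)$ in the dominance order, with equality forcing $i=j$. Hence $\xi_\mu S^+(n,d)\xi_\nu\neq 0$ only if $\mu\succeq\nu$, and the diagonal corners are $\xi_\lambda S^+(n,d)\xi_\lambda=k\xi_\lambda$. Consequently each $\xi_\lambda$ is primitive, $S^+(n,d)$ is basic, its radical is $\bigoplus_{\mu\succ\nu}\xi_\mu S^+(n,d)\xi_\nu$ (nilpotent, since the dominance order has no infinite descending chains), and $S^+(n,d)/\rad S^+(n,d)\cong k^{\lvert\Lambda(n,d)\rvert}$. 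This produces exactly one one-dimensional simple $k_\lambda$ for each $\lambda\in\Lambda(n,d)$, which is the content of part (i).

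For part (ii) I would first reduce to the fate of a single generator. Since $k_\lambda$ is concentrated in weight $\lambda$, the module $S(n,d)\otimes_{S^+(n,d)}k_\lambda$ is cyclic on $\xi_\lambda\otimes 1$ and equals $S(n,d)\xi_\lambda/S(n,d)(\rad S^+(n,d))\xi_\lambda$; thus it is non-zero if and only if $\xi_\lambda\notin S(n,d)\,(\rad S^+(n,d))\,\xi_\lambda$. For dominant $\lambda$ I would identify this induced module with the Weyl (Schur) module $\Delta(\lambda)$, which is known to be non-zero with one-dimensional $\lambda$-weight space spanned by the image of $\xi_\lambda\otimes 1$; equivalently, the highest-weight corner is untouched by any composite of a positive-root raising element followed by a lowering element, so the generator survives. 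For non-dominant $\lambda$, say $\lambda_s<\lambda_{s+1}$, the elements $f_s^{(r)}e_s^{(r)}$ with $r\ge 1$ all lie in $S(n,d)(\rad S^+(n,d))\xi_\lambda$ (the right factor $e_s^{(r)}$ raises the content strictly in the dominance order, hence lies in $\rad S^+(n,d)$), and the claim is that $\xi_\lambda$ lies in their span; this forces $\xi_\lambda\otimes 1=0$, so the induced module vanishes.

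I expect the genuine obstacle to be this last vanishing. It reduces, slot by slot, to a computation in the $\GL_2$-Schur algebra, where one must show that the raising-then-lowering elements $f_s^{(r)}e_s^{(r)}$ span the idempotent $\xi_\lambda$ precisely when the weight is non-dominant in the two relevant coordinates. A single divided power does not suffice, and a soft argument on the support of the module fails, since the weights below $\lambda$ in the dominance order may well include dominant ones; one really needs the whole family of divided powers, and in positive characteristic their integral span must be controlled carefully. This is exactly the point at which Green's straightening formula for bideterminants enters, and once the local $\GL_2$-statement is established the global relation, and hence both directions of (ii), follow formally.
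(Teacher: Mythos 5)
First, a point of comparison: the survey contains no proof of this theorem at all --- it is quoted from Green \cite[(1.3)]{Green-Oncertainsubalgebras-1990} --- so your proposal can only be judged on its own merits, not against an argument in the paper. On those merits, your part (i) is correct and complete: identifying $S^+(n,d)$ with the span of the $\xi_{i,j}$ admitting a representative with $i_r\le j_r$, observing that this forces $\mathrm{wt}(i)\succeq\mathrm{wt}(j)$ with equality only if $i=j$, and deducing the triangular structure (one-dimensional diagonal corners, nilpotent off-diagonal ideal equal to the radical) does produce exactly one one-dimensional simple $k_\lambda$ per weight $\lambda\in\Lambda(n,d)$. This is essentially Green's own triangularity argument.

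Part (ii) has two genuine holes. The smaller one: for dominant $\lambda$ you conclude non-vanishing by identifying the induced module ``with the Weyl module, which is known to be non-zero.'' In the present paper $\Delta(\lambda)$ is \emph{defined} to be $S(n,d)\otimes_{S^+(n,d)}k_\lambda$, so as written this is circular; you need an independent witness. One is cheap: the column antisymmetrizer $v_\lambda=\bigotimes_c\bigl(e_1\wedge\dots\wedge e_{\lambda'_c}\bigr)\in V^{\otimes d}$ is a non-zero vector over any field spanning an $S^+(n,d)$-submodule isomorphic to $k_\lambda$, so $\Hom_{S^+(n,d)}(k_\lambda,V^{\otimes d})\neq 0$, and Frobenius reciprocity then forces $S(n,d)\otimes_{S^+(n,d)}k_\lambda\neq 0$. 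The larger hole is the one you flag yourself: the entire vanishing direction rests on the claim that $\xi_\lambda$ lies in the $k$-span of the elements $f_s^{(r)}e_s^{(r)}\xi_\lambda$, $r\geq 1$, whenever $\lambda_s<\lambda_{s+1}$, and you do not prove it --- you only predict that it requires the straightening formula. Since that claim \emph{is} the Borel--Weil-type vanishing, the proposal as it stands establishes (i) and only one implication of (ii).

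The good news is that the missing claim is true and needs neither straightening nor any delicate control of integral forms; it is a finite linear-algebra computation. Under the isomorphism $\xi_\lambda S(n,d)\xi_\lambda\cong\End_{k\mathfrak{S}_d}(M^\lambda)$, let $\theta_j$ denote the double-coset operator exchanging $j$ entries between the values $s$ and $s+1$, so that $\theta_0=\xi_\lambda$. Writing $a=\lambda_s$, $b=\lambda_{s+1}$, a direct count of the divided-power action gives $f_s^{(r)}e_s^{(r)}\xi_\lambda=\sum_{j\geq 0}\binom{b-j}{r-j}\,\theta_j$. Now let $r$ run through $b-a,\dots,b$ (all $\geq 1$ precisely because $a<b$); using $\binom{b-j}{r-j}=\binom{b-j}{b-r}$ and substituting $q=b-r$, the coefficient matrix becomes $\bigl(\binom{b-j}{q}\bigr)_{0\le q,j\le a}$, whose determinant equals $\prod_{i<j}(x_j-x_i)/\prod_q q!$ evaluated at $x_j=b-j$, i.e. $\pm 1$. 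Being invertible over every field, these elements span a space containing $\theta_0=\xi_\lambda$; since each $e_s^{(r)}\xi_\lambda$ lies in $(\rad S^+(n,d))\xi_\lambda$ by your part (i), this yields $\xi_\lambda\in S(n,d)(\rad S^+(n,d))\xi_\lambda$ and hence the vanishing. With these two repairs your outline becomes a complete --- and pleasantly elementary --- proof.
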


For an ordered partition call $\Delta(\lambda):=S(n,d)\otimes_{S^+(n,d)} k_\lambda$ the \emphbf{Weyl module} corresponding to $\lambda$. The image of $\Delta(\lambda)$ under the Schur functor is called the \emphbf{Specht module} associated to $\lambda$.

\begin{theorem}[{\cite[Theorem 3.8.1]{HemmerNakano-Spechtfiltrations-2004}}]
Let $k$ be a field of characteristic $p\neq 2,3$. Let $S(n,d)$ be a Schur algebra. Denote by $\mathcal{F}(\Delta)\subset \modu S(n,d)$ the full subcategory of all modules having a filtration by Weyl modules and by $\mathcal{F}(e\Delta)\subset \modu \mathfrak{S}_d$ the full subcategory of all modules having a filtration by Specht modules. Then the Schur functor restricts to an equivalence between $\mathcal{F}(\Delta)$ and $\mathcal{F}(e\Delta)$.
\end{theorem}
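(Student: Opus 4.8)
The plan is to realise the Schur functor $F_e=e(-)$ and a suitable adjoint as mutually inverse equivalences. Write $S:=S(n,d)$. First I would dispose of the easy direction: since $F_e$ is given by multiplication with the idempotent $e$ it is exact, and by the definition recalled above $F_e(\Delta(\lambda))=e\Delta(\lambda)$ is the Specht module associated to $\lambda$. Applying the exact functor $F_e$ to a Weyl filtration of an object $M\in\mathcal{F}(\Delta)$ therefore yields a Specht filtration of $eM$, so $F_e$ restricts to a functor $\mathcal{F}(\Delta)\to\mathcal{F}(e\Delta)$. It remains to produce a quasi-inverse.

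The natural candidate is the left adjoint $L:=Se\otimes_{k\mathfrak{S}_d}-$ of $F_e$, where $Se$ is regarded as an $(S,k\mathfrak{S}_d)$-bimodule via $eSe\cong k\mathfrak{S}_d$. One half of the adjunction is essentially formal and needs no hypothesis on the characteristic: since $e(Se\otimes_{k\mathfrak{S}_d}N)\cong eSe\otimes_{k\mathfrak{S}_d}N\cong k\mathfrak{S}_d\otimes_{k\mathfrak{S}_d}N\cong N$, one has $F_eL\cong\id$. Thus the theorem reduces to two claims: that $L$ carries $\mathcal{F}(e\Delta)$ into $\mathcal{F}(\Delta)$, and that the counit $\varepsilon\colon LF_e\to\id$ is an isomorphism on $\mathcal{F}(\Delta)$.

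The homological heart of the matter, and the step I expect to be the main obstacle, is to verify two properties of $L$ on Specht modules:
$$\operatorname{Tor}^{k\mathfrak{S}_d}_{>0}(Se,e\Delta(\lambda))=0\qquad\text{and}\qquad \varepsilon_{\Delta(\lambda)}\colon Se\otimes_{k\mathfrak{S}_d}e\Delta(\lambda)\longrightarrow\Delta(\lambda)\ \text{is an isomorphism.}$$
This is equivalent to asking that $F_e$ induce isomorphisms on $\Hom$- and $\Ext^1$-groups between Weyl modules and their Specht images, and it is precisely where the hypothesis $p\neq 2,3$ is indispensable: in characteristics $2$ and $3$ there are extensions between Specht modules (for instance self-extensions) that do not lift to extensions of Weyl modules over $S$, so that the comparison map fails to be bijective and the Specht-filtration multiplicities are no longer well defined. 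Establishing this comparison is the genuine content imported from the cohomology of the symmetric and general linear groups, and I would isolate it as the key lemma rather than grind through it here.

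Granting these two properties, the remainder is a routine dévissage. The $\operatorname{Tor}$-vanishing propagates along Specht filtrations, so $L$ is exact on $\mathcal{F}(e\Delta)$; combined with the isomorphism $\varepsilon_{\Delta(\lambda)}$ it sends a Specht filtration of $N\in\mathcal{F}(e\Delta)$ to a Weyl filtration of $LN$, giving $L\colon\mathcal{F}(e\Delta)\to\mathcal{F}(\Delta)$. Finally, $LF_e$ is exact on $\mathcal{F}(\Delta)$, being the composite of $F_e$, which is exact, with $L$, which is exact on the image $\mathcal{F}(e\Delta)$; and the counit $\varepsilon$ is an isomorphism on each Weyl module by the second property. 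An induction on the length of a Weyl filtration, using the five lemma applied to the short exact sequence $0\to\Delta(\lambda)\to M\to M'\to 0$ obtained by splitting off the bottom standard subquotient, then shows that $\varepsilon_M$ is an isomorphism for every $M\in\mathcal{F}(\Delta)$. Together with $F_eL\cong\id$ this exhibits $F_e$ and $L$ as mutually inverse equivalences between $\mathcal{F}(\Delta)$ and $\mathcal{F}(e\Delta)$.
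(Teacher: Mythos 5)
The paper offers no proof for you to be compared against --- it quotes this result from Hemmer--Nakano --- so your argument must stand on its own, and its formal part does: exactness of $e(-)$ on Weyl filtrations, the isomorphism $F_eL\cong\id$ for the left adjoint $L=Se\otimes_{k\mathfrak{S}_d}-$, and the d\'evissage with the five lemma are all correct. The genuine gap is that the entire mathematical content of the theorem has been pushed into the ``key lemma'' that you explicitly decline to prove. That lemma is not a quotable background fact: the assertion that the counit is an isomorphism on each $\Delta(\lambda)$ together with the Tor-vanishing is, modulo the routine d\'evissage you do carry out, equivalent to the theorem itself; it is the only point where the hypothesis $p\neq 2,3$ enters (and the theorem is genuinely false for $p=2,3$); and establishing it is exactly what occupies Hemmer--Nakano, namely the comparison of $\Hom$ and $\Ext^1$ across the Schur functor via symmetric-group cohomology. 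As written, your proposal has the same logical standing as citing the theorem.

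Moreover, the key lemma is false as stated. As an $(S,k\mathfrak{S}_d)$-bimodule one has $Se\cong V^{\otimes d}$, and as a right $k\mathfrak{S}_d$-module $V^{\otimes d}$ decomposes into Young permutation modules $M^\mu=\operatorname{Ind}_{\mathfrak{S}_\mu}^{\mathfrak{S}_d}k$ indexed by contents $\mu$; Shapiro's lemma identifies $\operatorname{Tor}^{k\mathfrak{S}_d}_i(M^\mu,N)$ with the group homology $H_i(\mathfrak{S}_\mu,N)$. Taking $\lambda=(d)$, so that $e\Delta(\lambda)$ is the trivial module, and looking at the summand $M^{(d)}=k$, you find $H_i(\mathfrak{S}_d,k)$ sitting inside $\operatorname{Tor}^{k\mathfrak{S}_d}_i(Se,e\Delta(\lambda))$, and this is non-zero in degree $i=2p-3>1$ whenever $d\geq p$ (the case $d<p$ being semisimple and vacuous). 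So $\operatorname{Tor}_{>0}$-vanishing fails in the relevant range for every $p$. What is true for $p\geq 5$, and what your d\'evissage actually needs, is only the $\operatorname{Tor}_1$-statement, i.e.\ $H_1(\mathfrak{S}_\mu,e\Delta(\lambda))=0$ for all $\mu$ and $\lambda$; this low-degree vanishing is precisely the hard symmetric-group cohomology input that Hemmer--Nakano establish and that breaks down for $p=2,3$. Two further remarks: Hemmer--Nakano run the argument with the right adjoint $\Hom_{k\mathfrak{S}_d}(eS,-)$ rather than your left adjoint, which is a dual but equally viable route; and if you weaken your lemma to its $\operatorname{Tor}_1$ form and actually prove that, your outline does become a complete proof.
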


In particular, more detailed information on the representation theory of the Schur algebra, in many cases, also tells us something about the representation theory of the symmetric group.

Returning to the full module category, also the Schur algebras have been classified according to their representation type. The following theorem summarises the work of many people. The first instance was achieved by Xi in \cite{Xi-Onrepresentationtypes-1992} under the constraint $n\geq d$  who classified these Schur algebras as to whether they are representation-finite or not. In this case the representation theory of $S(n,d)$ is more regular and the module category is equivalent to the category of strict polynomial functors, see \cite[Theorem 3.2]{FriedlanderSuslin-Cohomologyoffinitegroupschemes-1997}. Erdmann removed this constraint in \cite{Erdmann-Schuralgebras-1993}. The representation type of all blocks was established by Donkin and Reiten in \cite{DonkinReiten-OnSchuralgebras-1994} by classifying all quasi-hereditary algebras of finite representation type having a duality. Finally, the classification of tame Schur algebras is due to Doty, Erdmann, Martin, and Nakano in \cite{DotyErdmannMartinNakano-Representationtype-1999}. The labelling of the tame algebras follows Erdmann in \cite{Erdmann-Schuralgebras-1993}.

\begin{theorem}\label{representationtypeSchur}
Let $k$ be of characteristic $p\geq 0$.
\begin{enumerate}[(i)]
\item The Schur algebra $S(n,d)$ has finite representation type if and only if one of the following occurs:
\begin{enumerate}[(a)]
\item $p\geq 2, n\geq 3, d<2p$,
\item $p\geq 2$, $n=2$, $d<p^2$,
\item $p=2$, $n=2, d=5,7$.
\end{enumerate}
Furthermore, each representation-finite block of $S(n,d)$ is Morita equivalent to $(\mathcal{A}_n)$, the path algebra of $\begin{tikzcd}1\arrow[yshift=0.5ex]{r}{\alpha_1}&2\arrow[yshift=-0.5ex]{l}{\beta_1}\arrow[yshift=0.5ex]{r}{\alpha_2}&\dots\arrow[yshift=-0.5ex]{l}{\beta_2}\arrow[yshift=0.5ex]{r}{\alpha_{m-1}}&m\arrow[yshift=-0.5ex]{l}{\beta_{m-1}}\end{tikzcd}$ with relations $\alpha_{i-1}\beta_{i-1}-\beta_i\alpha_i$, $\alpha_{i}\alpha_{i-1}$, $\beta_{i-1}\beta_i$ for $i=2,\dots, m-1$ and $\alpha_{m-1}\beta_{m-1}$, where $m$ is the number of simple modules in that block.
\item The Schur algebra $S(n,d)$ has tame representation type if and only if one of the following occurs:
\begin{enumerate}[(a)]
\item $p=2, n=2, d=4,9$,
\item $p=3, n=3, d=7$,
\item $p=3, n=3, d=8$,
\item $p=3, n=2, d=9,10,11$.
\end{enumerate}
The basic algebras corresponding to the non-semisimple blocks in these cases are all isomorphic to one of the cases in the following list of path algebras with relations (in the same ordering):
\begin{enumerate}[(a)]
\item[$(\mathcal{D}_3)$] $\begin{tikzcd}3\arrow[yshift=0.5ex]{r}{\alpha_1}&1\arrow[yshift=0.5ex]{r}{\beta_2}\arrow[yshift=-0.5ex]{l}{\beta_1}&2\arrow[yshift=-0.5ex]{l}{\alpha_2}\end{tikzcd}$ with relations $\beta_1\alpha_1, \beta_2\alpha_2, \alpha_2\beta_2\alpha_1, \beta_1\alpha_2\beta_2$,
\item[$(\mathcal{R}_4)$] $\begin{tikzcd}4\arrow[yshift=0.5ex]{r}{\alpha_3}&3\arrow[yshift=0.5ex]{r}{\alpha_2}\arrow[yshift=-0.5ex]{l}{\beta_3}&2\arrow[yshift=0.5ex]{r}{\alpha_1}\arrow[yshift=-0.5ex]{l}{\beta_2}&1\arrow[yshift=-0.5ex]{l}{\beta_1}\end{tikzcd}$ with relations $\beta_3\alpha_3$, $\alpha_2\alpha_3$, $\beta_3\beta_2$, $\alpha_3\beta_3=\beta_2\alpha_2$, $\alpha_2\beta_2=\beta_1\alpha_1$,
\item[$(\mathcal{H}_4)$] $\begin{tikzcd}{}&3\arrow[xshift=0.5ex]{d}{\alpha_2}\\
4\arrow[yshift=0.5ex]{r}{\alpha_1}&2\arrow[yshift=-0.5ex]{l}{\beta_1}\arrow[xshift=-0.5ex]{u}{\beta_2}\arrow[yshift=0.5ex]{r}{\beta_3}&1\arrow[yshift=-0.5ex]{l}{\alpha_3}\end{tikzcd}$ with relations $\beta_1\alpha_1$, $\beta_1\alpha_2$, $\beta_1\alpha_3$, $\beta_2\alpha_1$, $\beta_2\alpha_2$, $\beta_3\alpha_1$, $\alpha_3\beta_3=\alpha_1\beta_1+\alpha_2\beta_2$,
\item[$(\mathcal{D}_4)$] The same quiver as in (c) but with relations $\beta_1\alpha_1$, $\beta_2\alpha_2$, $\beta_1\alpha_3$, $\beta_2\alpha_3$, $\beta_3\alpha_1$, $\beta_3\alpha_2$, $\alpha_2\beta_2=\alpha_3\beta_3$.
\end{enumerate}
\end{enumerate}
\end{theorem}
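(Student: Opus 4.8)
The plan is to treat this statement as the assembly of several independent classification results, since no single argument covers all of parts (i) and (ii); I would organise everything around the block decomposition $S(n,d)=\bigoplus_\chi B_\chi$ and reduce each assertion to a statement about a single block $B$. The two structural inputs that make a block tractable are that $B$ is quasi-hereditary with standard modules the Weyl modules indexed by the partitions in its linkage class, and that $B$ carries a duality, i.e. a contravariant exact involution fixing the simple modules. Because representation type is detected blockwise --- $S(n,d)$ is representation-finite exactly when every block is, it is wild as soon as one block is wild, and it is tame precisely when no block is wild but some block fails to be representation-finite --- the whole theorem becomes a question of which quasi-hereditary-with-duality algebras occur as blocks, together with a count, in terms of $(p,n,d)$, of how many simple modules each block has.

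For the finite-representation-type part (i) I would first dispose of the dependence on $n$. For $n\geq d$ the algebra $S(n,d)$ is independent of $n$ up to Morita equivalence, being governed by the degree-$d$ strict polynomial functors, and Xi's computation settles this range; Erdmann's analysis removes the constraint $n\geq d$ by relating $S(n,d)$ to $S(n',d)$ through an idempotent truncation $e\,S(n',d)\,e$ and the Schur functor. The classification itself then rests on the theorem of Donkin and Reiten that a quasi-hereditary algebra admitting a duality is of finite representation type only if each of its blocks is semisimple or Morita equivalent to the zig-zag algebra $(\mathcal{A}_m)$. Recognising $(\mathcal{A}_m)$ amounts to computing the Gabriel quiver from $\dim D\Ext^1(S_i,S_j)$ and the relations from $\dim D\Ext^2(S_i,S_j)$ in the small blocks, and then confirming that $(\mathcal{A}_m)$ is itself representation-finite: it is special biserial, and one checks directly that it admits no bands, so that its finitely many string modules exhaust the indecomposables. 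The arithmetic thresholds $d<2p$ for $n\geq 3$, $d<p^2$ for $n=2$, and the sporadic values $p=2,n=2,d=5,7$ then emerge from the combinatorial control of the linkage classes --- in particular, for $S(2,d)$ the block sizes are read off from the $p$-adic digits of $d$.

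For the tame part (ii) I would follow the strategy of Doty, Erdmann, Martin and Nakano: having removed the finite cases, show that every remaining block is either wild or one of four exceptional algebras. Wildness I would establish by exhibiting inside the block a wild subquotient --- typically an idempotent subalgebra, or a quotient whose Gabriel quiver already has a recognisably wild shape, for instance a vertex carrying too many arrows or a wild separated quiver --- so that the bimodule criterion for wildness recalled in the definition of representation type is inherited. The four surviving families are then identified explicitly with the path algebras $(\mathcal{D}_3)$, $(\mathcal{R}_4)$, $(\mathcal{H}_4)$, $(\mathcal{D}_4)$ of the statement, again by computing quiver and relations in the relevant small blocks (e.g. $p=2,n=2,d=4$ and $p=3,n=3,d=7,8$); tameness of these four algebras is verified directly, each being special biserial or reducible by covering theory to a tame problem.

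The genuine obstacle sits entirely in part (ii): deciding the tame--wild boundary. Proving that no block outside the four exceptional families is tame requires producing an explicit wild subcategory in each remaining case, and this is the delicate combinatorial heart of the argument, sensitive to the precise triple $(p,n,d)$; dually, the verification that $(\mathcal{D}_3)$, $(\mathcal{R}_4)$, $(\mathcal{H}_4)$, $(\mathcal{D}_4)$ are tame rather than wild is non-trivial precisely because their quivers sit on the borderline. A secondary difficulty, already present in (i), is the bookkeeping needed to translate the linkage combinatorics into the clean numerical conditions stated, especially pinning down the exponents $2p$ and $p^2$ and isolating the sporadic exceptions.
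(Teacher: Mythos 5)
Your outline follows essentially the same route as the paper, because the paper gives no proof of this theorem at all: it is stated as a compilation of cited results --- Xi's computation for $n\geq d$ via strict polynomial functors, Erdmann's removal of that constraint, Donkin--Reiten's classification of representation-finite quasi-hereditary algebras with a duality, and Doty--Erdmann--Martin--Nakano's tame classification --- which is precisely the division of labour your proposal reconstructs, down to the blockwise reduction and the role of the duality. Since the paper defers all the genuinely hard content (the wildness certificates, the identification of the exceptional blocks, the linkage arithmetic giving $d<2p$, $d<p^2$ and the sporadic cases) to those references, your deferral of the same points matches the paper's treatment and no discrepancy arises.
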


Note that there is no theorem classifying all tame blocks of Schur algebras. The author does not know of any other example of a tame block of a Schur algebra.

Unlike the analogous case of blocks of category $\mathcal{O}$, the representation type of the subcategory of modules filtered by Weyl modules $\mathcal{F}(\Delta)\subseteq S(n,d)$ is not known in general. Let us state here two theorems in this direction. The first theorem gives a sufficient criterion for $\mathcal{F}(\Delta)\subseteq \modu S(n,d)$ not to be of finite representation type:

\begin{theorem}[{\cite[Corollary 6.11]{CoxErdmann-OnExt2-2000}}]
Let $k$ be a field of characteristic $p\geq 0$. Let $S(n,d)$ be a Schur algebra. Then $\mathcal{F}(\Delta)\subseteq \modu S(n,d)$ is of infinite representation type if one of the following cases holds:
\begin{enumerate}[(a)]
\item $p>2$ and $d\geq 2p^2+p-2$, or
\item $p=2$ and $d\geq \begin{cases}8&\text{if $d$ is even}\\17&\text{if $d$ is odd}\end{cases}$
\end{enumerate}
\end{theorem}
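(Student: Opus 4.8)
The plan is to detect infinite representation type through a quadratic-form obstruction, exploiting the bocs description of $\mathcal{F}(\Delta)$ furnished by Main Theorem 1 together with the quasi-hereditary structure of $S(n,d)$. Since $\mathcal{F}(\Delta)$ is the category filtered by Weyl modules, the filtration multiplicities $[M:\Delta(\lambda)]$ are well defined and additive on short exact sequences, so its Grothendieck group is free with basis the classes $[\Delta(\lambda)]$, where $\lambda$ ranges over the ordered partitions of $d$ into at most $n$ parts. On this lattice sits the Euler form
\begin{equation*}
\langle [\Delta(\lambda)], [\Delta(\mu)]\rangle = \sum_{i\geq 0}(-1)^i \dim_k \Ext^i_{S(n,d)}(\Delta(\lambda), \Delta(\mu)).
\end{equation*}
By Main Theorem 1, $\mathcal{F}(\Delta)$ is equivalent to $\modu\mathfrak{B}$ for a directed bocs $\mathfrak{B}$ whose solid arrows encode $\dim\Ext^1(\Delta(\lambda),\Delta(\mu))$ and whose differential (the dotted part) is controlled by $\Ext^2(\Delta(\lambda),\Delta(\mu))$ --- the same quiver-versus-relations dictionary recalled for ordinary algebras in Section 2. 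The entry point is then the standard principle from the reduction theory of bocses that a representation-finite bocs has weakly positive Tits form; equivalently, if one produces a strictly positive integral vector $v$ with $q(v)\leq 0$, where $q$ is the Tits form assembled from the solid- and dotted-arrow data, then $\mathcal{F}(\Delta)$ cannot be of finite representation type.

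First I would feed in the arithmetic of Weyl-module extensions. The essential input is the explicit determination of $\dim\Ext^1(\Delta(\lambda),\Delta(\mu))$ and $\dim\Ext^2(\Delta(\lambda),\Delta(\mu))$ --- precisely the computations carried out in the body of the cited paper, and the genuine technical heart of the argument. With these in hand I would restrict $q$ to the sublattice spanned by a well-chosen finite family of weights clustered around a single $p$-alcove (or a pair of adjacent alcoves), where the first extension groups are largest and hence contribute the most negative off-diagonal terms. The relevant extension groups between partitions with few parts are known to stabilise as $n$ grows, so the restricted form is essentially independent of $n$; it therefore suffices to exhibit the bad configuration among weights with a small, fixed number of parts, which explains why the hypotheses constrain only $p$ and $d$.

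Next I would exhibit the required vector. For the regimes $d\geq 2p^2+p-2$ (for $p>2$) and the even/odd thresholds $d\geq 8$, $d\geq 17$ (for $p=2$), I would assign the multiplicities $v_\lambda$ --- typically giving the central weights multiplicity two and the extreme ones multiplicity one, chosen so as to play the negative $\Ext^1$-contributions against the positive diagonal and the positive $\Ext^2$-contributions --- and check directly that $q(v)\leq 0$. Concretely this amounts to showing that the solid subquiver supporting $v$, once the differential is taken into account, is no longer of Dynkin type, so that $\mathcal{F}(\Delta)$ contains a full exact subcategory of infinite type. Matching the stated bounds on $d$ is then an optimisation: the thresholds are exactly where enough weights carrying non-trivial $\Ext^1$ (with controlled $\Ext^2$) first appear for $q$ to turn non-positive, and the parity split for $p=2$ reflects which (near-)symmetric partitions of $d$ are available.

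The hard part will be the exact computation of $\Ext^2$ between Weyl modules: whereas $\Ext^1$ is comparatively accessible and maps directly onto the quiver of the bocs, the second extension groups require real work --- translation functors, Jantzen-type sum formulae, or explicit resolutions --- and it is their size that decides whether the differential of the bocs is small enough to keep $q$ weakly positive. A secondary, combinatorial difficulty is to choose the family of weights and the vector $v$ so that $q(v)\leq 0$ holds at exactly the stated bounds rather than at a weaker one, which is what upgrades a qualitative infinite-type statement to the sharp numerical corollary.
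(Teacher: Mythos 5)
First, a point of comparison that matters for this review: the survey does not prove this statement at all --- it is quoted directly from Cox--Erdmann \cite[Corollary 6.11]{CoxErdmann-OnExt2-2000}. So your proposal can only be judged on its own merits, and on those merits it has a genuine gap: the entire mathematical content is deferred. The theorem \emph{is} the numerical corollary of the Cox--Erdmann $\Ext$-computations; any proof must exhibit concrete partitions $\lambda,\mu$, the dimensions of the relevant extension groups, and a derivation of the precise thresholds $d\geq 2p^2+p-2$ (for $p>2$) and $d\geq 8$ / $d\geq 17$ (for $p=2$). Your proposal instead says the ``essential input'' is ``precisely the computations carried out in the body of the cited paper'' and then promises to ``check directly that $q(v)\leq 0$'' --- no weights, no dimensions, no vector $v$, and no argument for why the bounds come out as stated. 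Since the statement to be proved is exactly the outcome of those computations, this is circular: what remains after the deferral is only a strategy template.

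The strategy itself also rests on two points that fail as stated. (i) Your dictionary for the bocs is wrong. In the Koenig--K\"ulshammer--Ovsienko construction the dashed arrows of the biquiver are given by the radical homomorphisms $\rad(\Delta,\Delta)$, while $\Ext^2_A(\Delta,\Delta)$ (together with the higher $A_\infty$-products, not just dimensions) controls the \emph{relations} of the solid algebra $B$. A ``Tits form assembled from solid $=\Ext^1$ and dotted $=\Ext^2$'' therefore omits the $\Hom$-contributions entirely; these enter the form with a positive sign, and for Schur algebras in the stated range homomorphisms between distinct Weyl modules in a block are frequently nonzero, so your form underestimates the true one and a verification of $q(v)\leq 0$ with it proves nothing. (ii) The ``standard principle'' that a representation-finite bocs has weakly positive Tits form is available in the literature only for free (triangular) bocses, i.e.\ differential biquivers, equivalently for \emph{strongly} quasi-hereditary algebras. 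In the range of the theorem the Weyl modules have projective dimension $>1$, so $B$ has relations; the survey itself stresses that for such bocses even the reduction machinery exists only in the representation-finite form of Bautista--Salmer\'on and that tame--wild dichotomy for $\mathcal{F}(\Delta)$ is open there. So the key criterion you invoke would itself have to be proved before it can be used.

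For the record, the actual route of Cox--Erdmann is more elementary than a quadratic-form obstruction: one shows that $\dim\Ext^1(\Delta(\lambda),\Delta(\mu))\geq 2$ for suitable $\lambda,\mu$ once $d$ passes the stated bounds (this is where their $\Ext^2$ results enter, through the recursions that compute $\Ext^1$ for large weights), and then uses that $\End(\Delta(\lambda))\cong\End(\Delta(\mu))\cong k$ together with the directedness vanishing $\Hom(\Delta(\mu),\Delta(\lambda))=0$ to conclude that non-proportional extension classes give pairwise non-isomorphic indecomposable middle terms, hence a one-parameter family inside $\mathcal{F}(\Delta)$. No Tits form, and none of the delicate geometry of bocses with relations, is needed.
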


A necessary and sufficient criterion is only available in the case $n=2$.

\begin{theorem}[{\cite{ErdmannMadsenMiemietz-OnAuslanderReitentranslates-2010}}]
Let $k$ be a field of characteristic $p\geq 0$. Let $A$ be a block of the Schur algebra $S(2,d)$ over $k$ with $m$ simple modules. Then the corresponding $\mathcal{F}(\Delta)$ is representation-finite if and only if one of the following cases occurs:
\begin{enumerate}[(a)]
\item $p=2$ and $m\leq 4$,
\item $p\leq 7$, $p$ odd, and $m\leq p+3$,
\item $p\geq 11$ and $m\leq p+2$.
\end{enumerate} 
\end{theorem}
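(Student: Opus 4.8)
The plan is to invoke the Main Theorem to replace $\mathcal{F}(\Delta)$ by the module category of a directed bocs, and then to decide representation type by means of the reduction algorithm.

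First I would fix a block $A$ of $S(2,d)$ with $m$ simple modules and assemble its quasi-hereditary data. For $n=2$ the weights of a block form a single chain, the Weyl modules $\Delta(\lambda)$ are indexed by this chain, and their submodule structure is governed by the $p$-adic pattern of the weights. By the Main Theorem there is a directed bocs $\mathfrak{B}$, with underlying algebra an exact Borel subalgebra $B$ of a Morita representative of $A$, such that $\mathcal{F}(\Delta)\cong\modu\mathfrak{B}$. The solid arrows of $\mathfrak{B}$ come from the quiver of $B$, which by the description in Theorem \ref{representationtypeSchur} is a type-$A$ chain on the $m$ weights; the dashed arrows and the differential encode the second-order relations, that is, the surviving higher multiplications in the $A_\infty$-structure on $\Ext^*(\Delta,\Delta)$.

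Second I would make this bocs explicit as a differential biquiver. I expect a linear chain on the $m$ vertices together with a family of dashed arrows whose positions are dictated by the $p$-adic jumps in the chain, and a differential read off from the extension data between adjacent and next-to-adjacent Weyl modules. The key structural point is that $\mathfrak{B}$ forms a one-parameter family indexed by $m$ (for fixed $p$): lengthening the chain repeats a fixed local pattern whose period is governed by $p$, so the essentially new phenomena occur only once $m$ grows past roughly one period.

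Third I would run the Bautista--Salmer\'on reduction algorithm on $\mathfrak{B}$. For $m$ below the stated bound I would verify that the algorithm terminates with finitely many indecomposables, giving representation-finiteness; for $m$ above the bound I would locate inside $\mathfrak{B}$ a subbocs of tame (Euclidean) or wild type, which by the reduction theory forces $\mathcal{F}(\Delta)$ to be infinite. The thresholds $m\leq p+2$ and $m\leq p+3$ then record exactly how long the chain may be before the first such critical configuration is forced, which is about one $p$-period plus a short overhang.

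The main obstacle will be the explicit determination of the differential of $\mathfrak{B}$ and the bookkeeping of the reduction over arbitrarily long chains: one must control the whole family of bocses uniformly in $m$ and pin down the precise step at which a minimal tame or minimal wild subbocs first appears. I expect the delicate points to be the distinction between the bounds $p+2$ and $p+3$ for small odd primes and the separate value $m\leq 4$ at $p=2$, which are boundary effects requiring explicit treatment of the primes $p=2,3,5,7$.
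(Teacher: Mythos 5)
You should first note that the paper does not prove this statement at all: it is quoted from \cite{ErdmannMadsenMiemietz-OnAuslanderReitentranslates-2010}, and the paper explicitly remarks that the proof ``uses quite sophisticated methods from Auslander--Reiten theory of finite dimensional algebras.'' Your plan --- replace $\mathcal{F}(\Delta)$ by $\modu\mathfrak{B}$ for a directed bocs and run the reduction algorithm --- is therefore a genuinely different route, and it is the route the paper itself follows in Section 5, but only for the four fixed tame blocks $(\mathcal{D}_3)$, $(\mathcal{D}_4)$, $(\mathcal{R}_4)$, $(\mathcal{H}_4)$, where the bocs can be computed explicitly and the Bautista--Salmer\'on reduction can be carried out (partly by computer) on finitely many concrete objects.

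As a proof of the full classification, however, your proposal has genuine gaps. First, your structural Ansatz for $\mathfrak{B}$ is already wrong in the simplest case: Theorem \ref{representationtypeSchur} describes the quiver of the basic algebra $A$, not of the exact Borel subalgebra $B$, and the paper's computation of the bocs for the representation-finite blocks $(\mathcal{A}_m)$ (via the Klamt--Stroppel $A_\infty$-calculation) yields solid arrows $a_i\colon i\to i+1$ \emph{and} $b_i\colon i\to i+2$ with relations $a_{i+1}a_i$ and $a_{i+2}b_i+b_{i+1}a_i$; in particular $B$ is not hereditary, so you are in the bocs-with-relations setting from the start, and for blocks outside the finite and tame ranges no computation of the $A_\infty$-structure on $\Ext^*(\Delta,\Delta)$ is available --- the ``repeating $p$-periodic pattern'' you expect is precisely the hard content you would have to establish. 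Second, Theorem \ref{BautistaSalmeronreduction} only guarantees termination with finitely many indecomposables \emph{in} the representation-finite case; for the ``only if'' direction you must exhibit, uniformly in $m$, a one-parameter family or a wild embedding into $\modu\mathfrak{B}$, and since these blocks are in general not strongly quasi-hereditary you cannot appeal to Corollary \ref{stronglyfilteredtamewild} --- the paper states explicitly that a tame--wild dichotomy for $\mathcal{F}(\Delta)$ of general quasi-hereditary algebras is an open problem. Third, even granting the bocs, a reduction-algorithm verification handles one algebra at a time; you give no inductive scheme controlling the reduction for all $m$ simultaneously, which is indispensable since the statement concerns an infinite family. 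As it stands the proposal is a plausible research programme rather than a proof, and it should not be presented as a replacement for the cited Auslander--Reiten-theoretic argument.
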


The proof uses quite sophisticated methods from Auslander--Reiten theory of finite dimensional algebras.\\
We will not spend much time on the definition of the $q$-Schur algebra. It is defined analogously to the classical Schur algebra, using all the time the quantised version. The general linear group $\GL(n)$ is replaced by $U_q(\mathfrak{gl}_n)$, the symmetric group $\mathfrak{S}_d$ by the Hecke algebra $\mathcal{H}_d$, and also the Weyl modules have quantum analogues. For details, see \cite{Donkin-TheqSchuralgebra-1998}.
Regarding representation type there is the following classification due to Erdmann and Nakano \cite{ErdmannNakano-Representationtype-2001}. Again, the representation-finite case for $n\geq d$ was established earlier by Xi in \cite{Xi-Onrepresentationtypesq-1993}.

\begin{theorem}
Let $k$ be a field of characteristic $p\geq 0$. If $q$ is not a root of unity, then $S_q(n,d)$ is semisimple. In particular, it is representation-finite. If on the other hand $q\in k$ is a primitive $\ell$-th root of unity, then there is the following distinction of representation types:
\begin{enumerate}[(i)]
\item The $q$-Schur algebra is of finite representation type if and only if one of the following cases occurs:
\begin{enumerate}[(a)]
\item $n\geq 3$ and $d<2\ell$,
\item $n=2$, $p=0$, 
\item $n=2$, $p\neq 0$, $\ell\geq 3$ and $d<\ell p$,
\item $n=2$, $p\geq 3$, $\ell=2$ and $d$ even with $d<2p$, or $d$ odd with $d<2p^2+1$.
\end{enumerate}
\item The $q$-Schur algebra is of tame representation type if and only if one of the following cases occurs:
\begin{enumerate}[(a)]
\item $n = 3$, $\ell = 3$, $p \neq 2$, and $d = 7,8$,
\item $n = 3$, $\ell = 2$, and $d = 4,5$,
\item $n = 4$, $\ell = 2$, and $d = 5$,
\item $n = 2$, $\ell \geq 3$, $p = 2$ or $p = 3$, and $p\ell \leq d < (p + 1)\ell$
\item $n = 2$, $\ell = 2$, $p = 3$ and $d \in \{6, 19, 21, 23\}$
\end{enumerate}
In each of the cases, the tame block occurring is provided in the following list. In some cases, it occurs twice or some additional representation-finite block occurs (see the cited reference for details):
\begin{enumerate}[(a)]
\item $(\mathcal{R}_4)$ for $d=7$, or $(\mathcal{H}_4)$ for $d=8$,
\item $(\mathcal{R}_4)$ for $d=4$, or $(\mathcal{H}_4)$ for $d=5$,
\item $(\mathcal{H}_4)$,
\item $(\mathcal{D}_3)$ for $p=2$, or $(\mathcal{D}_4)$ for $p=3$,
\item $(\mathcal{D}_4)$
\end{enumerate}
\end{enumerate}
\end{theorem}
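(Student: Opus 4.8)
The plan is to reduce the classification to finitely many small blocks and to transfer as much as possible from the already-established classical case recorded in Theorem~\ref{representationtypeSchur}. First I would dispose of the semisimple case: when $q\in k$ is not a root of unity the Hecke algebra $\mathcal{H}_d$ is semisimple, so $V^{\otimes d}$ decomposes as a direct sum of simple $\mathcal{H}_d$-modules and its endomorphism algebra $S_q(n,d)=\End_{\mathcal{H}_d}(V^{\otimes d})$ is a product of matrix algebras, hence semisimple and in particular representation-finite. For the remainder one may assume that $q$ is a primitive $\ell$-th root of unity.

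Next I would pass to blocks. Since representation type is the supremum (finite $<$ tame $<$ wild) over the indecomposable block summands, it suffices to determine the type of each block. For $q$-Schur algebras the blocks are governed by the linkage principle: two Weyl modules lie in the same block precisely when the indexing partitions share the same $\ell$-core, so a block is pinned down by its $\ell$-core together with its $\ell$-weight $w$, where $d=|\mathrm{core}|+w\ell$. The number $m$ of simple modules in a block, and more importantly the complexity of its quiver and relations, grows with $w$.

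The central reduction step is to invoke the quantum row- and column-removal theorems (see \cite{Donkin-TheqSchuralgebra-1998}), which produce equivalences between a block of $S_q(n,d)$ and a block of a smaller $q$-Schur algebra of the same $\ell$-weight. Iterating these removals collapses any block to one with small $n$ and small $d$, leaving only finitely many genuinely distinct blocks to analyse for each value of $w$. On these small blocks I would compute the Gabriel quiver from $\dim D\Ext^1$ between the simples and the relations from $\dim D\Ext^2$ (the latter being available from the $\Ext^2$-computations of Cox and Erdmann \cite{CoxErdmann-OnExt2-2000}), and then match the resulting presentation against the list after Theorem~\ref{representationtypeSchur}. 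Blocks of $\ell$-weight one turn out to be Morita equivalent to the zig-zag algebra $(\mathcal{A}_m)$ and hence representation-finite; certain weight-two configurations at small $n$ yield exactly the four algebras $(\mathcal{D}_3),(\mathcal{R}_4),(\mathcal{H}_4),(\mathcal{D}_4)$ and are therefore tame; in all remaining cases one must prove wildness, e.g.\ by exhibiting a wild subquotient through a separated-quiver argument.

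The hard part will be twofold. First, one must convert the clean structural dichotomy in the weight $w$ into the explicit arithmetic conditions on $n,d,\ell,p$ in the statement; this is where the characteristic $p$ genuinely intervenes, adding a second layer of $p\ell$-periodicity that is invisible in the generic ($p=0$) case and that accounts for the delicate bounds such as $d<\ell p$ for $n=2$ and the exceptional constraints like $p\neq 2$ when $n=3,\ell=3$. Second, the finitely many boundary blocks whose type depends on the precise interaction of $p$ and $\ell$ cannot be settled by transfer and require explicit verification — proving tameness by matching a special biserial presentation and proving wildness by locating an honestly wild subquotient — which is the most computation-heavy and error-prone portion of the argument.
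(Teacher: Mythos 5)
You should first be aware that the paper does not prove this theorem at all: it is quoted as a known classification due to Erdmann and Nakano \cite{ErdmannNakano-Representationtype-2001}, with the representation-finite case for $n\geq d$ going back to Xi \cite{Xi-Onrepresentationtypesq-1993}. So there is no internal argument to measure your proposal against; it can only be compared with the strategy of the cited work.

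Your outline does capture the broad architecture of that strategy: semisimplicity away from roots of unity (your argument via semisimplicity of $\mathcal{H}_d$ is fine), reduction to blocks, identification of small blocks with the algebras $(\mathcal{A}_m)$, $(\mathcal{D}_3)$, $(\mathcal{R}_4)$, $(\mathcal{H}_4)$, $(\mathcal{D}_4)$ listed after Theorem \ref{representationtypeSchur}, and wildness arguments for everything else. But as a proof it has a genuine gap, which you in effect concede by labelling it "the hard part": every tool you actually name sees only $\ell$, not $p$. Linkage by $\ell$-cores, row and column removal, and Scopes-type weight-preserving equivalences cannot by themselves produce the bound $d<\ell p$ for $n=2$, the tame window $p\ell\leq d<(p+1)\ell$ for $p=2,3$, or the exceptional odd-$d$ bound $d<2p^2+1$ when $\ell=2$, $p\geq 3$ — these are precisely the assertions of the theorem. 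In the proof of Erdmann and Nakano the second, characteristic-$p$ layer enters through the quantum Frobenius morphism and Donkin's tensor product theorem, which relate blocks of $S_q(n,d)$ at an $\ell$-th root of unity over a field of characteristic $p$ to blocks of \emph{classical} Schur algebras in characteristic $p$; it is only through this bridge that the classical classification of Theorem \ref{representationtypeSchur} can be imported and the $p\ell$-periodicity becomes visible. Without this tool (or a substitute for it), your step "convert the structural dichotomy in the weight into the arithmetic conditions on $n,d,\ell,p$" is a restatement of the theorem rather than a method for proving it, and the wildness of all remaining blocks is likewise asserted rather than established.
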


This result tells us that everything we prove for the tame Schur algebras via the basic algebras of their blocks will also be valid for the $q$-Schur algebras.

\subsection{Quasi-hereditary algebras and exact Borel subalgebras}

Summarising the common features between blocks of BGG category $\mathcal{O}$ and Schur algebras (and other categories, most notably the category of $G_rT$, for $G_r$ the $r$-th Frobenius kernel of a reductive algebraic group $G$ with torus $T$) one arrives at the notion of a highest weight category, or if one restricts to finitely many simple modules, at the notion of a quasi-hereditary algebra. There are many equivalent notions around to define it. We stick to the one closest to exceptional collections in triangulated categories (which often arise in algebraic geometry). For other possibilities, see the three surveys mentioned in the beginning of this section as well as the articles by Kalck \cite{Kalck-Derivedcategories-2016} and Krause \cite{Krause-Highestweightcategories-2016} in this volume. 

\begin{definition}
A finite dimensional algebra $A$ with $n$ simple modules (up to isomorphism) is called \emphbf{quasi-hereditary} if there exist indecomposable modules $\Delta(1),\dots,\Delta(n)$ with the following properties:
\begin{enumerate}[{(QH}1{)}]
\item $\End_A(\Delta(i))\cong k$,
\item $\Hom_A(\Delta(i),\Delta(j))\neq 0\Rightarrow i\leq j$,
\item $\Ext^1_A(\Delta(i),\Delta(j))\neq 0\Rightarrow i<j$,
\item $A\in \mathcal{F}(\Delta)$.
\end{enumerate}
\end{definition}

\begin{remark}
\begin{enumerate}[(i)]
\item The modules $\Delta(1),\dots, \Delta(n)$ are called \emphbf{standard modules}.
\item Sometimes it is convenient to work with a partial order on $\{1,\dots,n\}$ instead of a total order, but the two definitions are equivalent.
\item Let $M(i)=\displaystyle\sum_{\substack{j>i\\f\in \Hom(P_j,P_i)}} \image(f)$. Then the standard modules can be defined as $\Delta(i):=P(i)/M(i)$. One equivalent definition for an algebra to be quasi-hereditary one can easily check in examples is that the so-defined modules $\Delta(i)$ filter the indecomposable projectives for the algebra.
\end{enumerate}
\end{remark}

Motivated by the example of $U(\mathfrak{b})\subseteq U(\mathfrak{g})$, Koenig introduced the notion of an exact Borel subalgebra of a quasi-hereditary algebra \cite{Koenig-ExactBorelsubalgebras-1995}. 

\begin{definition}
Let $A$ be a quasi-hereditary algebra. A subalgebra $B\hookrightarrow A$ is called an \emphbf{exact Borel subalgebra} if the number of simples for $B$ and $A$ coincides and the following properties hold:
\begin{enumerate}[{(B}1{)}]
\item The algebra $B$ is \emphbf{directed}, i.e. the Gabriel quiver of $B$ is directed, i.e. whenever there is an arrow $i\to j$ in $Q_B$, then $i\leq j$.
\item The induction functor $A\otimes_B - $ is exact.
\item The standard modules for $A$ can be obtained as $\Delta_A(i)=A\otimes_B L_B(i)$, where $L_B(i)$ are the simple modules for $B$.
\end{enumerate}
\end{definition}

Note that the Borel Schur algebras defined before aren't examples of exact Borel subalgebras, as the number of simple modules for $S^+(n,d)$ and $S(n,d)$ does not coincide. A paper establishing common grounds for these definitions is \cite{ParshallScottWang-Borelsubalgebrasredux-2000}.

Quasi-hereditary algebras in general do not have exact Borel subalgebras. An example was already given in \cite[Example 2.3]{Koenig-ExactBorelsubalgebras-1995}, see also \cite[Appendix A.3]{KoenigKulshammerOvsienko-Quasihereditaryalgebras-2014}. However, Koenig was able to establish the phenomenon that the quasi-hereditary algebras coming from blocks of BGG category $\mathcal{O}$ seem to remember that they came from $U(\mathfrak{g})$ and have exact Borel subalgebras.

\begin{theorem}[{\cite[Theorem D]{Koenig-ExactBorelsubalgebras-1995}}]
Let $\mathfrak{g}$ be a finite dimensional semismiple complex Lie algebra. Let $\mathcal{O}_\chi$ be a block of BGG category $\mathcal{O}$. Then, there exists a quasi-hereditary algebra $A_\chi'$ with $\modu A_\chi'\cong \mathcal{O}_\chi$ having an exact Borel subalgebra.
\end{theorem}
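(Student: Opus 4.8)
The cleanest abstract route is to note that the statement is a special case of Main Theorem~1. Indeed, a block $\mathcal{O}_\chi \cong \modu A_\chi$ is quasi-hereditary: the Verma modules $\Delta(\lambda)$ are indecomposable and satisfy (QH1)--(QH4), since $\End(\Delta(\lambda)) \cong \mathbb{C}$, nonzero homomorphisms and extensions between Vermas respect the BGG order, and the projectives carry Verma flags by BGG reciprocity. The Main Theorem then furnishes an algebra $A_\chi'$, Morita equivalent to $A_\chi$ (so $\modu A_\chi' \cong \mathcal{O}_\chi$), possessing an exact Borel subalgebra. This argument is, however, anachronistic: Theorem D predates and motivates the Main Theorem, so I would rather sketch a direct construction that exhibits why category $\mathcal{O}$ is special, namely that it already carries the ambient inclusion $U(\mathfrak{b}) \subseteq U(\mathfrak{g})$.

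First I would reduce to an integral block: by Soergel's theorem every non-integral block is equivalent to an integral one, so I may assume $\chi = \chi_\lambda$ with $\lambda \in X$. The Verma modules, hence the simple modules, of $\mathcal{O}_\chi$ are then indexed by a finite linkage class, and by the BGG theorem $\mathcal{O}_\chi \cong \modu A_\chi$ with $A_\chi = \End_{\mathcal{O}_\chi}(P)^{op}$ for $P$ a minimal projective generator.

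The heart of the matter is to model $U(\mathfrak{b}) \hookrightarrow U(\mathfrak{g})$ in finite dimensions. At the enveloping-algebra level all three Borel axioms hold tautologically: the relevant part of the Gabriel quiver of $U(\mathfrak{b})$ is directed because the nilpotent part shifts weights strictly in the weight order (B1); induction $U(\mathfrak{g}) \otimes_{U(\mathfrak{b})} -$ is exact because $U(\mathfrak{g})$ is free over $U(\mathfrak{b})$ by PBW (B2); and $U(\mathfrak{g}) \otimes_{U(\mathfrak{b})} \mathbb{C}_\lambda = \Delta(\lambda)$ realizes standard modules as induced from simples (B3). I would therefore construct, inside a suitable Morita representative $A_\chi'$, a finite-dimensional subalgebra $B$ whose one-dimensional simples $L_B(\lambda)$ correspond to the $\mathbb{C}_\lambda$ and whose arrows are the weight-shifting ones, so that $B$ is directed and $A_\chi' \otimes_B L_B(\lambda) \cong \Delta(\lambda)$. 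Passing to a Morita representative is forced: the subalgebra only presents itself cleanly after choosing a projective generator adapted to the Verma filtration, e.g. via Soergel's combinatorial description of $A_\chi$.

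The main obstacle is the exactness axiom (B2) together with the identification (B3) once one has descended to finite dimensions. While PBW makes both transparent for $U(\mathfrak{g})/U(\mathfrak{b})$, transporting exactness of induction to $A_\chi' \otimes_B -$ amounts to showing $A_\chi'$ is projective as a right $B$-module and that $\operatorname{Tor}^B_{>0}(A_\chi', L_B(\lambda)) = 0$, equivalently that the Verma-flag multiplicities of the projectives match the composition data of $B$. This is exactly the homological input that the later bocs machinery axiomatizes, and I expect essentially all the work to lie in constructing $B$ with the correct right-module structure.
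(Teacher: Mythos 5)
The survey does not prove this theorem at all: it is quoted from Koenig's paper (Theorem D of \cite{Koenig-ExactBorelsubalgebras-1995}) as motivation for the Main Theorem, so there is no in-paper argument to compare yours against. Judged on its own terms, your first paragraph does constitute a complete proof of the statement as literally written: blocks of $\mathcal{O}$ are quasi-hereditary with the Verma modules as standard modules, and the Main Theorem then supplies an algebra $A_\chi'$, Morita equivalent to $A_\chi$, with an exact Borel subalgebra, whence $\modu A_\chi'\cong \modu A_\chi\cong \mathcal{O}_\chi$. But note what this buys and what it loses: it derives a 1995 theorem from a 2014 theorem whose proof is far harder and completely general (bocses, $A_\infty$-structures on $\Ext^*(\Delta,\Delta)$), and it discards the specific content of Koenig's result, namely a construction reflecting the ambient inclusion $U(\mathfrak{b})\subseteq U(\mathfrak{g})$. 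The survey itself flags this distinction when it remarks that it is not known whether the exact Borel subalgebras produced by Koenig's construction and by the Main Theorem coincide for blocks of $\mathcal{O}$; the two proofs are genuinely different, not reformulations of one another.

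Your second, direct route is the one in the spirit of Koenig's actual argument, but as written it has a genuine gap: everything after ``I would therefore construct'' is a statement of what must be done, not a construction. The PBW observations verify the analogues of (B1)--(B3) only for the infinite-dimensional pair $U(\mathfrak{b})\subseteq U(\mathfrak{g})$, and they do not descend for free: the block algebra $A_\chi$ is an endomorphism algebra of a projective generator, and there is no canonical algebra map from $U(\mathfrak{b})$, or from any finite-dimensional quotient of it, into $A_\chi$ or a Morita representative. Producing the subalgebra $B$ with the correct number of simples, proving that $A_\chi'$ is projective as a right $B$-module (which already gives exactness of induction, making your additional Tor-vanishing condition redundant), and identifying $A_\chi'\otimes_B L_B(\lambda)$ with $\Delta(\lambda)$ is precisely the content of Koenig's theorem; his paper does this by first establishing general existence criteria for exact Borel subalgebras and then verifying them for the blocks $\mathcal{O}_\chi$ using the structure theory of category $\mathcal{O}$, not by transporting the enveloping-algebra picture. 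So either stand by the anachronistic derivation, which is correct, or fill in the construction; at present your direct sketch identifies the obstacle without overcoming it.
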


The question whether for general quasi-hereditary algebras there exists a Morita equivalent algebra having an exact Borel subalgebra was open for a long time, even for the class of Schur algebras. The folllowing theorem by Koenig, Ovsienko, and the author setteled the question affirmatively:

\begin{theorem}
For every quasi-hereditary algebra $A$ there exists a Morita equivalent algebra $R$ such that $R$ has an exact Borel subalgebra $B$.
\end{theorem}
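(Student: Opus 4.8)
This statement is the ``in particular'' half of Main Theorem 1, so the plan is to establish the stronger bocs-theoretic assertion and then read off the exact Borel subalgebra from it. Concretely, given a quasi-hereditary algebra $A$ I would first produce a directed bocs $\mathfrak{B}$ whose module category models $\mathcal{F}(\Delta)$, and then feed $\mathfrak{B}$ into Burt--Butler's right algebra machinery to obtain both the Morita-equivalent algebra $R$ and its Borel subalgebra $B$ at once.

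First I would record the structural features of $\mathcal{F}(\Delta)$ that the construction feeds on: it is a functorially finite, resolving exact subcategory of $\modu A$; by Ringel it has Auslander--Reiten sequences; it has a relative projective generator $\bigoplus_i P(i)$; and by (QH4) the algebra $A$ itself lies in $\mathcal{F}(\Delta)$. The standard modules play the role of the ``simple'' objects of this exact category, with $\End_A(\Delta(i)) \cong k$ by (QH1). The aim of the next step is to turn this exact category into bocs data.

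The heart of the argument is to assemble a bocs $\mathfrak{B} = (B, W)$ out of homological data of the standard modules. Conditions (QH1)--(QH3) say precisely that the $\Ext$-algebra $\Ext^*_A\!\bigl(\bigoplus_i \Delta(i), \bigoplus_i \Delta(i)\bigr)$ is directed: with respect to the order its degree-zero part is upper triangular with $k$ on the diagonal, and $\Ext^1$ is strictly upper triangular. One concrete route, in the spirit of this paper, is to use Keller's $A_\infty$-reconstruction (Theorem \ref{Kellerreconstruction}): the $A_\infty$-structure on this $\Ext$-algebra determines the underlying algebra $B$ (from the $\Hom$-part and the higher products landing in degree zero) together with the bimodule $W$ and its coalgebra structure (from the $\Ext^1$-part, the ``dotted'' arrows). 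Directedness of the $\Ext$-algebra forces $\mathfrak{B}$ to be directed. I would then verify that the representation category of $\mathfrak{B}$ is equivalent to $\mathcal{F}(\Delta)$, matching exact structures. With a directed bocs in hand, I would invoke Burt--Butler's right algebra $R = R(\mathfrak{B})$: it is finite-dimensional, $\modu \mathfrak{B}$ embeds fully faithfully into $\modu R$, and for a directed bocs $R$ is quasi-hereditary with image $\mathcal{F}(\Delta_R)$. Since $\mathcal{F}(\Delta_R) \simeq \modu \mathfrak{B} \simeq \mathcal{F}(\Delta_A)$ and the number of simples is preserved, $R$ is Morita equivalent to $A$. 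Finally the underlying algebra $B$ of $\mathfrak{B}$ sits inside $R$, and I would check the three axioms: $B$ is directed (B1) because $\mathfrak{B}$ is; the induction functor $R \otimes_B -$ is exact (B2), which on the bocs side corresponds to $R$ being projective as a right $B$-module (the one-sided projectivity of the kernel $W$); and $\Delta_R(i) \cong R \otimes_B L_B(i)$ (B3) by construction. Hence $B$ is an exact Borel subalgebra of $R$.

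The main obstacle is the construction of $\mathfrak{B}$ together with the exactness claim: turning the $A_\infty$/$\Ext$ data into a bona fide bocs --- verifying the coalgebra axioms, establishing the one-sided projectivity of the kernel $W$ responsible for exactness of induction, and proving the equivalence $\modu \mathfrak{B} \simeq \mathcal{F}(\Delta)$ as exact categories --- is where all the genuine work lies. Once a directed bocs faithfully encoding $\mathcal{F}(\Delta)$ exists, the passage to $R$ and the verification of (B1)--(B3) is the comparatively formal output of Burt--Butler's theory.
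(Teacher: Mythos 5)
Your overall strategy is exactly the one behind the paper's treatment (which defers the full proof to \cite{KoenigKulshammerOvsienko-Quasihereditaryalgebras-2014} and sketches it in Section 4): build a directed bocs $\mathfrak{B}=(B,W)$ from the $A_\infty$-structure on $\Ext^*_A(\Delta,\Delta)$, prove $\modu\mathfrak{B}\simeq\mathcal{F}(\Delta)$, and then let Burt--Butler's right algebra $R\supseteq B$ deliver the Morita equivalent algebra and its exact Borel subalgebra, with (B1)--(B3) as the formal output. However, there is a genuine error at the heart of your construction: you have swapped the roles of the degree-zero and higher-degree parts of the $\Ext$-algebra. In the actual construction the algebra $B$ is obtained by applying Keller's reconstruction (Theorem \ref{Kellerreconstruction}) to $\bigoplus_i k1_i\oplus\bigoplus_{j\geq 1}\Ext^j_A(\Delta,\Delta)$: its solid arrows are a basis of $D\Ext^1_A(\Delta(i),\Delta(j))$ and its relations come from the $A_\infty$-products landing in $\Ext^2$; the kernel of the coring is $\overline{W}=B\otimes_k\rad\Hom_A(\Delta,\Delta)\otimes_k B$, so the dashed arrows encode the non-isomorphisms between standard modules. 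Your proposal asserts the opposite ($B$ from the $\Hom$-part, $W$ from the $\Ext^1$-part), and your phrase ``higher products landing in degree zero'' reflects this confusion, since the products of $\Ext^1$-classes relevant for relations land in degree $2$.

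This is not a cosmetic slip, because the objects of $\modu\mathfrak{B}$ are precisely the $B$-modules, while $W$ only enlarges morphism spaces. Under the desired equivalence $\modu\mathfrak{B}\simeq\mathcal{F}(\Delta)$, a $B$-module must record the multiplicities and the gluing data of a $\Delta$-filtration, so the solid arrows have to encode extensions between standard modules; the extra non-$B$-linear morphisms coming from $W$ account for the radical homomorphisms between the $\Delta(i)$. With your assignment the construction already fails for $A=kA_2$ (quiver $1\to 2$) with the natural order: there $\rad\Hom_A(\Delta,\Delta)=0$ but $\Ext^1_A(\Delta(1),\Delta(2))=k$, so your $B$ would be semisimple, every object of $\modu\mathfrak{B}$ would decompose into ``standard'' summands, and no choice of $W$ can repair this since $W$ does not change the object class; yet $\mathcal{F}(\Delta)=\modu A$ contains the indecomposable $P(1)$ with a length-two $\Delta$-filtration. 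Compare Example \ref{sl2example} and Theorem \ref{classification2simples}, where the solid arrows are counted by $\dim\Ext^1(\Delta(1),\Delta(2))$ and the dashed ones by homomorphisms between standard modules. Once this swap is corrected, the remaining programme you outline --- verifying the coalgebra axioms and projectivity of $\overline{W}$, establishing the exact equivalence $\modu\mathfrak{B}\simeq\mathcal{F}(\Delta)$, and then reading off $R$ and $B$ via Theorem \ref{KoenigKulshammerOvsienko} --- is indeed the proof.
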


In fact, the authors proved a stronger statement. We will describe some of the techniques involved in the next section. It is not known whether in the case of blocks of BGG category $\mathcal{O}$ the exact Borel subalgebras defined by Koenig and by Koenig, Ovsienko, and the author coincide.

\section{Bocses}

Bocses were introduced in 1979 by Roiter \cite{Roiter-Matrixproblems-1979}. The term bocs is an acronym for \underline{b}imodule \underline{o}ver \underline{c}ategory with \underline{c}oalgebra \underline{s}tructure. In this article, we do not need this generality and will stick to Burt--Butler's generality of using a bimodule over a finite dimensional algebra. In our setting, we can and will always assume the algebra $B$ to be basic. For such an  algebra fix an isomorphism $B\cong KQ/I$ for a quiver $Q$ and an admissible ideal $I$, and regard it as a category with objects the vertices of the quiver and morphisms the equivalence classes of linear combinations of paths in $Q$ modulo the ideal $I$.
In other contexts, bocses are also called $B$-corings for $B$ an associative algebra.

\subsection{Definitions and first properties}

In this subsection, we introduce the notion of a bocs and its category of representations. For general theory on bocses, including parts of the reduction algorithm, the reader is referred to the book by Bautista, Salmer\'on, and Zuazua \cite{BautistaSalmeronZuazua-Differentialtensoralgebras-2009}. Nicely written approaches to other aspects of the theory are contained in the unpublished manuscript by Burt \cite{Burt-Almostsplitsequences-2005} and the survey by Crawley-Boevey \cite{CrawleyBoevey-Matrixproblems-1990}.

\begin{definition}
A \emphbf{bocs} is a pair $\mathfrak{B}=(B,W)$ consisting of a finite dimensional algebra $B$ and a $B$-$B$-bimodule $W$ which has a $B$-coalgebra structure, i.e. there exists a $B$-bilinear comultiplication $\mu\colon W\to W\otimes_B W$ and a $B$-bilinear counit $\varepsilon\colon W\to B$ such that the following diagrams commute:
\[\begin{tikzcd}
W\arrow{d}{\mu}\arrow{r}{\mu} &W\otimes_B W\arrow{d}{1\otimes_B \mu}\\
W\otimes_B W\arrow{r}{\mu\otimes_B 1} &W\otimes_B W\otimes_B W
\end{tikzcd}
\begin{tikzcd}
B\otimes_B W\arrow{dr}[swap]{\can}&W\otimes_B W\arrow{r}{1\otimes_B \varepsilon}\arrow{l}[swap]{\varepsilon\otimes_B 1} &W\otimes_B B\arrow{dl}{\can}\\
&W\arrow{u}{\mu}
\end{tikzcd}
\]
\end{definition}

\begin{remark}
\begin{enumerate}[(i)]
\item The slightly unorthodox use of the letter $\mu$ for the comultiplication is due to historic reasons. For us it is fortunate since the more standard letter $\Delta$ is already in use for standard modules.
\item In addition to the conditions mentioned in the definition we will also assume that $B$ is basic and that $W$ is finite dimensional.
\end{enumerate}
\end{remark}

\begin{example}
The easiest possible example is the \emphbf{regular bocs}, where $W=B$, $\mu\colon B\to B\otimes_B B$ is the canonical isomorphism, and $\varepsilon=1_B$.
\end{example}

\begin{definition}
Let $\mathfrak{B}=(B,W)$ be a bocs. Then, the category of \emphbf{modules} over the bocs $\mathfrak{B}$, $\Mod \mathfrak{B}$ (resp. finite dimensional modules $\modu \mathfrak{B}$) is defined as follows:
\begin{description}
\item[objects:] (finite dimensional) $B$-modules
\item[morphisms:] $\Hom_\mathfrak{B}(M,N):=\Hom_{B\otimes B^{op}}(W,\Hom_k(M,N))$
\item[composition:] The composition of two morphisms $f\colon L\to M$ and $g\colon M\to N$ is given by the following composition of $B$-bilinear maps:
\[
\begin{tikzcd}
W\arrow{r}{\mu} &W\otimes_B W\arrow{r}{g\otimes f} &\Hom_k(M,N)\otimes_B \Hom_k(L,M)\\
{}\arrow{r}{\comp}&\Hom_k(L,M)
\end{tikzcd}\]
where $\comp$ denotes the usual composition of functions.
\item[units:] The unit morphism $1_M\in \Hom_{\mathfrak{B}}(M,M)$ is given by the composition of the following maps
\[
\begin{tikzcd}
W\arrow{r}{\varepsilon} &B\arrow{r}{\lambda} &\Hom_k(M,M)
\end{tikzcd}
\]
where $\lambda$ is the function mapping an element $b\in B$ to left multiplication with $b$.
\end{description}
\end{definition}

\begin{remark}\label{bocssecondthirddefinition}
\begin{enumerate}[(i)]
\item This is not the original definition of modules over a bocs, but it is an equivalent one. Using a standard adjunction
\begin{align*}
\Hom_{B\otimes B^{op}}(W,\Hom_k(M,N))&\cong \Hom_B(W\otimes_B M,N)\\
&\cong \Hom_B(M,\Hom_B(W,N))
\end{align*}
one gets back two of the more standard definitions.\\
Using structure transport one sees that the multiplication and units for the second one are given as follows:
\begin{description}
\item[composition:] The composition of $g\colon W\otimes_B M\to N$ and $f\colon W\otimes_B L\to M$ is given by composing the following functions:
\[
\begin{tikzcd}
W\otimes_B L\arrow{r}{\mu\otimes 1} &W\otimes_B W\otimes_B L\arrow{r}{1\otimes_B f} &W\otimes_B M\arrow{r}{g} &N
\end{tikzcd}
\]
\item[units:] $1_M$ is given by $\begin{tikzcd}W\otimes_B M\arrow{r}{\varepsilon\otimes 1} &B\otimes_B M\arrow{r}{\can} &M\end{tikzcd}$, where $\can$ denotes the canonical isomorphism.
\end{description}
For the third definition, structure transport gives the following:
\begin{description}
\item[composition:] The composition of two morphisms $g\colon M\to \Hom_B(W,N)$ and $f\colon L\to \Hom_B(W,M)$ is given by composing the following functions:
\[\begin{tikzcd}
L\arrow{r}{f} &\Hom_B(W,M)\arrow{r}{\Hom_B(W,g)} &\Hom_B(W,\Hom_B(W,N))\\
{}\arrow{r}{\adj} &\Hom_B(W\otimes_B W,N)\arrow{r}{\Hom(\mu,N)} &\Hom_B(W,N),
\end{tikzcd}\]
where $\adj$ denotes the canonical $\Hom$-tensor adjunction homomorphism.
\item[units:] $1_M$ is given by 
\[\begin{tikzcd}[column sep=10ex]M\arrow{r}{\can} &\Hom_B(B,M)\arrow{r}{\Hom_B(\varepsilon,M)} &\Hom_B(W,M)\end{tikzcd},\]
where $\can$ denotes the canonical isomorphism.
\end{description}
The second and the third definition do in fact fit into a more general framework called Kleisli categories of monads and comonads. We will discuss this in the next section. Although it might not seem so at first sight, the first definition is actually the closest one to the representation theory of quivers. Choosing generators of $W$ one can describe morphisms in a very similar way to representations of quivers. We will see one such description in Subsection \ref{bocsesviadifferentialbiquivers}.
\item In the case of the regular bocs, one gets $\modu \mathfrak{B}\cong \modu B$.
\end{enumerate}
\end{remark}

\subsection{Monads and comonads}

In this subsection, monads, comonads, and their Kleisli categories are discussed which form a general framework for the equivalence of the second and third definition of modules over bocses. This possibility was first observed by Bautista, Colavita, and Salmer\'on in \cite{BautistaColavitaSalmeron-Onadjointfunctors-1981}. The content of this subsection is taken quite literally from \cite{Kleiner-Inducedmodules-1981}. For a more recent paper on the subject, see \cite{BohmBrzezinskiWisbauer-Monadsandcomonads-2009}.

\begin{definition}
Let $\mathcal{C}$ be a category. 
\begin{enumerate}[(i)]
\item A \emphbf{monad} on $\mathcal{C}$ consists of an endofunctor $T\colon \mathcal{C}\to \mathcal{C}$ together with two natural transformations $e\colon 1_\mathcal{C}\to T$ and $m\colon T\circ T\to T$ such that the following diagrams commute:
\[
\begin{tikzcd}
T^3\arrow{r}{Tm}\arrow{d}{mT} &T^2\arrow{d}{m}\\
T^2\arrow{r}{m}&T
\end{tikzcd}\qquad
\begin{tikzcd}
T\arrow{r}{Te}\arrow[equals]{rd} &T^2\arrow{d}{m}&T\arrow{l}[swap]{eT}\arrow[equals]{ld}\\
&T
\end{tikzcd}
\] 
\item A \emphbf{comonad} on $\mathcal{C}$ consists of an endofunctor $U\colon \mathcal{C}\to \mathcal{C}$ together with two natural transformations $f\colon U\to 1_\mathcal{C}$ and $c\colon U\to U\circ U$ such that the following diagrams commute:
\[
\begin{tikzcd}
U\arrow{r}{c}\arrow{d}{c} &U^2\arrow{d}{Uc}\\
U^2\arrow{r}{cU} &U^3
\end{tikzcd}\qquad 
\begin{tikzcd}
U&U^2\arrow{r}{Uf}\arrow{l}[swap]{fU}&U\\
&U\arrow{u}{c}\arrow[equals]{ru}\arrow[equals]{lu}
\end{tikzcd}
\]
\end{enumerate}
\end{definition}

Although there are many other examples, e.g. coming from logic, in this article, we will only consider the following examples coming from bocses.

\begin{example}
\begin{enumerate}[(i)]
\item A bocs $(B,W)$ induces a monad $T=\Hom_B(W,-)$ on $\modu B$ where $e\colon 1_{\modu B}\to T$ is given by the canonical isomorphism $1_{\modu B}\to \Hom_B(B,-)$ followed by the map $\Hom_B(\varepsilon,-)$ and $m\colon T^2\to T$ is given by the canonical isomorphism $\Hom_B(W,\Hom_B(W,-))\cong \Hom_B(W\otimes_B W,-)$ followed by $\Hom_B(\mu,-)$.
\item A bocs $(B,W)$ induces a comonad $U=W\otimes_B -$ on $\modu B$ where $f\colon U\to 1_{\modu B}$ is given by the composition of $\varepsilon\otimes_B -$ with the canonical isomorphism and $c\colon U\to U\circ U$ is given by $\mu\otimes_B -$.
\end{enumerate}
\end{example}

Associated to a monad, there is a category, which in the case of the monad (resp. comonad) associated to a bocs turns out to be the module category of the bocs.

\begin{definition}
Let $\mathcal{C}$ be a category.
\begin{enumerate}[(i)]
\item Let $T$ be a monad on $\mathcal{C}$. Then, the \emphbf{Kleisli category} $\Kleisli T$ of $T$ is the category with
\begin{description}
\item[objects:] The objects are the same as the objects of $\mathcal{C}$.
\item[morphisms:] Given two objects, $X,Y\in \Kleisli T$, 
\[\Hom_T(X,Y):=\Hom_{\mathcal{C}}(X,TY).\]
\item[composition:] Given three objects $X,Y,Z\in \Kleisli T$, the composition map
\[\Hom_{\mathcal{C}}(Y,TZ)\times \Hom_{\mathcal{C}}(X,TY)\to \Hom_{\mathcal{C}}(X,TZ)\]
is given by the composition of the following maps 
\[\begin{tikzcd}X\arrow{r}{f}&TY\arrow{r}{Tg}&T^2 Z\arrow{r}{m_Z}&TZ\end{tikzcd}.\]
\item[unit:] The unit morphism in $\Hom_{\mathcal{C}}(X,TX)$ is given by $e_X$.
\end{description}
\item Dually, given a comonad $U$ on $\mathcal{C}$, the corresponding \emphbf{coKleisli category} $\Kleisli U$ of $U$ is the category with
\begin{description}
\item[objects:] The objects are the same as the objects of $\mathcal{C}$.
\item[morphisms:] Given $X,Y\in \Kleisli U$, $\Hom_U(X,Y):=\Hom_{\mathcal{C}}(UX,Y)$.
\item[composition:] Given three objects, $X,Y,Z \in \Kleisli U$, the composition map
\[\Hom_{\mathcal{C}}(UY,Z)\times \Hom_{\mathcal{C}}(UX,Y)\to \Hom_{\mathcal{C}}(UX,Z)\]
is given by the composition of the following maps
\[\begin{tikzcd}UX\arrow{r}{c_X} &U^2X\arrow{r}{Uf} &UY\arrow{r}{g} &Z.\end{tikzcd}\]
\item[unit:] The unit morphism in $\Hom_{\mathcal{C}}(UX,X)$ is given by $f_X$.
\end{description}
\end{enumerate}
\end{definition}

\begin{remark}
\begin{enumerate}[(i)]
\item The associativity of composition and the unitality follow from the corresponding properties of the monad, respectively comonad.
\item If $\mathfrak{B}=(B,W)$ is a bocs then, by definition and by the equivalent descriptions of $\modu\mathfrak{B}$ given in the previous section, $\modu \mathfrak{B}\cong \Kleisli U_{\mathfrak{B}}\cong \Kleisli T_{\mathfrak{B}}$. This is a special case of a general phenomenon observed by Kleiner in \cite{Kleiner-Inducedmodules-1981} that will be recalled next.
\end{enumerate}
\end{remark}

\begin{definition}
Let $\mathcal{C}$ be a category. Let $T$ (resp. $U$) be a monad (resp. comonad) on $\mathcal{C}$. Then $T$ is said to be \emphbf{right adjoint} to $U$ \emphbf{(in the monadic sense)} if there exists an adjunction $\alpha_{X,Y}\colon \begin{tikzcd}\Hom_{\mathcal{C}}(UX,Y)\arrow{r}{\sim}  &\Hom_{\mathcal{C}}(X,TY)\end{tikzcd}$ such that the following two diagrams commute for all $X,Y\in \mathcal{C}$:
\[
\begin{tikzcd}[column sep=10ex]
\Hom_{\mathcal{C}}(UX,Y)\arrow{r}{\alpha_{X,Y}} &\Hom_{\mathcal{C}}(X,TY)\\
\Hom_{\mathcal{C}}(U^2X,Y)\arrow{r}{\alpha_{X,TY}\circ \alpha_{UX,Y}}\arrow{u}{\Hom_{\mathcal{C}}(c_X,Y)} &\Hom_{\mathcal{C}}(X,T^2Y)\arrow{u}[swap]{\Hom_{\mathcal{C}}(X,m_Y)}
\end{tikzcd}
\]
and
\[
\begin{tikzcd}
\Hom_{\mathcal{C}}(UX,Y)\arrow{rr}{\alpha_{X,Y}} &&\Hom_{\mathcal{C}}(X,TY)\\
&\Hom_{\mathcal{C}}(X,Y)\arrow{lu}{\Hom_{\mathcal{C}}(f_X,Y)}\arrow{ru}[swap]{\Hom_{\mathcal{C}}(X,e_Y)}
\end{tikzcd}
\]
\end{definition}

\begin{theorem}
Let $\mathcal{C}$ be a category. Let $T$ be a monad on $\mathcal{C}$ which is right adjoint to a comonad $U$ on $\mathcal{C}$ in the monadic sense. Then, the Kleisli category of $T$ is equivalent to the coKleisli category of $U$.
\end{theorem}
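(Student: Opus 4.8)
The plan is to construct an explicit functor $\Phi\colon \Kleisli U\to \Kleisli T$ which is the identity on objects and is given on morphisms by the adjunction bijections $\alpha_{X,Y}$, and then to show that $\Phi$ is an isomorphism of categories (in particular an equivalence). On objects both categories are literally $\mathcal{C}$, so I set $\Phi(X)=X$. A morphism $X\to Y$ in $\Kleisli U$ is an element of $\Hom_{\mathcal{C}}(UX,Y)$ and a morphism $X\to Y$ in $\Kleisli T$ is an element of $\Hom_{\mathcal{C}}(X,TY)$, so I define $\Phi$ on $\Hom_U(X,Y)$ to be exactly $\alpha_{X,Y}$. Since each $\alpha_{X,Y}$ is a bijection by hypothesis, $\Phi$ is automatically bijective on hom-sets; thus once functoriality is established $\Phi$ will be fully faithful and bijective on objects, hence an isomorphism of categories. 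The only real content is therefore to verify that $\Phi$ preserves identities and composition.

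For identities, the identity of $X$ in $\Kleisli U$ is $f_X\in\Hom_{\mathcal{C}}(UX,X)$ and the identity of $X$ in $\Kleisli T$ is $e_X$. Specialising the triangle (the second compatibility diagram) at $Y=X$ and evaluating on $1_X$ gives $\alpha_{X,X}(1_X\circ f_X)=e_X\circ 1_X$, that is $\alpha_{X,X}(f_X)=e_X$, as required. For composition, given $f\in\Hom_{\mathcal{C}}(UX,Y)$ and $g\in\Hom_{\mathcal{C}}(UY,Z)$ their coKleisli composite is $g\circ Uf\circ c_X$, while the Kleisli composite of $\alpha_{X,Y}(f)$ and $\alpha_{Y,Z}(g)$ is $m_Z\circ T\alpha_{Y,Z}(g)\circ\alpha_{X,Y}(f)$. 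To match these I would put $\phi=g\circ Uf\in\Hom_{\mathcal{C}}(U^2X,Z)$ and apply the square (the first compatibility diagram) to get $\alpha_{X,Z}(\phi\circ c_X)=m_Z\circ\alpha_{X,TZ}(\alpha_{UX,Z}(\phi))$. Then naturality of $\alpha$ in the first variable, applied to $f\colon UX\to Y$, rewrites $\alpha_{UX,Z}(g\circ Uf)=\alpha_{Y,Z}(g)\circ f$, and naturality in the second variable, applied to $\alpha_{Y,Z}(g)\colon Y\to TZ$, rewrites $\alpha_{X,TZ}\bigl(\alpha_{Y,Z}(g)\circ f\bigr)=T\alpha_{Y,Z}(g)\circ\alpha_{X,Y}(f)$; substituting these back yields precisely the Kleisli composite.

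The main obstacle, and the only place that needs care, is this composition identity: it is exactly where both monadic-compatibility diagrams get used, and it also relies on the naturality of the adjunction isomorphism $\alpha_{X,Y}$ in both arguments. I would therefore make this naturality explicit, since it is part of what it means for the family $\alpha_{X,Y}$ to constitute an adjunction, and I would track the variances carefully: $\Hom_{\mathcal{C}}(c_X,Y)$ and $\Hom_{\mathcal{C}}(f_X,Y)$ are contravariant in the first slot, whereas $\Hom_{\mathcal{C}}(X,m_Y)$ and $\Hom_{\mathcal{C}}(X,e_Y)$ are covariant in the second. Once the composition check is complete, essential surjectivity is immediate because $\Phi$ is the identity on objects, and full faithfulness is immediate from the bijectivity of each $\alpha_{X,Y}$, so $\Phi$ is an isomorphism of categories and in particular the desired equivalence.
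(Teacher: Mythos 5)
Your proof is correct: the identity-on-objects functor $\Phi$ defined by $\alpha_{X,Y}$ on hom-sets is exactly the right comparison, with the triangle diagram giving $\alpha_{X,X}(f_X)=e_X$ (preservation of units) and the square diagram combined with naturality of $\alpha$ in both variables giving preservation of composition, whence $\Phi$ is an isomorphism of categories. The paper states this theorem without proof (deferring to Kleiner's 1981 paper, from which the whole subsection is taken), and your argument is precisely the standard one being deferred to, so there is nothing further to compare.
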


Next, we will recall the notion of the Eilenberg--Moore category of a monad, which gives rise to two further equivalent definitions of the category of modules over a bocs.

\begin{definition}
Let $\mathcal{C}$ be a category.
\begin{enumerate}[(i)]
\item Let $T$ be a monad on $\mathcal{C}$. Then, the \emphbf{Eilenberg--Moore category} $\EM(T)$ is defined to be the category with:
\begin{description}
\item[objects:] pairs $(X,h)$, where $X\in \mathcal{C}$ and $h\in \Hom_{\mathcal{C}}(TX,X)$, such that the following diagrams commute:
\[\begin{tikzcd}
T^2X\arrow{r}{m_X}\arrow{d}{Th} &TX\arrow{d}{h}\\
TX\arrow{r}{h}&X
\end{tikzcd}\qquad
\begin{tikzcd}
TX\arrow{r}{h}&X\arrow[equals]{ld}\\
X\arrow{u}{e_X}
\end{tikzcd}\]
\item[morphisms:] $\Hom_{\EM(T)}((X,h),(X',h'))$ is the set given by those morphisms $f\in \Hom_{\mathcal{C}}(X,X')$ such that the following diagram commutes:
\[
\begin{tikzcd}
X\arrow{r}{f}&X'\\
TX\arrow{u}{h}\arrow{r}{Tf}&TX'\arrow{u}{h'}
\end{tikzcd}\]
\item[composition:] Composition is given by the composition in $\mathcal{C}$.
\item[unit:] The unit is given by the identity map $1_X\in \Hom_{\mathcal{C}}(X,X)$.
\end{description} 
\item Dually, let $U$ be a comonad on $\mathcal{C}$. Then, the \emphbf{co-Eilenberg--Moore category} $\EM(U)$ is defined to be the category with:
\begin{description}
\item[objects:] pairs $(Y,g)$, where $Y\in \mathcal{C}$ and $g\in \Hom_{\mathcal{C}}(Y,UY)$, such that the following diagrams commute:
\[\begin{tikzcd}
Y\arrow{r}{g}\arrow{d}{g}&UY\arrow{d}{c_Y}\\
UY\arrow{r}{Ug}&U^2 Y
\end{tikzcd}\qquad
\begin{tikzcd}
Y\arrow{d}{g}\arrow[equals]{rd}\\
UY\arrow{r}{f_Y} &Y
\end{tikzcd}\]
\item[morphisms:] $\Hom_{\EM(U)}((Y,g),(Y',g'))$ is the set given by those morphisms $g\in \Hom_{\mathcal{C}}(Y,Y')$ such that the following diagram commutes:
\[\begin{tikzcd}
Y\arrow{d}{g}\arrow{r}{f} &Y'\arrow{d}{g'}\\
UY\arrow{r}{Uf} &UY'
\end{tikzcd}\]
\item[composition:] Composition is given by the composition in $\mathcal{C}$.
\item[unit:] The unit is given by the idenity map $1_Y\in \Hom_{\mathcal{C}}(Y,Y)$.
\end{description}
\item The \emphbf{free Eilenberg--Moore category} is the full subcategory of the Eilen\-berg--Moore category given by the objects of the form $(TX,m_X)$.
\item The \emphbf{cofree Eilenberg--Moore category} is the full subcategory of the co-Eilenberg--Moore category given by the objects of the form $(UY,c_Y)$.
\end{enumerate}
\end{definition}

\begin{example}
Let $\mathfrak{B}=(B,W)$ be a bocs.
\begin{enumerate}[(i)]
\item For the associated monad $T$, the free Eilenberg--Moore category is equal to the category of coinduced $W$-contramodules, i.e. the full subcategory of all contramodules isomorphic to $\Hom_B(W,M)$ where $M$ is a $B$-module and the contraaction is given by the composition of 
\[\begin{tikzcd}[column sep=1.6cm]\Hom_B(W,\Hom_B(W,M))\arrow{r}{\can} &\Hom_B(W\otimes_B W,M)\arrow{d}{\Hom_B(\mu,M)}\\
{} &\Hom_B(W,M).\end{tikzcd}\]
\item For the associated comonad $U$, the cofree Eilenberg--Moore category is equal to the category of induced $W$-comodules, i.e. the full subcategory of all comodules isomorphic to $W\otimes_B M$ for a $B$-module $M$ where the coaction is given by $\begin{tikzcd}W\otimes_B M\arrow{r}{\mu\otimes M} &W\otimes W\otimes M\end{tikzcd}$.
\end{enumerate}
\end{example}

\begin{lemma}
Let $\mathcal{C}$ be a category.
\begin{enumerate}[(i)]
\item Let $T$ be a monad on $\mathcal{C}$. Let $F\colon \Kleisli T\to \EM(T)$ be the functor given on objects by $X\mapsto TX$ and on morphisms by $f\colon X\to TY$ is mapped to the composition $\begin{tikzcd}TX\arrow{r}{Tf}&T^2Y\arrow{r}{m_Y} &TY\end{tikzcd}$. Then $F$ is fully faithful with essential image the free Eilenberg--Moore category.
\item Let $U$ be a comonad on $\mathcal{C}$. Let $G\colon \Kleisli U\to \EM(U)$ be the functor given on objects by $Y\mapsto UY$ and on morphisms by $g\colon UX\to Y$ is mapped to the composition $\begin{tikzcd}UX\arrow{r}{c_X} &U^2X\arrow{r}{Ug} &UY\end{tikzcd}$. Then $G$ is fully faithful with essential image the cofree co-Eilenberg--Moore category.
\end{enumerate}
\end{lemma}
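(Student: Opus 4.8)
The plan is to split the claim about $F$ into three parts — that $F$ is well defined (it really lands in the free Eilenberg--Moore category and is functorial), that it is fully faithful, and that its essential image is exactly the free Eilenberg--Moore category — and then to treat $G$ by dualising, i.e. by running the same argument with the unit $e$ replaced by the counit $f$ and the monad axioms replaced by the comonad axioms (equivalently, by passing to $\mathcal{C}^{op}$, under which a comonad becomes a monad).

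For well-definedness I would first check that $(TX,m_X)$ is an object of $\EM(T)$: the associativity square for $h=m_X$ is precisely the associativity axiom $m_X\circ Tm_X=m_X\circ m_{TX}$ of the monad, and the unit triangle is the identity $m_X\circ e_{TX}=1_{TX}$. Next, for $f\colon X\to TY$ I would verify that $F(f)=m_Y\circ Tf$ is an $\EM(T)$-morphism, i.e. $F(f)\circ m_X=m_Y\circ T(F(f))$; expanding both sides and using naturality of $m$ applied to $f$ reduces this to the associativity axiom of $T$ at $Y$. Preservation of the Kleisli composition $m_Z\circ Tg\circ f$ and of the Kleisli unit $e_X$ is the same kind of bookkeeping with naturality of $m$ and the unit axioms, which I would state but not carry out in detail.

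The heart of the argument is full faithfulness, and the plan is to produce an explicit two-sided inverse rather than argue abstractly. On hom-sets $F$ is the map
\[
\Hom_{\mathcal{C}}(X,TY)=\Hom_{\Kleisli T}(X,Y)\longrightarrow \Hom_{\EM(T)}\bigl((TX,m_X),(TY,m_Y)\bigr),\qquad f\mapsto m_Y\circ Tf,
\]
whose target sits inside $\Hom_{\mathcal{C}}(TX,TY)$ as those $\phi$ satisfying $\phi\circ m_X=m_Y\circ T\phi$. I would define the candidate inverse by precomposition with the unit, $\Phi(\phi):=\phi\circ e_X$. Then $\Phi(F(f))=m_Y\circ Tf\circ e_X$, and rewriting $Tf\circ e_X=e_{TY}\circ f$ by naturality of $e$ lets the right unit axiom $m_Y\circ e_{TY}=1_{TY}$ collapse this to $f$. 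Conversely $F(\Phi(\phi))=m_Y\circ T\phi\circ Te_X$, and here I would invoke the $\EM(T)$-morphism identity $m_Y\circ T\phi=\phi\circ m_X$ to rewrite this as $\phi\circ(m_X\circ Te_X)=\phi$ by the left unit axiom. The one genuinely non-formal point is exactly this last step: it is the morphism condition built into $\EM(T)$, rather than any monad axiom, that makes $F\Phi=\id$ hold, so this is where I would be most careful.

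Finally, the essential image is read off from the definitions: $F$ sends $X$ to $(TX,m_X)$, and the free Eilenberg--Moore category is by definition the full subcategory on exactly these objects, so $F$ corestricts to a functor that is surjective on the objects of that subcategory; combined with full faithfulness this exhibits $F$ as an equivalence onto the free Eilenberg--Moore category, which is the asserted identification of the essential image. For $G$ and $U$ the co-Eilenberg--Moore morphism condition $U\psi\circ c_X=c_Y\circ\psi$ plays the role of the $\EM(T)$-morphism condition, the inverse of $G$ on hom-sets is postcomposition with the counit, $\psi\mapsto f_Y\circ\psi$, and the two counit axioms $f_{UX}\circ c_X=1_{UX}$ and $Uf_Y\circ c_Y=1_{UY}$ replace the two unit axioms used above; no new ideas are required.
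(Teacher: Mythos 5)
Your proof is correct and complete, and in fact the paper states this lemma without any proof (it is a standard fact about Kleisli and Eilenberg--Moore categories), so there is no argument in the paper to diverge from. Your verification is precisely the standard one the paper leaves to the reader: the monad axioms give that $(TX,m_X)$ is an algebra and that $F$ is a well-defined functor, the map $\phi\mapsto\phi\circ e_X$ is a two-sided inverse on hom-sets --- with the essential use of the $\EM(T)$-morphism identity $m_Y\circ T\phi=\phi\circ m_X$ in the direction $F\Phi=\id$ correctly singled out as the non-formal step --- the essential image is read off since $F$ hits exactly the objects $(TX,m_X)$ defining the free Eilenberg--Moore category, and part (ii) follows by the dual computation with the counit axioms $f_{UX}\circ c_X=1_{UX}$ and $Uf_Y\circ c_Y=1_{UY}$.
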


\begin{example}
Let $\mathfrak{B}=(B,W)$ be a bocs. In this case, the above lemma states that the category of modules over the bocs is equivalent to the category of coinduced $W$-contramodules as well as to the category of induced $W$-comodules.
\end{example}

So far, we have seen five ways to describe the category of modules over a bocs: the bimodule definition, the Kleisli category of $\Hom_B(W,-)$ as well as the coKleisli category of $W\otimes_B -$ and, at the end of this subsection, the category of induced $W$-comodules as well as the category of coinduced $W$-contramodules. In the next subsection, using a duality, there will be two further descriptions.

\subsection{Burt--Butler theory of bocses}

If $C$ is a coalgebra over a field $k$, it is well-known that its dual $DC$ is an algebra over $k$. In 1975 Sweedler, in \cite{Sweedler-Thepredualtheorem-1975}, considered the analogous situation over a not-necessarily commutative ring $B$. He defined for a bocs $\mathfrak{B}=(B,W)$ two duals and showed that certain subcategories of modules and comodules are equivalent. This was further developed in \cite{Kleiner-Thedualring-1984}, and with the emphasis on Auslander--Reiten theory in \cite{BautistaKleiner-Almostsplitsequences-1990} and \cite{BurtButler-Almostsplitsequences-1991}. Other perspectives on this approach can for example be found in \cite{Wisbauer-Onthecategory-2002, PoinsotPorst-Thedualrings-2015}. We follow the approach by Burt and Butler, which is also contained in a manuscript by Burt \cite{Burt-Almostsplitsequences-2005}.

\begin{definition}
Let $\mathfrak{B}=(B,W)$ be a bocs.
\begin{enumerate}[(i)]
\item The \emphbf{right algebra} of $\mathfrak{B}$ is defined to be $R_{\mathfrak{B}}:=\End_{\mathfrak{B}}(B)^{op}$.
\item The \emphbf{left algebra} of $\mathfrak{B}$ is defined to be $L_{\mathfrak{B}}:=\End_{\mathfrak{B}^{op}}(B)$ where the endomorphism ring is computed in the category of right $\mathfrak{B}$-modules, i.e. in any of the definitions left modules are replaced by right modules.
\end{enumerate}
\end{definition}

\begin{remark}
Using the second description of the category of modules over a bocs $\mathfrak{B}=(B,W)$ it can easily be seen that the right algebra of the bocs $\mathfrak{B}$ is just the opposite ring of the $B$-dual algebra of the coalgebra $W$, i.e. $R\cong \Hom_B(W,B)$ with multiplication given by $f\cdot g=g(1\otimes_B f)\mu$, where we have omitted the canonical identification $W\otimes_B B\cong W$. A similar statement is true for $L$ considering right modules instead of left modules.
\end{remark}

Let us recall the following well-known definitions:

\begin{definition}
\begin{enumerate}[(i)]
\item A category $\mathcal{C}$ is called \emphbf{fully additive} (or idempotent closed) if for every $M\in \mathcal{C}$ and every idempotent morphism $e\colon M\to M$ there exists an object $N\in \mathcal{C}$ and morphisms $f\colon M\to N$ and $g\colon N\to M$ such that $e=gf$ and $1_N=fg$.
\item Let $A$ be an algebra. A module $T\in \modu A$ is called a \emphbf{(Miyashita) tilting module} if 
\begin{enumerate}[{(T}1{)}]
\item $T$ has finite projective dimension.
\item $\Ext^i(T,T)=0$ for $i>0$.
\item There exists a short exact sequence $0\to A\to T^{(1)}\to \dots\to T^{(m+1)}\to 0$ for some $m$ with $T^{(j)}\in \add T$, i.e. $T^{(j)}$ is a direct summand of a finite direct sum of copies of $T$.
\end{enumerate}
\item Let $\mathcal{C}$ be a Krull--Remak--Schmidt exact category. Let $X$ be an indecomposable object. A short exact sequence
\[0\to X\to Y\to Z\to 0\]
is called an \emphbf{Auslander--Reiten sequence} if it is not split and every map $Z'\to Z$ which is not a split epimorphism factors through $Y$. In this situation, we write $Z=\tau^{-1}X$ and $X=\tau Z$.
\item A Krull--Remak--Schmidt exact category \emphbf{admits Auslander--Reiten sequences} if for every non-projective object $Z$ there exists an Auslander--Reiten sequence ending in $Z$.
\end{enumerate}
\end{definition}

In order for $\modu \mathfrak{B}$ to admit Auslander--Reiten sequences the bocses need to satisfy the following quite restrictive condition.

\begin{definition}
A bocs $(B,W)$ is said to have \emphbf{projective kernel} if its counit $\varepsilon\colon W\to B$ is surjective and $\ker \varepsilon$ is a projective $B$-$B$-bimodule.
\end{definition}

The following portmanteau theorem summarizes Burt--Butler's theory of bocses.

\begin{theorem}
Let $\mathfrak{B}=(B,W)$ be a bocs with projective kernel and assume that $\modu \mathfrak{B}$ is fully additive. Then the following statements hold:
\begin{enumerate}[(i)]
\item The algebra $B$ is a subalgebra of $R$ as well as of $L$.
\item The algebras $R$ and $L$ are projective over $B$.
\item The induction functor $R\otimes_B -\colon \modu B\to \modu R$ (resp. the coinduction fucntor $\Hom_B(L,-)\colon \modu B\to \modu L$) is faithful, but in general neither full nor dense. Its essential image is extension-closed in $\modu R$.
\item The induction functor $R\otimes_B-\colon \modu B\to \modu R$ (resp. the coinduction functor $\Hom_B(L,-)\colon \modu B\to \modu L$) induces morphisms 
\[\Ext^n_B(M,N)\to \Ext^n_R(R\otimes_B M, R\otimes_B N)\]
(resp. $\Ext^n_B(M,N)\to \Ext^n_L(\Hom_B(L,M),\Hom_B(L,N))$) which are epimorphisms for $n=1$ and isomorphisms for $n\geq 2$.
\item The functor $R\otimes_B -\colon \modu \mathfrak{B}\to \Ind(B,R)$ is an equivalence, where $\Ind(B,R)$ is the category of induced modules, i.e. modules of the form $R\otimes_B M$ for a $B$-module $M$. Analogously, the functor $\Hom_B(L,-)\colon \modu \mathfrak{B}\to \CoInd(B,L)$ is an equivalence, where $\CoInd(B,L)$ is the category of coinduced modules, i.e. modules of the form $\Hom_B(L,M)$ for a $B$-module $M$.
\item The $B$-$B$-bimodule $DW$ is also an $R$-$L$-bimodule which is an $R$-tilting module with $L\cong \End_R(DW)^{op}$ and $R\cong \End_{L}(DW)$. 
\item Let $\modu \mathfrak{B}$ be equipped with the exact structure given by restricting the exact structure of $\modu R$ to $\Ind(B,R)\cong \modu \mathfrak{B}$. Then $\modu \mathfrak{B}$ admits Auslander--Reiten sequences.
\end{enumerate}
\end{theorem}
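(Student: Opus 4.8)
My plan is to run the whole portmanteau statement through one structural fact, namely that the $k$-dual $DW$ is a tilting module over the right algebra $R$; with that in hand, parts (i)--(v) and (vii) come either from unwinding adjunctions or from the Brenner--Butler tilting theorem. First I would fix concrete models for the two algebras. Using the second description of $\modu\mathfrak{B}$ from Remark~\ref{bocssecondthirddefinition} one identifies $R\cong\Hom_B(W,B)$ as an algebra, with unit the counit $\varepsilon$ and product $f\cdot g=g\,(1\otimes_B f)\,\mu$, and dually for $L$ using right modules. Statement (i) is then immediate: since $\varepsilon\colon W\to B$ is surjective, $\Hom_B(\varepsilon,B)\colon B=\Hom_B(B,B)\hookrightarrow\Hom_B(W,B)=R$ is an injective algebra homomorphism, and symmetrically $B\hookrightarrow L$. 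For (ii) the point is that projective kernel forces $W$ to be one-sided projective over $B$: the bimodule $\ker\varepsilon$ is a summand of some $(B\otimes_k B)^{m}$ and hence projective on each side, while $B$ is free of rank one on each side, so $0\to\ker\varepsilon\to W\to B\to 0$ splits over $B$ on the left and on the right; dualising, $R=\Hom_B(W,B)$ and likewise $L$ become projective over $B$. Finally (v) is formal: for finite-dimensional $W$ one has $R\otimes_B N\cong\Hom_B(W,N)$, so $\Hom_R(R\otimes_B M,R\otimes_B N)\cong\Hom_B(M,\Hom_B(W,N))\cong\Hom_{B\otimes B^{op}}(W,\Hom_k(M,N))=\Hom_{\mathfrak{B}}(M,N)$, exhibiting induction as an equivalence onto $\Ind(B,R)$ and, dually, coinduction onto $\CoInd(B,L)$.

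The heart of the theorem is (vi), and I expect it to be the main obstacle. Dualising $\mu$ and $\varepsilon$ makes $DW$ at once a left $R$-module and a right $L$-module, and I would first check that these two actions commute, yielding the $R$-$L$-bimodule structure. The genuine work is to verify the full tilting axioms rather than just the partial-tilting ones: the projective dimension of $DW$ over $R$ is finite because $W$ differs from the regular bimodule $B$ only by the projective bimodule $\ker\varepsilon$, which after dualising and inducing up to $R$ contributes only a single extra layer; self-orthogonality $\Ext^{>0}_R(DW,DW)=0$ and a finite $\add(DW)$-coresolution of $R$ (axiom (T3)) are extracted from the same short exact sequence; and the double-centraliser identities $L\cong\End_R(DW)^{op}$ and $R\cong\End_L(DW)$ record that $DW$ is faithfully balanced. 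The real difficulty is bookkeeping: keeping the left/right module structures and the two different dualities ($D$ over $k$ and $\Hom_B(-,B)$ over $B$) consistent, and establishing the coresolution (T3) rather than merely the easier rigidity condition.

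Granting (vi), the remaining parts are tilting-theoretic. The tilting module $DW$ cuts out a torsion pair in $\modu R$, and one identifies the essential image of induction with one of its two classes; since torsion and torsion-free classes of a tilting torsion pair are extension-closed, this gives (iii), together with the standard observation that a tilting functor is faithful but in general neither full nor dense. For (iv) I would resolve $M$ by projective $B$-modules and apply $R\otimes_B-$, which is exact by (ii) and lands in projective $R$-modules; the adjunction then identifies the comparison map $\Ext^n_B(M,N)\to\Ext^n_R(R\otimes_B M,R\otimes_B N)$ with the map induced by the unit $N\to(R\otimes_B N)|_B$, whose defect is measured by the projective bimodule $\ker\varepsilon$. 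Because that bimodule is projective, the discrepancy between $\Ext_B$ and $\Ext_R$ is concentrated in homological degree one, which is exactly what produces surjectivity for $n=1$ and bijectivity for $n\ge 2$. Finally, for (vii) I would note that $\Ind(B,R)$ is extension-closed by (iii) and functorially finite in $\modu R$, the latter because it is a class of a tilting torsion pair; the Auslander--Smal\o\ criterion then says that a functorially finite, extension-closed subcategory of a module category is a Krull--Remak--Schmidt exact category admitting Auslander--Reiten sequences, and this assertion transports to $\modu\mathfrak{B}$ along the equivalence of (v).
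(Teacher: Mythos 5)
First, a point of comparison: the paper itself does not prove this theorem at all --- it is stated as a summary of Burt--Butler's results, with \cite{BurtButler-Almostsplitsequences-1991} and \cite{Burt-Almostsplitsequences-2005} as sources --- so your proposal can only be judged on its own merits. On those merits, your handling of (i), (ii), (iv) and (v) is essentially sound: the embedding $\Hom_B(\varepsilon,B)\colon B\hookrightarrow R$; the one-sided splittings of $0\to\overline{W}\to W\to B\to 0$ (with the caveat that the conclusion of (ii) genuinely is one-sided --- $R$ is projective as a \emph{right} $B$-module and $L$ as a \emph{left} $B$-module; for the $\mathfrak{sl}_2$-bocs of Example \ref{sl2example} one checks that $R$ is not projective as a left $B$-module); the identification $R\otimes_B-\cong\Hom_B(W,-)$, valid because ${}_BW$ is finitely generated projective; and, for (iv), the key fact that the cokernel of the unit $N\to R\otimes_BN$ is $\Hom_B(\overline{W},N)$, which is an \emph{injective} $B$-module because $\overline{W}$ is a projective bimodule, so that the long exact sequence gives an epimorphism for $n=1$ and isomorphisms for $n\geq 2$.

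The first genuine gap is the torsion-theoretic derivation of (iii) and (vii). The essential image $\Ind(B,R)$ is neither the torsion class nor the torsion-free class attached to $DW$. In the quasi-hereditary case it corresponds to $\mathcal{F}(\Delta)$, which is closed neither under quotients nor under submodules: for the $\mathfrak{sl}_2$-block of Example \ref{examplesl2}, $DW$ is (up to multiplicities) $\Delta(1)\oplus P(1)$ with $P(1)$ the projective-injective module; the quotient of $P(1)$ by its socle (top $L(1)$, socle $L(2)$) lies in the torsion class generated by $DW$ but admits no $\Delta$-filtration, while $L(2)$ lies in the torsion-free class and $\Delta(1)=L(1)\in\mathcal{F}(\Delta)$ does not. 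The correct description of the image in terms of $DW$ is Ringel's $\Ext$-orthogonality statement ${}^{\perp}DW$, whose proof is of the same order of difficulty as (vi) itself. Extension-closure in (iii) should instead be deduced from your own (iv): by surjectivity on $\Ext^1$ and exactness of $R\otimes_B-$, every extension of $R\otimes_BN$ by $R\otimes_BM$ in $\modu R$ is induced from an extension of $N$ by $M$ over $B$. But then (vii) is left unproved: the functorial finiteness of $\Ind(B,R)$, which is what the Auslander--Smal\o{} criterion requires, rested on the same false torsion-pair identification and is established nowhere else in your plan.

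The second gap is (vi) itself, which you rightly call the heart but do not prove. The sketch for (T1) is unsound: the identification $DW\cong\Hom_B(W,DB)\cong R\otimes_BDB$ exhibits $DW$ as an induced module and gives $\projdim_RDW\leq\projdim_BDB$, but projectivity of $\overline{W}$ does not make this ``a single extra layer'', and nothing in the hypotheses makes it finite. Concretely, for the directed bocses of the representation-finite Schur algebra blocks $(\mathcal{A}_n)$ in Section 5, $\Delta(1)=T(1)$ is a summand of $DW$ with the projective resolution of length $n-1$ displayed there, so $\projdim_RDW=n-1$ is unbounded over this class; worse, the regular bocs $(B,B)$ satisfies all hypotheses of the theorem with $R\cong B$ and $DW\cong DB$, so any proof of (T1) from the stated hypotheses alone would show that $DB$ has finite projective dimension over \emph{every} finite dimensional algebra, which is false --- the finiteness has to come from finer input (in the applications, directedness of $B$), which your argument never invokes. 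The coresolution (T3) and the double-centraliser isomorphisms are likewise asserted rather than derived. Since (iii) and (vii) are routed through (vi) in your architecture, these defects are not local: without an independent argument for (vi) --- which is exactly the part Burt--Butler supply --- the proof collapses.
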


\begin{remark}
The exact structure of $\modu \mathfrak{B}$ can in some cases be described intrinsically in terms of the bocs $\mathfrak{B}$ without referring to its right algebra $R$. For details, see e.g. \cite[Section 9]{KoenigKulshammerOvsienko-Quasihereditaryalgebras-2014}.
\end{remark}

Up to the authors knowledge, the following result has not been stated explicitly elsewhere. It can easily be obtained from the foregoing theorem.

\begin{corollary}\label{exactBorelfinitetype}
Let $\mathfrak{B}=(B,W)$ be a bocs with projective kernel. If $B$ is of finite representation type, then $\modu \mathfrak{B}\subseteq \modu R$ is of finite representation type.
\end{corollary}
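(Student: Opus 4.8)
The plan is to reduce the statement to a counting argument in the category of induced modules $\Ind(B,R)$, using part (v) of the portmanteau theorem, which provides an equivalence $R\otimes_B-\colon \modu\mathfrak{B}\to\Ind(B,R)$. Since an equivalence of (additive) categories induces a bijection on isomorphism classes of indecomposable objects, it preserves the property of being representation-finite; hence it suffices to show that $\Ind(B,R)$ has only finitely many indecomposable objects up to isomorphism.

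First I would fix a complete list $M_1,\dots,M_r$ of the indecomposable $B$-modules up to isomorphism, which is finite precisely because $B$ is of finite representation type. By definition every object of $\Ind(B,R)$ has the form $R\otimes_B M$ for some $B$-module $M$, and Krull--Remak--Schmidt in $\modu B$ gives $M\cong\bigoplus_i M_i^{n_i}$. As induction is additive, $R\otimes_B M\cong\bigoplus_i (R\otimes_B M_i)^{n_i}$. Each $R\otimes_B M_i$ is finite-dimensional over $k$ (recall from the earlier remark that $R\cong\Hom_B(W,B)$, which is finite-dimensional since $B$ and $W$ are), and so decomposes into finitely many indecomposable $R$-modules. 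Let $S$ be the finite set of isomorphism classes of indecomposable $R$-modules occurring as a direct summand of some $R\otimes_B M_i$.

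It then remains to see that every indecomposable object of $\Ind(B,R)$ already lies in $S$. Here I would invoke full additivity of $\modu\mathfrak{B}$ (part of the standing hypotheses of Burt--Butler's theorem), so that the equivalent category $\Ind(B,R)$ is idempotent-closed; being a full subcategory of $\modu R$, splitting of idempotents then shows that $\Ind(B,R)$ is closed under direct summands inside $\modu R$. Consequently, if $X\in\Ind(B,R)$ is indecomposable, writing $X\cong R\otimes_B M\cong\bigoplus_i (R\otimes_B M_i)^{n_i}$ and applying Krull--Remak--Schmidt in $\modu R$ forces $X$ to be isomorphic to a single indecomposable summand of some $R\otimes_B M_i$, i.e. $X\in S$. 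Thus $\Ind(B,R)$, and with it $\modu\mathfrak{B}$, has only finitely many indecomposables.

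The computations are entirely routine; the one point that genuinely must be checked, and which I expect to be the main (if modest) obstacle, is the closure of $\Ind(B,R)$ under direct summands in $\modu R$. Without it one would only know that induced modules are finite direct sums of the finitely many building blocks $R\otimes_B M_i$, leaving open the possibility of an ``exotic'' object of $\Ind(B,R)$ that is indecomposable in the subcategory yet decomposable in $\modu R$; the full additivity built into the Burt--Butler framework is exactly what rules this out.
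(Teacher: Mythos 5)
Your proof is correct and is essentially the paper's own argument: the paper gives no written proof beyond the remark that the corollary ``can easily be obtained from the foregoing theorem,'' and what you write---the equivalence $\modu\mathfrak{B}\cong\Ind(B,R)$ from part (v) of the portmanteau theorem, Krull--Remak--Schmidt over $B$ and over $R$, and the full additivity hypothesis to identify indecomposability in $\Ind(B,R)$ with indecomposability in $\modu R$---is precisely the intended filling-in of that remark. Your closing emphasis on closure under direct summands is well placed, as that is exactly the role of the fully additive hypothesis carried over from the Burt--Butler theorem.
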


\subsection{Quasi-hereditary algebras via bocses}

This subsection gives the characterisation of quasi-hereditary algebras by bocses with directed biquiver following \cite{KoenigKulshammerOvsienko-Quasihereditaryalgebras-2014}.

\begin{definition}
Let $\mathfrak{B}=(B,W)$ be a bocs with projective kernel. Let $\overline{W}:=\ker\varepsilon$.
\begin{enumerate}[(i)]
\item The \emphbf{biquiver} of $\mathfrak{B}$ is the quiver with vertices the vertices of the quiver of $B$ and two kinds of arrows (solid or dashed). The solid arrows are given by the quiver of $B$. The dashed arrows are obtained as follows: If $\overline{W}\cong \bigoplus Be_j\otimes_k e_mB$, then for each direct summand  there is a dashed arrow $\begin{tikzcd}m\arrow[dashed]{r} &j\end{tikzcd}$.
\item A biquiver with vertex set indexed by $\{1,\dots, n\}$ is called \emphbf{directed} if whenever there is an arrow $i\to j$ (regardless of whether it is solid or dashed), then $i\leq j$.
\end{enumerate}
\end{definition}

We now come to the equivalent description of quasi-hereditary algebras.

\begin{theorem}[{\cite[Theorem 1.1]{KoenigKulshammerOvsienko-Quasihereditaryalgebras-2014}}]\label{KoenigKulshammerOvsienko}
An algebra $A$ is quasi-hereditary if and only if there is a bocs with projective kernel $(B,W)$ with directed biquiver such that $A$ is Morita equivalent to the right algebra of $(B,W)$. The same holds for the right algebra being replaced by the left algebra.
\end{theorem}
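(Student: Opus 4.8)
The plan is to prove both implications through Burt--Butler's portmanteau theorem, which for a bocs $\mathfrak{B}=(B,W)$ with projective kernel identifies $\modu\mathfrak{B}$ with the extension-closed subcategory $\Ind(B,R)\subseteq\modu R$ of induced modules, and records that induction $R\otimes_B-$ is exact and faithful and that the regular $B$-module corresponds to the projective generator of $\Ind(B,R)$. The forward verification ($\Leftarrow$) then amounts to reading off the quasi-hereditary axioms from the directedness of the biquiver, while the converse ($\Rightarrow$) requires manufacturing a bocs out of the exact category $\mathcal{F}(\Delta)$. Throughout I would reduce to $A$ basic.

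For the direction ($\Leftarrow$), start from $(B,W)$ with projective kernel and directed biquiver, and right algebra $R$. Since the solid part of the biquiver is the quiver of $B$ and is directed (acyclic), the regular module ${}_BB$ has a composition series by the simples $L_B(i)$; applying the exact functor $R\otimes_B-$ exhibits ${}_RR\cong R\otimes_B{}_BB$ as filtered by the modules $\Delta_R(i):=R\otimes_B L_B(i)$, which gives (QH4). Using the equivalence $\modu\mathfrak{B}\cong\Ind(B,R)$ and the decomposition $W\cong B\oplus\overline{W}$ underlying the biquiver, I would compute $\Hom_R(\Delta_R(i),\Delta_R(j))\cong\Hom_{\mathfrak{B}}(L_B(i),L_B(j))$; the summand coming from $B$ contributes $\Hom_B(L_B(i),L_B(j))=\delta_{ij}k$, while the summand coming from $\overline{W}=\bigoplus Be_j\otimes_k e_mB$ contributes one dimension for each dashed arrow $i\to j$. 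Directedness forces every dashed arrow $i\to j$ to satisfy $i\le j$ (and, being loop-free, to occur only for $i<j$), so $\End_R(\Delta_R(i))\cong k$ and $\Hom_R(\Delta_R(i),\Delta_R(j))\ne0\Rightarrow i\le j$, i.e.\ (QH1) and (QH2); the first also shows each $\Delta_R(i)$ is indecomposable. Finally, part (iv) of Burt--Butler gives a surjection $\Ext^1_B(L_B(i),L_B(j))\twoheadrightarrow\Ext^1_R(\Delta_R(i),\Delta_R(j))$, and since the quiver of $B$ is directed the left-hand side vanishes unless there is a solid arrow $i\to j$, which forces $i<j$; this yields (QH3). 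Hence $R$ is quasi-hereditary.

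For the harder direction ($\Rightarrow$), let $A$ be quasi-hereditary and equip $\mathcal{F}(\Delta)$ with the exact structure inherited from $\modu A$. By Ringel's theorem this is a Krull--Remak--Schmidt exact category admitting Auslander--Reiten sequences, and its relative-projective (Ext-projective) objects are precisely $\add A$, so $A$ is a projective generator of $\mathcal{F}(\Delta)$. The strategy is to invoke the Bautista--Kleiner--Burt--Butler machinery: an exact Krull--Remak--Schmidt category with Auslander--Reiten sequences and a projective generator is equivalent, as an exact category, to $\modu\mathfrak{B}$ for a bocs $\mathfrak{B}=(B,W)$ with projective kernel, in such a way that the projective generator corresponds to the free object $B$ and Burt--Butler's dictionary applies. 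Here $B$ is built to have simple modules indexed by $\{1,\dots,n\}$ with solid-arrow quiver recording $\Ext^1_{\mathcal{F}(\Delta)}(\Delta(i),\Delta(j))$, while $\overline{W}=\ker\varepsilon$ encodes the remaining morphisms and the Auslander--Reiten structure. Transporting the identity $\End_{\mathcal{F}(\Delta)}(A)\cong A^{op}$ (for basic $A$) across the equivalence identifies the right algebra $R=\End_{\mathfrak{B}}(B)^{op}$ with $A$ up to Morita equivalence (the two op's cancel). Directedness of the resulting biquiver is then exactly the translation of (QH2) and (QH3): (QH3) makes the solid arrows (carried by $\Ext^1$ between standards) strictly increasing, and (QH2), bounding $\Hom$ between standards, makes the dashed arrows non-decreasing.

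The main obstacle is the construction step in the converse: producing a counit $\varepsilon$ and comultiplication $\mu$ on a bimodule $W$ with $\ker\varepsilon$ projective, and verifying that the resulting $\modu\mathfrak{B}$ is equivalent to $\mathcal{F}(\Delta)$ \emph{as an exact category}, not merely as an additive one. The delicate point is that the projective-kernel condition is the bocs-theoretic shadow of the existence of Auslander--Reiten sequences in $\mathcal{F}(\Delta)$, so the coalgebra structure must be extracted from relative-projective resolutions (or, equivalently, from the $A_\infty$-structure on $\Ext^\ast_{\mathcal{F}(\Delta)}(\bigoplus_i\Delta(i),\bigoplus_i\Delta(i))$); pinning down $\mu$ so that composition in $\modu\mathfrak{B}$ reproduces composition in $\mathcal{F}(\Delta)$ is where essentially all of the work resides. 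Once the bocs is in hand, the directedness and the right-algebra computation are comparatively formal. The statement for the left algebra then follows by the dual argument, replacing induction $R\otimes_B-$ by coinduction $\Hom_B(L,-)$ throughout, since Burt--Butler's theorem is symmetric in $R$ and $L$.
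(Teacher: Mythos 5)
Your direction ($\Leftarrow$) is essentially the argument from the source \cite{KoenigKulshammerOvsienko-Quasihereditaryalgebras-2014}: Burt--Butler's portmanteau theorem makes $R$ projective over $B$, identifies $\modu\mathfrak{B}$ with $\Ind(B,R)$, and your Hom/Ext computations give (QH1)--(QH4). Two caveats: you need directedness to be \emph{strict} (no solid or dashed loops), since a dashed loop would destroy $\End_R(\Delta(i))\cong k$ and a solid loop would destroy $\Ext^1_R(\Delta(i),\Delta(i))=0$; and $W\cong B\oplus\overline{W}$ is only a splitting of one-sided modules (via the grouplike), not of bimodules, so the count of $\Hom_{\mathfrak{B}}(L_i,L_j)$ should be run through the left-exact sequence $0\to\Hom_{B\otimes B^{op}}(B,X)\to\Hom_{B\otimes B^{op}}(W,X)\to\Hom_{B\otimes B^{op}}(\overline{W},X)$ rather than through a direct sum decomposition. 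These are repairable details.

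The converse direction, however, has a genuine gap. The engine of your argument is a ``recognition theorem'': every Krull--Remak--Schmidt exact category with Auslander--Reiten sequences and a projective generator is exact-equivalent to $\modu\mathfrak{B}$ for a bocs $\mathfrak{B}=(B,W)$ with projective kernel. No such theorem exists in Bautista--Kleiner, Burt--Butler, or elsewhere: their results go only in the opposite direction, from a bocs with projective kernel to an exact category with almost split sequences. Indeed, existence of Auslander--Reiten sequences in $\mathcal{F}(\Delta)$ (Ringel's theorem) is far too coarse an invariant to reconstruct a coalgebra structure, and in the logic of this survey Ringel's theorem is \emph{recovered} from the bocs description, not used as input. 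The actual proof constructs $(B,W)$ explicitly: take the $A_\infty$-minimal model (Kadei{\v{s}}vili) on $\Ext^*_A(\Delta,\Delta)$ for $\Delta=\bigoplus_i\Delta(i)$, build $B$ from $\bigoplus_{j\geq 1}\Ext^j_A(\Delta,\Delta)$ by Keller's reconstruction (Theorem \ref{Kellerreconstruction}), set $\overline{W}=B\otimes_k\rad(\Delta,\Delta)\otimes_k B$ with counit and comultiplication dictated by the $A_\infty$-operations mixing the degree-zero part $\Hom_A(\Delta,\Delta)$ with the higher part, prove directly that $\modu\mathfrak{B}\cong\mathcal{F}(\Delta)$ as exact categories, and only then apply Burt--Butler to identify $A$ with the right algebra; directedness of the biquiver then follows from (QH1)--(QH3). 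You do point to this $A_\infty$-construction at the end as ``where essentially all of the work resides,'' which is accurate --- but your proposal defers to it rather than carrying it out, while the mechanism you actually invoke is unavailable, so the hard direction remains unproved.
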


The portmanteau theorem can now be specialised to this situation. For simplicity we will only consider the case of the right algebra. An analogous theorem holds when considering the left algebra. Part (ii) and (iii) are originally due to Ringel, see \cite{Ringel-Thecategoryofmodules-1991}.

\begin{theorem}
Let $R$ be the right algebra of a bocs with directed biquiver $\mathfrak{B}=(B,W)$.
\begin{enumerate}[(i)]
\item Then $B$ is an Borel subalgebra of $R$ such that the induction functor $R\otimes_B-\colon \modu B\to \mathcal{F}(\Delta)$ is faithful and dense. It is full if and only if the standard modules are simple if and only if $\overline{W}=0$. Furthermore, this induction functor preserves $\Ext$-groups starting from $n\geq 2$.
\item There is a tilting module $DW\in \modu R$. This is called the \emphbf{characteristic tilting module}. (To be precise one should say up to multiplicity of direct summands.) The left algebra $L$ is called the \emphbf{Ringel dual} of $R$.
\item The category $\mathcal{F}(\Delta)$ admits Auslander--Reiten sequences.
\end{enumerate}
\end{theorem}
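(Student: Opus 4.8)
The plan is to read the statement as nothing more than a translation of the portmanteau theorem of the previous subsection into the language of quasi-hereditary algebras, so that each of the three items becomes a special case of one of its parts. The hypotheses of that theorem are available: the bocs has projective kernel by assumption, and the full additivity of $\modu\mathfrak{B}$ is routine, since its objects already range over all finite-dimensional $B$-modules and any idempotent bocs-endomorphism splits the underlying $B$-module. By Theorem \ref{KoenigKulshammerOvsienko} the right algebra $R$ is quasi-hereditary, so $\mathcal{F}(\Delta)$ and the standard modules are defined, and the entire proof hinges on the single identification
\[
\Ind(B,R)=\mathcal{F}(\Delta),\qquad \Delta(i)\cong R\otimes_B L_B(i).
\]

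To establish this I would set $\Delta(i):=R\otimes_B L_B(i)$ for the simple $B$-modules $L_B(i)$. Since $R$ is projective, hence flat, as a right $B$-module (portmanteau (ii)), induction $R\otimes_B-$ is exact; applying it to a composition series of an arbitrary $B$-module produces a filtration of $R\otimes_B M$ with subquotients among the $\Delta(i)$, giving $\Ind(B,R)\subseteq\mathcal{F}(\Delta)$. For the reverse inclusion I would use that the essential image of induction is extension-closed in $\modu R$ (portmanteau (iii)) and contains every $\Delta(i)$; as $\mathcal{F}(\Delta)$ is the closure of the standard modules under extensions, it lies in $\Ind(B,R)$. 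It then remains to check that these induced simples really are the standard modules: here directedness of the biquiver supplies the ordering conditions (QH2) and (QH3) on $\Hom$ and $\Ext^1$, the isomorphism $\Ext^n_B(M,N)\cong\Ext^n_R(R\otimes_B M,R\otimes_B N)$ for $n\geq 2$ (portmanteau (iv)) controls higher extensions, the tops of the $\Delta(i)$ are the simples $S_R(i)$, and (QH4) holds because $R=R\otimes_B B\in\Ind(B,R)=\mathcal{F}(\Delta)$; uniqueness of the standard family then forces the identification.

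With this in hand the three parts follow. For (i), the Borel axioms hold because $B$ is directed (B1), induction is exact (B2), and $\Delta_R(i)=R\otimes_B L_B(i)$ (B3); faithfulness is portmanteau (iii), density is the identification, and $\Ext$-preservation for $n\geq 2$ is portmanteau (iv). To see that fullness is equivalent to $\overline{W}=0$ and to simplicity of the standard modules, I would tensor the defining sequence $0\to\overline{W}\to W\to B\to 0$ with $M$ (exact, as $\overline{W}$ is a projective, hence flat, bimodule) and apply $\Hom_B(-,N)$; using $\Hom_R(R\otimes_B M,R\otimes_B N)\cong\Hom_B(W\otimes_B M,N)$, the comparison map is $\Hom_B(\varepsilon\otimes M,N)$, injective with cokernel controlled by the term $\Hom_B(\overline{W}\otimes_B M,N)$ of the long exact sequence, so it is surjective for all $M,N$ exactly when $\overline{W}=0$, which is the dashed-arrow-free case in which every $\Delta(i)$ is simple. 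Part (ii) is portmanteau (vi): $DW$ is an $R$-tilting module with $L\cong\End_R(DW)^{op}$, and identifying it up to multiplicities with the characteristic tilting module uses Ringel's uniqueness of the tilting object of $\mathcal{F}(\Delta)\cap\mathcal{F}(\nabla)$, after which $L=\End_R(DW)^{op}$ is by definition the Ringel dual. Part (iii) is immediate from portmanteau (vii) together with the equivalence $\modu\mathfrak{B}\cong\Ind(B,R)=\mathcal{F}(\Delta)$ of portmanteau (v).

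The main obstacle is the boxed identification, and within it the verification that the induced simples satisfy the ordering axioms (QH2) and (QH3): everything else is bookkeeping layered on top of the portmanteau theorem, but this is precisely the step where directedness of the biquiver is indispensable and where the failure of induction to be full, recorded by the cokernel term $\Hom_B(\overline{W}\otimes_B M,N)$, must be reconciled with the expected $\Hom$- and $\Ext^1$-behaviour of standard modules. I would expect the cleanest organisation to come from reading the six-term exact sequence above simultaneously in degrees $n=0$ and $n=1$, rather than from a direct module-theoretic computation.
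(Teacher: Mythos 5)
Your overall strategy is exactly the paper's own: the theorem is stated there without a separate proof, as the specialisation of Burt--Butler's portmanteau theorem to directed bocses through the identification $\Ind(B,R)=\mathcal{F}(\Delta)$, $\Delta(i)\cong R\otimes_B L_B(i)$ coming from Theorem \ref{KoenigKulshammerOvsienko}, with parts (ii) and (iii) attributed to Ringel. Your reconstruction of that identification (exactness of induction for one inclusion, extension-closedness of the essential image for the other, then verification of the quasi-hereditary axioms for the induced simples using directedness), and your derivation of faithfulness, density, the $\Ext$-statement, (ii) and (iii), all follow the intended route and are essentially correct.

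There are, however, two genuine gaps in part (i). The serious one concerns the three-way equivalence. Your exact sequence $0\to\Hom_B(M,N)\to\Hom_B(W\otimes_B M,N)\to\Hom_B(\overline{W}\otimes_B M,N)\to\Ext^1_B(M,N)$ does give ``full $\Leftrightarrow\overline{W}=0$'' (for the ``only if'' one must still produce, when $\overline{W}\neq 0$, modules $M,N$ with nonzero restriction map, e.g.\ $M$ a suitable simple and $N$ an injective envelope of $\overline{W}\otimes_B M$), and $\overline{W}=0$ clearly forces the standard modules to be simple. But the remaining implication --- all $\Delta(i)$ simple implies $\overline{W}=0$ --- is only asserted in your parenthesis, and it is precisely the implication that does not follow formally; for an arbitrary directed bocs it is false. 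Take the paper's own example preceding Proposition \ref{regularisation}: $B$ the path algebra of $1\to 2$ with solid arrow $a$, one dashed arrow $\varphi\colon 1\to 2$, and $\partial(a)=\varphi$. This bocs is directed with projective kernel and $\overline{W}\neq 0$, yet regularisation gives $\modu\mathfrak{B}\simeq\modu(k\times k)$; its right algebra is the semisimple algebra $k\times M_2(k)$, so all standard modules are simple, while induction is not full (e.g.\ $\Hom_B(P_1,L_2)=0$ but $\Hom_{\mathfrak{B}}(P_1,L_2)\cong k$). So this step requires an additional hypothesis --- that the bocs has no superfluous arrows, or that it is the bocs constructed in \cite{KoenigKulshammerOvsienko-Quasihereditaryalgebras-2014}, where $\overline{W}=B\otimes_k\rad\Hom(\Delta,\Delta)\otimes_k B$ makes the implication immediate --- together with an appeal to regularisation; it cannot be dismissed as bookkeeping.

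The second, smaller gap is your claim that full additivity of $\modu\mathfrak{B}$ is routine. A morphism of bocs modules is not a $B$-module map (only its vertex components are $k$-linear maps, and their images need not form submodules), so an idempotent in $\Hom_{\mathfrak{B}}(M,M)$ does not ``split the underlying $B$-module''. The natural source of full additivity is the equivalence $\modu\mathfrak{B}\simeq\mathcal{F}(\Delta)$ together with closure of $\mathcal{F}(\Delta)$ under direct summands --- but that equivalence is what you are invoking the portmanteau theorem to prove, so arguing this way is circular. In \cite{KoenigKulshammerOvsienko-Quasihereditaryalgebras-2014} full additivity is established directly for directed bocses, by induction along the layered structure, and your write-up needs to cite or reproduce that step before the portmanteau theorem may be applied.
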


Since in Lie theory a Borel subalgebra $\mathfrak{b}\subset \mathfrak{g}$ is unique up to an inner automorphism of $\mathfrak{g}$, it is natural to ask for a sort of uniqueness of an exact Borel subalgebra or of a bocs $\mathfrak{B}$. In this direction, the following is a result of joint work with Vanessa Miemietz.

\begin{theorem}[{\cite{KulshammerMiemietz-Uniqueness-2016}}]
Let $\mathfrak{B}=(B,W)$ and $\mathfrak{B}'=(B',W')$ be two bocses with basic $B$ and Morita equivalent right algebras, such that $\Ext^1_B(L,L)\cong \Ext^1_A(\Delta,\Delta)$, where $L$ is the direct sum of all simple $B$-modules and $\Delta$ the direct sum of all standard modules for $A$. Then, the biquivers associated to $\mathfrak{B}$ and $\mathfrak{B}'$ coincide and $B\cong B'$.
\end{theorem}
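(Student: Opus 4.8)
The plan is to recover each piece of data attached to $\mathfrak{B}$ — the vertex set, the solid arrows, the dashed arrows, and finally the algebra $B$ — as an invariant of the Morita equivalence class of the common right algebra $A$, so that $\mathfrak{B}$ and $\mathfrak{B}'$ are forced to agree. First I would fix the vertices: by the specialised form of Burt--Butler's theorem one has $\Delta(i)=R\otimes_B L_B(i)$, so the simple $B$-modules biject with the standard modules of $A$, hence with the simple $A$-modules. Thus both biquivers have vertex set the simples of $A$, equipped with the intrinsic quasi-hereditary order. The solid arrows are then immediate: by definition they form the Gabriel quiver of $B$, so their number from $i$ to $j$ is the $(i,j)$-component of $\dim\Ext^1_B(L,L)$, which the hypothesis identifies with $\dim\Ext^1_A(\Delta,\Delta)$, a Morita invariant of $A$. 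Hence the solid parts coincide.

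For the dashed arrows I would compute homologically the multiplicity $d_{mj}$ of a summand $Be_j\otimes_k e_mB$ in $\overline W=\ker\varepsilon$. Normalising the bocs so that $W\cong B\oplus\overline W$ as $B$-bimodules, the bimodule definition of $\modu\mathfrak B$ gives $\Hom_{\mathfrak B}(L_m,L_j)=\Hom_{B\otimes B^{op}}(W,\Hom_k(S_m,S_j))\cong\Hom_{B\otimes B^{op}}(B,\Hom_k(S_m,S_j))\oplus\Hom_{B\otimes B^{op}}(\overline W,\Hom_k(S_m,S_j))$. The first summand is $\delta_{mj}k$, and the second has dimension $d_{mj}$ because $\Hom_{B\otimes B^{op}}(Be_j\otimes_k e_mB,\Hom_k(S_m,S_j))\cong e_j\Hom_k(S_m,S_j)e_m\cong k$. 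Therefore $d_{mj}=\dim\Hom_{\mathfrak B}(L_m,L_j)-\delta_{mj}$, and under the equivalence $\modu\mathfrak B\simeq\mathcal F(\Delta)$ of the Main Theorem this is $\dim\Hom_A(\Delta_m,\Delta_j)-\delta_{mj}$ — again a Morita invariant, supported on $m<j$ by (QH2) in accordance with directedness. So the dashed parts agree as well. (Without the bimodule splitting one instead runs the long exact sequence obtained from $0\to\overline W\to W\to B\to 0$; its correction terms depend only on $B$, so the conclusion can be fed from the next step.)

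It remains to prove $B\cong B'$, which I expect to be the heart of the matter. Since $B$ is directed, $\Ext^0_B(L,L)=k^n$ is semisimple, so by Keller's reconstruction theorem (Theorem \ref{Kellerreconstruction}) $B$ is determined up to isomorphism by the $A_\infty$-structure on $E_B:=\Ext^\bullet_B(L,L)$; the plan is to show this $A_\infty$-algebra is itself intrinsic to $A$. Induction $R\otimes_B-$ is exact and $R$ is projective over $B$, so applying it to endomorphisms of a projective resolution of $L$ yields a genuine dg-algebra map $\operatorname{RHom}_B(L,L)\to\operatorname{RHom}_A(\Delta,\Delta)$; on cohomology this is the Burt--Butler comparison map, an isomorphism in degrees $\ge 2$, an isomorphism in degree $1$ by the hypothesis, and the inclusion $k^n\hookrightarrow\End_A(\Delta)$ in degree $0$. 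Passing to minimal models, $E_B$ is thus $A_\infty$-isomorphic to the $A_\infty$-subalgebra $k^n\oplus\Ext^{\ge 1}_A(\Delta,\Delta)$ of the minimal model of $\operatorname{RHom}_A(\Delta,\Delta)$, a subalgebra which is closed under the higher products by strict unitality (the degree-$0$ generators are the units $\id_{\Delta(i)}$) and is manifestly an invariant of $A$. Running the same argument for $\mathfrak B'$ produces the same $A_\infty$-algebra, whence $B\cong B'$. The main obstacle is precisely this last step: one must verify that the exact but non-full induction functor transports the whole $A_\infty$-structure, controlling the interaction of the extra degree-zero endomorphisms in $\End_A(\Delta)$ — invisible to $B$, but exactly what the dashed arrows record — with the Massey products landing in $\Ext^2_A(\Delta,\Delta)$, so that they do not perturb the minimal model reconstructing $B$.
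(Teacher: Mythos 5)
First, a point of order: the survey states this theorem without proof, as a quotation from the cited work in progress \cite{KulshammerMiemietz-Uniqueness-2016}, so there is no in-paper argument to compare yours against; what follows assesses your proposal on its own terms.

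Your recovery of the vertices and of the solid arrows is correct (with the harmless remark that the hypothesis should be read via the canonical comparison map: since the Burt--Butler map $\Ext^1_B(L_i,L_j)\to\Ext^1_A(\Delta(i),\Delta(j))$ of part (iv) of the portmanteau theorem is surjective for each pair $(i,j)$, equality of total dimensions already forces componentwise equality). The first genuine gap is the dashed-arrow count. You cannot ``normalise'' so that $W\cong B\oplus\overline{W}$ as bimodules: a grouplike splits the counit sequence only as a one-sided module map, and the class of $0\to\overline{W}\to W\to B\to 0$ in $\Ext^1_{B\otimes B^{op}}(B,\overline{W})$ is exactly the class of the degree-zero part of the differential, hence is nonzero whenever some solid arrow has nonzero differential --- e.g.\ for the bocses of $(\mathcal{D}_3)$, $(\mathcal{R}_4)$, $(\mathcal{H}_4)$, $(\mathcal{D}_4)$ computed in Section 5 of this very paper. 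Your fallback (the long exact sequence) is the right move, but your justification for it is false: the connecting map $\Hom_{B\otimes B^{op}}(\overline{W},\Hom_k(L_m,L_j))\to\Ext^1_B(L_m,L_j)$ is cup product with that extension class, so the correction term depends on $W$, not only on $B$. Concretely, the two bocses on the biquiver with one solid arrow $a\colon 1\to 2$ and one dashed arrow $\varphi\colon 1\dashrightarrow 2$, with $\partial(a)=0$ (Example \ref{sl2example}) and with $\partial(a)=\varphi$ (the first example of the reduction-algorithm subsection), have the same $B$ and the same $\overline{W}$, but connecting maps $0$ and an isomorphism respectively; their right algebras are Morita equivalent to the algebra of Example \ref{examplesl2} and to $k\times k$, and your formula $d_{mj}=\dim\Hom_A(\Delta(m),\Delta(j))-\delta_{mj}$ fails for the second one. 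The correct repair is that, by Burt--Butler's proof of part (iv), the map out of $\Ext^1_B(L_m,L_j)$ in this long exact sequence is precisely the comparison map to $\Ext^1_A(\Delta(m),\Delta(j))$; its kernel, i.e.\ the image of the connecting map, vanishes exactly because of the standing $\Ext^1$-hypothesis (which, as the counterexample shows, is indispensable here, cf.\ the remark on regularisation, Proposition \ref{regularisation}). Only then does $d_{mj}=\dim\Hom_{\mathfrak{B}}(L_m,L_j)-\delta_{mj}=\dim\Hom_A(\Delta(m),\Delta(j))-\delta_{mj}$ follow, using part (v) of the portmanteau theorem.

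The second gap is the one you flag yourself: the transport of the $A_\infty$-structure along the induction functor, which is the heart of the theorem, is left unproved, so as written this is a programme rather than a proof. It can be closed roughly as you suggest, but the argument must be made: choose strictly unital minimal models and a strictly unital $A_\infty$-quasi-inverse (possible by \cite{LefevreHasegawa-SurlesAinfini-2003}), so that the induced $A_\infty$-morphism $\Psi\colon\Ext^*_B(L,L)\to\Ext^*_A(\Delta,\Delta)$ is strictly unital with $\Psi_1$ the Burt--Butler comparison map. Each higher component $\Psi_n$, $n\geq 2$, vanishes when an input is an identity --- and the identities span the entire degree-zero part of the source --- while on inputs of positive degree it lands in degree at least $n+(1-n)=1$, where $\Psi_1$ is an isomorphism by hypothesis and part (iv). Hence all $\Psi_n$, $n\geq2$, take values in your subalgebra $S=k^n\oplus\Ext^{\geq1}_A(\Delta,\Delta)$, so $\Psi$ corestricts to an $A_\infty$-morphism onto $S$ with bijective linear part, i.e.\ an $A_\infty$-isomorphism; this is the step your write-up defers. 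Two further points then still need attention before invoking Theorem \ref{Kellerreconstruction}: the Gabriel quiver of $B$ must be acyclic (no loops: this follows from $\Ext^1_B(L_i,L_i)\cong\Ext^1_A(\Delta(i),\Delta(i))=0$ by (QH3), combined with directedness), and one must check that Keller's reconstruction assigns isomorphic algebras $kQ/I$ to minimal $A_\infty$-structures related by a not necessarily strict $A_\infty$-isomorphism --- true, but a verification your proposal silently assumes.
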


The restriction that $\Ext^1_B(L,L)\cong \Ext^1_A(\Delta,\Delta)$ is not essential. It can always be achieved using the process of regularisation described in Proposition \ref{regularisation}.

\subsection{Curved differential graded categories and bocses}

In this short subsection, we will state the equivalence between bocses with surjective counit and curved differential graded algebras. This was first proved in \cite{Brzezinski-Curveddifferentialgraded-2013} generalising an earlier result of Ro\v{\i}ter in \cite{Roiter-Matrixproblems-1979}. We start by recalling the definition of a curved differential graded algebra.

\begin{definition}
\begin{enumerate}[(i)]
\item A \emphbf{curved differential graded algebra} is an $\mathbb{N}$-graded algebra $\mathcal{A}=\bigoplus_{n\in \mathbb{N}} \mathcal{A}^{(n)}$ together with a $k$-linear map  $d\colon \mathcal{A}\to \mathcal{A}$ of degree $1$ and an element $\gamma\in \mathcal{A}^{(2)}$ satisfying the following condition:
\begin{enumerate}[{(CDGA}1{)}]
\item $d$ satisfies the \emphbf{graded Leibniz rule}, i.e. 
\[d(ab)=d(a)b+(-1)^{|a|}ad(b)\] for all homogeneous $a,b\in \mathcal{A}$.
\item For all $a\in \mathcal{A}$, $d^2(a)=\gamma a-a\gamma$.
\item $d(\gamma)=0$.
\end{enumerate}
\item A curved differential graded algebra is called \emphbf{semi-free} if $\mathcal{A}\cong T_B(W)$ as graded algebras, where $B$ is an algebra and $W$ is an $A$-bimodule and the grading on the tensor algebra $T_B(W)$ is given by the usual tensor grading where $B$ has degree $0$ and $W$ is of degree $1$.
\item A curved differential graded algebra is called a \emphbf{differential graded algebra} if $\gamma=0$.
\end{enumerate}
\end{definition}

For stating Ro\v{\i}ter's original result, we have to recall the definition of a normal bocs.

\begin{definition}
Let $B$ be a basic algebra with idempotents $e_1,\dots, e_n$. A bocs $(B,W)$ is called \emphbf{normal} if there is a set $\mathfrak{w}=\{w_i\in W(i,i)| \varepsilon(w_i)=e_i\}$ such that $\mu(w_i)=w_i\otimes w_i$.
\end{definition}

The second part of the following theorem is due to Ro\v{\i}ter while its generalisation \cite[Corollary 3.12]{Brzezinski-Curveddifferentialgraded-2013} is due to Brzezi{\'n}ski \cite{Brzezinski-Curveddifferentialgraded-2013}.

\begin{theorem}
There is an equivalence of categories between the category of bocses with a surjective counit and the category of semi-free curved differential graded algebras. It restricts to an equivalence between the category of normal bocses and the category of semi-free differential graded algebras.
\end{theorem}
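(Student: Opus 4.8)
The plan is to define mutually quasi-inverse functors between the two categories and then check the restriction to the normal/differential-graded case. First I would unpack the data: a bocs $(B,W)$ with surjective counit $\varepsilon\colon W\to B$ splits as $W\cong B\oplus \overline{W}$ as a $B$-$B$-bimodule, where $\overline{W}=\ker\varepsilon$. Given such a bocs, I would produce the semi-free graded algebra $\mathcal{A}:=T_B(\overline{W})$, so that $B$ sits in degree $0$ and $\overline{W}$ in degree $1$. The content of the theorem is that the comultiplication $\mu$ and counit $\varepsilon$ on $W$ are equivalent data to a curvature element $\gamma\in \mathcal{A}^{(2)}=\overline{W}\otimes_B\overline{W}$ together with a degree-$1$ derivation $d$ satisfying (CDGA1)--(CDGA3).

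**From comultiplication to differential.**
The key step is to extract $d$ and $\gamma$ from $\mu$. Writing the normalised comultiplication on $\overline{W}$ via the decomposition $W\cong B\oplus\overline{W}$, the component $\mu\colon \overline{W}\to (B\otimes_B\overline{W})\oplus(\overline{W}\otimes_B B)\oplus(\overline{W}\otimes_B\overline{W})$ has three parts; the first two (using the counit axiom) encode how a degree-$1$ generator acts, and the third part is precisely the quadratic term defining a derivation $d\colon\overline{W}\to \overline{W}\otimes_B\overline{W}$. I would then extend $d$ to all of $T_B(\overline{W})$ by the graded Leibniz rule (CDGA1), which is forced and well-defined on a tensor algebra. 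The curvature $\gamma$ arises from the failure of the chosen splitting to be coassociative on the nose: the non-coassociativity of $\mu$, measured against the idempotent $w_i$'s that would exist in the normal case, lands in $\overline{W}\otimes_B\overline{W}=\mathcal{A}^{(2)}$. The coassociativity diagram for $\mu$ then translates, after a somewhat delicate bookkeeping, into exactly the two remaining axioms: $d^2(a)=\gamma a-a\gamma$ (CDGA2) and $d(\gamma)=0$ (CDGA3). Conversely, given $(\mathcal{A}=T_B(W),d,\gamma)$ I would reconstruct $\mu$ and $\varepsilon$ on $W:=B\oplus \mathcal{A}^{(1)}$ by reversing this dictionary, and verify the coalgebra axioms.

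**Functoriality and the normal/dg restriction.**
Having the object-level bijection, I would promote it to an equivalence of categories by checking that morphisms of bocses (pairs compatible with $\mu,\varepsilon$) correspond bijectively to morphisms of curved dg algebras (graded algebra maps commuting with $d$ and respecting $\gamma$); this is a routine diagram chase once the structure maps are matched, so I would not belabour it. For the final sentence, the point is that normality supplies group-like elements $w_i$ with $\mu(w_i)=w_i\otimes w_i$, which is exactly the condition allowing the splitting $W\cong B\oplus\overline{W}$ to be chosen compatibly with the algebra structure so that the curvature term vanishes; thus normal bocses correspond to the case $\gamma=0$, i.e. to honest (uncurved) semi-free differential graded algebras.

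**Main obstacle.**
The hard part will be the bookkeeping in the middle step: correctly tracking signs and the $B$-bimodule tensor placements when translating the coassociativity pentagon for $\mu$ into the identities $d^2=[\gamma,-]$ and $d\gamma=0$. In particular one must verify that the chosen splitting does not introduce spurious terms and that $\gamma$ is genuinely well-defined independent of auxiliary choices (or, more precisely, that different splittings yield gauge-equivalent curved dg algebras, so that the equivalence is well-posed at the level of categories). I would rely on \cite{Brzezinski-Curveddifferentialgraded-2013} for the clean formulation of this dictionary rather than reproducing every sign computation.
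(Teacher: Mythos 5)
The paper itself offers no proof of this statement: it is quoted from Ro\v{\i}ter and Brzezi\'nski, so your attempt has to be measured against the known construction, part of which the paper actually displays in the subsection on differential biquivers (the formulas $\partial(a)=aw_i-w_ja$, $\partial(v)=\mu(v)-v\otimes w_i-w_j\otimes v$, and the right $B$-action on $W=B\oplus\overline{W}$ given by $\begin{pmatrix}1\\\partial\end{pmatrix}$). Against that benchmark your proposal has a genuine gap at its very first step. Surjectivity of $\varepsilon$ does \emph{not} give a splitting $W\cong B\oplus\overline{W}$ as $B$-$B$-bimodules; it only provides a $k$-linear section, i.e.\ an element $w\in W$ with $\varepsilon(w)=1$ (a bimodule section would force $bw=wb$ for all $b$). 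The bimodule extension class of $0\to\overline{W}\to W\to B\to 0$ is genuine data, and it is encoded precisely in the degree-zero component of the differential, $d(b)=bw-wb\in\overline{W}$, which your construction never defines. A graded derivation of $T_B(\overline{W})$ is determined by its values on $B$ \emph{and} on $\overline{W}$; extracting only a map $\overline{W}\to\overline{W}\otimes_B\overline{W}$ and "extending by Leibniz" implicitly sets $d|_B=0$, which loses exactly the information distinguishing $W$ from the trivial extension and destroys bijectivity of the correspondence. The paper's twisted right action $\begin{pmatrix}1\\\partial\end{pmatrix}$ in the inverse construction exists precisely because the sequence does not split as bimodules.

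The second error is your description of the curvature: $\mu$ is coassociative by the coring axioms, so $\gamma$ cannot measure "non-coassociativity of $\mu$". The curvature is the failure of the chosen section to be group-like, $\gamma=\mu(w)-w\otimes w$, which lands in $\overline{W}\otimes_B\overline{W}$ by the counit axioms; (CDGA2) and (CDGA3) then follow from coassociativity and counitality of $(W,\mu,\varepsilon)$ by direct computation. Your final paragraph is right in spirit — a group-like section forces $\gamma=0$, so normal bocses give honest dg algebras — but note that normality kills only the curvature, not the degree-zero differential: even for a normal bocs one has $d(b)=bw_i-w_jb\neq 0$ in general, as in the paper's formula. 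Repairing your argument means replacing the bimodule splitting by a $k$-linear section $w$, defining $d$ on both degrees by the two formulas above, setting $\gamma=\mu(w)-w\otimes w$, and then checking the axioms and functoriality (where morphisms on the curved side must be allowed to absorb a change of section, which is how Brzezi\'nski makes the assignment independent of the choice of $w$).
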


The next subsection gives a rough outline on how to construct the bocs $\mathfrak{B}=(B,W)$ with a right algebra $R$ starting from a quasi-hereditary algebra $A$ in such a way that $R$ is Morita equivalent to $A$.

\subsection{Bocses via $A_\infty$-categories}

In the previous subsection we have seen the relation of bocses to differential graded algebras, which are differential graded categories with one object. In this subsection we are concerned with the relation of bocses to $A_\infty$-categories. $A_\infty$-categories are a generalisation of differential graded categories. One major advantage upon differential graded categories is that the homology of an $A_\infty$-category is a (quasi-isomorphic) $A_\infty$-category. For a general introduction into $A_\infty$-categories, the reader can consult the three survey articles \cite{Keller-Introduction-2001, Keller-Ainfinityalgebras-2002, Keller-Ainfinityalgebras-2006} of Keller dealing with different aspects of the theory. The results stated in these surveys follow from the abstract theory developed in \cite{LefevreHasegawa-SurlesAinfini-2003}. Other introductions to $A_\infty$-categories can be found in \cite[Appendix B]{Madsen-Homologicalaspects-2002}, \cite{LuPalmieriWuZhang-Ainfinityalgebras-2004} and \cite[Chapter I]{Seidel-Fukayacategories-2008}. The reader should be warned that the different authors deal with different sign conventions.

\begin{definition}
A (small) \emphbf{$A_\infty$-category} $\mathcal{A}$ consists of
\begin{description}
\item[objects:] a set of objects $\mathcal{A}$,
\item[morphisms:] ror each pair of objects $x,y\in \mathcal{A}$ a $\mathbb{Z}$-graded $k$-vector space of morphisms $\mathcal{A}(x,y)$,
\item[multiplications:] for each $n$ and each sequence of objects $x_1,\dots,x_n\in \mathcal{A}$ a $k$-linear map $m_n\colon \mathcal{A}(x_{n-1},x_n)\otimes_k \dots\otimes_k \mathcal{A}(x_1,x_2)\to \mathcal{A}(x_1,x_n)$ of degree $2-n$ such that the following conditions hold for all $n\in \mathbb{N}$:
\[(A_n)\quad \sum_{\substack{n=r+s+t\\u=r+1+t}}(-1)^{r+st}m_u(1^{\otimes r}\otimes m_s\otimes 1^{\otimes t})=0.\]
\end{description}
\end{definition}

\begin{remark}
\begin{enumerate}[(i)]
\item The condition $(A_1)$ states that $m_1\circ m_1=0$, so that $\mathcal{A}(x,y)$ can be regarded as a complex and one can define its homology $H^*\mathcal{A}$.
\item The condition $(A_2)$ is just the graded Leibniz rule for $m_2$ and the differential $m_1$.
\item An $A_\infty$-category with $m_n=0$ for $n\geq 3$ is precisely the same as a differential graded category.
\item Note that $m_2$ is in general not associative for an $A_\infty$-category, but it is associative up to a homotopy given by $m_3$ as one can see from the condition for $n=3$. In the examples we consider $\mathcal{A}$ will either be differential graded so that $m_3=0$ or it will be \emphbf{minimal}, i.e. $m_1=0$. In these cases, condition $(A_3)$ just gives associativity of the algebra.
\end{enumerate}
\end{remark}

Morphisms in the category of (small) $A_\infty$-categories are defined as follows:

\begin{definition}
\begin{enumerate}[(i)]
\item A morphism $f\colon \mathcal{A}\to \mathcal{B}$ between two (small) $A_\infty$-ca\-te\-go\-ries $\mathcal{A}$ and $\mathcal{B}$ is given by an object $f(x)\in \mathcal{B}$ for each object $x\in \mathcal{A}$ and for all $n\geq 1$ maps $f_n\colon \mathcal{A}(x_{n-1},x_n)\otimes_k \dots\otimes_k \mathcal{A}(x_1,x_2)\to \mathcal{B}(f(x_1),f(x_n))$ of degree $1-n$ such that for all $n\geq 1$ the following equations hold:
\[\sum_{\substack{n=r+s+t\\u=r+1+t}} (-1)^{r+st}f_u(1^{\otimes r}\otimes_k m_s\otimes_k 1^{\otimes t})=\sum_{\substack{1\leq r\leq n\\n=i_1+\dots+i_r}}(-1)^sm_r(f_{i_1}\otimes_k \dots\otimes f_{i_r})\]
where $s=(r-1)(i_1-1)+(r-2)(i_2-1)+\dots+2(i_{r-2}-1)+(i_{r-1}-1)$. 
\item The morphism $f$ is called a \emphbf{quasi-isomorphism} if $f_1$ is a quasi-isomorphism of complexes.
\end{enumerate}
\end{definition}

The following theorem, usually attributed to Kadei{\v{s}}vili (see \cite{Kadeishvili-Thealgebraicstructure-1983}, but also \cite{Kadeishvili-Onthetheory-1980, Smirnov-Homology-1980, Proute-Algebresdifferentielles-1984, GugenheimLambeStasheff-Algebraicaspects-1990, JohanssonLambe-Transferringalgebrastructures-2001, Merkulov-Stronghomotopyalgebras-1999}), states the existence of a minimal model for every small $A_\infty$-category, i.e. a quasi-isomorphic $A_\infty$-category with $m_1=0$.

\begin{theorem}
Let $\mathcal{A}$ be a small $A_\infty$-category. Then, there is an $A_\infty$-structure on $H^*\mathcal{A}$ with $m_1=0$ and $m_2$ is induced by the $m_2$ on $\mathcal{A}$, such that there is a quasi-isomorphism of $A_\infty$-categories $H^*\mathcal{A}\to \mathcal{A}$ lifting the identity of $H^*\mathcal{A}$.
\end{theorem}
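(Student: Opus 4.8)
The plan is to prove this by \emph{homotopy transfer}: construct a contraction of each morphism complex onto its homology and then transport the $A_\infty$-operations along it. \textbf{Step 1: a contraction.} For each pair of objects $x,y$ I would fix a contraction of the complex $(\mathcal{A}(x,y),m_1)$ onto $H^*\mathcal{A}(x,y)$. Since $k$ is a field, every complex splits: choosing complements to the boundaries inside the cycles, and to the cycles inside the whole space, produces $k$-linear maps $p\colon \mathcal{A}(x,y)\to H^*\mathcal{A}(x,y)$, $i\colon H^*\mathcal{A}(x,y)\to \mathcal{A}(x,y)$ and a degree $-1$ homotopy $h\colon \mathcal{A}(x,y)\to \mathcal{A}(x,y)$ with $pi=\id$ and $\id-ip=m_1h+hm_1$, together with the side conditions $h^2=0$, $hi=0$, $ph=0$; here $pm_1=0$ and $m_1i=0$ because $i$ lands in cycles and $p$ annihilates boundaries.

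\textbf{Step 2: the transported operations.} Next I would define the operations $m_n^H$ on $H^*\mathcal{A}$ together with the components $f_n$ of a comparison morphism, simultaneously by induction on $n$, by $m_1^H=0$, $f_1=i$, and for $n\ge 2$
\[m_n^H=\sum_{\substack{k\ge 2\\ i_1+\cdots+i_k=n}}\pm\,p\,m_k(f_{i_1}\otimes\cdots\otimes f_{i_k}),\qquad f_n=\sum_{\substack{k\ge 2\\ i_1+\cdots+i_k=n}}\pm\,h\,m_k(f_{i_1}\otimes\cdots\otimes f_{i_k}).\]
For $n=2$ this yields $m_2^H=p\,m_2(i\otimes i)$, so $m_2^H$ is exactly the operation induced by $m_2$, as demanded by the statement. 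Unwinding the recursion expresses each $m_n^H$ as a sum over planar rooted trees, with leaves decorated by $i$, internal edges by $h$, the root by $p$, and the vertices by the $m_k$.

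\textbf{Step 3: verification and quasi-isomorphism.} I would then check that $(H^*\mathcal{A},\{m_n^H\})$ satisfies the relations $(A_n)$ and that $f=\{f_n\}$ is an $A_\infty$-morphism. The cleanest route is the bar construction: an $A_\infty$-structure on $\mathcal{A}$ is the same as a square-zero degree-one coderivation $b=\sum_n b_n$ on the tensor coalgebra $T^c(s\mathcal{A})$, and the contraction $(i,p,h)$ lifts, by the tensor trick, to a contraction of $(T^c(s\mathcal{A}),b_1)$ onto $(T^c(sH),0)$. Viewing $\delta:=\sum_{n\ge 2}b_n$ as a perturbation of the linear differential $b_1$, which is admissible because the tensor-length filtration makes it locally nilpotent, the Homological Perturbation Lemma produces a transferred perturbation $\delta'$ on $T^c(sH)$. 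By construction $\delta'$ is again a square-zero coderivation, hence an $A_\infty$-structure on $H^*\mathcal{A}$, and the transferred comparison map is a morphism of dg coalgebras, i.e. the $A_\infty$-morphism $f$, whose components recover the $m_n^H$ and $f_n$ above. Since $f_1=i$ induces the identity on homology, $f$ is a quasi-isomorphism lifting $\id_{H^*\mathcal{A}}$.

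\textbf{Main obstacle.} The genuine difficulty is the sign bookkeeping: matching the Koszul signs $(-1)^{r+st}$ appearing in $(A_n)$ and in the morphism equations with those generated by the tree summation. Working throughout on the shifted spaces $s\mathcal{A}$, where all the $b_n$ have uniform degree $+1$ and the $A_\infty$-relations are packaged as the single identity $b^2=0$, is precisely what tames this, since the perturbation lemma then delivers $\delta'^2=0$ for free from $b^2=0$ and thereby yields all the quadratic relations at once, without term-by-term sign checking. The only input beyond formal manipulation is the existence of the contraction of Step 1, which is where the hypothesis that $k$ is a field is used.
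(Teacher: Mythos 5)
Your proof is correct, and its core coincides with what the paper records: the paper itself does not prove this theorem (it is quoted from Kadei\v{s}vili and others) but recalls, in the remark that follows it, two constructions specialised to differential graded categories. Your Steps 1--2 are essentially Merkulov's construction as recalled there: your choice of complements producing the contraction $(i,p,h)$ is the paper's decomposition $\mathcal{A}=\mathcal{B}\oplus\mathcal{H}\oplus\mathcal{L}$ with your $h$ playing the role of the map $G$, and your recursion for $m_n^H$ and $f_n$ unwinds to the same tree sum as the recursion defining the $\lambda_n$ with $m_n=p\lambda_n i$. Where you go beyond the paper is twofold. First, you treat the general $A_\infty$ case (vertices of arbitrary arity), which is what the theorem actually asserts, whereas the paper's remark restricts to differential graded categories, i.e.\ $m_k=0$ for $k\geq 3$. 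Second, you supply a verification mechanism, via the bar construction and the homological perturbation lemma, which the paper omits entirely; this is also what lets you avoid checking Koszul signs term by term, and it is genuinely different from Keller's inductive construction (the paper's second recalled route), which builds $f_n$ and $m_n$ step by step from chosen homotopies without fixing a global contraction. One point you should make explicit in Step 3: the perturbation lemma by itself only yields a perturbed differential and chain maps on $T^c(sH)$; to conclude that $\delta'$ is a coderivation and that the transferred map is a morphism of dg coalgebras (hence an $A_\infty$-morphism) you need the side conditions $h^2=0$, $hi=0$, $ph=0$ together with the compatibility of the tensor-trick contraction with the coalgebra structure --- this is a known lemma, but it is precisely the step a careful reader would ask you to justify rather than assert "by construction".
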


There are two constructions for computing this $A_\infty$-structure on $H^*\mathcal{A}$. Since we only need it in this case, we specialise to $\mathcal{A}$ being a differential graded category.

\begin{remark}
\begin{enumerate}[(i)]\label{Merkulov}
\item The first construction is due to Merkulov in \cite{Merkulov-Stronghomotopyalgebras-1999}. There are several possible choices one can make in this construction. We follow the choices of \cite{LuPalmieriWuZhang-Ainfinitystructure-2009}.   Let $\mathcal{A}$ be a differential graded category with differential $d$. Let $\mathcal{Z}$ be the cycles of $\mathcal{A}$ and $\mathcal{B}$ be the boundaries. Identify the homology of $\mathcal{A}$ with a subspace $\mathcal{H}$ of $\mathcal{A}$. As we work over a field, we can find a subspace $\mathcal{L}$ of $\mathcal{A}$ with $\mathcal{A}=\mathcal{B}\oplus \mathcal{H}\oplus \mathcal{L}$. Let $p\colon \mathcal{A}\to \mathcal{H}$ be the projection on $\mathcal{H}$ and let $G\colon \mathcal{A}\to \mathcal{A}$ be the degree $-1$ map with $Q|_{\mathcal{L}\oplus \mathcal{H}}=0$ and $Q|_{\mathcal{B}}=(d|_{\mathcal{L}})^{-1}$. Let $\lambda_n\colon \mathcal{A}^{\otimes n}\to \mathcal{A}$ for $n\geq 2$ be defined recursively as $\lambda_2(a_1,a_2)=m_2(a_1, a_2)$ and 
\[\lambda_n(a_1,\dots,a_n)=-\sum_{\substack{k+l=n\\k,l\geq 1}}(-1)^\sigma m_2(G(\lambda_k(a_1,\dots,a_k)),G(\lambda_l(a_{k+1},\dots,a_n))),\]
where $\sigma=k+(l-1)\left(\sum_{i=1}^{k}|a_i|\right)$, and $G\lambda_1:=-1$ by convention. Then a minimal model is defined on $\mathcal{H}$ by $m_1=0$ and $m_n=p\lambda_n i$, where $i\colon \mathcal{H}\to \mathcal{A}$ is the canonical inclusion. For an example of this construction, see page \pageref{exampleMerkulov}.
\item The second construction can be found in Keller's paper \cite{Keller-Ainfinityalgebras-2002}. Again let $\mathcal{A}$ be a differential graded category with differential $d$. Define $m_i\colon H^*\mathcal{A}\to H^*\mathcal{A}$ of degree $2-i$ inductively as follows: $m_1=0$ and $m_2$ is the multiplication induced by the multiplication of $\mathcal{A}$. By definition, $m_2(f_1\otimes f_1)$ and $f_1m_2$ are homotopic as morphisms of complexes. Choose $f_2\colon H^*\mathcal{A}\otimes H^*\mathcal{A}\to \mathcal{A}$ as a morphism of complexes of degree $-1$ such that $f_1m_2=m_1f_2+m_2(f_1\otimes f_1)$. Let $\Phi_3=m_2(f_1\otimes f_2-f_2\otimes f_1)+f_2(1\otimes m_2-m_2\otimes 1)$. Then, by a similar argument, there exist $f_3\colon (H^*\mathcal{A})^{\otimes 3}\to \mathcal{A}$ and $m_3\colon (H^*\mathcal{A})^{\otimes 3}\to H^*\mathcal{A}$ such that $f_1m_3=m_1f_3+\Phi_3$. Continueing in a similar fashion one can construct $f_i$ and $m_i$ for all $i\geq 3$.
\end{enumerate}
\end{remark}

The case in which we apply this construction is the following: Let $A$ be an algebra and let $M_1,\dots, M_n$ be $A$-modules with projective resolutions $P_1^*,\dots, P_n^*$. Let $P=\bigoplus_{i,j} P_i^{j}$. Then, $\Hom_k(P,P)$ has the structure of a differential graded category $\mathcal{D}$ with objects $1,\dots,n$ and morphism spaces $\mathcal{D}(i,i')$ of degree $k$ given by $\Hom_k(\bigoplus_{j}P_i^j,\bigoplus_{j}P_{i'}^{j+k})$ and with differential $d$ given by $d(f)=f\circ \partial-(-1)^{|f|}\partial \circ f$, where $\partial$ is the differential on $P$. Composition is given by composition of maps. The homology of this complex can then be identified with $\Ext^*_A(M,M)$ where $M=\bigoplus_{i=1}^n M_i$. We are now ready to state Keller's reconstruction theorem for the quiver and relations of an algebra. The dual version of this statement is stated without proof in \cite[Proposition 2]{Keller-Ainfinityalgebras-2002}, in a different setting this result is proved in \cite[Theorem A]{LuPalmieriWuZhang-Ainfinitystructure-2009}.  For simplicity of exposition we restrict to the case of an acyclic quiver where the result also follows from \cite{KoenigKulshammerOvsienko-Quasihereditaryalgebras-2014} in the special case where the standard modules are simple. 

\begin{theorem}\label{Kellerreconstruction}
Let $A$ be a basic algebra with acyclic Gabriel quiver $Q$. Let $L_1,\dots,L_n$ be the simple modules (up to isomorphism). Let $L=\bigoplus_{i=1}^n L_i$ and let the $\Ext^*_A(L,L)$ be equipped with the $A_\infty$-structure constructed before. Let $m=(m_i)_{i\geq 2}\colon \bigoplus_{i\geq 2} (\Ext^1_A(L,L))^{\otimes i}\to \Ext^2_A(L,L)$ and let $d=Dm$ be the $k$-dual map. Then, an ideal $I$ such that $A\cong KQ/I$ is generated by $\image(Dm)$. In particular, the number of relations from $i$ to $j$ is given by $\dim D\Ext^2(L_i,L_j)$.
\end{theorem}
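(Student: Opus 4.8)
The plan is to reconstruct $A$ from the $A_\infty$-structure on $E:=\Ext^*_A(L,L)$ by a cobar-type (Koszul dual) construction, in which the path algebra comes from $E^0$ and $E^1$ while the relations appear as the dual of the higher products on $\Ext^1$. First I would record the low-degree identifications. Since $A$ is basic, $S:=E^0=\End_A(L)\cong k^n$ is spanned by the primitive orthogonal idempotents and is identified with the semisimple algebra on the vertices $Q_0$; and $E^1=\Ext^1_A(L,L)$ is an $S$-bimodule whose $k$-dual $DE^1$ is, by the very definition of the Gabriel quiver, the arrow space $kQ_1$. Hence $T_S(DE^1)=kQ$ is the path algebra, and the arrows are recovered with no reference to the higher structure.

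Next I would pin down the map to be dualized. As $m_i$ has degree $2-i$, evaluating it on $i$ arguments from $E^1$ (each of degree $1$) yields an element of degree $i+(2-i)=2$; thus $m_i|_{(E^1)^{\otimes_S i}}\colon (E^1)^{\otimes_S i}\to E^2$ is the only way a higher product can reach $E^2$ from degree-one inputs, and these assemble into $m=\sum_{i\geq 2}m_i$. Its dual $Dm\colon DE^2\to \bigoplus_{i\geq 2}(DE^1)^{\otimes_S i}$ lands in the span of paths of length $\geq 2$, i.e. in $(kQ^+)^2\subseteq kQ$, so that $\image(Dm)$ is automatically admissible. Acyclicity of $Q$ enters here: $kQ$ is finite-dimensional, so on each component only finitely many $m_i$ are nonzero and $Dm$ is a genuine, uncompleted map into $kQ$ (no bar-construction completion is needed).

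The heart of the proof is the identification $A\cong kQ/\langle\image(Dm)\rangle$. For this I would invoke $A_\infty$-Koszul duality in the form of the reconstruction theorem of \cite{Keller-Ainfinityalgebras-2002}, proved in the present generality in \cite[Theorem A]{LuPalmieriWuZhang-Ainfinitystructure-2009}: the minimal model $E$ (with $m_1=0$, obtained by applying the minimal model theorem to the endomorphism dg-category $\mathcal{D}=\Hom_k(P,P)$ of a minimal projective resolution $P$ of $L$) is a complete invariant of $A$, and $A$ is recovered as the degree-zero cohomology of the cobar dg-algebra $\Omega(DE)$ of the graded-dual $A_\infty$-coalgebra $DE$. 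There the underlying graded algebra is $T_S(D E^{\geq 1})$ with $\Omega(DE)^0=T_S(DE^1)=kQ$, the relation generators $DE^2$ sitting one degree away, and the cobar differential restricted to $DE^2$ equal to $Dm$ (while it vanishes on $DE^1$, since no product of degree $1$ maps $(E^1)^{\otimes i}$ to $E^1$). Because this differential is an $S$-bimodule derivation, its value on a word $a\,g\,b$ consisting of a generator $g\in DE^2$ flanked by paths $a,b\in kQ$ is $\pm\,a\,(Dm\,g)\,b$; hence the image of the differential is exactly the two-sided ideal $\langle\image(Dm)\rangle$, and $H^0(\Omega(DE))=kQ/\langle\image(Dm)\rangle\cong A$. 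The ``in particular'' clause then follows, since the vanishing of the higher cohomology of $\Omega(DE)$ forces $Dm$ to be injective, so that $\image(Dm)$ is a minimal set of relations whose $(i,j)$-component has dimension $\dim D\Ext^2(L_i,L_j)$, recovering Bongartz's count \cite{Bongartz-Algebras-1983}.

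The step I expect to be the main obstacle is making the reconstruction $H^0(\Omega(DE))\cong A$ precise and checking that the cobar differential on the relation generators is literally $Dm$, rather than $Dm$ composed with some automorphism of $kQ$. This requires carefully tracking the sign and shift conventions through the bar--cobar adjunction and verifying the compatibility of the two minimality hypotheses in play, namely minimality of the projective resolution (so that $E$ has $m_1=0$) and minimality of the generating set of relations (so that no higher cohomology of $\Omega(DE)$ survives). This is exactly the content imported from \cite{LuPalmieriWuZhang-Ainfinitystructure-2009}, which in this exposition I would cite rather than reprove in full.
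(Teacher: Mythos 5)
Your proposal is correct and follows essentially the same route as the paper: the paper offers no self-contained proof of Theorem \ref{Kellerreconstruction}, but justifies it by the very citations you rely on, namely Keller's reconstruction statement \cite[Proposition 2]{Keller-Ainfinityalgebras-2002} and \cite[Theorem A]{LuPalmieriWuZhang-Ainfinitystructure-2009}, together with the remark that in the acyclic case it also follows from \cite{KoenigKulshammerOvsienko-Quasihereditaryalgebras-2014} specialised to simple standard modules. The one inaccuracy is your claim that \cite{LuPalmieriWuZhang-Ainfinitystructure-2009} proves the result ``in the present generality'': their Theorem A concerns connected graded algebras, which is why the paper says it is proved ``in a different setting''; the transfer to basic finite-dimensional algebras over $k^n$ (harmless here, since acyclicity makes all relevant spaces finite) is precisely the step that both you and the paper leave implicit.
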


In the case of a quasi-hereditary algebra $A$, the idea of how to construct a corresponding bocs $\mathfrak{B}=(B,W)$ is as follows: the algebra $B$ can be obtained by disregarding the homomorphisms between the $\Delta_1,\dots,\Delta_n$. Let $\Delta:=\bigoplus_{i=1}^n \Delta_i$. Applying Keller's reconstruction theorem to $\bigoplus_{i=1}^n k1_i\oplus \bigoplus_{j\geq 1} \Ext^j_A(\Delta,\Delta)$ yields an algebra $B$. The coring $W$ over $B$ is then constructed in a similar way using $\Hom_A(\Delta,\Delta)$ as well. In particular, $\overline{W}=B\otimes_k \rad(\Delta,\Delta)\otimes_k B$, where $\rad(\Delta,\Delta)$ is given by the non-isomorphisms between the $\Delta_i$. For the precise construction see the paper \cite{KoenigKulshammerOvsienko-Quasihereditaryalgebras-2014}.

\subsection{Bocses via differential biquivers}\label{bocsesviadifferentialbiquivers}

For finite dimensional algebras, hereditary algebras are given by path algebras of acyclic quivers. In the context of bocses, the bocses with hereditary module category are given by the differential biquivers. The quasi-hereditary algebras corresponding to bocses arising from a directed differential biquiver are precisely the strongly quasi-hereditary algebras, a notion coined by Ringel in \cite{Ringel-Iyamasfinitenesstheorem-2010}, but studied much earlier, see e.g. \cite{DlabRingel-Themoduletheoreticalapproach-1992}.

\begin{definition}
\begin{enumerate}[(i)]
\item A \emphbf{biquiver} is a quiver $(Q_0,Q_1)$ with two kinds of arrows, i.e. the arrows arise as a disjoint union $Q_1=Q_1^0\cup Q_0^0$. The arrows in $Q_1^0$ are called \emphbf{solid} while the arrows in $Q_1^1$ are called \emphbf{dashed}.
\item The path algebra $kQ$ is regarded as a graded algebra with $\deg Q_1^0=0$ and $\deg Q_1^1=1$.
\item A pair $(Q,\partial)$ consisting of a biquiver and a linear map $\partial\colon kQ\to kQ$ is called a \emphbf{differential biquiver} if $\partial$ satisfies the following properties:
\begin{enumerate}[{(D}1{)}]
\item $\partial$ is of degree $1$ and $\partial^2=0$.
\item $\partial(e)=0$ for $e$ a trivial path
\item $\partial$ satisfies the graded Leibniz rule.
\end{enumerate}
\end{enumerate}
\end{definition}

\begin{definition}
Let $(Q,\partial)$ be a differential biquiver. Then the category of \emphbf{representations} of $(Q,\partial)$ is given as follows:
\begin{description}
\item[objects:] representations of $kQ^0$ where $Q^0=(Q_0,Q_1^0)$.
\item[morphisms:] Given two representations of $kQ^0$, $V$ and $W$, a morphism from $V$ to $W$ is a family of $k$-linear maps $f_i\colon V_i\to W_i$ for each vertex $i\in Q_0$ together with a family of $k$-linear maps $f_\varphi\colon V_i\to W_j$ for each arrow $\varphi\colon\begin{tikzcd}i\arrow[dashed]{r}&j\end{tikzcd}$ such that the following is satisfied: If $\partial(a)=\sum_{\varphi}\lambda_\varphi b_\varphi \varphi b_\varphi'$ for $\lambda_{\varphi}\in k$, $b_\varphi, b_\varphi'\in kQ^0$, $\varphi\in Q_1^1$, then $\sum W_{b_\varphi} f_\varphi W_{b_\varphi'}=W_{a} f_i-f_j V_a$ for all solid arrows $a\colon i\to j$.
\item[composition:] If $\partial(\varphi)=\sum_{\psi,\psi'} \lambda_{\psi,\psi'} c_{\psi,\psi'} \psi c'_{\psi,\psi'}\psi'c''_{\psi,\psi'}$ with $\varphi\colon \begin{tikzcd}i\arrow[dashed]{r}&j\end{tikzcd}\in Q_1^1$, $\psi,\psi'\in Q_1^1$, $\lambda_{\psi,\psi'}\in k$, $c_{\psi,\psi'},c_{\psi,\psi'}',c_{\psi,\psi'}''\in kQ^0, $, then $(f\circ g)_i=f_i\circ g_i$ and $(f\circ g)_\varphi=f_j g_\varphi + f_\varphi g_i + \sum_{\psi,\psi'} \lambda_{\psi,\psi'} W_{c_{\psi,\psi'}} f_\psi V_{c_{\psi,\psi'}'} g_{\psi'} U_{c_{\psi,\psi'}'}$.
\item[units:] The identity morphism of a representation $V$ is given by $f_i=1_{V_i}\colon V_i\to V_i$ and $f_\varphi=0$ for all $\varphi\in Q_1^1$.
\end{description}
\end{definition}

\begin{example}
Let $Q$ be the biquiver $\begin{tikzcd}1\arrow[yshift=0.5ex]{r}{a}\arrow[yshift=-0.5ex, dashed]{r}[swap]{\varphi}\arrow[yshift=0.5ex, bend left]{rr}{b}\arrow[yshift=-0.5ex, bend right, dashed]{rr}[swap]{\chi}&2\arrow[dashed]{r}{\psi}&3\end{tikzcd}$ with non-zero differential on the arrows given by $\partial(\chi)=\psi\varphi$ and $\partial(b)=\psi a$. A representation $V$ is given by vector spaces $V_1,V_2,V_3$ and linear maps $V_a\colon V_1\to V_2$ and $V_b\colon V_1\to V_3$. Given another representation $W$, a morphism $g\colon V\to W$ is given by $6$ linear maps $g_1\colon V_1\to W_1$, $g_2\colon V_2\to W_2$, and $g_3\colon V_3\to W_3$  (compare with the notion of a representation of a quiver) and $g_\varphi\colon V_1\to W_2$, $g_\psi\colon V_2\to W_3$, and $g_\chi\colon V_1\to W_3$ satisfying $g_2V_a=W_ag_1$ (because $\partial(a)=0$) and $g_3V_b-W_bg_1+g_\psi V_a=0$ (because $\partial(b)=\psi a$). The composition $h:=g\circ f$ with a map $f\colon U\to V$ is given by the maps $h_i=g_if_i$ for $i\in \{1,2,3\}$ and $h_\psi=g_3f_\psi+g_\psi f_2$ (because $\partial(\psi)=0$), $h_\varphi=g_2f_\varphi+g_\varphi f_1$ (because $\partial(\varphi)=0$) and $h_\chi:=g_3f_\chi+g_\chi f_1+g_\psi f_\varphi$ (since $\partial(\chi)=\psi\varphi$).
\end{example}

\begin{definition}
A bocs $(B,W)$ with projective kernel is called \emphbf{free} if $B$ is a hereditary algebra.
\end{definition}

The following theorem is already due to Roiter and contained in \cite{Roiter-Matrixproblems-1979}, similar ideas are also contained in \cite{KleinerRoiter-Representations-1975} before bocses had been introduced. For a precise reference, see e.g. \cite[Theorem 6.3.5]{Bodnarchuk-Vectorbundles-2007}.

\begin{theorem}
There is a one-to-one correspondence between free normal bocses and differential biquivers given as follows:
\begin{enumerate}[(i)]
\item If $(Q,\partial)$ is a differential biquiver, a free normal bocs can be defined as $B=kQ_1^0$ and $\overline{W}=B\otimes_k kQ_1^1\otimes_k B$. There is a $B$-$B$-bimodule structure on $W=B\oplus \overline{W}$ by the following two maps $\begin{pmatrix}1\\0\end{pmatrix}\colon B\to B\oplus \overline{W}$, the canonical inclusion, for the left structure and $\begin{pmatrix}1\\\partial\end{pmatrix}\colon B\to B\oplus \overline{W}$ for the right structure.
\item If $(B,W)$ is a free normal bocs. Then a biquiver can be obtained by the graded algebra homomorphism $kQ\cong T_B(\overline{W})$, where the tensor algebra $T_B(\overline{W})$ is given the usual grading where $B$ has degree $0$ and $W$ has degree $1$. The normal section then defines a map $T_B(\overline{W})\to T_B(\overline{W})$ by linear extension of the map given on $A$ by $\partial(a)=aw_i-w_ja$ and $\partial(v)=\mu(v)-v\otimes w_i-w_j\otimes v$.
\end{enumerate}
Furthermore, under this bijection the category of modules over the free normal bocs is equivalent to the category of representations of the differential biquiver. 
\end{theorem}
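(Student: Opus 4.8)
The plan is to deduce the bijection from the equivalence already recalled between normal bocses with surjective counit and semi-free differential graded algebras (Brzezi\'nski's theorem in its normal form), and then to treat the module-category statement, which carries the genuine combinatorial content, by directly unwinding the bimodule definition of $\modu\mathfrak{B}$. First I would note that a free normal bocs is precisely a normal bocs $(B,W)$ with projective kernel in which $B$ is hereditary. Since $B$ is basic, heredity gives $B\cong kQ^0$ for an acyclic quiver $Q^0=(Q_0,Q_1^0)$, the solid part of the prospective biquiver, and projectivity of $\overline{W}=\ker\varepsilon$ as a $B$-$B$-bimodule lets me write $\overline{W}\cong B\otimes_k kQ_1^1\otimes_k B$ for a generating set $Q_1^1$, which I declare to be the dashed arrows. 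Under Brzezi\'nski's equivalence, $(B,W)$ corresponds to the semi-free dg algebra $T_B(\overline{W})$ with differential $d$; because $B=kQ^0$ is itself free, the tensor grading identifies $T_B(\overline{W})\cong kQ$ for the biquiver $Q=(Q_0,Q_1^0\sqcup Q_1^1)$, solid arrows in degree $0$ and dashed arrows in degree $1$. The differential $d$ becomes a degree-$1$ map $\partial$ on $kQ$; the dg-algebra axioms yield $\partial^2=0$ and the graded Leibniz rule, while normality, $\mu(w_i)=w_i\otimes w_i$, forces $\partial(e_i)=0$ on trivial paths (the differential of a solid arrow need not vanish, landing instead in $\overline{W}$). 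This is exactly (D1)--(D3).

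Next I would verify that the two explicit constructions (i) and (ii) are mutually inverse. Starting from $(Q,\partial)$, construction (i) reassembles $W=B\oplus\overline{W}$ with the left structure the obvious inclusion and the right structure twisted by $\partial$ through $\begin{pmatrix}1\\\partial\end{pmatrix}$, taking $\varepsilon$ to be the projection and reading the comultiplication off $\partial$. Applying the formulas of (ii), namely $\partial(a)=aw_i-w_ja$ for a solid arrow $a\colon i\to j$ and $\partial(v)=\mu(v)-v\otimes w_i-w_j\otimes v$ for a dashed arrow $v$, returns the original $\partial$: the first formula records precisely the twist in the right action, and the second records the reduced comultiplication landing in $\overline{W}\otimes_B\overline{W}$. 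The reverse composite is checked the same way, and coassociativity and counitality of $\mu$ together with projectivity of $\overline{W}$ and normality confirm that (i) does output a free normal bocs, so the correspondence is a bijection.

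Finally I would establish the module-category equivalence by unwinding $\Hom_{\mathfrak{B}}(M,N)=\Hom_{B\otimes B^{op}}(W,\Hom_k(M,N))$. Objects on both sides are $B$-modules, i.e.\ representations of the solid quiver $Q^0$. A morphism in $\modu\mathfrak{B}$ is a $B$-$B$-bimodule map out of $W$, determined by its values on generators; restricting it to the normal elements $w_i$ and to the dashed arrows $\varphi\colon i\to j$ yields maps $f_i\in\Hom_k(M_i,N_i)$ and $f_\varphi\in\Hom_k(M_i,N_j)$, which are exactly the data of a morphism of representations of $(Q,\partial)$. The crucial point is that, since the right $B$-action is twisted by $\partial$, bimodule-compatibility along a solid arrow $a$ is not the naive identity $W_af_i=f_jV_a$ but the corrected one $W_af_i-f_jV_a=\sum_\varphi\lambda_\varphi W_{b_\varphi}f_\varphi W_{b_\varphi'}$, read off from $\partial(a)=\sum_\varphi\lambda_\varphi b_\varphi\varphi b_\varphi'$. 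I would then match composition: the bocs composite inserts $\mu$, whose trivial part $v\otimes w_i+w_j\otimes v$ produces the terms $f_jg_\varphi+f_\varphi g_i$ while its reduced part $\partial(v)$ produces the correction $\sum_{\psi,\psi'}\lambda_{\psi,\psi'}W_{c}f_\psi V_{c'}g_{\psi'}U_{c''}$, and $\varepsilon$ yields the stated unit with $f_\varphi=0$.

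I expect the main obstacle to be this last verification, and within it the composition identity: one must keep careful track of the twisted right $B$-structure and of how the non-trivial part of $\mu(v)$ threads through the coassociativity-based composition in $\modu\mathfrak{B}$, so that the signs and the placement of the paths $c,c',c''$ reproduce the biquiver formula exactly. By contrast, the coalgebra and differential axioms are handled almost formally once the correspondence with semi-free dg algebras is invoked, and the bijectivity of the two constructions reduces to reading the defining formulas in both directions.
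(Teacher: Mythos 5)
The paper itself gives no proof of this theorem: it is attributed to Ro\v{\i}ter, with \cite[Theorem 6.3.5]{Bodnarchuk-Vectorbundles-2007} cited as the precise reference, so there is no in-paper argument to compare with step by step. Your proposal is correct, and it is essentially the argument that the paper's own arrangement of the material suggests: the object-level bijection is obtained by restricting Brzezi\'nski's equivalence between normal bocses with surjective counit and semi-free differential graded algebras (stated in the paper immediately before this theorem) to the free case --- heredity of the basic algebra $B$ gives $B\cong kQ^0$ by Gabriel's theorem, projectivity of $\overline{W}$ gives $\overline{W}\cong\bigoplus_t Be_{j_t}\otimes_k e_{i_t}B$ with the summands recording the dashed arrows, so that $T_B(\overline{W})\cong kQ$ for a biquiver $Q$, the dg axioms translate into (D1) and (D3), and normality forces $\partial(e_i)=0$, which is (D2). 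The module-category equivalence, where you rightly locate the real content, is then the direct unwinding of $\Hom_{\mathfrak{B}}(M,N)=\Hom_{B\otimes B^{op}}(W,\Hom_k(M,N))$. The one step you should make explicit in a full write-up is the justification that a bimodule map $F\colon W\to\Hom_k(M,N)$ is determined by, and can be prescribed through, the values $f_i=F(w_i)$ and $f_\varphi=F(\varphi)$ subject \emph{only} to the solid-arrow conditions: this needs a presentation of $W$ as a $B$-$B$-bimodule, generated by the grouplikes $w_i$ and the dashed arrows with defining relations $e_jw_i=\delta_{ij}w_i=w_ie_j$, the freeness of $\overline{W}$, and $aw_i-w_ja=\partial(a)$ for solid arrows $a\colon i\to j$; completeness of this list of relations uses once more that $B=kQ^0$ is a path algebra without relations, i.e.\ exactly where ``free'' enters. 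Granting this, your matching of the morphism condition, of composition (the terms $f_jg_\varphi+f_\varphi g_i$ coming from the grouplike part of $\mu$ and the correction terms coming from $\partial(\varphi)$ threaded through the tensor over $B$), and of units is exactly what is required.
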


Connecting this to quasi-hereditary algebras, these classes correspond to the following class of quasi-hereditary algebras:

\begin{definition}
A quasi-hereditary algebra is called \emphbf{strongly quasi-hereditary} if $\projdim \Delta(i)\leq 1$ for all $i$.
\end{definition}

\begin{proposition}
An algebra $A$ is strongly quasi-hereditary if and only if it is Morita equivalent to the right algebra of a free normal bocs.
\end{proposition}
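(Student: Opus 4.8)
The plan is to deduce both implications from Theorem \ref{KoenigKulshammerOvsienko} together with the $\Ext$-comparison provided by the portmanteau theorem, the one genuinely new ingredient being the equivalence ``$B$ is hereditary $\iff\mathcal{F}(\Delta)$ is a hereditary exact category''. Recall that, by the definition recalled in the introduction, $A$ being \emph{strongly} quasi-hereditary is the same as $\mathcal{F}(\Delta)$ being a hereditary exact category, i.e. its second extension groups vanish; and that a bocs is \emph{free} precisely when its underlying algebra $B$ is hereditary, i.e. has global dimension at most $1$.

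First I would treat the forward implication. Assume $A$ is strongly quasi-hereditary. Since $A$ is in particular quasi-hereditary, Theorem \ref{KoenigKulshammerOvsienko} provides a bocs $\mathfrak{B}=(B,W)$ with projective kernel and directed biquiver whose right algebra $R$ is Morita equivalent to $A$, and the specialisation of the portmanteau theorem to this situation identifies $\modu\mathfrak{B}\cong\Ind(B,R)$ with $\mathcal{F}(\Delta)$ as exact categories, the simple $B$-modules $L_B(i)$ corresponding to the standard modules $\Delta(i)=R\otimes_B L_B(i)$. Transporting the vanishing of the second extension groups of $\mathcal{F}(\Delta)$ across this exact equivalence, and using that induction preserves $\Ext$-groups isomorphically in degrees $\geq 2$ (portmanteau theorem, part (iv)), I obtain $\Ext^2_B(M,N)=0$ for all $B$-modules $M,N$, hence that $B$ is hereditary and $\mathfrak{B}$ is free. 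It then remains to arrange normality: since the counit $\varepsilon\colon W\to B$ is surjective (projective kernel), one may choose a section $\mathfrak{w}=\{w_i\mid\varepsilon(w_i)=e_i,\ \mu(w_i)=w_i\otimes w_i\}$, i.e. normalise $\mathfrak{B}$ without altering its right algebra; alternatively, the bocs produced by the $A_\infty$-construction of \cite{KoenigKulshammerOvsienko-Quasihereditaryalgebras-2014} is normal by construction, and the vanishing $\Ext^{\geq 2}_A(\Delta,\Delta)=0$ forces $B$ to have no relations by Theorem \ref{Kellerreconstruction}, giving $B$ hereditary directly. Thus $A$ is Morita equivalent to the right algebra of a free normal bocs.

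For the converse I would run the same chain backwards. Suppose $A$ is Morita equivalent to the right algebra $R$ of a free normal bocs $\mathfrak{B}=(B,W)$; under the correspondence with differential biquivers its biquiver is directed, so Theorem \ref{KoenigKulshammerOvsienko} already yields that $A$ is quasi-hereditary with $\mathcal{F}(\Delta)\cong\modu\mathfrak{B}$. As $\mathfrak{B}$ is free, $B$ is hereditary, whence $\Ext^2_B(M,N)=0$ for all $M,N$; feeding this into the degree-$\geq2$ isomorphism $\Ext^2_B(M,N)\cong\Ext^2_R(R\otimes_B M,R\otimes_B N)$ of the portmanteau theorem shows that the second extension groups of $\mathcal{F}(\Delta)$ vanish, i.e. $\mathcal{F}(\Delta)$ is a hereditary exact category. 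By the characterisation recalled in the introduction this is equivalent to $\projdim_A\Delta(i)\leq 1$ for all $i$, so $A$ is strongly quasi-hereditary.

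The main obstacle I anticipate is the bookkeeping linking heredity at three different levels: the global projective dimension $\projdim_A\Delta(i)\leq 1$, heredity of the exact category $\mathcal{F}(\Delta)$, and global dimension at most $1$ of $B$. The delicate point is the middle equality, where one must verify that the $\Ext^2$ computed in the \emph{restricted} exact structure on $\Ind(B,R)\cong\modu\mathfrak{B}$ really agrees with $\Ext^2_R$ of the induced modules; this relies on the essential image of induction being extension-closed in $\modu R$ (portmanteau theorem, part (iii)), so that the intermediate terms of a $2$-extension cannot leave $\mathcal{F}(\Delta)$, combined with the isomorphism in part (iv). The remaining ingredients---that normality can always be imposed without changing the right algebra, and that directedness of the biquiver is exactly what renders Theorem \ref{KoenigKulshammerOvsienko} applicable in the converse---are comparatively routine, but should be stated explicitly to keep the dictionary with the differential-biquiver picture intact.
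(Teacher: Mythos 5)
Your overall strategy --- deducing everything from Theorem \ref{KoenigKulshammerOvsienko}, the differential-biquiver correspondence, and the $\Ext$-comparison in the portmanteau theorem --- is the intended one (the paper offers no separate proof of this proposition, treating it as immediate from exactly these ingredients), and your forward direction is essentially correct, but only via your \emph{second} alternative: the bocs of \cite{KoenigKulshammerOvsienko-Quasihereditaryalgebras-2014} is normal with projective kernel by construction, and the relations of $B$ are dual to the multiplications landing in $\Ext^2_A(\Delta,\Delta)$ (Theorem \ref{Kellerreconstruction}), which vanishes when $\projdim\Delta(i)\leq 1$, so $B$ is a path algebra of a directed quiver, hence hereditary. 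Your first alternative for normality should be dropped: surjectivity of the counit does not let you "choose a section" satisfying $\mu(w_i)=w_i\otimes w_i$; the bimodule sequence $0\to\overline{W}\to W\to B\to 0$ need not even split, and a splitting need not be a grouplike.

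The converse contains a genuine gap. You assert that the biquiver of a free normal bocs "is directed", but freeness and normality constrain only the solid part; nothing forces the dashed arrows to respect any order, and the claim is false. Concretely, the free normal bocs with one vertex, one dashed loop $\varphi$ and $\partial=0$ has right algebra $k[x]/(x^2)$, which is not even quasi-hereditary (it has infinite global dimension). Directedness must be taken as a \emph{hypothesis}: the proposition is to be read as "free normal \emph{directed} bocs", which is how the paper itself uses it in the proof of Corollary \ref{stronglyfilteredtamewild} and in the sentence introducing this subsection; it cannot be derived as you attempt. Second, even granting directedness, your $\Ext$-transfer only yields $\Ext^2_R(X,Y)=0$ for $X,Y$ \emph{induced}, i.e.\ in $\mathcal{F}(\Delta)$, whereas $\projdim_R\Delta(i)\leq 1$ requires $\Ext^2_R(\Delta(i),Y)=0$ for \emph{all} $Y$; bridging this needs the standard syzygy argument (the syzygy $\Omega\Delta(i)$ lies in $\mathcal{F}(\Delta)$ and the $\Ext$-projectives of $\mathcal{F}(\Delta)$ are exactly the projectives), and appealing instead to the introduction's unproven "in other words" reformulation --- with the additional unresolved issue of comparing Yoneda $\Ext^2$ of the exact subcategory with $\Ext^2_R$ --- leaves the key step unjustified. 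Both difficulties evaporate if you argue directly: by the portmanteau theorem $R$ is projective as a right $B$-module, so $R\otimes_B-$ is exact and sends projectives to projectives; applying it to a length-one projective resolution of $L_B(i)$ over the hereditary algebra $B$ gives a length-one projective resolution of $\Delta(i)=R\otimes_B L_B(i)$, whence $\projdim_R\Delta(i)\leq 1$ with no $\Ext$-comparison needed.
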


\subsection{The reduction algorithm}

The reduction algorithm is a way to construct from a differential biquiver a series of differential biquivers with equivalent categories of representations. The goal is to decrease the complexity of the solid part (as a drawback one has to increase the complexity of the dashed part). In the representation-finite case one ultimately reaches a differential biquiver with no solid arrows (and as many dashed arrows as the dimension of the Jacobson radical of its Auslander algebra). It thus gives a way to enhance the one-to-one correspondence of modules over an algebra with semi-simple modules over its Auslander algebra to an equivalence of categories. There are many different formulations and levels of generality for this algorithm. A first version of a reduction algorithm for bocses already appears in \cite{KleinerRoiter-Representations-1975}. Its full strength was used in Drozd's proof of the tame--wild dichotomy theorem \cite{Drozd-Tameandwild-1977, Drozd-Tameandwild-1979, Drozd-Tameandwild-1980}, see also \cite{CrawleyBoevey-Ontamealgebras-1988, BautistaSalmeronZuazua-Differentialtensoralgebras-2009} for later formulations. In this subsection, we mostly follow \cite{Bodnarchuk-Vectorbundles-2007} which contains a precise formulation of part of the reduction algorithm in the language of differential biquivers. For our application to Schur algebras, we also need a result of Bautista and Salmer\'on on differential biquivers with relations, see \cite{BautistaSalmeron-TheKievalgorithm-1989}.\\
The first part of the reduction algorithm comes from the following example of an equivalence of categories of representations of differential biquivers:

\begin{example}
Let $\begin{tikzcd}1\arrow[yshift=0.5ex]{r}{a}\arrow[dashed, yshift=-0.5ex]{r}[swap]{\varphi}&2\end{tikzcd}$ be a differential biquiver with $\partial(a)=\varphi$. Then, it is easy to see that the extension given by the solid arrow $1\to 2$ actually splits in the category of representations of the differential biquiver, i.e. there is the following isomorphism of representations:
\[\begin{tikzcd}
k\arrow{d}[swap]{1}\arrow[dashed]{rd}{1}\arrow[dashed]{r}{1}& k\arrow{d}{0}\\
k\arrow[dashed]{r}[swap]{1}&k
\end{tikzcd}\]
Hence, there is the following equivalence of categories:
\[\operatorname{rep}(\begin{tikzcd}1\arrow[yshift=0.5ex]{r}{a}\arrow[dashed, yshift=-0.5ex]{r}[swap]{\varphi}&2\end{tikzcd},\partial)\cong \operatorname{rep}(\begin{tikzcd}1&2\end{tikzcd}).\]
\end{example}

This motivates the following definition:

\begin{definition}
Let $(Q,\partial)$ be a differential biquiver. A solid arrow $a\colon i\to j$ is called \emphbf{non-regular} or \emphbf{superfluous} if $\partial(a)=\lambda v+\sum_{i}\mu_i p_i$ where the $p_i$ are paths $i\to j$ with $v$ not contained in any of them.
\end{definition}

The equivalence given in the example can now be applied locally for an arbitrary differential biquiver containing a superfluous arrow.

\begin{proposition}\label{regularisation}
Let $a$ be a non-regular arrow of a differential biquiver with $\partial(a)=\lambda v+\sum_{i}\lambda_i p_i$. Define a new biquiver $\tilde{Q}$ with the same vertex set and $\tilde{Q}^0:=Q_0\setminus \{a\}$ and $\tilde{Q}^1_1=Q^1_1\setminus \{v\}$. Let $\tilde{\partial}\colon k\tilde{Q}\to k\tilde{Q}$ be the $k$-linear map obtained from $\partial$ be substituting $v$ by $-\lambda^{-1}(\sum_{i}\lambda_ip_i)$. Then $(\tilde{Q},\tilde{\partial})$ is a differential biquiver and the categories of representations of $(Q,\partial)$ and $(\tilde{Q},\tilde{\partial})$ are equivalent.
\end{proposition}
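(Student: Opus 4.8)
The plan is to read the statement as a \emph{Gaussian elimination} for differential biquivers and to reduce it to the toy situation already settled in the preceding Example, in which a solid arrow whose differential is a nonzero multiple of a dashed arrow is split off. Here $\lambda\neq0$ (this is implicit in calling $a$ non-regular, and is what licenses the factor $\lambda^{-1}$). The conceptual device I would use is a triangular change of the dashed generator: replacing $v$ by $\hat v:=\partial(a)=\lambda v+\sum_i\lambda_i p_i$ is invertible, since the $p_i$ do not involve $v$, and in the new coordinate one has $\partial(a)=\hat v$ and $\partial(\hat v)=\partial^2(a)=0$. Thus $(a,\hat v)$ is precisely a contractible pair modelling the acyclic complex $k\xrightarrow{\ \sim\ }k$, and the whole proposition becomes the assertion that such a pair can be cancelled.

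Before cancelling I would record the one identity everything rests on. Applying $\partial$ to $\partial(a)=\lambda v+\sum_i\lambda_i p_i$ and using $\partial^2=0$ yields
\[
\lambda\,\partial(v)=-\sum_i\lambda_i\,\partial(p_i),
\]
which says exactly that the substitution $v\mapsto-\lambda^{-1}\sum_i\lambda_i p_i$ is compatible with $\partial$, i.e.\ that it intertwines $\partial$ with the induced map $\tilde\partial$. Granting this, the differential-biquiver axioms for $(\tilde Q,\tilde\partial)$ are quick: the substitution sends the degree-$1$ generator $v$ to a degree-$1$ element and fixes every other generator, so it is a homomorphism of graded algebras; hence $\tilde\partial=\sigma\circ\partial$ is again a degree-$1$ derivation vanishing on trivial paths, and $\tilde\partial^2=0$ is inherited from $\partial^2=0$ through the displayed identity (equivalently, through the fact that cancelling a contractible pair preserves the differential graded structure).

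For the equivalence of representation categories I would cancel the pair $(a,\hat v)$ exactly as in the Example, performed locally. Since $\lambda\neq0$, the component along $v$ of a morphism can be used as a gauge: morphisms with identity vertex components are unipotent, hence invertible, and a suitable such morphism carries any representation $V$ to the representation $V^0$ obtained by setting $V_a=0$ and leaving the remaining data unchanged. Discarding the now-trivial datum $V_a$ and the redundant dashed component then identifies the normalised representations with $\operatorname{rep}(\tilde Q,\tilde\partial)$; translating this normalisation from the coordinate $\hat v$ back to $v$ via $v=\lambda^{-1}(\hat v-\sum_i\lambda_i p_i)$ and setting the cancelled generator to zero reproduces precisely the substitution $v\mapsto-\lambda^{-1}\sum_i\lambda_i p_i$ in the constraint and composition formulas. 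Running $V\mapsto V^0$ against ``extend by $V_a=0$'' gives the two mutually quasi-inverse functors.

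I expect the genuine difficulty to be \emph{well-definedness} rather than the formal equivalence: one must check that after the substitution the image of $\tilde\partial$ really lands in $k\tilde Q$, i.e.\ that no monomial containing the deleted solid arrow $a$ survives, and that the constraint for each remaining solid arrow through which $v$ passes transports correctly. This is where $\partial^2=0$ does its second, less visible job, namely it forbids $a$ from being composed with $v$ inside a differential (such a composite would leave an uncancellable degree-$2$ monomial in $\partial^2$), so the occurrences of $a$ are forced to disappear together with those of $v$. Carefully tracking which monomials contain $a$ or $v$, and the signs produced by the graded Leibniz rule, is the only part of the argument that requires real care.
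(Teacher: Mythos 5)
Your overall architecture --- the triangular change of dashed generator $\hat v:=\partial(a)$, the observation that $(a,\hat v)$ then forms a contractible pair, and cancellation of that pair as in the preceding Example --- is exactly the route the paper intends (the paper gives no written proof of this proposition; it only says the Example can be ``applied locally'' and defers to the literature). However, both of the points you yourself single out as carrying the real weight are settled incorrectly. First, the density step fails as you state it: a representation $V$ is in general \emph{not} isomorphic to ``$V$ with $V_a=0$ and the remaining data unchanged.'' Take vertices $1,2,3$, solid arrows $a\colon 1\to 2$, $c\colon 2\to 3$, $b\colon 1\to 3$, one dashed arrow $v\colon 1\to 2$, and $\partial(a)=v$, $\partial(b)=cv$, $\partial(c)=\partial(v)=0$; this is a directed differential biquiver in which $a$ is superfluous. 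For $V$ with all vertex spaces $k$ and $V_a=1$, $V_c=1$, $V_b=0$, the constraints attached to $a$, $c$, $b$ force any morphism with identity vertex components into a representation $W$ with $W_a=0$ to satisfy $f_v=-1$, $W_c=1$ and $W_b=V_b+W_cf_v=-1\neq 0=V_b$; and $V$ is not isomorphic to the representation with components $(W_a,W_b,W_c)=(0,0,1)$ at all, since the constraint for $b$ would force the vertex component $g_2$ of any such morphism to vanish. So eliminating $a$ necessarily \emph{corrects} the other solid components: the actual proof must define $W_b:=V_b-\sum W_{c}V_aV_{c'}$ recursively over the monomials $cvc'$ occurring in $\partial(b)$, and must check that this recursion is well founded. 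This Gaussian-elimination bookkeeping is precisely the content of the proposition, and it is missing from your argument.

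Second, your resolution of the well-definedness issue is also wrong: $\partial^2=0$ does \emph{not} prevent the deleted solid arrow $a$ from occurring in the differentials of other arrows, and such occurrences do not disappear under the substitution for $v$. For example, take vertices $1,2,3$, solid arrows $a\colon 1\to 2$, $b\colon 1\to 3$, dashed arrows $v\colon 1\to 2$, $\psi\colon 2\to 3$, $\rho\colon 1\to 3$, with $\partial(a)=v$, $\partial(b)=\psi a+\rho$, $\partial(\rho)=\psi v$, $\partial(\psi)=\partial(v)=0$. The graded Leibniz rule gives $\partial^2(b)=-\psi v+\psi v=0$, so all axioms hold, yet the monomial $\psi a$ in $\partial(b)$ survives the substitution $v\mapsto 0$ and does not lie in $k\tilde{Q}$. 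The correct argument is structural: after your change of generators, the two-sided ideal generated by $a$ and $\hat v$ is a differential ideal (because $\partial(a)=\hat v$ and $\partial(\hat v)=0$), so the quotient $kQ/(a,\hat v)\cong k\tilde{Q}$ inherits a square-zero degree-one derivation; unwinding the identification, the induced differential substitutes $v\mapsto -\lambda^{-1}\sum_i\lambda_i p_i$ \emph{and, in addition, } $a\mapsto 0$ (in the example above, $\tilde{\partial}(b)=\rho$). The substitution $a\mapsto 0$ is thus an essential part of the construction --- one the proposition's own wording elides --- and not, as you claim, a consequence of $\partial^2=0$ rendering such occurrences impossible.
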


The second part of the reduction algorithm is based on the Gau{\ss} algorithm, or equivalently, the representation theory of the hereditary algebra $kA_2$.

\begin{example}
Consider the bocs $\mathfrak{B}$ given by the following differential biquiver $\begin{tikzcd}1&3\arrow[dashed]{l}{\pi}&2\arrow[dashed]{l}{\iota}\end{tikzcd}$ with zero differential. Then, there is an equivalence of categories 
\[F\colon \modu kA_2 \to \modu \mathfrak{B}\]
given as follows: Each $kA_2$-module can be decomposed into indecomposables $M\cong S_1^{m_1}\oplus S_2^{m_2}\oplus P_1^{m_3}$. On objects, $F$ is then given by sending $M$ to the representation with $k^{m_1}$ on the vertex $1$, $k^{m_2}$ on the vertex $2$ and $k^{m_3}$ on the vertex $3$. Furthermore each map $M\to N$ can be written as a matrix 
\[\begin{tikzcd}[column sep=10ex, ampersand replacement=\&]{S_1^{m_1}\oplus S_2^{m_2}\oplus P_1^{m_3}}\arrow{r}{\begin{pmatrix}f_1 &0&g_2\\0&f_2&0\\0&g_1&f_3\end{pmatrix}} \&{S_1^{n_1}\oplus S_2^{n_2}\oplus P_1^{n_3}}\end{tikzcd}\]
Noting that $\Hom(S_1^{m_1},S_1^{n_1})\cong (\Hom(S_1,S_1))_{m_1\times n_1}\cong k^{m_1\times n_1}$ one can identify $f_1$ with an $m_1\times n_1$-matrix. Similarly for $f_2$, $f_3$, $g_1$, and $g_2$. Hence each $kA_2$-linear map $M\to N$ can be identified with five $k$-linear maps, i.e. with a morphism $FM\to FN$ in $\modu \mathfrak{B}$ as follows:
\[
\begin{tikzcd}
k^{m_2}\arrow[dashed]{r}{f_2}\arrow[dashed]{rd}{g_1}&k^{n_2}\\
k^{m_3}\arrow[dashed]{r}{f_3}\arrow[dashed]{rd}{g_2}&k^{n_3}\\
k^{m_1}\arrow[dashed]{r}{f_1}&k^{n_1}
\end{tikzcd}
\]
\end{example}

The second part of the reduction algorithm, the minimal edge reduction, does the replacement of the previous example locally for a differential biquiver.

\begin{proposition}\label{minimaledgereduction}
Let $(Q,\partial)$ be a differential biquiver. Let $a\colon i\to j$ be a solid arrow with $\partial(a)=0$. Define a differential biquiver $(\tilde{Q},\tilde{\partial})$ as follows:
\begin{description}
\item[vertices:] $\tilde{Q}$ has one vertex more than $Q$, call it $\emptyset$.
\item[arrows:] 
\begin{enumerate}[(a)]
\item For every arrow $b\colon l\to m$ of $Q$ different from $a$ add an arrow $b\colon l\to m$ into $\tilde{Q}$. If the original arrow was solid (resp. dashed), the new one is solid (resp. dashed).
\item For each arrow $b\colon l\to m$ where $l\in \{i,j\}$, add an arrow $b_{m,\emptyset}\colon \emptyset\to m$ to $\tilde{Q}$. Again if the original arrow was solid (resp. dashed), the new one is solid (resp. dashed).
\item For each arrow $b\colon l\to m$ where $m\in \{i,j\}$, add an arrow $b_{\emptyset,l}\colon l\to \emptyset$ to $\tilde{Q}$. The same rule for solid (resp. dashed) applies.
\item For each arrow $b\colon l\to m$ where $l,m\in \{i,j\}$, add a loop $b_{\emptyset,\emptyset}\colon \emptyset\to \emptyset$ to $\tilde{Q}$. The same rule for solid (resp. dashed) applies.
\item Add two new dashed arrows, $\iota\colon \begin{tikzcd}j\arrow[dashed]{r} &\emptyset\end{tikzcd}$ and $\pi\colon \begin{tikzcd}\emptyset\arrow[dashed]{r} &i\end{tikzcd}$.
\end{enumerate}
\item[differential:] Set $\tilde{\partial}(\iota)=\tilde{\partial}(\pi)=0$.\\
Define $F\colon Q_1\to \tilde{Q}_1\cup M_{2\times 1}(\tilde{Q}_1)\cup M_{1\times 2}(\tilde{Q}_1)\cup M_{2\times 2}(\tilde{Q}_1)\cup M_{2\times 2}(k)$ as follows:\\
$F(b)=b$ for an arrow in case (a).\\
If $b\colon i\to m$ for $m\notin \{i,j\}$, then $F(b)=(b_{m,\emptyset},b)$.\\
If $b\colon l\to i$ for $l\notin \{i,j\}$, then $F(b)=\begin{pmatrix}b_{\emptyset,m}\\b\end{pmatrix}$.\\
If $b\colon j\to m$ for $m\notin \{i,j\}$, then $F(b)=(b,b_{m,\emptyset})$.\\
If $b\colon l\to j$ for $l\notin \{i,j\}$, then $F(b)=\begin{pmatrix}b\\b_{\emptyset,l}\end{pmatrix}$.\\
If $b\colon i\to i$, then $F(b)=\begin{pmatrix}b_{\emptyset,\emptyset}&b_{\emptyset,1}\\b_{1,\emptyset}&b\end{pmatrix}$.\\
If $b\colon j\to j$, then $F(b)=\begin{pmatrix}b&b_{j,\emptyset}\\b_{\emptyset,j}&b_{\emptyset,\emptyset}\end{pmatrix}$.\\
If $b\colon i\to j$, different from $a$, then $F(b)=\begin{pmatrix}b_{j,\emptyset}&b\\b_{\emptyset,\emptyset}&b_{\emptyset,i}\end{pmatrix}$.\\
Define $F(a)=\begin{pmatrix}0&0\\1&0\end{pmatrix}$.\\
If $b\colon j\to i$, then $F(b)=\begin{pmatrix}b_{\emptyset,j} &b_{\emptyset,\emptyset}\\b&b_{i,\emptyset}\end{pmatrix}$.\\
Define the new differential by $\tilde{\partial}(F(x)):=F(\partial(x))$ for all $x\in Q_1\setminus \{a\}$.
\end{description}
Then, the categories of representations of $(Q,\partial)$ and $(\tilde{Q},\tilde{\partial})$ are equivalent.
\end{proposition}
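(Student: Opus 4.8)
The plan is to realise this reduction as an explicit equivalence modelled on the example preceding the statement, where the three-vertex biquiver carrying the dashed arrows $\pi,\iota$ and zero differential was identified with $\modu kA_2$. Since $\partial(a)=0$, the morphism condition attached to the solid arrow $a$ reads $W_af_i=f_jV_a$, so at the level of objects the datum $V_a\colon V_i\to V_j$ is exactly the structure map of a genuine $kA_2$-representation placed at the vertices $i,j$, while at the level of morphisms the only constraint it imposes is honest commutativity. First I would use the decomposition of $kA_2$-modules into copies of the three indecomposables to put $V_a$ into Gauss normal form: choose splittings $V_i=U_\emptyset\oplus U_i$ and $V_j=U_j\oplus U_\emptyset$ with $\dim U_\emptyset=\operatorname{rank} V_a$, so that $V_a$ restricts to the identity on the common summand $U_\emptyset$ and vanishes off it. This is precisely the normal form recorded by $F(a)=\begin{pmatrix}0&0\\1&0\end{pmatrix}$, and the new vertex $\emptyset$ is the carrier of the rank space $U_\emptyset$.

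Next I would define a functor $\Phi\colon\operatorname{rep}(\tilde Q,\tilde\partial)\to\operatorname{rep}(Q,\partial)$ on objects by setting $V_i:=U_\emptyset\oplus U_i$ and $V_j:=U_j\oplus U_\emptyset$, letting $V_a$ be the canonical block above, and rebuilding every remaining solid arrow $b$ from the components listed in the definition of $F$; thus a solid loop $b\colon i\to i$ is reassembled as $\begin{pmatrix}b_{\emptyset,\emptyset}&b_{\emptyset,i}\\b_{i,\emptyset}&b\end{pmatrix}$ acting on $U_\emptyset\oplus U_i$, and similarly for the arrows joining $\{i,j\}$ to the other vertices. On morphisms, the vertex components $(g_\ell)$ and dashed components $(g_\varphi)$ of a $\tilde Q$-morphism are reassembled blockwise in the same way, the two new dashed arrows $\iota$ and $\pi$ supplying exactly the connecting components between the $\emptyset$-block and the $i$- and $j$-blocks that a $Q$-morphism is permitted to carry. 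A quasi-inverse is then given by the blockwise splitting: for a $Q$-representation one applies Gauss elimination to $V_a$ to reach the normal form and reads off $U_\emptyset$ together with the matrix entries of each arrow. This depends on a choice of complement and so is only defined up to isomorphism, which suffices for an equivalence.

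Before checking that $\Phi$ is an equivalence I would verify that $(\tilde Q,\tilde\partial)$ is a differential biquiver at all, that is, that the prescription $\tilde\partial(F(x)):=F(\partial x)$ together with $\tilde\partial(\iota)=\tilde\partial(\pi)=0$ extends to a well-defined degree-$1$ map satisfying $\tilde\partial^2=0$ and the graded Leibniz rule. The key point is that $F$, extended multiplicatively to paths by block-matrix multiplication, is compatible with concatenation; applying $F$ to the Leibniz identity and to $\partial^2=0$ in $kQ$ then transports these relations to $\tilde Q$, and the only genuine content is that each $F(\partial x)$ really lies in the span of the newly introduced arrows and scalars, which is read off case by case from the table. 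I would then establish full faithfulness by comparing the two composition laws: in $\operatorname{rep}(Q,\partial)$ the composite of two morphisms acquires the correction terms $\sum\lambda\,f_\psi\cdots g_{\psi'}$ governed by $\partial$ of the dashed arrows, and one checks that, under the blockwise dictionary, these match summand by summand the composition in $\operatorname{rep}(\tilde Q,\tilde\partial)$, the extra contributions being accounted for exactly by $\tilde\partial$ of the split arrows and by the rigidity of $\iota$ and $\pi$.

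The main obstacle is the bookkeeping in this last step: one must confirm that the connecting components carried by $\iota$ and $\pi$ reproduce precisely the off-diagonal freedom of a $Q$-morphism relative to the splittings of $V_i$ and $V_j$, neither more nor less, and that the differential of every arrow adjacent to $i$ or $j$ is redistributed correctly among its new block components. This is a finite but delicate case analysis running over the possible source-target patterns of an arrow (the eight cases tabulated in the statement), and it is here that the precise index conventions in the definition of $F$ must be honoured. Once the dictionary is checked to intertwine both the differentials and both composition laws, essential surjectivity follows from the Gauss normal form and full faithfulness from the termwise matching, yielding the asserted equivalence; this is the argument carried out in detail in \cite{Bodnarchuk-Vectorbundles-2007}.
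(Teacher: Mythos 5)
Your global strategy -- Gauss normal form at $a$, a new vertex carrying the rank space, and a blockwise dictionary in which $\iota$ and $\pi$ carry the off-diagonal freedom of the vertex maps at $i$ and $j$ -- is exactly the intended argument: the paper itself only says that the $kA_2$-example is to be applied ``locally'' and defers the details to Bodnarchuk's thesis. The genuine gap is in your treatment of the new differential. You adopt the prescription $\tilde\partial(F(x)):=F(\partial x)$ literally (entrywise, with $F$ extended multiplicatively and $F(a)$ a constant matrix), argue that Leibniz and $\tilde\partial^2=0$ ``transport'' through $F$, and then claim the block dictionary matches the two composition laws. This cannot work, because the differential that makes the dictionary functorial is \emph{not} $F(\partial x)$: it acquires correction terms in $\iota$ and $\pi$. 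Indeed, commutation with $V_a,W_a$ in normal form forces the vertex components of a $Q$-morphism to be block lower triangular,
\[
f_i=\begin{pmatrix} f_\emptyset & 0\\ f_\pi & f_{ii}\end{pmatrix},\qquad
f_j=\begin{pmatrix} f_{jj} & 0\\ f_\iota & f_\emptyset\end{pmatrix},
\]
and substituting these into the morphism condition $W_bf_l-f_mV_b=\sum\lambda\,W_pf_\varphi V_q$ for any arrow $b$ adjacent to $i$ or $j$ leaves over the strictly lower triangular contributions $-W_bN_l+N_mV_b$. These must be absorbed into the differential of the blocks of $F(b)$, so the correct formula is (up to the signs dictated by the grading)
\[
\tilde\partial(F(x))=F(\partial x)+\Pi_{t(x)}F(x)-F(x)\Pi_{s(x)},\qquad
\Pi_i=\begin{pmatrix}0&0\\ \pi&0\end{pmatrix},\quad
\Pi_j=\begin{pmatrix}0&0\\ \iota&0\end{pmatrix},\quad
\Pi_\ell=0 \text{ for } \ell\notin\{i,j\}.
\]

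The paper's own computations exhibit exactly these terms and refute the uncorrected prescription: reducing $a$ in the $kA_3$ example produces $\partial(b)=-b_{34}a_s$ although $\partial(b)=0$ before reduction, and in Example \ref{sl2example} the reduced differentials $\partial(\varphi')=\iota_a\varphi$, $\partial(\varphi'')=-\varphi\pi_a$, $\partial(\varphi''')=\iota_a\varphi''-\varphi'\pi_a$ are nonzero although the original differential vanishes identically; under your prescription all of them would be zero. The failure is not cosmetic: with the uncorrected $\tilde\partial$ your functor $\Phi$ is not even well defined on morphisms. In the $kA_3$ case a $\tilde Q$-morphism with nonzero component $g_\iota$ reassembles to vertex maps for which $W_bf_2-f_3V_b$ has the nonzero block $B'_\emptyset g_\iota$, violating the strict commutativity that $\partial(b)=0$ demands of a $Q$-morphism; so the summand-by-summand matching you appeal to collapses at the first arrow adjacent to $i$ or $j$. (To be fair, the formula as printed in the proposition invites your reading; but a correct proof must discover and repair this, since with the literal $\tilde\partial$ the asserted equivalence is false.) The repair is to build the corrections $\Pi_tF(x)-F(x)\Pi_s$ into the definition of $\tilde\partial$ -- conceptually, $F$ is a morphism of bocses which does not preserve the group-like, and the twist of the group-like is precisely what produces the $\iota,\pi$-terms. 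Note also that then $\tilde\partial^2=0$ and the Leibniz rule no longer follow ``formally'' from those of $\partial$: one needs the extra (easy) identities $\tilde\partial(\Pi_i)=\tilde\partial(\Pi_j)=0$ and $\Pi^2=0$. With the corrected differential, your Gauss-normal-form argument for essential surjectivity and the blockwise comparison for fullness and faithfulness do go through as outlined.
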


In the representation-finite case, the regularisation and minimal edge reduction suffice:

\begin{proposition}
Let $\mathfrak{B}=(B,W)$ be a free normal directed bocs with represen\-ta\-tion-finite category of modules. Then, applying Proposition \ref{regularisation} as long as possible followed by Proposition \ref{minimaledgereduction} one arrives at a free normal bocs with no solid arrows, as many vertices as indecomposable modules in $\modu \mathfrak{B}$ and as many dashed arrows as the dimension of the Jacobson radical of the Auslander algebra.
\end{proposition}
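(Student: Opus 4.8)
The plan is to fix three reduction-invariant quantities and then read off both numerical claims from the shape of the terminal bocs. First I would record that each step preserves the category of representations (Propositions \ref{regularisation} and \ref{minimaledgereduction}), hence preserves representation-finiteness, the number $N$ of isomorphism classes of indecomposable objects of $\modu \mathfrak{B}$, and, up to isomorphism, the Auslander algebra $\Gamma = \End_{\mathfrak{B}}(\bigoplus_{t=1}^N M_t)$, where $M_1,\dots,M_N$ are the indecomposables. These are my fixed reference points throughout.

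Next I would establish termination together with the assertion that the terminal bocs has no solid arrows. Regularisation strictly decreases the number of solid arrows, so every maximal run of regularisations halts at a bocs without superfluous solid arrows. In such a bocs I claim that, whenever a solid arrow exists, some solid arrow $a$ satisfies $\partial(a)=0$: choosing $a\colon i\to j$ with $i$ maximal among all sources of solid arrows and then $j$ minimal among targets of solid arrows out of $i$, directedness forces every monomial of $\partial(a)$ to have the form $x a'$, with $a'$ a solid arrow out of $i$ and $x$ a single dashed arrow (no single dashed term survives since $a$ is not superfluous, and no solid arrow may start strictly above $i$); but such an $a'$ would be a solid arrow out of $i$ with target strictly below $j$, contradicting minimality unless $\partial(a)=0$. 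Minimal edge reduction then applies to $a$ and adds exactly one vertex. Finally the vertex count can never exceed $N$, because the vertex simples $S_i$ are pairwise non-isomorphic indecomposables of $\modu\mathfrak{B}$ (their leading terms are distinct simple $B$-modules, and the ideal of morphisms with vanishing leading term is nilpotent by conilpotency of a directed normal bocs). Since each minimal edge reduction raises the vertex count by one, only finitely many can occur, and the algorithm halts precisely when no solid arrow remains.

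For the counting I would analyse the terminal bocs $\mathfrak{B}'=(B',W')$. Absence of solid arrows makes $B'$ semisimple, so every object decomposes over $B'$ into the $S_i$, and the split surjection $\End_{\mathfrak{B}'}(M)\to \End_{B'}(M)\cong \prod_i M_{n_i}(k)$ with nilpotent kernel shows that $M$ is indecomposable if and only if $M\cong S_i$ for a single vertex $i$. Hence the indecomposables of $\modu\mathfrak{B}'\cong\modu\mathfrak{B}$ biject with the vertices, giving the first claim: the number of vertices equals $N$. For the second, the morphism conditions are vacuous (there are no solid arrows), so $\Hom_{\mathfrak{B}'}(S_i,S_j)$ is spanned by the dashed arrows $i\to j$, together with the identity scalar when $i=j$; summing yields $\dim\Gamma=\sum_{i,j}\dim\Hom_{\mathfrak{B}'}(S_i,S_j)=\#\{\text{dashed arrows}\}+N$, whence $\#\{\text{dashed arrows}\}=\dim\Gamma-N=\dim\rad\Gamma$, the last equality holding because $\Gamma$ is basic with exactly $N$ simple modules.

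The hard part will be the bookkeeping that guarantees the intermediate bocses stay free, normal and directed, so that the extremal-vertex argument, the conilpotency underlying the Krull--Remak--Schmidt decompositions, and the vertex bound remain available at every stage. In particular one must control the loops that a minimal edge reduction can produce at the new vertex: dashed loops are harmless, but one has to verify that no solid loop survives (so that $B'$ stays hereditary). This is exactly the structural stability supplied by the established reduction theory of \cite{Bodnarchuk-Vectorbundles-2007, BautistaSalmeronZuazua-Differentialtensoralgebras-2009}, which I would invoke rather than re-derive, leaving the invariant count above as the substantive content of the proof.
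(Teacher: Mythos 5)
The paper itself states this proposition without proof (it is quoted from the reduction-algorithm literature, \cite{Bodnarchuk-Vectorbundles-2007, CrawleyBoevey-Ontamealgebras-1988, BautistaSalmeronZuazua-Differentialtensoralgebras-2009}), so your argument has to stand on its own. Much of it does: the invariance of the number $N$ of indecomposables and of the Auslander algebra under the equivalences of Propositions \ref{regularisation} and \ref{minimaledgereduction}, the termination count (regularisation drops the number of solid arrows, each edge reduction adds one vertex, and the vertex simples are pairwise non-isomorphic indecomposables, so the vertex count is bounded by $N$), and the analysis of the terminal bocs (indecomposables biject with vertices, $\dim\Hom$-count gives $\#\{\text{dashed arrows}\}=\dim\rad\Gamma$) are all essentially sound, modulo the nilpotency of the ideal of morphisms with vanishing solid components.

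The genuine gap is the inductive engine: your extremal-vertex argument producing a solid arrow with $\partial(a)=0$ after each maximal run of regularisations requires directedness of the \emph{whole} biquiver, and directedness is destroyed by the very first minimal edge reduction — this is not "bookkeeping" that the literature supplies, it is simply false. The paper's own Example \ref{sl2example} shows it: reducing $a$ in the $\mathfrak{sl}_2$ bocs creates a dashed loop $\varphi'''$ at the new vertex and a dashed cycle through $\varphi$, $\iota_a$, $\pi_a$, so no ordering of the vertices is compatible with the arrows. Once dashed loops and descending dashed arrows exist, your filtering of monomials collapses: a monomial of $\partial(a)$ may be (dashed loop at $j$) composed with (solid $i\to j$), or may use a dashed arrow descending below $i$ followed by a solid arrow starting there, and neither contradicts your extremal choice of $i$ and $j$; so from the second round on you cannot conclude $\partial(a)=0$, and your parenthetical claim that "dashed loops are harmless" is exactly where the proof breaks. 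What the established theory actually preserves is the weaker \emph{triangular} (layered) structure: an ordering $a_1<\dots<a_r$ of the solid arrows such that $\partial(a_k)$ involves only $a_1,\dots,a_{k-1}$ and dashed arrows. With that invariant the reducible edge is found by a different argument: $\partial(a_1)$ can contain no solid arrow, hence is a linear combination of single dashed arrows, hence $a_1$ is superfluous unless $\partial(a_1)=0$ — this is what "minimal edge" means, and the same layering (together with representation-finiteness) is what rules out parallel solid arrows to the reduced edge, i.e.\ the solid loops you correctly worry would destroy freeness and finite-dimensionality of $B$. So the proposal needs its central invariant replaced (directedness by triangularity) and the key lemma re-derived relative to it; as written, the argument only covers the first reduction step.
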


In joint work with Ulrich Thiel \cite{KulshammerThiel-Boxes-2016}, we implemented this algorithm using the computer algebra system MAGMA. This is also what is behind our calculations in the forthcoming examples subsection.\\
For tame or wild algebras, one needs one additional part of the reduction algorithm, the so called partial loop reduction (for a motivation on this part, see e.g. \cite{CrawleyBoevey-Matrixproblems-1990}). Since it is not needed in our study of representation-finite categories of filtered modules, we will not repeat it here and only mention a straightforward corollary of it, namely:

\begin{corollary}\label{stronglyfilteredtamewild}
Let $A$ be a strongly quasi-hereditary algebra. Then, the category of modules filtered by standard modules is either representation-finite, tame, or wild.
\end{corollary}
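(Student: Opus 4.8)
The plan is to move the problem into the world of bocses and then quote Drozd's trichotomy in the form in which it is actually proved, namely as a statement about bocs reductions. First I would apply the Proposition characterising strongly quasi-hereditary algebras: since $A$ is strongly quasi-hereditary, it is Morita equivalent to the right algebra $R$ of a \emph{free normal} bocs $\mathfrak{B}=(B,W)$, and as $A$ is in particular quasi-hereditary, Theorem \ref{KoenigKulshammerOvsienko} guarantees that the associated biquiver is directed. The Main Theorem then identifies the category $\mathcal{F}(\Delta)$ of standard-filtered $A$-modules with the module category $\modu\mathfrak{B}$. Finally, the one-to-one correspondence between free normal bocses and differential biquivers turns $\modu\mathfrak{B}$ into the category $\operatorname{rep}(Q,\partial)$ of representations of a directed differential biquiver $(Q,\partial)$. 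Since representation type is invariant under these equivalences, it suffices to establish the trichotomy for $\operatorname{rep}(Q,\partial)$.

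Next I would observe that $\operatorname{rep}(Q,\partial)$ is exactly the object on which the reduction algorithm operates, and that this algorithm is the combinatorial heart of Drozd's proof of the tame--wild dichotomy. The three reduction steps---regularisation (Proposition \ref{regularisation}), minimal edge reduction (Proposition \ref{minimaledgereduction}), and the partial loop reduction---each replace a differential biquiver by another one with an equivalent category of representations, and therefore preserve representation type. Read off its proof, Drozd's theorem asserts that iterating these reductions either terminates at a biquiver with no solid arrows, leaving only finitely many indecomposables or finitely many one-parameter families, or else produces a local configuration (a solid loop carrying a suitable differential) forcing the one- or two-parameter behaviour characteristic of tame, respectively wild, type. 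Because $\mathfrak{B}$ is already a free normal bocs, no preliminary passage from an algebra to a bocs is needed and the reduction applies directly to $(Q,\partial)$.

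The place where directedness enters is the termination of the reduction: the hypothesis that the biquiver is directed means the solid part has no oriented cycles, which provides a well-founded order in which to carry out the edge and loop reductions and ensures the process halts in a controlled way. Granting this, the trichotomy for $\modu\mathfrak{B}$ is immediate from the dichotomy theorem in its bocs formulation, and transporting it back along the equivalences of the first paragraph yields the assertion for $\mathcal{F}(\Delta)$.

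The main obstacle is precisely the partial loop reduction and the accompanying termination and type analysis, which the excerpt deliberately does not reproduce: this is the genuinely hard part of Drozd's theorem, where tame and wild are separated. A fully rigorous argument would require importing the complete reduction apparatus (as developed in \cite{CrawleyBoevey-Ontamealgebras-1988, BautistaSalmeronZuazua-Differentialtensoralgebras-2009}) and verifying that free normal directed bocses lie within its domain of applicability. Once that is in place, the corollary is indeed the ``straightforward'' consequence advertised.
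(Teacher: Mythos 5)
Your proposal is correct and takes essentially the same route as the paper: the paper's proof likewise identifies $\mathcal{F}(\Delta)$ with $\modu\mathfrak{B}$ for a free normal directed bocs $\mathfrak{B}=(B,W)$ and then invokes the proof of Drozd's tame--wild dichotomy theorem, which operates on exactly this class of bocses via the reduction algorithm. Your extra discussion of regularisation, minimal edge reduction and the partial loop reduction merely spells out what that citation encapsulates, so no further comparison is needed.
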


\begin{proof}
The category $\mathcal{F}(\Delta)$ for a strongly quasi-hereditary algebra is equivalent to the category $\modu \mathfrak{B}$ for a free normal directed bocs $\mathfrak{B}=(B,W)$. By the proof of Drozd's tame--wild dichotomy theorem for algebras, the module category of such a bocs is either representation-finite, tame, or wild.
\end{proof}

Next, we recall differential biquivers with relations following Bautista and Salmer\'on. 

\begin{lemma}
Let $\mathfrak{B}=(B,W)$ be a bocs and let $I\subset B$ be an ideal. Define $\tilde{B}=B/I$ and $\tilde{W}=\tilde{B}\otimes_B W\otimes_B \tilde{B}$ and let $\tilde{\mu}$ and $\tilde{\varepsilon}$ be induced by $\mu$ and $\varepsilon$, respectively. Then $\tilde{\mathfrak{B}}=(\tilde{B},\tilde{W})$ is a bocs whose module category can be identified with the full subcategory of $\modu \mathfrak{B}$ of modules that vanish on $I$. 
\end{lemma}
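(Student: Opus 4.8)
The plan is to verify first that $\tilde{\mathfrak B}=(\tilde B,\tilde W)$ really is a bocs, and then to exhibit an isomorphism of categories between $\modu\tilde{\mathfrak B}$ and the full subcategory $\mathcal C\subseteq\modu\mathfrak B$ on those $M$ with $IM=0$.

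First I would check the bocs axioms for $\tilde{\mathfrak B}$. The bimodule $\tilde W=\tilde B\otimes_B W\otimes_B\tilde B$ is finite dimensional, being a quotient of $\tilde B\otimes_k W\otimes_k\tilde B$, and $\tilde B=B/I$ is again basic, so the standing assumptions are met. The comultiplication $\tilde\mu$ is obtained by applying the base-change functor $\tilde B\otimes_B(-)\otimes_B\tilde B$ to $\mu$, that is, as the composite
\[
\tilde W=\tilde B\otimes_B W\otimes_B\tilde B
\xrightarrow{1\otimes\mu\otimes 1}
\tilde B\otimes_B W\otimes_B W\otimes_B\tilde B
\longrightarrow
\tilde W\otimes_{\tilde B}\tilde W ,
\]
where the last arrow inserts the unit $B\to\tilde B$ into the middle tensor slot; dually $\tilde\varepsilon$ is $1\otimes\varepsilon\otimes 1$ followed by the multiplication $\tilde B\otimes_B B\otimes_B\tilde B\cong\tilde B\otimes_B\tilde B\to\tilde B$. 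Coassociativity and the counit axioms for $(\tilde\mu,\tilde\varepsilon)$ then follow by applying this (monoidal, right exact) functor to the two commuting diagrams defining the coalgebra structure on $W$; this is precisely the standard base-ring extension of corings, so the verification is formal.

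Next I would identify the two categories. On objects there is nothing to do: a $\tilde B$-module is exactly a $B$-module $M$ with $IM=0$, and these constitute the objects of $\mathcal C$. For the morphisms I would use the bimodule description $\Hom_{\mathfrak B}(M,N)=\Hom_{B\otimes B^{\mathrm{op}}}(W,\Hom_k(M,N))$. If $IM=0=IN$, then the $B$-bimodule $\Hom_k(M,N)$ is annihilated by $I$ on both sides, hence is a $\tilde B$-bimodule. The key point is the induction–restriction adjunction for bimodules: since $\tilde B\otimes_B(-)\otimes_B\tilde B$ is left adjoint to restriction along $B\to\tilde B$, precomposition with the canonical $q\colon W\to\tilde W$, $w\mapsto\bar 1\otimes w\otimes\bar 1$, gives a bijection
\[
\Hom_{\tilde B\otimes\tilde B^{\mathrm{op}}}(\tilde W,\Hom_k(M,N))
\xrightarrow{\ \sim\ }
\Hom_{B\otimes B^{\mathrm{op}}}(W,\Hom_k(M,N)),
\]
that is, $\Hom_{\tilde{\mathfrak B}}(M,N)\cong\Hom_{\mathfrak B}(M,N)$.

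Finally I would check that this bijection is compatible with composition and units, so that it assembles into an isomorphism of categories onto $\mathcal C$. By construction $\tilde\mu$ was defined precisely so that $q$ is a morphism of corings over $B\to\tilde B$, meaning that $\tilde\mu\circ q$ agrees with the map induced by $\mu$ under the comparison $W\otimes_B W\to\tilde W\otimes_{\tilde B}\tilde W$, and likewise $\tilde\varepsilon\circ q$ equals $\varepsilon$ followed by $B\to\tilde B$. Writing composition in $\modu\mathfrak B$ as $\comp\circ(g\otimes f)\circ\mu$ and composition in $\modu\tilde{\mathfrak B}$ as $\comp\circ(g\otimes f)\circ\tilde\mu$, precomposition with $q$ turns the latter into the former because $(g\otimes f)\circ(q\otimes q)=(g\circ q)\otimes(f\circ q)$; the unit statement is immediate from $\tilde\varepsilon\circ q$, using that $M$ is a $\tilde B$-module so left multiplication by $b\in B$ and by its image in $\tilde B$ coincide. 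I expect this last bookkeeping — tracking the comparison map $W\otimes_B W\to\tilde W\otimes_{\tilde B}\tilde W$ carefully through the definition of $\tilde\mu$ — to be the only genuinely fiddly point; everything else is the formal behaviour of base change of corings together with the bimodule adjunction.
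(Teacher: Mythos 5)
Your proof is correct. There is, however, nothing in the paper to compare it against: the lemma is stated without proof, being treated as a standard fact needed only to set up the Bautista--Salmer\'on reduction for bocses with relations. What you give is the standard argument, and it is sound. Two small remarks on precision. First, the functor $\tilde{B}\otimes_B-\otimes_B\tilde{B}$ is not monoidal but \emph{colax} monoidal (the comparison map goes $F(X\otimes_BY)\to F(X)\otimes_{\tilde{B}}F(Y)$, by inserting the unit of $\tilde{B}$ in the middle slot, exactly as you write); that is precisely the structure needed to push coalgebras forward, so the base-ring-extension-of-corings argument goes through verbatim. Second, you correctly isolate the only delicate point in the morphism comparison: the composition in $\modu\mathfrak{B}$ lands via $\Hom_k(M,N)\otimes_B\Hom_k(L,M)$ while that in $\modu\tilde{\mathfrak{B}}$ lands via $\Hom_k(M,N)\otimes_{\tilde{B}}\Hom_k(L,M)$, and the two are reconciled because the composition map $\comp$ is balanced over $B$ (indeed over $\tilde{B}$) once $I$ annihilates $L$, $M$, $N$, so it factors through the natural surjection between the two tensor products. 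With that observation, precomposition with the quotient map $q\colon W\to\tilde{W}$ (the unit of the bimodule induction--restriction adjunction, which gives your bijection on $\Hom$-spaces since $\Hom_k(M,N)$ is a $\tilde{B}$-bimodule exactly when $IM=0=IN$) intertwines compositions and units, yielding an isomorphism of $\modu\tilde{\mathfrak{B}}$ onto the full subcategory of $\modu\mathfrak{B}$ of modules vanishing on $I$, as claimed.
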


For this version of the reduction algorithm some compatibility with the set of generators of $(B,W)$ is needed:

\begin{definition}
Let $\mathfrak{B}=(B,W)$ be a normal bocs with an ideal $I\subset B$. Then $I$ is called \emphbf{compatible} with the set of grouplikes $\mathfrak{w}$ if there is a finite generating set $b_1,\dots,b_t$ of $I$ as a $B$-$B$-bimodule such that $\partial(b_i)\in I_{i-1}W+WI_{i-1}$ for all $0<i\leq t$ where $I_i$ is the $B$-$B$-subbimodule of $I$ generated by $b_1,b_2,\dots,b_i$ for $0\leq i\leq t$.  
\end{definition}

\begin{theorem}[{\cite[Theorem 7]{BautistaSalmeron-TheKievalgorithm-1989}}]\label{BautistaSalmeronreduction}
Let $\mathfrak{B}=(B,W)$ be a normal directed bocs with an ideal $I\subset B$ compatible with the set of grouplikes. Let $\tilde{B}$ be given as in \ref{minimaledgereduction}. Let $I_F$ be the ideal of $\tilde{B}$ generated by $F(I)$. Then, $I_F$ is compatible with the set of grouplikes for $\tilde{\mathfrak{B}}$.\\
In the representation-finite case, this algorithm stops at a free normal bocs with no solid arrows, as many vertices as indecomposable modules in $\modu \mathfrak{B}$ and as many dashed arrows as the dimension of the Jacobson radical of the Auslander algebra.
\end{theorem}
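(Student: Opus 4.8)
The plan is to treat the two assertions separately: the compatibility statement is the inductive step that licenses the reduction, and the conclusion in the representation-finite case follows by iterating it. Throughout I would rely on the fact, built into the construction of Proposition \ref{minimaledgereduction}, that the new differential is defined by $\tilde{\partial}(F(x))=F(\partial(x))$ on arrows, and that $F$ is multiplicative in the appropriate matrix sense (this is exactly what makes $\tilde{\mathfrak{B}}$ a bocs and $F$ an equivalence there). Combining multiplicativity with the graded Leibniz rule for $\partial$ and $\tilde{\partial}$ propagates $\tilde{\partial}\circ F=F\circ\partial$ from arrows to all of $\tilde{B}$ and $\tilde{W}$.

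For the compatibility of $I_F$, I would start from a generating set $b_1,\dots,b_t$ of $I$ witnessing compatibility, so that $\partial(b_i)\in I_{i-1}W+WI_{i-1}$ with $I_0=0$. The entries of the matrices $F(b_1),\dots,F(b_t)$ generate $I_F$, since $F(cb_ic')=F(c)F(b_i)F(c')$ shows $F(I)$ lies in the $\tilde{B}$-bimodule they span. Order these entries blockwise, every entry of $F(b_i)$ coming after every entry of $F(b_{i-1})$ (the internal order being irrelevant), and let $(I_F)_{<i}$ be the subbimodule generated by the entries of $F(b_1),\dots,F(b_{i-1})$. Applying $\tilde{\partial}$ entrywise gives $\tilde{\partial}(F(b_i))=F(\partial(b_i))$, so it suffices to show $F(I_{i-1}W+WI_{i-1})$ has entries in $(I_F)_{<i}\tilde{W}+\tilde{W}(I_F)_{<i}$. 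For $xw$ with $x\in I_{i-1}$ and $w\in\overline{W}$, multiplicativity gives $F(xw)=F(x)F(w)$; the entries of $F(x)$ lie in $(I_F)_{<i}$ because $x$ lies in the bimodule on $b_1,\dots,b_{i-1}$, and the entries of $F(w)$ lie in $\tilde{W}$, so the entries of $F(xw)$ lie in $(I_F)_{<i}\tilde{W}$, and symmetrically for $wx$. Since each entry of $F(b_i)$ receives a global index larger than all earlier blocks, the filtration condition $\tilde{\partial}(\text{entry})\in(I_F)_{s-1}\tilde{W}+\tilde{W}(I_F)_{s-1}$ holds with room to spare, establishing compatibility.

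The step I expect to be the main obstacle is precisely the bookkeeping behind this multiplicativity. The map $F$ sends a single arrow to a $1\times1$, $2\times1$, $1\times2$ or $2\times2$ block of arrows of $\tilde{Q}$ according to its incidence with the split pair $i,j$, and one must check that $F$ of a composable product equals the matrix product of the blocks, including the new dashed arrows $\iota,\pi$ and the new vertex $\emptyset$, and that $F$ respects the two-sided $B$-action, so that the assertion ``entries of $F(x)$ lie in $(I_F)_{<i}$'' is genuinely a statement about $\tilde{B}$-subbimodules. I would verify this on the generating arrows and extend by the tensor-algebra structure, leaning on Proposition \ref{minimaledgereduction} for the well-definedness of $F$ and $\tilde{\partial}$.

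For the representation-finite case, I would run the algorithm inside the subcategory of $\modu\mathfrak{B}$ of modules vanishing on $I$, which by the preceding lemma is $\modu\tilde{\mathfrak{B}}$. Regularisation (Proposition \ref{regularisation}) removes a superfluous solid arrow together with a dashed arrow without adding vertices; once none remains, one selects, as in the free case, a solid arrow $a$ with $\partial(a)=0$, applies Proposition \ref{minimaledgereduction}, and invokes the compatibility just proved to ensure the transported ideal is again compatible, so the step iterates. Each vertex of the current bocs corresponds, under the chain of equivalences, to a distinct indecomposable of the (up to equivalence) fixed representation-finite category $\modu\tilde{\mathfrak{B}}$, since the simple $S_v$ vanishes on $I_F\subseteq\rad$ and distinct vertices give non-isomorphic simples; hence the number of vertices never exceeds the finite number of indecomposables. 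As each minimal edge reduction adds exactly one vertex, only finitely many occur, and between consecutive ones only finitely many regularisations are possible since each strictly decreases the number of arrows; so the process terminates, necessarily at a bocs with no solid arrows. There $B$ is semisimple, so the indecomposables of $\modu\tilde{\mathfrak{B}}$ are exactly the simples, one per vertex, giving the vertex count; and as in the free case the right algebra of the terminal bocs is the basic Auslander algebra of $\modu\tilde{\mathfrak{B}}$, whose radical has $k$-dimension equal to $\dim_k\overline{W}$, that is, to the number of dashed arrows.
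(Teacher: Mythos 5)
The paper itself contains no proof of this statement: it is quoted from Bautista--Salmer\'on \cite[Theorem 7]{BautistaSalmeron-TheKievalgorithm-1989} and is only applied, in Section 5, to the blocks $(\mathcal{R}_4)$ and $(\mathcal{H}_4)$. So your proposal can only be judged against what a proof requires and against the way the paper uses the result. Your first half (compatibility of $I_F$) is a sound sketch: extending $F$ multiplicatively to paths, propagating $\tilde{\partial}\circ F=F\circ\partial$ from arrows to everything by the graded Leibniz rule, and transporting a compatible generating set blockwise is the right plan, and you correctly identify the matrix bookkeeping as the substance of that half.

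The second half has a genuine gap. The claim $I_F\subseteq\rad\tilde{B}$, on which your vertex count rests, is false in general: $F(a)=\begin{pmatrix}0&0\\1&0\end{pmatrix}$ has a scalar entry, so as soon as the solid arrow being reduced itself lies in the ideal, $I_F$ contains the idempotent $e_\emptyset$, the new vertex is dead (its simple does not vanish on $I_F$), and the correspondence between vertices and indecomposables of the fixed category --- hence your termination bound --- collapses. This situation is not exotic; it is exactly what the theorem is for. In the paper's own computation for $(\mathcal{H}_4)$, after one reduction the relation $da$ has become $F(da)=(d_{4,5},0)$, i.e.\ the transported ideal is generated by the solid arrow $d_{4,5}$ with vanishing differential --- precisely the kind of arrow your algorithm would select next for minimal edge reduction. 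What is missing is the third move of the Kiev algorithm, the one the compatibility statement exists to license: a solid arrow lying in the transported ideal has vanishing differential (it is the image of the first generator of a compatible filtration), and must be \emph{deleted}, i.e.\ one passes to the quotient bocs; this is the step ``removing $d_{4,5}$'' in the paper's tables. That move is also the only mechanism by which relations ever disappear, so without it the process cannot terminate at a \emph{free} bocs, which is part of the assertion you are proving.

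A second, smaller gap: you assert that once no superfluous arrow remains one can ``select, as in the free case, a solid arrow $a$ with $\partial(a)=0$''. For a strictly directed biquiver this follows from a minimal-norm argument (in a degree-one path of length at least two between the endpoints of a norm-minimal solid arrow, every solid arrow is strictly shorter), but strict directedness is not preserved by minimal edge reduction: dashed loops appear at the new vertex, as in Example \ref{sl2example}. So the invariant that keeps the algorithm from getting stuck while solid arrows remain must be a triangularity/layering condition on the solid arrows, and its stability under all three moves (regularisation, deletion, minimal edge reduction) has to be checked; your induction currently carries no such invariant.
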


\subsection{Passing to the hereditary situation}

This subsection is just an excursion to indicate one step which is missing to prove Drozd's tame--wild dichotomy theorem. Although in the last section, we have seen a version of the reduction algorithm for bocses "with relations" only in the representation-finite case. There is also no version of the full version of the reduction algorithm for bocses where $B$ is non-hereditary. Instead in the proof of Drozd's tame--wild dichotomy theorem, one uses the following trick to pass to a hereditary bocs with almost equivalent category of representations. This is described in detail in \cite[Section 18]{BautistaSalmeronZuazua-Differentialtensoralgebras-2009}.

\begin{definition}
Let $A$ be a finite dimensional algebra. 
\begin{enumerate}[(i)]
\item Define $\mathcal{P}(A)$ to be the category with
\begin{description}
\item[objects:] triples $(P,Q,f)$, where $P,Q\in A-\proj$ and $f\colon P\to Q$ is an $A$-module homomorphism
\item[morphisms:] Homomorphisms from $(P,Q,f)$ to $(P,Q,f')$ are given by pairs $(\alpha,\beta)$, where $\alpha\colon P\to P'$ and $\beta\colon Q\to Q'$ such that the following diagram commutes:
\[\begin{tikzcd}
P\arrow{r}{f}\arrow{d}{\alpha}&Q\arrow{d}{\beta}\\
P'\arrow{r}{f'}&Q'.
\end{tikzcd}\]
\end{description}
\item Let $\mathcal{P}^1(A)$ be the full subcategory with $\image f\subseteq \rad Q$.
\item Let $\mathcal{P}^2(A)$ be the full subcategory with $\Ker f\subseteq \rad P$ and $\image f\subseteq \rad Q$.
\end{enumerate}
\end{definition}

\begin{theorem}
\begin{enumerate}[(i)]
\item Every object $X\in \mathcal{P}(A)$ is of the form $X\cong (P,0,0)\oplus (Q,Q,1_Q)\oplus \overline{X}$ for some $\overline{X}\in \mathcal{P}^2(A)$ and some $P,Q\in A-\proj$. 
\item The restriction of the cokernel functor $\Cok\colon \mathcal{P}(A)\to \modu A, (P,Q,f)\mapsto \Cok f$ to $\mathcal{P}^2(A)$ is dense, full, reflects isomorphisms, and preserves and reflects indecomposability.
\end{enumerate}
\end{theorem}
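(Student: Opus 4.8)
The plan is to handle the two parts in turn, using throughout that $A$ is finite dimensional, so that projective covers exist, Nakayama's lemma applies, and the relevant categories are Krull--Remak--Schmidt. For part (i) I would proceed by Gaussian elimination on $f$. The basic dichotomy is that for indecomposable projectives $P_a,Q_b$ a homomorphism $h\colon P_a\to Q_b$ is either an isomorphism (which forces $P_a\cong Q_b$) or has $\image h\subseteq\rad Q_b$: the induced map $P_a/\rad P_a\to Q_b/\rad Q_b$ of simple tops is nonzero only if the tops agree, and in that case $h$ is onto by Nakayama, hence a split surjection between isomorphic modules, hence an isomorphism. Decomposing $P,Q$ into indecomposable projectives and writing $f$ as a matrix, I would use the isomorphism entries as pivots; pre- and post-composing with automorphisms of $P$ and $Q$ (which are exactly the isomorphisms $(\alpha,\beta)$ of $\mathcal{P}(A)$) clears their rows and columns and yields $X\cong(P_0,Q_0,f_0)\oplus(P',Q',f')$ with $f_0$ an isomorphism and $\image f'\subseteq\rad Q'$, together with $(P_0,Q_0,f_0)\cong(Q_0,Q_0,1_{Q_0})$. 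To split off the kernel, let $\rho\colon\tilde P\to\image f'$ be a projective cover; since $P'$ is projective, the corestriction $P'\to\image f'$ of $f'$ lifts through $\rho$ to $h\colon P'\to\tilde P$, which is onto because $\Ker\rho$ is superfluous, hence split. This gives $P'=\tilde P\oplus P_1$ with $f'|_{P_1}=0$ and $\Ker(f'|_{\tilde P})=\Ker\rho\subseteq\rad\tilde P$, so $\overline X:=(\tilde P,Q',f'|_{\tilde P})$ lies in $\mathcal{P}^2(A)$ and $X\cong(Q_0,Q_0,1_{Q_0})\oplus(P_1,0,0)\oplus\overline X$.

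For part (ii) the key observation, which I would record first, is that the objects of $\mathcal{P}^2(A)$ are precisely the minimal projective presentations: $\image f\subseteq\rad Q$ says that $\pi\colon Q\to\Cok f$ is a projective cover, while $\Ker f\subseteq\rad P$ says that $f\colon P\to\image f=\Ker\pi$ is a projective cover. \textbf{Density} is then immediate, since a minimal projective presentation $P\xrightarrow{f}Q\to M\to 0$ of any $M\in\modu A$ lies in $\mathcal{P}^2(A)$ with $\Cok f\cong M$. \textbf{Fullness} in fact holds already on $\mathcal{P}(A)$: given $g\colon\Cok f\to\Cok f'$, lift $g\pi$ along the projective $Q$ to $\beta\colon Q\to Q'$ with $\pi'\beta=g\pi$; then $\beta(\image f)\subseteq\image f'$ because $\pi'\beta f=g\pi f=0$, and since $f'$ maps $P'$ onto $\image f'$ and $P$ is projective one lifts $\beta f$ to $\alpha\colon P\to P'$ with $f'\alpha=\beta f$. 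By construction $(\alpha,\beta)$ induces $g$ on cokernels.

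The genuine use of the defining conditions of $\mathcal{P}^2(A)$ occurs in \textbf{reflecting isomorphisms}. Suppose $(\alpha,\beta)$ induces an isomorphism $\overline\beta\colon\Cok f\xrightarrow{\sim}\Cok f'$. Since $\pi,\pi'$ are projective covers of isomorphic modules, $Q\cong Q'$ and $\beta$ is onto (as $\overline\beta\pi$ is onto and $\Ker\pi'$ is superfluous), hence $\beta$ is an isomorphism, which then restricts to an isomorphism $\image f\xrightarrow{\sim}\image f'$. Because $f,f'$ are projective covers of these images — this is exactly where $\Ker f\subseteq\rad P$ and $\Ker f'\subseteq\rad P'$ enter — the relation $f'\alpha=\beta f$ shows $f'\alpha$ is onto $\image f'$, so $\alpha$ is onto with $P\cong P'$, hence an isomorphism. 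Together with fullness and density this shows $\Cok$ induces a bijection on isomorphism classes between $\mathcal{P}^2(A)$ and $\modu A$.

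Finally, $\mathcal{P}^2(A)$ is closed under direct summands (the conditions $\image f_i\subseteq\rad Q_i$ and $\Ker f_i\subseteq\rad P_i$ are inherited componentwise), and the zero object is the only object of $\mathcal{P}^2(A)$ with zero cokernel (surjectivity of $f$ together with $\image f\subseteq\rad Q$ forces $Q=0$, whence $\Ker f=P\subseteq\rad P$ gives $P=0$). Since $\Cok$ is additive, \textbf{preservation and reflection of indecomposability} then follow formally from the bijection on isomorphism classes: a nontrivial decomposition of $X$ gives one of $\Cok X$ with both summands nonzero, and conversely a decomposition $\Cok X\cong M_1\oplus M_2$ is realised, via density and the bijection, by a nontrivial decomposition of $X$. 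I expect the reflection of isomorphisms to be the main obstacle: it fails already on $\mathcal{P}^1(A)$ and forces both conditions defining $\mathcal{P}^2(A)$, resting on the uniqueness of the projective covers $Q\to\Cok f$ and $P\to\image f$; by contrast the elimination bookkeeping of part (i) is routine once the dichotomy for maps between indecomposable projectives is in place.
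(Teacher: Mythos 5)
Your proposal is correct, and there is no gap to report: the dichotomy for maps between indecomposable projectives plus Gaussian elimination gives the splitting in (i), the identification of objects of $\mathcal{P}^2(A)$ with minimal projective presentations gives density and fullness, and your use of the two radical conditions (uniqueness/superfluous-kernel properties of the projective covers $Q\to\Cok f$ and $P\to\image f$) is exactly what makes reflection of isomorphisms work, with indecomposability then following formally from the bijection on isomorphism classes together with closure of $\mathcal{P}^2(A)$ under summands and the fact that only the zero object has zero cokernel. One point of comparison: the paper itself gives no proof of this theorem — it is quoted as background in the excursion on passing to the hereditary situation, with the details delegated to Bautista--Salmer\'on--Zuazua (Section 18) — and your argument is essentially the standard one found there, so nothing genuinely different is being traded off; if anything, your observation that fullness already holds on all of $\mathcal{P}(A)$, while reflection of isomorphisms genuinely needs both defining conditions of $\mathcal{P}^2(A)$, is a tidy way of isolating where the hypotheses enter.
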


The categories $\mathcal{P}^2(A)$ and $\mathcal{P}^1(A)$ only differ in the objects given by $(P,0,0)$. Therefore, determining the representation type of $\mathcal{P}^2(A)$ is the same as determining the representation type of $\mathcal{P}^1(A)$. This category is equivalent to representations of a differential biquiver as follows:

\begin{proposition}
Let $A$ be a finite dimensional algebra with $n$ simples. Fix a basis of $\rad(P_i,P_j)$ for each pair of projective modules $(P_i,P_j)$. Define a differential biquiver on $2n$ vertices, denoted $\{1,\dots, n,\overline{1},\dots,\overline{n}\}$ as follows: Write down a solid arrow $i\to \overline{j}$ and a dashed arrow $\begin{tikzcd}i\arrow[dashed]{r} &j\end{tikzcd}$ for each basis vector of $\rad(P_i,P_j)$ in the fixed basis. Let the differential $\partial$ be given by the $k$-dual of the multiplication on $\rad A$. Then, the category $\mathcal{P}^1(A)$ is equivalent to the category of representations over this differential biquiver.
\end{proposition}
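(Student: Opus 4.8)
The plan is to construct an explicit equivalence functor $F\colon \mathcal{P}^1(A)\to \operatorname{rep}(Q,\partial)$ directly, rather than going through the full bocs formalism, and then to check that the twisted composition and the morphism-compatibility conditions of $\operatorname{rep}(Q,\partial)$ are forced precisely by the fact that $\partial$ is the $k$-dual of the multiplication of $\rad A$. On objects, I would start from the Krull--Remak--Schmidt decompositions $P\cong\bigoplus_i P_i^{a_i}$ and $Q\cong\bigoplus_j P_j^{b_j}$ and set $V_i:=k^{a_i}$ on the vertex $i$ and $V_{\overline{j}}:=k^{b_j}$ on the vertex $\overline{j}$. Using $\Hom_A(P_i,P_j)\cong e_jAe_i$ and $\rad(P_i,P_j)\cong e_j(\rad A)e_i$, the map $f$, whose image lies in $\rad Q$, decomposes blockwise into elements of $\rad(P_i,P_j)$; expanding each block in the fixed basis, the coefficient of a basis vector $\xi\in\rad(P_i,P_j)$ is a linear map $V_i\to V_{\overline{j}}$, which I declare to be the value of the representation on the solid arrow labelled by $\xi$. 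Reversing this assignment shows at once that $F$ is dense.

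For morphisms, the key is the Fitting decomposition $\alpha=\alpha_{\mathrm{ss}}+\alpha_{\mathrm{rad}}$ and $\beta=\beta_{\mathrm{ss}}+\beta_{\mathrm{rad}}$ of a pair $(\alpha,\beta)$ into its scalar (diagonal) part and its radical part. The scalar parts give the vertex components $g_i,g_{\overline{j}}$ of a representation morphism, while the radical parts, read off in the fixed bases of the spaces $\rad(P_i,P_{i'})$ and $\rad(P_j,P_{j'})$, give the components attached to the dashed arrows (encoding $\alpha_{\mathrm{rad}}$ on the unbarred vertices and $\beta_{\mathrm{rad}}$ on the barred vertices). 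This sets up a bijection between the morphism data on both sides, so that fullness and faithfulness will be immediate once well-definedness and functoriality are established.

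The heart of the proof is to show that the single equation $\beta f=f'\alpha$ defining a morphism in $\mathcal{P}^1(A)$ is equivalent to the family of compatibility relations that $\operatorname{rep}(Q,\partial)$ imposes along the solid arrows. Here I would expand $\beta f-f'\alpha=\beta_{\mathrm{ss}}f-f'\alpha_{\mathrm{ss}}+\beta_{\mathrm{rad}}f-f'\alpha_{\mathrm{rad}}$ and compare the coefficient of each basis vector $\xi\in\rad(P_i,P_j)$. The first two terms reproduce exactly the expression $W_{a}g_i-g_{\overline{j}}V_{a}$ attached to the solid arrow $a$ labelled by $\xi$, whereas the products $\beta_{\mathrm{rad}}f$ and $f'\alpha_{\mathrm{rad}}$ are governed by factorisations $\xi=\eta\zeta$ in $\rad A$; since $\partial$ is the $k$-dual of this multiplication, $\partial(a)$ is precisely the sum over such factorisations producing the mixed solid--dashed terms on the left-hand side of the compatibility relation. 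Functoriality is checked the same way: the radical part of a composite $\alpha'\alpha$ contains the cross terms $\alpha'_{\mathrm{ss}}\alpha_{\mathrm{rad}}+\alpha'_{\mathrm{rad}}\alpha_{\mathrm{ss}}$, matching the linear part $g_jf_\varphi+g_\varphi f_i$ of the twisted composition, together with the genuinely quadratic term $\alpha'_{\mathrm{rad}}\alpha_{\mathrm{rad}}$, which matches the correction term coming from $\partial$ applied to the dashed generators.

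I expect the bookkeeping in this last step to be the main obstacle: one must keep track of the signs prescribed by the graded Leibniz rule, handle the fact that $f$ need not be homogeneous for the radical filtration even though the fixed basis spans the whole radical $\Hom$-space, and verify that $\partial^2=0$ (equivalently, the associativity of the multiplication of $A$) guarantees that the relations are consistent, so that the twisted composition is well-defined and associative. Once this coefficientwise matching is carried out, well-definedness, functoriality, and the bijection on morphisms together yield that $F$ is a fully faithful dense functor, hence an equivalence.
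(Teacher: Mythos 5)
Your strategy is correct, and there is no in-paper argument to measure it against: the paper states this proposition without proof, delegating the details to \cite[Section 18]{BautistaSalmeronZuazua-Differentialtensoralgebras-2009}, and what you outline is exactly the standard verification given there (Drozd's bocs of projective presentations). The three pillars of your argument are all sound: objects correspond to representations of the solid quiver via Krull--Remak--Schmidt and expansion of $f\in\Hom_A(P,\rad Q)\cong \bigoplus_{i,j}\rad(P_i,P_j)\otimes_k \Hom_k(V_i,V_{\overline{j}})$ in the fixed basis; morphisms are matched using the splitting $\Hom_A(P_i,P_{i'})\cong \delta_{ii'}\,k\id\oplus\rad(P_i,P_{i'})$, which is valid precisely because $A$ is basic and $k$ algebraically closed; and the single intertwining equation $\beta f=f'\alpha$, compared coefficientwise in the chosen basis of $\rad A$, is exactly the family of solid-arrow conditions, since the cross terms $\beta_{\mathrm{rad}}f$ and $f'\alpha_{\mathrm{rad}}$ are governed by the structure constants of the multiplication $\rad A\otimes \rad A\to\rad A$, i.e.\ by $\partial$; the same bookkeeping identifies composition of pairs with the twisted composition. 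The issues you flag (signs from the graded Leibniz rule, $\partial^2=0$ as the dual of associativity) are genuinely only bookkeeping and hide no difficulty.

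One point you silently correct, and must: as printed, the proposition attaches to each basis vector of $\rad(P_i,P_j)$ a single dashed arrow from $i$ to $j$, whereas your proof uses dashed arrows on both levels, from $i$ to $j$ carrying $\alpha_{\mathrm{rad}}$ and from $\overline{i}$ to $\overline{j}$ carrying $\beta_{\mathrm{rad}}$. This is forced, not optional: with only the unbarred dashed arrows the comparison functor is not faithful. For instance, take $A=k[x]/(x^3)$ and the object $(A,A,f)$ of $\mathcal{P}^1(A)$ with $f$ right multiplication by $x$; its endomorphisms are pairs $(a,b)\in A\times A$ (acting by right multiplication) with $xb=ax$, i.e.\ $b-a\in kx^2$, a $4$-dimensional algebra, while the endomorphism space of the corresponding representation of the literal biquiver is only $2$-dimensional, because the component that should record $b_2$ has no dashed arrow to live on and the condition dual to $x\cdot x=x^2$ kills the unbarred component instead. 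So your reading --- one dashed arrow between the unbarred copies and one between the barred copies for each basis vector --- is the correct interpretation of the statement, and with it your outline goes through.
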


In practice, one can thus determine the representation type of an algebra by first computing the differential biquiver as indicated in the foregoing proposition and then apply the reduction algorithm.

\subsection{Examples}
In this section we give several examples for the reduction algorithm applied to bocses arising from strongly quasi-hereditary algebras.

\begin{example}
We start by using the reduction algorithm to construct the well-known Auslander algebra of the path algebra of $kA_3=k(\begin{tikzcd}1\arrow{r}{a}&2\arrow{r}{b}&3\end{tikzcd})$.\\
In the first step, one of the minimal edges $a$ or $b$ can be reduced. We choose $a$ and arrive at the following differential biquiver:
\[\begin{tikzcd}
&4\arrow[dashed]{ld}{a_s}\arrow{rrrd}{b_{34}}\\
1&&2\arrow[dashed]{ul}{a_t}\arrow{rr}{b}&&3
\end{tikzcd}\]
with differential $\partial(b)=-b_{34}a_s$. There are still no superfluous edges, but only one minimal edge $b_{34}$ which we reduce next:
\[
\begin{tikzcd}
&&5\arrow[dashed, bend right]{lldd}[swap]{a_{s15}}\arrow[dashed]{ld}[description]{b_{34s}}\\
&4\arrow[dashed]{ld}{a_s}\\
1&&2\arrow[dashed]{lu}{a_t}\arrow[xshift=-0.5ex]{uu}{b_{52}}\arrow[xshift=0.5ex, dashed]{uu}[swap]{a_{t52}}\arrow{rr}{b}&&3\arrow[dashed, bend right]{lluu}[swap]{b_{34t}}
\end{tikzcd}
\]
with differential $\partial(a_t)=b_{34s}a_{t52}$, $\partial(b_{52})=-a_{t52}+b_{34t}b$, and $\partial(a_{s15})=-a_sb_{34s}$. The next step is to remove the superfluous edge $b_{52}$. This yields the following:
\[
\begin{tikzcd}
&&5\arrow[dashed, bend right]{lldd}[swap]{a_{s15}}\arrow[dashed]{ld}[description]{b_{34s}}\\
&4\arrow[dashed]{ld}{a_s}\\
1&&2\arrow[dashed]{lu}{a_t}\arrow{rr}{b}&&3\arrow[dashed, bend right]{lluu}[swap]{b_{34t}}
\end{tikzcd}
\]
with differential $\partial(a_t)=b_{34s}b_{34t}b$ and $\partial(a_{s15})=-a_sb_{34s}$. As a last step, the minimal edge $b$ is removed:
\[
\begin{tikzcd}
&&5\arrow[dashed, bend right]{lldd}[swap]{a_{s15}}\arrow[dashed]{ld}[description]{b_{34s}}\\
&4\arrow[dashed]{ld}{a_s}&&6\arrow[dashed]{dl}{b_s}\arrow[dashed]{ll}{a_{t46}}\arrow[dashed]{ul}[description]{b_{34t56}}\\
1&&2\arrow[dashed]{lu}{a_t}&&3\arrow[dashed, bend right]{lluu}[swap]{b_{34t}}\arrow[dashed]{ul}{b_t}
\end{tikzcd}
\]
This gives a basis of the Jacobson radical of the Auslander algebra where the differential coincides with the dual of the multiplication. It is given by $\partial(b_{34t})=-b_{34t56}b_t$, $\partial(a_{s15})=-a_sb_{34s}$ and $\partial(a_{t46})=-a_tb_s+b_{34s}b_{34t56}$. It is possible to extract the Auslander--Reiten quiver from this data as the irreducible maps are precisely those with vanishing differential. In the example we get the following well-known picture where the dotted arrows represent the Auslander--Reiten translation:
\[
\begin{tikzcd}
&&5\arrow{rd}[description]{b_{34s}}\\
&6\arrow{ru}[description]{b_{34t56}}\arrow{rd}[description]{b_s}&&4\arrow{rd}[description]{a_s}\arrow[dotted]{ll}\\
3\arrow{ru}[description]{b_t}&&2\arrow[dotted]{ll}\arrow{ru}[description]{a_t}&&1\arrow[dotted]{ll}
\end{tikzcd}
\]
\end{example}

\begin{example}\label{sl2example}
Let $\mathfrak{g}=\mathfrak{sl}_2$. The quasi-hereditary algebra $A_\chi$ corresponding to the non-semisimple blocks of BGG category $\mathcal{O}$ was described in Example \ref{examplesl2}. In this example, we use the reduction algorithm to compute the Auslander--Reiten quiver of the category of modules which can be filtered by standard modules. The indecomposable projective modules for $A_\chi$ look as follows:
\[\begin{tikzcd}
1\arrow{d}\\
2\arrow{d}&2\arrow{d}\\
1&1
\end{tikzcd}\]
It is easy to see, that the corresponding bocs (constructed in the proof of Theorem  \ref{KoenigKulshammerOvsienko}, see \cite[Appendix A.1]{KoenigKulshammerOvsienko-Quasihereditaryalgebras-2014}) arises from the following differential biquiver:
\[\begin{tikzcd}
1\arrow[yshift=0.5ex]{r}{a}\arrow[yshift=-0.5ex, dashed]{r}[swap]{\varphi}&2
\end{tikzcd}
\]
with zero differential. In this case the corresponding right algebra is basic. We now want to construct the corresponding category of filtered modules using the reduction algorithm.
The only operation we can perform is thus minimal edge reduction at $a$. This yields the following differential biquiver:
\[\begin{tikzcd}
&{\begin{matrix}1\\2\end{matrix}}\arrow[dashed, yshift=-0.5ex]{rd}[swap]{\varphi''}\arrow[dashed, yshift=0.5ex]{ld}[swap]{\pi_a}\arrow[dashed, loop above]{}{\varphi'''}\\
1\arrow[dashed]{rr}[swap]{\varphi}\arrow[dashed, yshift=-0.5ex]{ru}[swap]{\varphi'}&&2\arrow[dashed, yshift=0.5ex]{lu}[swap]{\iota_a}
\end{tikzcd}\]
with differential given by $\partial(\varphi'')=-\varphi\pi_a$, $\partial(\varphi''')=\iota_a\varphi''-\varphi'\pi_a$, $\partial(\varphi')=\iota_a\varphi$.\\
In this quiver a number $j$ corresponds to a subfactor $\Delta(j)$ in a $\Delta$-filtration of the corresponding module, i.e. $1$ corresponds to $\Delta(1)=L(1)$, $2$ corresponds to $\Delta(2)=P(2)$ and $\begin{matrix}1\\2\end{matrix}$ corresponds to the unique non-split extension of $\Delta(1)$ by $\Delta(2)$. 
Removing the homomorphisms with non-zero differential (i.e. the reducible ones) and rewriting the modules in terms of simple modules we obtain the following Auslander--Reiten quiver of $\mathcal{F}(\Delta)$:
\[
\begin{tikzcd}
&{\begin{matrix}1\\2\\1\end{matrix}}\arrow{rd}{\pi_a}\\
{\begin{matrix}2\\1\end{matrix}}\arrow{ru}{\iota_a}&&{\begin{matrix}1\end{matrix}}\arrow[dotted]{ll}\arrow[bend left]{ll}{\varphi}
\end{tikzcd}
\]
The bend arrow corresponds to an irreducible map which is not part of any Auslander--Reiten sequence. According to results of Krebs \cite{Krebs-AuslanderReitentheory-2015} such morphisms can only occur from injective to projective objects in $\mathcal{F}(\Delta)$.
\end{example}

The above example is a particular case of the following classification of quasi-hereditary algebras with two simple modules, which was first obtained by Mem\-brillo-Her\-n\'an\-dez in \cite{MembrilloHernandez-Quasihereditaryalgebras-1994} (see also \cite{Dubnov-Onderivedcategories-2000, LiuYang-Stratifications-2013} for further study of these algebras from the point of view of derived categories).

\begin{theorem}\label{classification2simples}
Let $A$ be a quasi-hereditary algebra with two simple modules. Then $A$ is Morita equivalent to one of the following path algebras:
\[\begin{tikzcd}
1\arrow[yshift=2.8ex, bend left, color=white]{r}[description]{\color{black}\vdots}\arrow[yshift=4.1ex, bend left]{r}[description]{\alpha_1}\arrow[yshift=0.1ex, bend left]{r}[description]{\alpha_{s}}&2\arrow[yshift=-1.2ex, bend left, color=white]{l}[description]{\color{black}\vdots}\arrow[yshift=-4.3ex, bend left]{l}[description]{\beta_1}\arrow[yshift=-0.1ex, bend left]{l}[description]{\beta_t}
\end{tikzcd}\]
with relations $\alpha_i\beta_j$ for each $1\leq i\leq s$ and $1\leq j\leq t$. In each of the cases, the subalgebra formed by just taking the $\alpha_i$ is an exact Borel subalgebra.\\
The category of filtered modules is of finite representation-type if and only if $s\leq 1$. It is tame if and only if $s=2$.
\end{theorem}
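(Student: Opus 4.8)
The plan is to determine the module-theoretic shape of $A$ by hand and then feed the result into the bocs correspondence of Theorem \ref{KoenigKulshammerOvsienko}. Ordering the simples by $1<2$, the module $\Delta(2)=P(2)$ is projective and the minimal standard module is simple, $\Delta(1)=L(1)$, since every composition factor $L(\mu)$ of a standard module $\Delta(\lambda)$ satisfies $\mu\le\lambda$. From (QH3) I get $\Ext^1_A(\Delta(1),\Delta(1))=\Ext^1_A(L(1),L(1))=0$, which forces $\rad\Delta(2)$ to be semisimple; hence $\Delta(2)$ has top $L(2)$ and radical $L(1)^{t}$ with $t=\dim\Hom_A(\Delta(1),\Delta(2))$. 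Because $A\in\mathcal{F}(\Delta)$, the radical $\rad P(1)$ lies in $\mathcal{F}(\Delta(2))$, and $\Ext^1_A(\Delta(2),\Delta(2))=0$ makes its $\Delta(2)$-filtration split, so $\rad P(1)\cong\Delta(2)^{s}$ with $s=\dim\Ext^1_A(\Delta(1),\Delta(2))$. Reading off the Gabriel quiver (namely $s$ arrows $1\to2$, $t$ arrows $2\to1$, and no loops, as the self-extensions of both simples vanish) and comparing $\dim A=2+s+t+st$ with the dimension of $kQ$ modulo the length-two paths $2\to2$, I conclude that the defining relations are exactly the products $\alpha_i\beta_j$, so $A$ is Morita equivalent to the stated algebra.

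For the Borel subalgebra I would appeal directly to the bocs attached to $A$. The directed biquiver of $\mathfrak{B}=(B,W)$ has two vertices and, by the quasi-hereditary axioms, only arrows $1\to2$: a dashed loop would be a non-scalar endomorphism of a standard module, excluded by (QH1), and a solid loop a forbidden self-extension, excluded by (QH3), while arrows $2\to1$ are ruled out because homomorphisms and extensions between standards point upward. Applying Proposition \ref{regularisation} to remove the superfluous solid arrows makes the differential vanish and leaves exactly $s$ solid and $t$ dashed arrows $1\to2$. Thus $B$ is the path algebra of the generalized Kronecker quiver with $s$ arrows $1\to2$, which is hereditary and directed; the specialised Burt--Butler theorem identifies it as an exact Borel subalgebra of $R\cong A$, and matching $A\otimes_B L_B(i)=\Delta(i)$ identifies it with the subalgebra generated by the $\alpha_i$.

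The representation type is then read off from $\mathcal{F}(\Delta)\cong\modu\mathfrak{B}$. If $s\le1$ then $B$ is semisimple or $kA_2$, hence representation-finite, and Corollary \ref{exactBorelfinitetype} gives that $\mathcal{F}(\Delta)$ is representation-finite. For $s\ge2$ the decisive point is that, the differential being zero, a morphism in $\modu\mathfrak{B}$ consists of an honest $B$-module map $(f_1,f_2)$ together with unconstrained dashed components $f_\varphi\colon V_1\to V_2$, and these components form a square-zero two-sided ideal of $\End_{\mathfrak{B}}(V)$ with quotient $\End_B(V)$. Hence $\End_{\mathfrak{B}}(V)$ is local precisely when $\End_B(V)$ is, and through the Burt--Butler equivalence $\modu\mathfrak{B}\cong\Ind(B,R)$, under which $\End_A(R\otimes_B V)\cong\End_{\mathfrak{B}}(V)$, the induction functor $R\otimes_B-$ preserves indecomposability and reflects isomorphism. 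The indecomposables of $\mathcal{F}(\Delta)$ are therefore the induced indecomposable $B$-modules, so $\mathcal{F}(\Delta)$ has the representation type of the $s$-arrow generalized Kronecker algebra: representation-finite for $s\le1$, tame for $s=2$ (Euclidean type $\tilde{A}_1$) and wild for $s\ge3$, the tame and wild parametrizations being transported along $R\otimes_B-$, with Corollary \ref{stronglyfilteredtamewild} ensuring that no other behaviour occurs.

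I expect the main obstacle to be exactly this last step for $s\ge2$: beyond the clean square-zero computation of $\End_{\mathfrak{B}}(V)$, one must verify that induction transports the full tame and wild parametrizations, and not merely isomorphism classes of indecomposables, from the generalized Kronecker algebra into $\modu A$. The bookkeeping needed to pin down the biquiver, that is, ruling out the remaining arrows and arranging a vanishing differential, is the other place where care is required.
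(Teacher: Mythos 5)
Your proof is correct, but its first half runs in the opposite direction to the paper's. The paper never analyses the algebra $A$ directly: it classifies two-vertex directed bocses (a directed algebra on two simples can only be hereditary, so the bocs is a differential biquiver; for degree reasons the differential of a dashed arrow vanishes and the differential of a solid arrow is a combination of dashed arrows, so after Proposition \ref{regularisation} the differential is zero), and then reads off the presentation, the dimension $2+s+t+st$, basicness, and the Morita invariance of the pair $(s,t)=(\dim\Ext^1(\Delta(1),\Delta(2)),\dim\Hom(\Delta(1),\Delta(2)))$ from the right algebra of the resulting bocs. You instead derive the presentation of $A$ by hand from the quasi-hereditary axioms — $\Delta(2)=P(2)$ with semisimple radical, $\rad P(1)\cong\Delta(2)^s$, the Gabriel quiver, and then the dimension count pinning down $I=(\alpha_i\beta_j)$; note that this last step needs both inclusions, with $I\supseteq(\alpha_i\beta_j)$ coming from $\rad^2P(2)=0$ (which you have) and equality then following from the count — and you only invoke the bocs machinery afterwards, for the Borel subalgebra and the representation type. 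What your route buys is that the Morita classification is elementary and does not need Theorem \ref{KoenigKulshammerOvsienko}; what the paper's route buys is uniformity, since presentation, exhaustion and pairwise non-equivalence all fall out of a single computation on the bocs side. On the representation-type part the two arguments coincide in substance: the paper's one-line observation that, the differential being zero, two modules are isomorphic over $B$ if and only if they are isomorphic over $\mathfrak{B}$ is exactly your square-zero-ideal computation of $\End_{\mathfrak{B}}(V)$, and your extra care in transporting the tame and wild parametrizations along $R\otimes_B-$ (using that $R$ is projective as a $B$-module, together with Corollary \ref{exactBorelfinitetype} and Corollary \ref{stronglyfilteredtamewild}) fills in what the paper compresses into ``this readily yields the result on the representation type.''
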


\begin{proof}
Since (up to Morita equivalence) each quasi-hereditary algebra is the right algebra of a bocs, it suffices to classify those. It is clear that a directed algebra on two simples can only be hereditary. Thus, the bocs comes from a differential biquiver. For degree reasons, the only possible non-vanishing differential must lead to a non-regular arrow. Cancelling those, using Proposition \ref{regularisation}, we can assume that the differential vanishes.\\
Thus, the quasi-hereditary algebras on two simples are classified by two numbers, the number $s$ of arrows from $1$ to $2$ in the quiver of $B$ and the number $t$ of generators for $\overline{W}$ as a $B$-$B$-bimodule in a minimal generating set. Using the differential biquiver description, it is easy to see that the right algebra of the corresponding bocs will have dimension $2+s+t+st$. Since $s$ and $t$ are determined by the homomorphism spaces between the corresponding standard modules as $s=\dim \Ext^1(\Delta(1),\Delta(2))$ and $t=\dim \Hom(\Delta(2), \Delta(1))$, these algebras cannot be Morita equivalent.\\
It is straightforward to check that the corresponding right algebra is basic in each of the cases. Since the differential vanishes, two modules are isomorphic as $B$-modules if and only if they are isomorphic as $\mathfrak{B}$-modules. This readily yields the result on the representation type.
\end{proof}

\begin{example}
The philosophy could be that the closer $R$ is to be basic, the better the induction functor is behaved. The following example, constructed in joint work with Agnieszka Bodzenta \cite{BodzentaKulshammer-Stronglyquasihereditary-2016}, shows that this is not the case. It gives an example of a basic quasi-hereditary algebra $R$ with representation-finite filtered category but tame exact Borel subalgebra $B$. After constructing it, we learned that this algebra was already used as a counterexample in a different context by Mazorchuk, see \cite{Mazorchuk-Koszulduality-2010}.\\
The algebra is given by the following quiver:
\[
\begin{tikzcd}
3\arrow[yshift=0.5ex]{r}{\alpha}&1\arrow[yshift=-0.5ex]{l}{\beta}\arrow[yshift=0.5ex]{r}{\gamma}&2\arrow[yshift=-0.5ex]{l}{\delta}
\end{tikzcd}
\]
with relations $\beta\alpha$, $\gamma\delta$, and $\beta\delta\gamma\alpha$.
Its projective modules are given as follows:
\[\begin{tikzcd}
&1\arrow{ld}{\gamma}\arrow{rd}{\beta}&&2\arrow{d}{\delta}&3\arrow{d}{\beta}\\
2\arrow{d}{\delta}&&3\arrow{d}{\alpha}&1\arrow{d}{\beta}&1\arrow{d}{\gamma}\\
1\arrow{d}{\beta}&&1\arrow{d}{\gamma}&3\arrow{d}{\alpha}&2\arrow{d}{\delta}\\
3\arrow{d}{\alpha}&&2\arrow{d}{\delta}&1\arrow{d}{\gamma}&1\\
1\arrow{d}{\gamma}&&1&2\arrow{d}{\delta}\\
2\arrow{d}{\delta}&&&1\\
1
\end{tikzcd}\]
The minimal projective resolutions of standard modules are hence of the following form:
\[\begin{tikzcd}
0\arrow{r}&P_2\oplus P_3\arrow{r}&P_1\arrow{r}&\Delta(1)\arrow{r}&0\\
0\arrow{r}&P_3\arrow{r}&P_2\arrow{r}&\Delta(2)\arrow{r}&0\\
&0\arrow{r}&P_3\arrow{r}&\Delta(3)\arrow{r}&0
\end{tikzcd}\]
The $\Ext^*$-algebra can be represented by the following differential biquiver (without relations):
\[\begin{tikzcd}
\Delta(1)\arrow[yshift=0.5ex]{r}{a}\arrow[yshift=-0.5ex, dashed]{r}[swap]{\varphi}\arrow[yshift=0.5ex, bend left]{rr}{c}\arrow[yshift=-0.5ex, bend right, dashed]{rr}[swap]{\chi}&\Delta(2)\arrow[yshift=0.5ex]{r}{b}\arrow[yshift=-0.5ex, dashed]{r}[swap]{\psi}&\Delta(3)
\end{tikzcd}\]
and differential $\partial(\chi)=\psi\varphi$ and $\partial(c)=b\varphi$. The exact Borel subalgebra is given by a tame hereditary algebra $k\tilde{A}_2$, where $\tilde{A}_2$ is the Euclidean quiver of type $A$ with $3$ vertices. Computing $\End_{\mathfrak{B}}(B)$, one readily sees that its dimension is the same as the dimension of $A$. In particular it is basic.\\
The following diagram shows that all the modules in the $1$-parameter family of three-dimensional modules for $k\tilde{A}_2$ become isomorphic in the category of modules over the bocs:
\[
\begin{tikzcd}[column sep=35ex]
k\arrow[bend right]{dd}[swap]{\lambda}\arrow{d}[swap]{1}\arrow[dashed]{r}[description]{1}\arrow[dashed]{rd}[description]{\lambda}\arrow[dashed]{rdd}[pos=0.8, description]{0}&k\arrow[bend left]{dd}{0}\arrow{d}{1}\\
k\arrow{d}[swap]{1}\arrow[dashed]{r}[pos=0.25, description]{1}\arrow[dashed]{dr}[description]{0}&k\arrow{d}{1}\\
k\arrow[dashed]{r}[description]{1}&k
\end{tikzcd}
\]
The following table lists the reductions applied to arrive at a bocs with no solid arrows as well as the number of vertices and arrows in each step to give the reader an impression of the growth rate:
\begin{center}
\begin{tabular}{|l|c|c|}
\hline
Step &number of vertices &number of arrows\\
\hline
start & $3$ & $6$\\
minimal edge reduction at $a$ & $4$ &$14$\\  
minimal edge reduction at $b_{34}$ & $5$ & $35$\\
minimal edge reduction at $b$ & $6$ & $55$\\
regularisation at $b_{52}$ & $6$ & $53$\\
regulariation at $b_{52,56}$ &$6$ &$51$\\
minimal edge reduction at $c$ & $7$ & $75$\\
regularisation at $c_{61}$ & $7$ &$73$\\
regularisation at $c_{51}$ & $7$ & $71$\\
regularisation at $c_{61,67}$ &$7$&$69$\\
regularisation at $c_{51,57}$ &$7$&$67$\\
minimal edge reduction at $c_{34}$ &$8$&$107$\\
regularisation at $c_{34,74}$ &$8$&$105$\\
regularisation at $c_{34,64}$ &$8$&$103$\\
regularisation at $c_{34,53}$ &$8$&$101$\\
regularisation at $c_{34,74,78}$ & $8$ & $99$\\
regularisation at $c_{34,64,68}$ & $8$ &$97$\\
regularisation at $c_{34,54,58}$ & $8$ & $95$\\
minimal edge reduction at $c_{34,35}$ & $9$ & $156$\\
regularisation at $c_{34,35,85}$ & $9$ & $154$\\
regularisation at $c_{34,35,75}$ & $9$ & $152$\\
regularisation at $c_{34,35,65}$ & $9$ & $150$\\
regularisation at $c_{34,55,95}$ & $9$ & $148$\\
regularisation at $c_{34,55}$ & $9$ & $146$\\
regularisation at $c_{34,35,85,89}$ & $9$ & $144$\\
regularisation at $c_{34,35,75,79}$ & $9$ & $142$\\
regularisation at $c_{34,35,65,69}$ & $9$ & $140$\\
regularisation at $c_{34,55,99}$ & $9$ & $138$\\
regularisation at $c_{34,55,59}$ & $9$ & $136$\\
\hline
\end{tabular}
\end{center}
Removing the reducible maps yields the following Auslander--Reiten quiver of $\mathcal{F}(\Delta)$. As in Example \ref{sl2example} the numbers $1,2,3$ correspond to the modules $\Delta(1)$, $\Delta(2)$, $\Delta(3)$, respectively. Stacking numbers on top of each other corresponds to an extension of the upper module by the lower module.
\[
\begin{tikzcd}[ampersand replacement=\&]
3\arrow{rd}[description]{c_{34,35,t}}\&\& {\begin{smallmatrix}1\\2\\3\end{smallmatrix}}\arrow{rd}[description]{c_{34,s}}\\
\&{\begin{smallmatrix}&1\\2&&3\\3\end{smallmatrix}}\arrow{ru}[description]{c_{34,35,s}}\arrow{rd}[description]{c_{34,t,89}}\&\&{\begin{smallmatrix}1\\2\end{smallmatrix}}\arrow{rd}[description]{a_s}\arrow[bend right=70]{lllu}[description]{b_{34,s}}\\
{\begin{smallmatrix}2\\3\end{smallmatrix}}\arrow{ru}[description]{b_{34,t,56,96}}\arrow{rd}[description]{b_s}\&\&{\begin{smallmatrix}&1\\2&&3\end{smallmatrix}}\arrow{ru}[description]{c_{34,s}}\arrow{rd}[description]{c_{t,78}}\&\&1\arrow{rd}[description]{\varphi}\\
\&2\arrow{ru}[description]{a_{t,82}}\&\&{\begin{smallmatrix}1\\3\end{smallmatrix}}\arrow{ru}[description]{c_s}\arrow{rd}[description]{b_{t,67}}\&\&2\\
\&\&\&\&{\begin{smallmatrix}2\\3\end{smallmatrix}}\arrow{ru}[description]{b_s}
\end{tikzcd}
\]
The two modules labelled $\begin{smallmatrix}2\\3\end{smallmatrix}$ and the two modules labelled $2$ have to be identified, respectively.
\end{example}

\section{Bocses of Schur algebras}

In this section we will compute the bocses of the representation-finite and the tame Schur algebras and prove the result that the tame Schur algebras have a category of modules filtered by Weyl modules of finite representation type. This is joint work with Ulrich Thiel \cite{KulshammerThiel-Boxes-2016}.

\subsection{The bocses of the representation-finite Schur algebras}

In this subsection, the bocses associated to the representation-finite Schur algebras are described. Using the construction explained in \cite{KoenigKulshammerOvsienko-Quasihereditaryalgebras-2014} (which we referred to in Theorem \ref{KoenigKulshammerOvsienko}) in each case we obtain a bocs whose right algebra is Morita equivalent to the Schur algebra we started with and which has an exact Borel subalgebra.
This example shows that the dimension of the exact Borel subalgebra and of the right algebra can differ a lot from the dimension of the corresponding basic quasi-hereditary algebra.\\
A presentation of the corresponding basic algebras by quiver and relations was given in Theorem  \ref{representationtypeSchur}. The indecomposable projective modules for the algebra $(\mathcal{A}_n)$ are given as follows where $1<i<n$:
\[
\begin{tikzcd}
1\arrow{d}&&i\arrow{ld}\arrow{rd}&&n\arrow{d}\\
2\arrow{d}&i-1\arrow{rd}&&i+1\arrow{ld}&n-1\\
1&&i
\end{tikzcd}
\]
The dimension of the algebra is then given by $4(n-1)+1$. The projective resolutions of all standard modules are "contained" in the following projective resolution of $\Delta(1)$, i.e. all standard modules are syzygies of $\Delta(1)$.
\[
\begin{tikzcd}
0\arrow{r}&P(n)\arrow{r}&\dots\arrow{r}&P(2)\arrow{r}&P(1)\arrow{r}&\Delta(1)\arrow{r}&0
\end{tikzcd}
\]
The $A_\infty$-structure on the Ext-algebra of the standard modules has been computed by Klamt and Stroppel in \cite{KlamtStroppel-OntheExtalgebras-2012}. Translating to the language of bocses one arrives at the following:
\[
\begin{tikzcd}
1\arrow[yshift=0.5ex]{r}{a_1}\arrow[yshift=0.5ex, bend left]{rr}{b_1}\arrow[yshift=-0.5ex, dashed]{r}[swap]{\varphi_1}&2\arrow[yshift=0.5ex, bend left]{rr}{b_2}\arrow[yshift=0.5ex]{r}{a_2}\arrow[yshift=-0.5ex, dashed]{r}[swap]{\varphi_2}&3\arrow[yshift=0.5ex]{r}{a_3}\arrow[yshift=-0.5ex, dashed]{r}[swap]{\varphi_3}&\dots\arrow[yshift=0.5ex]{r}{a_{n-1}}\arrow[yshift=-0.5ex]{r}[swap]{\varphi_{n-1}}&n
\end{tikzcd}
\]
with relations $a_{i+1}a_i$ and $a_{i+2}b_i+b_{i+1}a_i$ and non-zero differential $\partial(b_i)=\varphi_{i+1}a_i-a_{i+1}\varphi$. Let $B$ be the path algebra of the solid part with the relations $a_{i+1}a_i$ and $a_{i+2}b_i+b_{i+1}a_i$. It is easy to see that the projective $B$-module to the vertex $i$ has dimension $n-i+1$. So in total, $B$ has dimension $\sum_{j=1}^n j=\dfrac{n(n+1)}{2}$. In particular, it is not bound in terms of the dimension of $A$.\\
For computing the dimension of the right algebra $R$ of the associated bocs $(B,W)$, one can use the fact that $A$ has a duality. By \cite[Corollary 4.2]{Koenig-ExactBorelsubalgebras2-1995} the dimension of $R$ is in this case given by 
\[\sum_{i=1}^n(\dim P_B(i))^2=\sum_{j=1}^n j^2=\dfrac{n(n+1)(2n+1)}{6}.\] 

\subsection{The bocses of the tame Schur algebras}

In this subsection, the bocses of the blocks of the tame Schur algebras $(\mathcal{D}_3)$, $(\mathcal{D}_4)$, $(\mathcal{R}_4)$ and $(\mathcal{H}_4)$ defined in Theorem \ref{representationtypeSchur} are computed. We start with $(\mathcal{D}_3)$. The indecomposable projective modules for the algebra $(\mathcal{D}_3)$ are given as follows:

\[
\begin{tikzcd}
&1\arrow{ld}{\beta_1}\arrow{rd}{\beta_2}&&2\arrow{d}{\alpha_2}&3\arrow{d}{\alpha_1}\\
3\arrow{d}{\alpha_1}&&2\arrow{d}{\alpha_2}&1\arrow{d}{\beta_1}&1\arrow{d}{\beta_2}\\
1\arrow{d}{\beta_2}&&1&3\arrow{d}{\alpha_1}&2\\
2&&&1\arrow{d}{\beta_2}\\
&&&2\\
\end{tikzcd}
\]
The projective resolutions of standard modules are of the following form:
\[
\begin{tikzcd}
0\arrow{r} &P_3\arrow{r} &P_2\oplus P_3\arrow{r} &P_1\arrow{r} &\Delta(1)\arrow{r}&0\\
&0\arrow{r}&P_3\arrow{r}&P_2\arrow{r}&\Delta(2)\arrow{r}&0\\
&&0\arrow{r}&P_3\arrow{r}&\Delta(3)\arrow{r}&0
\end{tikzcd}
\]
It is clear that there are no higher multiplications in the $A_\infty$-category $\Ext^*(\Delta,\Delta)$ since there is no path of length $\geq 3$ in the $\Ext^*$-quiver. We obtain the corresponding bocs easily as the following differential biquiver
\[
\begin{tikzcd}
\Delta(1)\arrow[yshift=0.5ex]{r}{a}\arrow[yshift=-0.5ex, dashed]{r}[swap]{\varphi}\arrow[yshift=0.5ex, bend left]{rr}{c} &\Delta(2)\arrow[yshift=0.5ex]{r}{b}\arrow[yshift=-0.5ex, dashed]{r}[swap]{\psi}&\Delta(3)
\end{tikzcd}
\]
with relation $ba$ and $\partial(c)=-b\varphi$.

For the algebra $(\mathcal{R}_4)$, the indecomposable projective modules are given as follows:

\[
\begin{tikzcd}
&1\arrow{d}{\beta_1}&&2\arrow{ld}{\beta_2}\arrow{rd}{\alpha_1}&&&3\arrow{ld}{\beta_3}\arrow{rd}{\alpha_2}&&4\arrow{d}{\alpha_3}\\
&2\arrow{ld}{\beta_2}\arrow{rd}{\alpha_1}&3\arrow{rd}{\alpha_2}&&1\arrow{ld}{\beta_1}&4\arrow{rd}{\alpha_3}&&2\arrow{ld}{\beta_2}\arrow{rd}{\alpha_1}&3\\
3\arrow{rd}{\alpha_2}&&1\arrow{ld}{\beta_1}&2\arrow{d}{\alpha_1}&&&3&&1\\
&2\arrow{d}{\alpha_1}&&1\\
&1
\end{tikzcd}
\]
Hence, the projective resolutions of its standard modules are given as follows:
\[
\begin{tikzcd}
&0\arrow{r}&P_2\arrow{r}&P_1\arrow{r}&\Delta(1)\arrow{r}&0\\
0\arrow{r}&P_4\arrow{r}&P_3\arrow{r}&P_2\arrow{r}&\Delta(1)\arrow{r}&0\\
&0\arrow{r}&P_4\arrow{r}&P_3\arrow{r}&\Delta(3)\arrow{r}&0\\
&&0\arrow{r}&P_4\arrow{r}&\Delta(4)\arrow{r}&0
\end{tikzcd}
\]
Here, there are paths of length $3$ in the $\Ext^*$-quiver, but $\Ext^*_A(\Delta(1),\Delta(4))=0$, i.e. there is no possible non-zero result of such a multiplication. Thus, the $\Ext$-algebra of the standard modules is formal again, and the corresponding bocs is given by the following biquiver 
\[
\begin{tikzcd}
\Delta(1)\arrow[yshift=0.5ex]{r}{a}\arrow[yshift=-0.5ex, dashed]{r}[swap]{\varphi}\arrow[yshift=0.5ex, bend left]{rr}{c}\arrow[yshift=-0.5ex, dashed, bend right]{rr}[swap]{\chi} &\Delta(2)\arrow[yshift=0.5ex, bend left]{rr}{e}\arrow[yshift=0.5ex]{r}{b}\arrow[yshift=-0.5ex, dashed]{r}[swap]{\psi}&\Delta(3)\arrow[yshift=0.5ex]{r}{d}\arrow[yshift=-0.5ex, dashed]{r}[swap]{\rho}&\Delta(4)
\end{tikzcd}
\]
with relation $db$ and $\partial(\chi)=\psi\varphi$, $\partial(c)=\psi a-b\varphi$, and $\partial(e)=d\psi-\rho b$.

For the algebra $(\mathcal{H}_4)$, the indecomposable projective modules cannot be that nicely pictured as in the previous cases. The following is an attempt to do so anyway. For the precise meaning, the reader should consult the relations:
\[
\begin{tikzcd}
&1\arrow{d}{\alpha_3}&&&2\arrow{ld}{\beta_1}\arrow{d}{\beta_2}\arrow{rd}{\beta_3}&&3\arrow{d}{\alpha_2}&4\arrow{d}{\alpha_1}\\
&2\arrow{ld}{\beta_2}\arrow{rd}{\beta_3}&&4\arrow{rd}{\alpha_1}&3\arrow{d}{\alpha_2}&1\arrow{ld}{\alpha_3}&2\arrow{d}{\beta_3}&2\\
3\arrow{rd}{\alpha_2}&&1\arrow{ld}{\alpha_3}&&2\oplus 2\arrow{d}{\beta_3}&&1\\
&2\arrow{d}{\beta_3}&&&1\\
&1
\end{tikzcd}
\]
The projective resolutions of the standard modules are given as follows:
\[
\begin{tikzcd}
0\arrow{r}&P_4\arrow{r}&P_2\arrow{r}&P_1\arrow{r}&\Delta(1)\arrow{r}&0\\
&0\arrow{r}&P_3\oplus P_4\arrow{r}&P_2\arrow{r}&\Delta(2)\arrow{r}&0\\
&&0\arrow{r}&P_3\arrow{r}&\Delta(3)\arrow{r}&0\\
&&0\arrow{r}&P_4\arrow{r}&\Delta(4)\arrow{r}&0
\end{tikzcd}
\]
Computing the $\Ext$-algebra of the standard modules, we again notice that there is no path of length greater or equal to three. Thus, this $\Ext$-algebra is formal. The differential biquiver is as follows:
\[
\begin{tikzcd}
\Delta(1)\arrow[yshift=0.5ex]{r}{a}\arrow[yshift=0.5ex, bend left]{rr}{c}\arrow[yshift=1.5ex, bend left]{rrr}{e}\arrow[dashed, yshift=-0.5ex]{r}[swap]{\varphi}\arrow[yshift=-0.5ex, bend right, dashed]{rr}[swap]{\chi}&\Delta(2)\arrow[yshift=0.5ex]{r}{b}\arrow[yshift=-0.5ex, dashed]{r}[swap]{\psi}\arrow[yshift=0.5ex, bend left]{rr}{d}\arrow[yshift=-0.5ex, bend right, dashed]{rr}[swap]{\rho}&\Delta(3)&\Delta(4)
\end{tikzcd}
\]
with relation $da$ and differential $\partial(\chi)=\psi\varphi$, $\partial(c)=\psi a-b\varphi, \partial(e)=\rho a-d\varphi$.

For the algebra $(\mathcal{D}_4)$, the indecomposable projective modules are given as follows:
\[
\begin{tikzcd}
1\arrow{d}{\alpha_3}&&2\arrow{ld}{\beta_1}\arrow{d}{\beta_2}\arrow{rrd}{\beta_3}&&&3\arrow{d}{\alpha_2}&4\arrow{d}{\alpha_1}\\
2\arrow{d}{\beta_3}&4\arrow{d}{\alpha_1}&3\arrow{rd}{\alpha_2}&&1\arrow{ld}{\alpha_3}&2\arrow{d}{\alpha_1}&2\arrow{d}{\beta_2}\\
1&2\arrow{d}{\beta_2}&&2&&4\arrow{d}{\alpha_1}&3\\
&3&&&&2\arrow{d}{\beta_2}\\
&&&&&3
\end{tikzcd}
\]
The projective resolutions of its standard modules take the following form:
\[
\begin{tikzcd}
0\arrow{r}&P_4\arrow{r}&P_3\oplus P_4\arrow{r} &P_2\arrow{r}&P_1\arrow{r}&\Delta(1)\arrow{r}&0\\
&0\arrow{r}&P_4\arrow{r}&P_3\oplus P_4\arrow{r}&P_2\arrow{r}&\Delta(2)\arrow{r}&0\\
&&0\arrow{r}&P_4\arrow{r}&P_3\arrow{r}&\Delta(3)\arrow{r}&0\\
&&&0\arrow{r}&P_4\arrow{r}&\Delta(4)\arrow{r}&0
\end{tikzcd}
\]
Computing the $\Ext^*$-algebra, we arrive at the following biquiver:
\[
\begin{tikzcd}
\Delta(1)\arrow[yshift=0.5ex]{r}{a}\arrow[yshift=0.5ex, bend left]{rr}{c}\arrow[yshift=-0.5ex, dashed]{r}[swap]{\varphi}&\Delta(2)\arrow[yshift=0.5ex]{r}{b}\arrow[dashed, yshift=-0.5ex]{r}[swap]{\psi}\arrow[yshift=0.5ex, bend left]{rr}{e}&\Delta(3)\arrow[yshift=0.5ex]{r}{d}\arrow[yshift=-0.5ex, dashed]{r}[swap]{\chi}&\Delta(4)
\end{tikzcd}
\]
We claim the following relations and differential on it: $db, ba, ea+dc$ and $\partial(c)=\psi a-b\varphi, \partial(e)=d\psi$.\\
The only possible non-zero higher multiplication is $m_3(d,b,a)$ since $m_3$ is of degree $-1$ and has to have as input a path of length $3$, and $\Ext^*_A(\Delta(1),\Delta(4))$ is concentrated in degree $2$. We use Merkulov's\label{exampleMerkulov} construction (see Remark \ref{Merkulov}), to this that also this higher multiplication vanishes. Writing down $d$, $b$ and $a$ in one diagram, one sees that $\lambda_2$ of every two of them lands in $\mathcal{H}$. Hence, $G$ vanishes on them. Thus $\lambda_3(d,b,a)=(\lambda_2(G\lambda_2\otimes 1)-\lambda_2(1\otimes G\lambda_2))(d,b,a)=0$.
\[
\begin{tikzcd}
0\arrow{r}&P_4\arrow{d}{\id}\arrow{r}&P_3\oplus P_4\arrow{d}{\id}\arrow{r}&P_2\arrow{d}{\id}\arrow{r}&P_1\arrow{r}&0\\
0\arrow{r}&P_4\arrow{d}{\id}\arrow{r}&P_3\oplus P_4\arrow{d}{(\id,0)}\arrow{r}&P_2\arrow{d}{0}\arrow{r}&0\\
0\arrow{r}&P_4\arrow{d}{\id}\arrow{r}&P_3\arrow{d}{0}\arrow{r}&0\\
0\arrow{r}&P_4\arrow{r}&0
\end{tikzcd}
\]

\subsection{Filtered representation type of tame Schur algebras}

In this subsection we will prove the second main theorem of this paper:

\begin{theorem}
The subcategory $\mathcal{F}(\Delta)$ for the quasi-hereditary algebras $(\mathcal{D}_3)$, $(\mathcal{D}_4)$, $(\mathcal{H}_4)$ and $(\mathcal{R}_4)$ given in Theorem \ref{representationtypeSchur} is of finite representation type, independent of the characteristic. In particular, $\mathcal{F}(\Delta)$ is of finite representation type for the tame Schur algebras.
\end{theorem}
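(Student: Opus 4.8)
The plan is to exploit the equivalence $\mathcal{F}(\Delta)\cong\modu \mathfrak{B}$ from Theorem \ref{KoenigKulshammerOvsienko}, using the four differential biquivers with relations computed above for $(\mathcal{D}_3)$, $(\mathcal{R}_4)$, $(\mathcal{H}_4)$ and $(\mathcal{D}_4)$. Since each of these bocses carries a nonzero relation ideal (for instance $ba$ for $(\mathcal{D}_3)$, $db$ for $(\mathcal{R}_4)$ and $(\mathcal{H}_4)$, and $db,ba,ea+dc$ for $(\mathcal{D}_4)$), the free reduction algorithm is not directly available; instead I would invoke the version of Bautista and Salmer\'on, Theorem \ref{BautistaSalmeronreduction}. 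The first task is therefore to check in each case that the relation ideal $I\subset B$ is compatible with the set of grouplikes, i.e. to order a bimodule generating set $b_1,\dots,b_t$ of $I$ so that $\partial(b_i)\in I_{i-1}W+WI_{i-1}$; this is a finite verification that can be read off the explicit differentials listed above.

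Once compatibility is established, I would run the reduction algorithm---alternating regularisations (Proposition \ref{regularisation}) to cancel superfluous solid arrows with minimal edge reductions (Proposition \ref{minimaledgereduction}) at solid arrows with vanishing differential---keeping track at every stage of the biquiver, the induced differential, and the propagated ideal $I_F$. As illustrated by the table in the preceding example, the number of arrows grows rapidly, so in practice this bookkeeping is carried out by the MAGMA implementation of \cite{KulshammerThiel-Boxes-2016}. The goal is to reach in each of the four cases a bocs all of whose arrows are dashed; by Theorem \ref{BautistaSalmeronreduction} the algorithm then halts exactly when $\modu \mathfrak{B}$ has finite representation type, and the vertices of the terminal biquiver are in bijection with the indecomposable objects of $\mathcal{F}(\Delta)$. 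Reading off their finite number yields both the classification of indecomposable $\Delta$-filtered modules and the finiteness assertion.

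The main obstacle is precisely termination: a priori the reduction could either loop forever or produce a solid loop, the latter forcing the partial loop reduction and signalling tame or wild behaviour. What has to be verified is that in none of the four runs does an irreducible solid loop survive and that the solid part is eventually emptied after finitely many steps. Since the four biquivers and their differentials are given by integer data independent of $p$ (each $\Ext^*_A(\Delta,\Delta)$ is formal, so no higher $A_\infty$-multiplications interfere and the differential is the dual of an ordinary multiplication), the entire reduction---and hence its termination---is uniform in the characteristic, which gives the characteristic independence in the statement. Finally, the ``in particular'' clause follows because $\mathcal{F}(\Delta)$ depends only on the Morita equivalence class of the algebra, and by Theorem \ref{representationtypeSchur} every non-semisimple block of a tame Schur algebra is Morita equivalent to one of $(\mathcal{D}_3)$, $(\mathcal{D}_4)$, $(\mathcal{H}_4)$, $(\mathcal{R}_4)$.
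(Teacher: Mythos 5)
Your proposal is correct in substance and, for the two hard cases $(\mathcal{R}_4)$ and $(\mathcal{H}_4)$, it is essentially the paper's argument: invoke the Bautista--Salmer\'on version of the reduction algorithm (Theorem \ref{BautistaSalmeronreduction}), check compatibility of the relation ideal with the grouplikes, run the reduction (in practice by the MAGMA implementation of \cite{KulshammerThiel-Boxes-2016}), and read off finiteness from the terminal bocs with no solid arrows. Where you diverge is in treating all four algebras uniformly this way. The paper handles $(\mathcal{D}_3)$ and $(\mathcal{D}_4)$ without any reduction at all: in those two cases the algebra $B$ underlying the bocs is a representation-finite special biserial algebra, so Corollary \ref{exactBorelfinitetype} (representation-finite $B$ forces $\modu\mathfrak{B}$ representation-finite, via Burt--Butler theory) finishes immediately; the paper explicitly notes this shortcut fails for $(\mathcal{R}_4)$ and $(\mathcal{H}_4)$ because there the Borel subalgebras are representation-infinite. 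Your uniform approach costs more: you must verify compatibility of the ideals $(ba)$ resp.\ $(db,\,ba,\,ea+dc)$ and carry them through the reduction, bookkeeping the paper never needs; in exchange you get the terminal biquiver, hence the full list of indecomposables and the Auslander--Reiten quiver, in all four cases rather than two. One further simplification you could import from the paper for $(\mathcal{R}_4)$ and $(\mathcal{H}_4)$: after the single minimal edge reduction at $a$ (resp.\ $b$), the image of the relation becomes $F(da)=(d_{4,5},0)$, so the relation just forces the solid arrow $d_{4,5}$ to vanish; removing it empties the ideal entirely, and the remainder of the run takes place in the relation-free setting of Propositions \ref{regularisation} and \ref{minimaledgereduction}, so the ideal never actually needs to be propagated beyond the first step. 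Your termination discussion and the characteristic-independence argument (the bocses are given by the same integer data in every characteristic, since the $\Ext$-algebras of the standard modules are formal) match the paper's reasoning, as does the deduction of the ``in particular'' clause from Theorem \ref{representationtypeSchur}.
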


\begin{proof}
Let $A$ be one of the algebras mentioned in the theorem. In the previous section for each of these a bocs $\mathfrak{B}=(B,W)$ with $\modu \mathfrak{B}\cong \mathcal{F}(\Delta)$ was constructed. For the algebras $(\mathcal{D}_3)$, respectively $(\mathcal{D}_4)$, the algebra $B$ is the following: $\begin{tikzcd}1\arrow{r}{a}\arrow[yshift=0.5ex, bend left]{rr}{c}&2\arrow{r}{b}&3\end{tikzcd}$ with relation $ba$, respectively $\begin{tikzcd}1\arrow{r}{a}\arrow[yshift=0.5ex, bend right]{rr}[swap]{c}&2\arrow[yshift=0.5ex,bend left]{rr}{e}\arrow{r}{b}&3\arrow{r}{d}&4\end{tikzcd}$ with relations $db$, $ba$, and $ea+dc$. In both of these cases, this is a representation-finite special biserial algebra (see e.g. \cite{WaldWaschbusch-Tamebiserial-1985}). By Corollary \ref{exactBorelfinitetype}, this implies that also $\mathcal{F}(\Delta)$ is of finite representation type.\\
For the algebras $(\mathcal{R}_4)$ and $(\mathcal{H}_4)$ this approach is not possible since the corresponding exact Borel subalgebras are not of finite representation type. To prove the claim, the reduction algorithm, in the version of Bautista and Salmer\'on, see Proposition \ref{BautistaSalmeronreduction}, is applied:\\
For $(\mathcal{H}_4)$ the first step is to do a minimal edge reduction at $a$. Then $F(da)=(d,d_{4,5})\begin{pmatrix}0&0\\1&0\end{pmatrix}=(d_{4,5},0)$. Hence the solid edge $d_{4,5}$ can be removed as a next step. The resulting bocs has no relations anymore, so the standard reduction algorithm for differential biquivers can be applied. The following table only lists the minimal edge reductions leaving out the regularisations which are done inbetween:
\begin{center}
\begin{tabular}{|l|c|c|}
\hline
step & number of vertices & number of arrows\\
\hline
start & $4$ & $9$\\
minimal edge reduction at $a$ & $5$ & $20$\\
removing $d_{4,5}$ & $5$ & $19$\\
minimal edge reduction at $b_{3,5}$ &$6$ &$42$\\
minimal edge reduction at $b$ & $7$ &$64$\\
minimal edge reduction at $c$ &$8$ &$85$\\
minimal edge reduction at $d$ &$9$ &$83$\\
minimal edge reduction at $e$ &$10$ &$105$\\
minimal edge reduction at $d_{4,7}$ &$11$ &$139$\\
minimal edge reduction at $e_{4,8}$ &$12$ &$180$\\
minimal edge reduction at $d_{47,107}$ &$13$ &$228$\\
regularisations &$13$ &$194$\\
\hline
\end{tabular}
\end{center}
The Auslander--Reiten quiver in this case has $13$ vertices and $20$ edges.\\
Similarly, in the case of $(\mathcal{R}_4)$, one first reduces $b$ and cancels $d_{4,5}$ to arrive at a differential biquiver without relations to which the standard reduction algorithm for differential biquivers can be applied. The corresponding table looks as follows:
\begin{center}
\begin{tabular}{|l|c|c|}
\hline
step & number of vertices & number of arrows\\
\hline
start &$4$ &$9$\\
minimal edge reduction at $b$ & $5$ & $20$\\
removing $d_{4,5}$ & $5$ & $19$\\
minimal edge reduction at $a_{51}$ &$6$ & $42$\\
minimal edge reduction at $a$ &$7$ & $63$\\
minimal edge reduction at $c$ &$8$ & $85$\\
minimal edge reduction at $d_{4,8}$ &$9$ &$92$\\
minimal edge reduction at $e_{4,7}$ &$10$ &$125$\\
minimal edge reduction at $d$ &$11$ &$142$\\
minimal edge reduction at $e$ &$12$ &$170$\\
minimal edge reduction at $d_{10,3}$ &$13$ &$222$\\
regularisations & $13$ &$194$\\
\hline
\end{tabular}
\end{center}
The resulting Auslander--Reiten quiver also has $13$ vertices and $20$ edges.
\end{proof}

\begin{remark}
\begin{enumerate}[(i)]
\item The algebra $(\mathcal{D}_3)$ is itself special-biserial. It is therefore possible to explicitly write down all the indecomposable modules, and see which ones can be filtered by standard modules. Also for the algebra $(\mathcal{D}_4)$ there is a method known to construct all the indecomposable modules up to isomorphism. This method is due to Br\"ustle and based on methods related to bocses. He calls this class of algebras KIT-algebras, see \cite{Bruestle-Kitalgebras-2001}.
\item That the category of modules filtered by standard modules for $(\mathcal{D}_3)$ and $(\mathcal{D}_4)$ is of finite representation type also follows from \cite{ErdmanndelaPenaSaenz-RelativeAuslanderReiten-2002} where this is proved for each of the algebras in the more general $(\mathcal{D}_n)$-series. In particular, for $n\geq 5$ these algebras give examples of wild blocks of Schur algebras such that the category of modules  filtered by standard modules is of finite representation type.
\item For the algebras $(\mathcal{R}_4)$ and $(\mathcal{H}_4)$ one can also use the reduction algorithm of differential biquivers on the differential biquivers computed in the previous subsection, omitting the  relations. For $(\mathcal{R}_4)$ this results in a bocs with $15$ vertices and $276$ dashed edges (the Auslander--Reiten quiver having 15 vertices and 23 edges). For $(\mathcal{H}_4)$ it results in a bocs with $17$ vertices and $452$ edges (the Auslander--Reiten quiver having $17$ vertices and $27$ edges). Since the categories of filtered modules for $(\mathcal{R}_4)$ and $(\mathcal{H}_4)$ are subcategories of the categories of representations of these differential biquivers, it follows that they are representation-finite with less than $15$, respectively $17$, isomorphism classes of indecomposable modules.
\end{enumerate}
\end{remark}

\frenchspacing


\begin{thebibliography}{7}

\bibitem{AssemSimsonSkowronski-Elements1-2006}
I. Assem and D. Simson and A. Skowro{\'n}ski,
\textit{Elements of the representation theory of associative algebras. {V}ol. 1. Techniques of representation theory}
London Mathematical Society Student Texts 65, Cambridge University Press, Cambridge 2006, x+458.

\bibitem{AuslanderReitenSmalo-Representationtheory-1995}
M. Auslander and I. Reiten and S. O. Smal{\o},
\textit{Representation theory of {A}rtin algebras.}
Cambridge Studies in Advanced Mathematics 36, Cambridge University Press, Cambridge 1995, xiv+423.

\bibitem{BautistaColavitaSalmeron-Onadjointfunctors-1981}
R. Bautista and L. Colavita and L. Salmer{\'o}n,
On adjoint functors in representation theory. In \textit{Representations of algebras ({P}uebla, 1980)}, Lecture Notes in Math. 903, Springer, Berlin-New York 1981, 9--25.

\bibitem{BautistaKleiner-Almostsplitsequences-1990}
R. Bautista and M. Kleiner,
Almost split sequences for relatively projective modules.
\textit{J. Algebra} \textbf{135} (1990), 19--56.

\bibitem{BautistaSalmeron-TheKievalgorithm-1989}
R. Bautista and L. Salmer{\'o}n,
The {K}iev algorithm for bocses applies directly to representation-finite algebras.
\textit{Manuscripta Math.} \textbf{65} (1989), 281--287.

\bibitem{BautistaSalmeronZuazua-Differentialtensoralgebras-2009}
R. Bautista and L. Salmer{\'o}n and R. Zuazua,
\textit{Differential tensor algebras and their module categories.}
London Mathematical Society Lecture Note Series 362, Cambridge University Press, Cambridge 2009, x+452.

\bibitem{BernsteinGelfandGelfand-Acertaincategory-1976}
I. N. Bern{\v{s}}te{\u\i}n and I. M. Gel{'}fand and S. I. Gel{'}fand,
A certain category of {$\mathfrak{g}$}-modules,
\textit{Funkcional. Anal. i Prilo\v zen.} \textbf{10} (1976), 1--8.

\bibitem{Bodnarchuk-Vectorbundles-2007}
L. Bodnarchuk, 
Vector bundles on degenerations of elliptic curves of types {II}, {III} and {IV}. 
\textit{PhD thesis} Kaiserslautern, 2007, http://kluedo.ub.uni-kl.de/volltexte/2008/2281/.

\bibitem{BodzentaKulshammer-Stronglyquasihereditary-2016}
A. Bodzenta and J. K\"ulshammer,
Strongly quasi-hereditary algebras and surfaces.
\textit{Work in progress}, 2016.

\bibitem{BohmBrzezinskiWisbauer-Monadsandcomonads-2009}
G. B{\"o}hm and T. Brzezi{\'n}ski and R. Wisbauer,
Monads and comonads on module categories.
\textit{J. Algebra} \textbf{322} (2009), 1719--1747.

\bibitem{BondarenkoDrozd-Therepresentationtype-1977}
V. M. Bondarenko and Ju. A. Drozd,
The representation type of finite groups.
\textit{Zap. Nau\v cn. Sem. Leningrad. Otdel. Mat. Inst. Steklov. (LOMI)} \textbf{71} (1977), 24--41, 282.

\bibitem{Bongartz-Algebras-1983}
K. Bongartz,
Algebras and quadratic forms.
\textit{J. London Math. Soc.} \textbf{28} (1983), 461--469.

\bibitem{Bruestle-Kitalgebras-2001}
T. Br{\"u}stle,
Kit algebras.
\textit{J. Algebra} \textbf{240} (2001), 1--24.

\bibitem{BrustleKoenigMazorchuk-Thecoinvariantalgebra-2001}
T. Br{\"u}stle and S. Koenig and V. Mazorchuk,
The coinvariant algebra and representation types of blocks of category {$\mathcal{O}$}.
\textit{Bull. London Math. Soc.} \textbf{33} (2001), 669--681.

\bibitem{Brzezinski-Curveddifferentialgraded-2013}
T. Brzezi{\'n}ski,
Curved differential graded algebras and corings.
\textit{Bull. Belg. Math. Soc. Simon Stevin} \textbf{20} (2013), 909--936.

\bibitem{Burt-Almostsplitsequences-2005}
W. L. Burt,
Almost split sequences and BOCSes.
\textit{unpublished manuscript}, August 2005.

\bibitem{BurtButler-Almostsplitsequences-1991}
W. L. Burt and M. C. R. Butler,
Almost split sequences for bocses. In \textit{Representations of finite-dimensional algebras ({T}sukuba, 1990)}, CMS Conf. Proc. 11, Amer. Math. Soc., Providence, RI, 1991, 89--121.

\bibitem{ClineParshallScott-Finitedimensionalalgebras-1988}
E. Cline and B. Parshall and L. Scott,
Finite-dimensional algebras and highest weight categories.
\textit{J. Reine Angew. Math.} \textbf{391} (1988), 85--99.

\bibitem{CoxErdmann-OnExt2-2000}
A. Cox and K. Erdmann,
On $\Ext^2$ between {W}eyl modules for quantum $\GL_n$.
\textit{Math. Proc. Cambridge Philos. Soc.} \textbf{128} (2000), 441--463.

\bibitem{CrawleyBoevey-Matrixproblems-1990}
W. W. Crawley-Boevey,
Matrix problems and {D}rozd's theorem. In \textit{Topics in algebra, {P}art 1 ({W}arsaw, 1988)}, Banach Center Publ. 26, PWN, Warsaw 1990, 199--222.

\bibitem{CrawleyBoevey-Ontamealgebras-1988}
W. W. Crawley-Boevey,
On tame algebras and bocses,
\textit{Proc. London Math. Soc. (3)} \textbf{56} (1988), 451--483.

\bibitem{DlabRingel-Themoduletheoreticalapproach-1992}
V. Dlab and C. M. Ringel,
The module theoretical approach to quasi-hereditary algebras. In \textit{Representations of algebras and related topics ({K}yoto, 1990)}, London Math. Soc. Lecture Note Ser. 168, Cambridge Univ. Press, Cambridge 1992, 200--224.

\bibitem{Donkin-TheqSchuralgebra-1998}
S. Donkin,
\textit{The {$q$}-{S}chur algebra.}
London Mathematical Society Lecture Note Series 253, Cambridge University Press, Cambridge 1998, x+179.

\bibitem{DonkinReiten-OnSchuralgebras-1994}
S. Donkin and I. Reiten,
On {S}chur algebras and related algebras. {V}. {S}ome quasi-hereditary algebras of finite type.
\textit{J. Pure Appl. Algebra} \textbf{97} (1994), 117--134.

\bibitem{DonovanFreislich-Therepresentationtheory-1973}
P. Donovan and M. R. Freislich,
The representation theory of finite graphs and associated algebras.
\textit{Carleton Mathematical Lecture Notes, No. 5}, Carleton University, Ottawa, Ont. 1973, iii+83.

\bibitem{DotyErdmannMartinNakano-Representationtype-1999}
S. R. Doty and K. Erdmann and S. Martin and D. K. Nakano,
Representation type of {S}chur algebras.
\textit{Math. Z.} \textbf{232} (1999), 137--182.

\bibitem{Drozd-Tameandwild-1977}
Ju. A. Drozd,
Tame and wild matrix problems. In \textit{Matrix problems ({R}ussian)}, Akad. Nauk Ukrain. SSR Inst. Mat., Kiev 1977, 104--114.

\bibitem{Drozd-Tameandwild-1979}
Ju. A. Drozd,
Tame and wild matrix problems. In \textit{Representations and quadratic forms ({R}ussian)}, Akad. Nauk Ukrain. SSR, Inst. Mat., Kiev 1979, 39--74, 154.

\bibitem{Drozd-Tameandwild-1980}
Ju. A. Drozd,
Tame and wild matrix problems. In \textit{Representation theory, {II} ({P}roc. {S}econd {I}nternat. {C}onf., {C}arleton {U}niv., Ottawa, {O}nt., 1979)}, Lecture Notes in Math. 832, Springer, Berlin-New York 1980, 242--258.

\bibitem{Dubnov-Onderivedcategories-2000}
D. Dubnov,
On derived categories of modules over 2-vertex basic algebras.
\textit{Comm. Algebra} \textbf{28} (2000), 4355--4374.

\bibitem{Erdmann-Schuralgebras-1993}
K. Erdmann,
Schur algebras of finite type.
\textit{Quart. J. Math. Oxford Ser. (2)} \textbf{44} (1993), 17--41.

\bibitem{ErdmanndelaPenaSaenz-RelativeAuslanderReiten-2002}
K. Erdmann and J. A. de la Pe{\~n}a and C. S{\'a}enz,
Relative {A}uslander--{R}eiten sequences for quasi-hereditary algebras.
\textit{Colloq. Math.} \textbf{91} (2002), 123--142.

\bibitem{ErdmannMadsenMiemietz-OnAuslanderReitentranslates-2010}
K. Erdmann and D. Madsen and V. Miemietz,
On {A}uslander--{R}eiten translates in functorially finite subcategories and applications.
\textit{Colloq. Math.} \textbf{119} (2010), 51--77.

\bibitem{ErdmannNakano-Representationtype-2001}
K. Erdmann and D. K. Nakano,
Representation type of {$q$}-{S}chur algebras.
\textit{Trans. Amer. Math. Soc.} \textbf{353} (2001), 4729--4756.

\bibitem{FriedlanderSuslin-Cohomologyoffinitegroupschemes-1997}
E. M. Friedlander and A. Suslin,
Cohomology of finite group schemes over a field.
\textit{Invent. Math.} \textbf{127} (1997), 209--270.

\bibitem{FutornyNakanoPollack-Representationtype-2001}
V. Futorny and D. K. Nakano and R. D. Pollack,
Representation type of the blocks of category {$\mathcal{O}$}.
\textit{Q. J. Math.} \textbf{52} (2001), 285--305.

\bibitem{Gabriel-Unzerlegbaredarstellungen-1972}
P. Gabriel,
Unzerlegbare Darstellungen. {I}.
\textit{Manuscripta Math.} \textbf{6} (1972), 71--103; correction, ibid. \textbf{6} (1972), 309.

\bibitem{Green-Polynomialrepresentations-2007}
J. A. Green,
\textit{Polynomial representations of {$\GL_n$}.}
Lecture Notes in Mathematics 830, with an appendix on Schensted correspondence and Littelmann paths by K. Erdmann, J. A. Green and M. Schocker, Springer Berlin 2007, x+161.

\bibitem{Green-Oncertainsubalgebras-1990}
J. A. Green,
On certain subalgebras of the {S}chur algebra.
\textit{J. Algebra} \textbf{131} (1990), 265--280.

\bibitem{GugenheimLambeStasheff-Algebraicaspects-1990}
V. K. A. M. Gugenheim and L. A. Lambe and J. D. Stasheff,
Algebraic aspects of {C}hen's twisting cochain.
\textit{Illinois J. Math.} \textbf{34} (1990), 485--502.

\bibitem{HemmerNakano-Spechtfiltrations-2004}
D. J. Hemmer and D. K. Nakano,
Specht filtrations for {H}ecke algebras of type {A}.
\textit{J. London Math. Soc. (2)} \textbf{69} (2004), 623--638.

\bibitem{Humphreys-Representations-2008}
J. E. Humphreys,
\textit{Representations of semisimple {L}ie algebras in the {BGG} category {$\mathcal{O}$}.}
Graduate Studies in Mathematics 94, American Mathematical Society, Providence, RI 2008, xvi+289.

\bibitem{JohanssonLambe-Transferringalgebrastructures-2001}
L. Johansson and L. Lambe,
Transferring algebra structures up to homology equivalence.
\textit{Math. Scand.} \textbf{89} (2001), 181--200.

\bibitem{Kadeishvili-Onthetheory-1980}
T. V. Kadei{\v{s}}vili,
On the theory of homology of fiber spaces. In \textit{International Topoloy Conference (Moscow State Univ., Moscow, 1979)}, Uspekhi Mat. Nauk 35 (1980), 183--188.

\bibitem{Kadeishvili-Thealgebraicstructure-1983}
T. V. Kadei{\v{s}}vili,
The algebraic structure in the homology of an {$A(\infty)$}-algebra,
\textit{Soobshch. Akad. Nauk Gruzin. SSR} \textbf{108} (1983), 249--252.

\bibitem{Kalck-Derivedcategories-2016}
M. Kalck,
Derived categories of quasi-hereditary algebras and their derived composition series,
\textit{this volume} (2016).

\bibitem{Keller-Introduction-2001}
B. Keller,
Introduction to {$A$}-infinity algebras and modules.
\textit{Homology Homotopy Appl.} \textbf{3} (2001), 1--35.

\bibitem{Keller-Ainfinityalgebras-2002}
B. Keller,
{$A$}-infinity algebras in representation theory. In \textit{Representations of algebras. {V}ol. {I}, {II}}, Beijing Norm. Univ. Press, Beijing 2002, 74--86.

\bibitem{Keller-Ainfinityalgebras-2006}
B. Keller,
{$A$}-infinity algebras, modules and functor categories. In \textit{Trends in representation theory of algebras and related topics}, Contemp. Math. 406, Amer. Math. Soc. Providence, RI 2006, 67--93.

\bibitem{KlamtStroppel-OntheExtalgebras-2012}
A. Klamt and C. Stroppel,
On the {E}xt algebras of parabolic {V}erma modules and {$A_\infty$}-structures,
\textit{J. Pure Appl. Algebra} \textbf{216} (2012), 323--336.

\bibitem{Kleiner-Inducedmodules-1981}
M. Kleiner,
Induced modules and comodules and representations of {BOCS}s and {DGC}s. In \textit{Representations of algebras ({P}uebla, 1980)}, Lecture Notes in Math. 903, Springer, Berlin-New York 1981, 168--185.

\bibitem{Kleiner-Thedualring-1984}
M. Kleiner,
The dual ring to a coring with a grouplike.
\textit{Proc. Amer. Math. Soc.} \textbf{91} (1984), 540--542.

\bibitem{KleinerRoiter-Representations-1975}
M. M. Kleiner and A. V. Ro{\u\i}ter,
Representations of differential graded categories. In \textit{Representations of algebras ({P}roc. {I}nternat. {C}onf. {C}arleton {U}niv., {O}ttawa, {O}nt., 1974.} Lecture Notes in Math. 488, Springer Berlin 1975, 316--339.

\bibitem{KlucznikKoenig-Characteristictiltingmodules-1999}
M. Klucznik and S. Koenig,
Characteristic tilting modules over quasi-hereditary algebras. In \textit{Erg{\"a}nzungsreihe / Sonderforschungsbereich 343 Diskrete Strukturen in der Mathematik}, Universit\"at Bielefeld 1999, 64.

\bibitem{Koenig-ExactBorelsubalgebras-1995}
S. Koenig,
Exact {B}orel subalgebras of quasi-hereditary algebras. {I}.
\textit{Math. Z.} \textbf{220} (1995), 399--426.

\bibitem{Koenig-ExactBorelsubalgebras2-1995}
S. Koenig,
Exact {B}orel subalgebras of quasi-hereditary algebras. {II}.
\textit{Comm. Alg.} \textbf{23} (1995), 2331--2344.

\bibitem{KoenigKulshammerOvsienko-Quasihereditaryalgebras-2014}
S. Koenig and J. K{\"u}lshammer and S. Ovsienko,
Quasi-hereditary algebras, exact {B}orel subalgebras, {$A_\infty$}-categories and boxes.
\textit{Adv. Math.} \textbf{262} (2014), 546--592.

\bibitem{Krause-Highestweightcategories-2016}
H. Krause,
Highest weight categories and strict polynomial functors.
\textit{This volume} (2016).

\bibitem{Krebs-AuslanderReitentheory-2015}
M. Krebs,
Auslander--Reiten theory in functorially finite resolving subcategories.
\textit{PhD thesis} Norwich, 2013, http://arxiv.org/abs/1501.01328/.

\bibitem{KulshammerMiemietz-Uniqueness-2016}
J. K\"ulshammer and V. Miemietz,
Uniqueness of exact Borel subalgebras for quasi-hereditary algebras.
\textit{Work in progress}, 2016.

\bibitem{KulshammerThiel-Boxes-2016}
J. K\"ulshammer and U. Thiel,
Boxes adn filtered representation type of Schur algebras.
\textit{Work in progress}, 2016.

\bibitem{LefevreHasegawa-SurlesAinfini-2003}
K. Lev{\`e}vre-Hasegawa,
Sur les $A_\infty$-cat{\'e}gories. \textit{PhD thesis} Universit{\'e} Paris 7, 2003, arXiv:math/0310337.

\bibitem{LiuYang-Stratifications-2013}
Q. Liu and D. Yang,
Stratifications of algebras with two simple modules.
\textit{Forum Math.} \textbf{28} (2016), 175--188.

\bibitem{LuPalmieriWuZhang-Ainfinityalgebras-2004}
D. M. Lu, J. H. Palmieri, Q. S. Wu and J. J. Zhang,
{$A_\infty$}-algebras for ring theorists. In \textit{Proceedings of the {I}nternational {C}onference on {A}lgebra}, Algebra Colloq. 11, 2004, 91--128.

\bibitem{LuPalmieriWuZhang-Ainfinitystructure-2009}
D. M. Lu, J. H. Palmieri, Q. S. Wu and J. J. Zhang,
{$A$}-infinity structure on {E}xt-algebras.
\textit{J. Pure Appl. Algebra} \textbf{213} (2009), 2017--2037.

\bibitem{Madsen-Homologicalaspects-2002}
D. O. Madsen,
\textit{Homological aspects in representation theory.}
PhD thesis, Norwegian University of Science and Technology, February 2002.

\bibitem{Marko-Algebraassociated-2006}
F. Marko,
Algebra associated with the principal block of category ${\mathcal{O}}$ for {$\mathfrak{sl}_3(\mathbb{C})$}. In \textit{Algebras, rings and their representations}, World Sci Publ., Hackensack, NJ 2006, 201--214.

\bibitem{Mazorchuk-Koszulduality-2010}
V. Mazorchuk,
Koszul duality for stratified algebras. {I}. {B}alanced quasi-hereditary algebras.
\textit{Manuscripta Math.} \textbf{131} (2010), 1--10.

\bibitem{Mazorchuk-Lectures-2010}
V. Mazorchuk,
\textit{Lectures on {$\mathfrak{sl}_2(\mathbb{C})$}-modules.}
Imperial College Press, London 2010, x+263.

\bibitem{MembrilloHernandez-Quasihereditaryalgebras-1994}
F. H. Membrillo-Hern{\'a}ndez,
Quasi-hereditary algebras with two simple modules and {F}ibonacci numbers.
\textit{Comm. Algebra} \textbf{22} (1994), 4499--4509.

\bibitem{Merkulov-Stronghomotopyalgebras-1999}
S. A. Merkulov,
Strong homotopy algebras of a {K}\"ahler manifold.
\textit{Internat. Math. Res. Notices} \textbf{3} (1999), 153--164.

\bibitem{Nazarova-Representationsofquivers-1973}
L. A. Nazarova,
Representations of quivers of infinite type.
\textit{Izv. Akad. Nauk SSSR Ser. Mat.} \textbf{37} (1973), 752--791.

\bibitem{ParshallScottWang-Borelsubalgebrasredux-2000}
B. Parshall and L. Scott and J.-P. Wang,
Borel subalgebras redux with examples from algebraic and quantum groups.
\textit{Algebr. Represent. Theory} \textbf{3} (2000), 213--257.

\bibitem{PoinsotPorst-Thedualrings-2015}
L. Poinsot and H.-E. Porst,
The dual rings of an $R$-coring revisited.
\textit{Comm. Algebra} to appear (2015).

\bibitem{Proute-Algebresdifferentielles-1984}
A. Prout{\'e},
Alg{\`e}bres diff{\'e}rentielles fortement homotopiquement associatives.
\textit{Th{\`e}se d'Etat}, Universit{\'e} Paris VII, 1984.


\bibitem{Ringel-Thecategoryofmodules-1991}
C. M. Ringel,
The category of modules with good filtrations over a quasi-hereditary algebra has almost split sequences.
\textit{Math. Z.} \textbf{208} (1991), 209--223.

\bibitem{Ringel-Iyamasfinitenesstheorem-2010}
C. M. Ringel,
Iyama's finiteness theorem via strongly quasi-hereditary algebras.
\textit{J. Pure Appl. Algebra} \textbf{214} (2010), 1687--1692.

\bibitem{Roiter-Matrixproblems-1979}
A. V. Ro{\v{\i}}ter,
Matrix problems and representations of {BOCS}s. In \textit{Representations and quadratic forms (Russian)}, Akad. Nauk Ukrain. SSR, Inst. Mat., Kiev 1979, 3--38, 154.

\bibitem{Scott-Simulatingalgebraicgeometry-1987}
L. Scott,
Simulating algebraic geometry with algebra. I. The algebraic theory of derived categories. In \textit{The Arcata Conference on Representations of Finite Groups (Arcata, Calif., 1986)}, Proc. Sympos. Pure Math., 47, Part 2, Amer. Math. Soc., Providence, RI, 1987, 271--281.

\bibitem{Seidel-Fukayacategories-2008}
P. Seidel,
\textit{Fukaya categories and Picard--Lefschetz theory.}
Zurich Lectures in Advanced Mathematics, European Mathematical Society, Z\"urich 2008, viii+326.

\bibitem{SimsonSkowronski-Elements3-2007}
D. Simson and A. Skowro{\'n}ski,
\textit{Elements of the representation theory of associative algebras. {V}ol. 3. Representation-infinite tilted algebras.}
London Mathematical Society Student Texts 72, Cambridge University Press, Cambridge 2007, xii+456.

\bibitem{Smirnov-Homology-1980}
V. A. Smirnov,
Homology of fiber spaces. In \textit{International Topology Conference (Moscow State Univ., Moscow, 1979)}, Uspekhi Mat. Nauk 35 (1980), 227--230.

\bibitem{Soergel-KategorieO-1990}
W. Soergel,
Kategorie {$\mathcal{O}$}, perverse {G}arben und {M}oduln \"uber den {K}oinvarianten zur {W}eylgruppe.
\textit{J. Amer. Math. Soc.} \textbf{3} (1990), 421--445.

\bibitem{Sweedler-Thepredualtheorem-1975}
M. Sweedler,
The predual theorem to the {J}acobson--{B}ourbaki theorem.
\textit{Trans. Amer. Math. Soc.} \textbf{213} (1975), 391--406.

\bibitem{Stroppel-CategoryO-2003}
C. Stroppel,
Category {$\mathcal{O}$}: quivers and endomorphism rings of projectives.
\textit{Represent. Theory} \textbf{7} (2003), 322--345.

\bibitem{WaldWaschbusch-Tamebiserial-1985}
B. Wald and J. Waschb\"usch,
Tame biserial algebras.
\textit{J. Algebra} \textbf{95} (1985), 480--500.

\bibitem{Wisbauer-Onthecategory-2002}
R. Wisbauer,
On the category of comodules over corings. In \textit{Mathematics \& mathematics education ({B}ethlehem, 2000)} World Sci. Publ., River Edge, NJ 2002, 325--336.

\bibitem{Woodcock-BorelSchuralgebras-1994}
D. J. Woodcock,
Borel {S}chur algebras.
\textit{Comm. Algebra} \textbf{22} (1994), 1703--1721.

\bibitem{Xi-Onrepresentationtypesq-1993}
C. C. Xi,
On representation types of {$q$}-{S}chur algebras.
\textit{J. Pure Appl. Algebra} \textbf{84} (1993), 73--84.

\bibitem{Xi-Onrepresentationtypes-1992}
C. C. Xi,
On representation types of Schur algebras. In \textit{Proceedings of the {F}irst {C}hina--{J}apan {I}nternational {S}ymposium on {R}ing {T}heory ({G}uilin, 1991)}, Okayama Univ., Okayama 1992, 162.



\end{thebibliography}
\end{document}